\documentclass
[
11pt]
{amsart}

\usepackage{hyperref}
\usepackage{cite}
\usepackage{amssymb}
\usepackage{amsmath}
\usepackage{amsthm}
\usepackage{amsfonts}
\usepackage{bbm}
\usepackage{enumitem} 
\usepackage{graphicx}
\usepackage{xcolor}
\usepackage[toc,page]{appendix}
\usepackage{subfigure}
\usepackage{mathtools}
\usepackage[normalem]{ulem}
\usepackage{comment}

\newcommand{\Vol}{\operatorname{Vol}}
\newcommand{\R}{\mathbb{R}}
\newcommand{\supp}{\operatorname{supp}}

\makeatother
\numberwithin{equation}{section}
\newtheorem{theorem}{Theorem}[section]

\newtheorem{proposition}[theorem]{Proposition}

\newtheorem{lemma}[theorem]{Lemma}
\newtheorem{corollary}[theorem]{Corollary}
\theoremstyle{definition}
\newtheorem{definition}[theorem]{Definition}
\newtheorem{example}[theorem]{Example}
\theoremstyle{remark}
\newtheorem{remark}[theorem]{Remark}
\newtheorem{notation}[theorem]{Notation}

\newcommand{\bbR}{{\mathbb R}}

\newcommand{\bbG}{{\mathbb G}}
\renewcommand{\geq}{\geqslant}
\renewcommand{\leq}{\leqslant}
\renewcommand{\epsilon}{\varepsilon}

\renewcommand{\ge}{\geqslant}
\renewcommand{\le}{\leqslant}
\newcommand{\Per}{\operatorname{Per}}

\newcommand{\rohan}[1]
{{\color{red} Rohan says: #1}}

\newcommand{\masha}[1]
{{\color{blue} Masha says: #1}}

\newcommand{\liangbing}[1]
{{\color{green} Liangbing says: #1}}

\newcommand{\andrea}[1]
{{\color{teal} Andrea says: #1}}

\title{Heat content and spectrum for subordinated sub-Laplacians}

\author[Maria Gordina]{Maria Gordina{$^{\dag}$}}
\thanks{\footnotemark {$\dag$} Research was supported in part by NSF Grant DMS-2246549.}
\address[Maria Gordina]{Department of Mathematics\\
University of Rochester\\
Rochester, NY 14627,  U.S.A.}
\email{maria.gordina@rochester.edu}

\author[Liangbing Luo]{Liangbing Luo}
\address[Liangbing Luo]{
Department of Mathematics and Statistics\\
Queen's University\\
Kingston, ON, Canada
K7L 3N6}
\email{liangbing.luo@queensu.ca}

\author[Andrea Pinamonti]{Andrea Pinamonti{{$^\ddagger$}}}
\thanks{\footnotemark {$\ddagger$} Research was supported in part by the University of Trento and by INdAM-GNAMPA 2026 Project \emph{Variational, Geometric, and Analytic Perspectives on Regularity}, CUP E53C25002010001.}
\address[Andrea Pinamonti]{Department of Mathematics, University of Trento, Via Sommarive 14, 38123 Povo (Trento), Italy}
	\email{andrea.pinamonti@unitn.it}

\author[Rohan Sarkar]{Rohan Sarkar}
\address[Rohan Sarkar]{Department of Mathematics
\\
University of Virginia
\\
Charlottesville, VA 22903, U.S.A.
}
\email{kfg3et@virginia.edu}

\begin{document}

\begin{abstract}
We study heat content and spectral properties of subordinated sub-Laplacians on arbitrary Carnot groups. We consider restrictions of such operators to bounded open sets with zero Dirichlet boundary condition and study their spectral properties.  In particular, for a large class of subordinators we give explicit eigenvalue estimates in terms of the subordinator and the eigenvalues of the sub-Laplacian. We also provide large-time asymptotics of the heat content for the subordinated sub-Laplacian in terms of its spectral gap.
For fractional sub-Laplacians we prove short-time asymptotics for the corresponding heat content and relative heat content.
Our approach combines semigroup methods, probabilistic techniques, geometric measure theory, and  heat kernel estimates.
\end{abstract}
\maketitle
\tableofcontents

\section{Introduction}

Carnot groups are Lie groups equipped with a natural sub-Riemannian structure. They have been studied in analysis and  metric geometry, e.g.  \cite{CapognaDanielliPaulsTysonBook, BonfiglioliLanconelliUguzzoniBook, AgrachevBarilariBoscainBook2020, LeDonne2017}. One of the main objects of study on Carnot groups is a sub-Laplacian. The aim of this article is to study such operators under (Bochner) subordination using both analytic and probabilistic techniques.

The main object of interest for us is the heat content for subordinated sub-Laplacians which measures the total energy over a set. It is known that the small-time and large-time asymptotics of the heat content behavior encode geometric and spectral properties of the underlying space and the operator. 

For a smooth Riemannian manifold $(M,g)$, the heat content of a domain $\Omega\subset M$ for the Laplace-Beltrami operator $\Delta$ is defined as
\begin{align*}
    \widetilde{H}_\Omega(t)=\int_{\Omega}u(t,x)d\Vol_g(x),
\end{align*}
where $u:[0,\infty)\times \Omega\longrightarrow \mathbb R$ satisfies 
\begin{align*}
\begin{cases}
    (\partial_t +\Delta)u(t,x)=0 \quad &\mbox{for all $(t,x)\in [0,\infty)\times\Omega$}, \\
    u(t,x)=0  \quad &\mbox{for all $(t,x)\in [0,\infty)\times\partial\Omega$}, \\
    u(0,x) = 1 \quad  &\mbox{for all $x\in\Omega$}.
\end{cases}
\end{align*}
The small-time asymptotics of the heat content for the Laplace-Beltrami operator can be given in terms of the volume, perimeter of the set, and the mean curvature of its boundary. More precisely, van den Berg and Gilkey in \cite{BergGilkey1994} proved that for an open, bounded subset $\Omega$ with a $C^3$ boundary in a Riemannian manifold, as $t\to 0$ one has

\begin{align*}
    \widetilde{H}_\Omega(t)=\Vol_g(\Omega)-\sqrt{\frac{2t}{\pi}}\sigma(\partial\Omega)+\frac{t}{2}\int_{\partial\Omega} h_\Omega(x)\sigma(dx)+ O(t^{3/2}),
\end{align*}
where $h_\Omega$ denotes the mean curvature of the domain at the boundary, and $\sigma$ is the surface measure on $\partial\Omega$. As a generalization of the above result, Savo in \cite{Savo1999} proved that for any open bounded set $\Omega$ with $C^\infty$ boundary in a Riemannian manifold, its heat content admits an asymptotic expansion of any order, namely, for any $k\ge 3$ there exist explicit constants $a_3,\ldots, a_k$ such that
\begin{align*}
    \widetilde{H}_\Omega(t)=\Vol_g(\Omega)-\sqrt{\frac{2t}{\pi}}\sigma(\partial\Omega)+\frac{t}{2}\int_{\partial\Omega} h_\Omega(x)\sigma(dx)+\sum_{i=3}^k a_i t^{i/2} + O(t^{\frac{k+1}{2}}).
\end{align*}

While the heat content asymptotics for Laplace-Beltrami operators have been known for a while, such results for non-local operators have been studied quite recently. If one considers fractional Laplacians on Euclidean spaces with $0<\alpha<2$, then the asymptotic behavior depends on $\alpha$ and one gets the perimeter or the fractional perimeter of $\Omega$ depending on $\alpha\in (0,1)$ or $\alpha\in [1,2)$, see \cite{ParkSong2022}. 

A similar behavior is observed for the fractional sub-Laplacian on Carnot groups. It has been recently proved in \cite{Sarkar2026} that heat content asymptotics for the fractional sub-Laplacian operator defined by 
\begin{align*}
\left(\Delta_{\mathcal H}\right)^{\alpha/2}, \quad \alpha\in (0,2],
\end{align*}
have the same scaling as in the Euclidean case when $\alpha\in [1,2)$, and one recovers the horizontal perimeter of the domain in the limit. 

One of the main goals of this article is to obtain the small-time asymptotics of the heat content of the fractional sub-Laplacian when $\alpha\in (0,1)$, but one needs to consider an appropriate notion of the fractional perimeter, and also appropriate regularity condition for the boundary of the domain intrinsic to Carnot groups.

When $\alpha=2$, the heat content asymptotics in sub-Riemannian geometry was first obtained by Tyson and Wang \cite{TysonWangJ2018} in the case of the 3-dimensional Heisenberg group. They proved that 
\begin{align*}
    \widetilde{H}_\Omega(t)=\Vol(\Omega)-\sqrt{\frac{2t}{\pi}}\Per_{\mathcal H}(\Omega)+\frac{t}{2}\int_{\partial \Omega} h_{\mathcal H}(x)\sigma(dx) + O(t^{3/2}),
\end{align*}
where $h_\mathcal{H}(x)$ denotes the horizontal mean curvature at $x\in\partial\Omega$, assuming that the boundary of $\Omega$ is completely non-characteristic. This result was later established by Rizzi and Rossi in \cite{RizziRossi2021} for any sub-Riemannian manifold, still assuming that $\partial\Omega$ does not contain any characteristic points.
Note that this setting is fundamentally different from Riemannian manifolds, as there is no canonical measure analogous to the Riemannian volume and no canonical connection analogous to the Levi-Civita connection. While for Carnot groups we use a Haar measure, curvature notions are still a challenge for general Carnot groups. 

In this article, we study properties of subordinated sub-Laplacians on a bounded open set $\Omega$ in a Carnot group $\bbG$. These operators are of particular interest because they are non-local and are defined in a setting where the underlying geometry differs substantially from that of Euclidean space. Consequently, identifying suitable geometric conditions under which these operators satisfy analogues of known Euclidean results is a highly non-trivial problem. Moreover, the analytical tools developed in \cite{RizziRossi2021} do not apply to such operators in the context of heat content asymptotics. The main results of the paper can be summarized in three parts as follows. Forthe precise assumptions we refer to later parts of the paper.

\begin{enumerate}[leftmargin=*]
    \item \textbf{Eigenvalues of subordinated sub-Laplacian.} In Theorem~\ref{thm:eigenvalue_comparison}, we establish an eigenvalue comparison between the subordinated sub-Laplacian and the sub-Laplacian, both subject to Dirichlet boundary conditions on $\Omega$. In the Euclidean setting, an analogous estimate was first obtained by Chen and Song in \cite{ChenSong2005} for domains satisfying an exterior cone condition.

We show that this comparison result extends to the setting of Carnot groups. The upper bound \eqref{ineq.UpperBd.Eigenvalue} requires no regularity assumptions on the boundary of $\Omega$. For the lower bound \eqref{ineq.LowerBd.Eigenvalue}, we assume that $\Omega$ satisfies an \emph{intrinsic exterior cone condition}, which provides a natural generalization of the classical exterior cone condition to the sub-Riemannian geometry of Carnot groups. We refer to Section~\ref{s:cones} for the precise definition and further details. In particular, the eigenvalue comparison theorem applies to every bounded domain with compact $C^1$ boundary and no \emph{characteristic points} (see Definition~\ref{def:characteristic_pt}).
\item \textbf{Heat content asymptotics for subordinated sub-Laplacian.} We then prove explicit large and small time asymptotics of the heat content for subordinated sub-Laplacians. 
    
In Theorem~\ref{thm:heat_content_large} we obtain the large-time asymptotic behavior of the heat content for a class of subordinated sub-Laplacians, including fractional sub-Laplacians, without requiring any regularity assumptions on the boundary of $\Omega$. Our result shows that when $t\to\infty$, the heat content at time $t$ scales like $e^{-\lambda_{1,\phi} t}$, where $\lambda_{1,\phi}$ is the spectral gap of the subordinated sub-Laplacian with Dirichlet boundary condition. The proof of this result uses a careful application of the Krein-Rutman theorem and irreducibility of the corresponding semigroups, developed in \cite{CarfagniniGordina2024} for sub-Laplacians on Carnot groups.

For $\alpha\in[1,2)$, the small-time asymptotics of the heat content for the fractional sub-Laplacian were proved in \cite{Sarkar2026}. Theorem~\ref{thm:spectral_heat} complements this result by treating the range $0<\alpha<1$, therefore establishing the existence of the corresponding limits for all $\alpha\in(0,2]$.

In the study of small-time heat content asymptotics in sub-Riemannian geometry, it is typically assumed that the boundary of the domain is smooth and non-characteristic; see, for instance, \cite{TysonWangJ2018, RizziRossi2021, Sarkar2026}. In Theorem~\ref{thm:spectral_heat}, we instead prove the asymptotics for domains satisfying the \emph{volume density condition}. This condition is flexible enough to include various domains with non-smooth boundaries, as well as domains whose boundaries contain (infinitely many) characteristic points; see Examples~\ref{ex:1}--\ref{ex:3}.
\item  \textbf{Relative heat content asymptotics for fractional sub-Laplacian.} An explicit small time asymptotics formula of the relative heat content for fractional sub-Laplacians with parameter $0<\alpha<2$ is given in Theorem~\ref{thm:heat_content}. This generalizes the results obtained by Agrachev, Rizzi, and Rossi in \cite{AgrachevRizziRossi2024} in the context of relative heat content asymptotics for Carnot groups. When $1\leqslant \alpha <2$, we assume that $\Omega$ has a $C^\infty$ boundary without any characteristic points. When $\alpha=1$, our results hold with no regularity assumptions on the boundary of $\Omega$, and for any set $\Omega$ with finite fractional horizontal perimeter. 
\end{enumerate}

The proofs of our main results are based on probabilistic techniques. From the standard theory of Markov processes, if $\mathcal{L}$ is the generator of a Feller process $(X_t)_{t\ge 0}$ on a metric measure space $(M,d,\mu)$, the solution to the Dirichlet boundary value problem 
\begin{align*}
    \begin{cases}
        \partial_t u(t,x)=\mathcal{L}u(t,x) \quad &\mbox{for all $(t,x)\in [0,\infty)\times\Omega$} \\
        u(t,x)=0 \quad & \mbox{for all $(t,x)\in [0,\infty)\times\partial\Omega$}\\
        u(0,x)=1 \quad & \mbox{for all $x\in\Omega$}
    \end{cases}
\end{align*}
under sufficient regularity conditions on the boundary of $\Omega$ can be represented as 
\begin{align*}
    u(t,x)=\mathbb P_x(\tau_\Omega>t),
\end{align*}
where $\tau_\Omega$ is the first exit time of $X_t$ from $\Omega$. Therefore, the heat content of $\Omega$ for the operator $\mathcal L$ can be written as 
\begin{align}\label{eq:heat_content_def}
    \widetilde{H}_\Omega(t)=\int_{\Omega}\mathbb P_x(\tau_\Omega>t)\mu(dx).
\end{align}
We note that the above probabilistic representation of the heat content is well defined for any measurable set $\Omega$, and does not require any  regularity condition on the boundary of $\Omega$. In this article, we adopt \eqref{eq:heat_content_def} as the definition of heat content. In particular, 
the heat content for the subordinated sub-Laplacian can be written as
\begin{align*}
    \widetilde{H}^\phi_\Omega(t)=\int_{\Omega}\mathbb P_x(\tau_\Omega>t) dx, \quad \mbox{and} \quad \Vol(\Omega)- \widetilde{H}^\phi_\Omega(t)=\int_{\Omega}\mathbb P_x(\tau_\Omega\le t) dx,
\end{align*}
where $\tau_\Omega$ is the first exit time of the subordinated hypoelliptic Brownian motion on the Carnot group, and $\phi$ is the Bernstein function associated with the subordinator. We refer to Section~\ref{s:probability} for detailed discussion on such processes. This shows that the asymptotic behavior of $H^\phi_\Omega(t)$ is closely related to the asymptotic behavior of the exit probability 
\begin{align*}
    \mathbb P_x(\tau_\Omega>t).
\end{align*}
This probabilistic approach was also used in \cite{TysonWangJ2018, Sarkar2026}. 

Let us briefly explain why the argument used in \cite{Sarkar2026} to obtain the fractional heat content asymptotics\footnote{For brevity, we use this term to indicate the heat content for fractional sub-Laplacian.} for $\alpha\in[1,2]$ cannot be directly adapted to the case $\alpha\in(0,1)$. First, the proof of \cite[Theorem~1.1]{Sarkar2026} relies heavily on the $C^2$-regularity of the boundary and the absence of characteristic points, assumptions that are substantially stronger than those imposed in Theorem~\ref{thm:spectral_heat}. Another key ingredient in the argument of \cite{Sarkar2026} is the following estimate\footnote{\cite[Lemma~4.3]{Sarkar2026} states the estimate in a different way but has the same implication.} proved in Lemma~4.3: for any $\Omega_1\Subset\Omega$ and every $\alpha\in(0,2)$,
\begin{align*}
\int_{\Omega_1}\mathbb P_x(\tau_\Omega\le t)dx = O(t) \quad \mbox{as $t\to 0$}.
\end{align*}
When the fractional heat content subtracted from the volume of the domain is normalized by $t^{1/\alpha}$ or $t\log(1/t)$, depending on $\alpha>1$ or $\alpha=1$, this estimate implies that the contribution from points away from the boundary is negligible in the small-time limit. Consequently, the asymptotic behavior is governed by the geometry of the boundary, leading to the horizontal perimeter in the limiting expression. In contrast, for $\alpha\in(0,1)$, Theorem~\ref{thm:spectral_heat} shows that the appropriate scaling is of order $t$. Hence, the estimate above is no longer sufficient to identify the precise limiting behavior, and one should also expect the effect of points away from the boundary in the limit.

To overcome the above difficulty, we first obtain the relative heat content asymptotics for the fractional sub-Laplacian; see Theorem~\ref{thm:heat_content}. For $\alpha\in(0,1)$, this asymptotics formula holds for every bounded measurable set with a finite fractional horizontal perimeter. We then show that the corresponding asymptotic relation also holds for the fractional heat content. This step relies on the Ikeda-Watanabe theorem (see \cite{Ikeda-Watanabe}) to determine the distribution of the subordinated hypoelliptic Brownian motion at the time of its exit from the domain. Another observation that plays a central role in this part of the argument is Proposition~\ref{prop:skip_boundary}, where we prove that, \emph{under the volume density condition, the subordinated hypoelliptic Brownian motion does not hit the boundary of the domain upon exiting}. An analogous property is known for certain L\'evy processes on Euclidean spaces under suitable technical assumptions on their L\'evy measures; see \cite{Bogdan1997, Sztonyk2000, Bogdan_et_al2020}. However, when viewed as a Markov process on the underlying Euclidean space, subordinated hypoelliptic Brownian motion on a Carnot group is not a L\'evy process. Therefore, Proposition~\ref{prop:skip_boundary} enlarges the class of jump-valued processes on  Euclidean space that satisfy the boundary skipping property during their exit time from domains.

Apart from the main results discussed above, a substantial portion of the paper is devoted to developing suitable boundary regularity conditions for domains in Carnot groups that are sufficient for our main results. In Section~\ref{s:cones}, we extend the Euclidean exterior cone condition to the setting of Carnot groups using the intrinsic cones introduced in \cite{FranchiSerapioniSerra_Cassano2003a}. In Euclidean spaces, cones are isometric under translations and rotations, which considerably simplifies the analysis of isotropic L\'evy processes\footnote{A L\'evy process whose distribution is invariant under isometries.} when restricted to cones. In Carnot groups, however, it is generally difficult to identify transformations that both preserve the volume of intrinsic cones and leave the distribution of the subordinated hypoelliptic Brownian motion invariant. To circumvent this difficulty, in Lemma~\ref{lem:volumes} we establish uniform bounds for the volumes of intrinsic cones and for the corresponding probabilities, which are sufficient for the proofs of our main results.

We also introduce a volume density condition in Carnot groups as a natural extension of its Euclidean counterpart considered in \cite{Jang-MeiWu2002}. We investigate the relationship between the intrinsic exterior cone condition and the volume density condition and establish sufficient geometric conditions under which these regularity assumptions are satisfied; see, for example, Theorem~\ref{thm:noncharacteristic-cones}.

The rest of the paper is organized as follows. After reviewing the necessary background on Carnot groups in Section~\ref{s:preliminaries}, we state our main results in Section~\ref{s:main_results}. In Section~\ref{s:probability}, we discuss the probabilistic interpretation of the subordinated sub-Laplacian. Boundary regularity conditions for domains in Carnot groups are introduced in Section~\ref{sec.Boundary}. Finally, the proofs of the main results are presented in Sections~\ref{s:proof1}, \ref{s.proof2}, and \ref{s:proof3}.

\vspace{1cm}
\textbf{AI disclosure:} The authors used OpenAI's ChatGPT during the preparation of this manuscript
for language editing, bibliography assistance, and preliminary mathematical
discussion. The authors independently verified all mathematical content and
references and assume full responsibility for the final manuscript.
\section{Preliminaries}\label{s:preliminaries}

\subsection{Carnot groups}
We start by introducing the basics of Carnot groups needed for our results. 

\begin{definition}
We say that $\bbG$ is a \emph{Carnot group} of step $k$ if $\bbG$  is a connected and simply connected Lie group whose Lie algebra $\mathfrak{g}$ is \emph{stratified}, that is, there are non-trivial linear subspaces $V_1,\ldots, V_k$ of $\mathfrak{g}$ such that
\begin{align*}
\mathfrak{g}=V_{1}\oplus\cdots\oplus V_{k},
\end{align*}
and
\begin{align}\label{e.Stratification}
& \left[V_{1}, V_{i-1}\right]=V_{i},\hskip0.1in  V_{i} \neq \{0\}, \hskip0.1in 2 \leqslant i \leqslant k,
\notag
\\
& [ V_{1}, V_{k} ]=\left\{ 0 \right\}.
\end{align}
\end{definition}

To exclude trivial cases we assume that the dimension of $\mathfrak{g}$ is at least $3$. 
From now on we denote by $\mathcal{H}:=V_{1}$ the space of \emph{horizontal} vectors that generate the rest of the Lie algebra.

By \cite[Theorem 2.2.18]{BonfiglioliLanconelliUguzzoniBook}, each $n$-dimensional Carnot group $\bbG$ is isomorphic to some homogeneous Carnot group 
 $(\mathbb{R}^n,\star)$ on $\mathbb{R}^n$ (see \cite[Definition 1.4.1]{BonfiglioliLanconelliUguzzoniBook} for the precise definition) with the same Lie algebra $\mathfrak{g}$ as that of $\bbG$, and the isomorphism is given by the exponential map
 \begin{align*}
     \exp:\mathfrak{g}\longrightarrow \mathbb G.
 \end{align*}
 Therefore, $\bbG$ can be identified with some homogeneous Carnot group $(\mathbb{R}^n,\star)$ on $\mathbb{R}^n$. We refer to \cite{BonfiglioliLanconelliUguzzoniBook} for more details on this identification. This allows us to work on homogeneous Carnot groups instead of general Carnot groups. For the sake of simplicity, we will often use the notation $xy$ to indicate the product of the group elements $x,y\in\mathbb G$. The identity element of $\mathbb{G}=(\mathbb R^n,\star)$ is given by $e=(0,\ldots, 0)\in \mathbb{R}^n$. Since we identify any Carnot group with a homogeneous Carnot group via the exponential map, without loss of generality we are going to assume that $x^{-1}=-x$ for any $x\in\mathbb G$.

Set
$m_i=\dim(V_i)$, for $i=1,\dots,k$ and $h_i=m_1+\dots +m_i$, so that $h_k=n$ with $h_0:=0,\ m:=m_1$.
We denote by $Q$ the {\em homogeneous dimension} of $\bbG$, i.e.
 \begin{align*}
 Q:=\sum_{i=1}^{k} i \dim(V_i).
 \end{align*}

For any $\lambda >0$, the {\em dilation} $\delta_\lambda:\bbG\longrightarrow\bbG$, is defined as
\begin{equation}\label{dilatazioni}
\delta_\lambda(\xi_1,\ldots,\xi_n)=
(\lambda \xi_1,\ldots,\lambda^k\xi_k),
\end{equation} 
where $x=(\xi_1,\dots,\xi_k)\in\bbR^{m_1}\times\cdots\times \bbR^{m_k}\equiv\bbG$.

By \cite[Proposition 1.3.21]{BonfiglioliLanconelliUguzzoniBook}, the Lebesgue measure $dx$ on $\mathbb{R}^n$ is invariant with
respect to the left and the right translations on $\mathbb{G}$. If $A\subset \bbG$ is Lebesgue measurable, we
write $\Vol(A)$ to denote its Lebesgue measure. 
\begin{definition}
A continuous function $\|\cdot\|:\mathbb G\longrightarrow [0,\infty)$ is called a \emph{homogeneous quasinorm} on the Carnot group $\mathbb{G}$ if the following conditions hold:
\begin{enumerate}[leftmargin=*]
    \item $\|\delta_\lambda x\|=\lambda \|x\|$ for all $\lambda>0$ and $x\in\mathbb G$,
    \item $\|x\|>0$ if and only if $x\neq 0$.
\end{enumerate}
The quasinorm is called \emph{symmetric} if $\|x\|=\|x^{-1}\|$ for all $x\in\mathbb G$. If $\|\cdot\|$ also subadditive, that is,
\begin{align*}
    \|xy\|\le \|x\|+\|y\|, \quad x,y\in\mathbb G,
\end{align*}
we call it a \emph{homogeneous norm}.
\end{definition}
By \cite[Proposition~5.1.4]{BonfiglioliLanconelliUguzzoniBook} any two homogeneous quasinorms are
equivalent, and from now on we denote by $\|\cdot\|$
an arbitrary homogeneous norm; all the estimates that we give are
then the same up to changes in the constants.
We denote by 
\begin{align*}
\mathbb{B}(x,r)=\{y\in \bbG: \| x^{-1}y\| < r\}
\end{align*}
the open ball centered at $x\in \bbG$ with radius $r>0$ and by $\mathbb{B}(r)=\mathbb{B}(e,r)$. 

\subsection{Sub-Riemannian structure on \texorpdfstring{$\bbG$}{}}
For $x\in \mathbb{G}$, we denote by $L_{x}: \mathbb{G} \longrightarrow \mathbb{G}$ the \emph{left translation}
\begin{align*}
L_{x} y:=x  y,  \text{ for } y \in \mathbb G,
\end{align*}
and the corresponding pushforward (differential) $(L_x)_{\ast}:T\mathbb{G} \longrightarrow T\mathbb{G}$ by
\begin{align*}
\left( L_{x} \right)_{\ast}: T_{y}\mathbb{G} & \longrightarrow T_{x y}\mathbb{G}
\\
v &\longmapsto (L_x)_{\ast}v.
\end{align*}
Here $T\mathbb{G}$ denotes the tangent bundle of $\mathbb{G}$ and $T_x\mathbb{G}$ the tangent space at $x\in \mathbb{G}$.

Now we describe the sub-Riemannian structure on $\mathbb{G}$. We assume that $\mathcal{H}$ is equipped with an inner product $\langle\cdot,\cdot\rangle_{\mathcal{H}}$ which induces a norm $\vert\cdot\vert_{\mathcal{H}}$. One may use left translation to define a \emph{horizontal distribution} $\mathcal{D}$, a sub-bundle of $T\mathbb{G}$, and a metric on $\mathcal{D}$ as follows. First, we identify the space $\mathcal{H} \subset \mathfrak{g}$ with $\mathcal{D}_{e}\subset T_e \mathbb{G}$. Then for any $x\in \mathbb{G}$, we define $\mathcal{D}_{x}:=(L_x)_{\ast}\mathcal{D}_{e}$. Then a metric on $\mathcal{D}$ can be defined by translating back to $\mathcal{H} \subset \mathfrak{g}$, that is,
\begin{align*}
\langle u, v\rangle_{\mathcal{D}_x} &:= \langle  (L_{x^{-1}})_{\ast} u,(L_{x^{-1}})_{\ast} v\rangle_{\mathcal{D}_e} \\
	&= \langle (L_{x^{-1}})_{\ast} u,(L_{x^{-1}})_{\ast} v\rangle_{\mathcal H}  \text{ for all } u, v\in\mathcal{D}_x.
\end{align*}

\begin{definition}
Let $\{X_1,\ldots,X_{m}\}$ be an orthonormal frame for $\mathcal{D}$. Define the \emph{horizontal gradient} of $f$, denoted by
$\nabla_{\mathcal{H}}f$, as the horizontal section
\begin{equation*}
\nabla_{\mathcal{H}}f:=\sum_{i=1}^{m}(X_if)X_i,
\end{equation*}
for any function $f:\bbG\longrightarrow \bbR$ for which the partial derivatives
$X_if$ exist. The second-order differential operator 
\begin{equation}\label{eq:sub-Laplacian}
\Delta_{\mathcal{H}}:=-\sum_{i=1}^{m} X_i^2
\end{equation}
defined on $C^{\infty}(\mathbb{G})$ is called a \emph{sub-Laplacian}.
\end{definition}
By \cite{Hoermander1967a}, since the
vector fields $X_1,\ldots,X_m$ satisfy H\"ormander's
bracket-generating condition, the operator $\Delta_{\mathcal H}$ is
hypoelliptic. Moreover, its definition is independent of the choice of
a left-invariant orthonormal frame and depends only on the
sub-Riemannian metric
$\langle\cdot,\cdot\rangle_{\mathcal H}$, see \cite[Theorem~3.6]{GordinaLaetsch2017}.

The operator $\Delta_{\mathcal H}$ is densely defined, symmetric and
non-negative on $L^2(\mathbb G,dx)$. Indeed, for every
$f\in C_c^\infty(\mathbb G)$,
\begin{align*}
\langle \Delta_{\mathcal H}f,f\rangle_{L^2(\mathbb G, dx)}
=\sum_{j=1}^{m}\int_{\mathbb G}|X_jf|^2\,dx\geq 0.
\end{align*}
Moreover, by \cite{Strichartz1986} and  \cite[p.~950]{DGS-C2009} it is known that $\Delta_{\mathcal H}$ is essentially self-adjoint on
$C_c^\infty(\mathbb G)$. Hence, its Friedrichs extension coincides
with its unique self-adjoint extension. With a slight abuse of
notation, we continue to denote this extension by
$\Delta_{\mathcal H}$.
Consider the quadratic form associated with $\Delta_{\mathcal H}$ given by
\begin{align*}
\mathcal E(f,g)
:= \sum_{j=1}^{m}\int_{\mathbb G}X_jf\cdot X_jg\,dx,
\qquad
f,g\in C_c^\infty(\mathbb G).
\end{align*}
Then its closure is a regular, strongly local Dirichlet form on
$L^2(\mathbb G,dx)$, see \cite[p.~1902]{CarfagniniGordina2024} and the references therein. Consequently, the associated self-adjoint
semigroup
\begin{align*}
P_t:=e^{-t\Delta_{\mathcal H}},
\qquad t\geq0,
\end{align*}
is sub-Markovian and admits consistent extensions to contraction semigroups on $L^p(\mathbb G,dx)$ for every $1\leq p\leq\infty$. Moreover, by Hunt's theorem (see \cite[Theorem~5.1]{Hunt1956a}) the heat semigroup on a Carnot group is conservative, that
is,
\begin{align*}
P_t 1= 1.
\end{align*}
Thus, $P_t$ is Markovian and can be applied, in particular, to bounded
Borel functions on $\mathbb G$. The hypoellipticity of the corresponding parabolic operator implies that $P_t$ admits a smooth positive transition density
\begin{align*}
p:(0,\infty)\times\mathbb G\times\mathbb G
\longrightarrow(0,\infty).
\end{align*}
In particular, for every bounded Borel function
$f:\mathbb G\to\mathbb R$,
\begin{align}\label{eq:transition_kernel}
P_tf(x)
=
\int_{\mathbb G}f(y)p(t,x,y)\,dy.
\end{align}
The left-invariance of $\Delta_{\mathcal H}$, and hence of the semigroup $P_t$, implies that
\begin{align*}
p(t,hx,hy)=p(t,x,y)
\end{align*}
for every $t>0$ and every $h,x,y\in\mathbb G$. Since $P_t$ is self-adjoint on $L^2(\mathbb G,dx)$, its transition kernel is symmetric, that is, $p(t,x,y)=p(t,y,x)$. Writing
\begin{align}\label{eq:heat_kernel}
p(t,x):=p(t,e,x),
\end{align}
the left-invariance of the kernel gives $p(t,x,y)=p(t,x^{-1}y)$. Moreover, symmetry yields $p(t,x)=p(t,x^{-1})$.
It follows that, for every $f\in L^2(\mathbb G,dx)$ and, more
generally, for every bounded Borel function $f$,
\begin{align*}
P_tf(x)
=
\int_{\mathbb G}f(y)p(t,x^{-1}y)\,dy
=
\int_{\mathbb G}f(y)p(t,y^{-1}x)\,dy
=
(f*p(t,\cdot))(x),
\end{align*}
where we use the convention
\begin{align*}
(f*g)(x):=\int_{\mathbb G}f(y)g(y^{-1}x)\,dy.
\end{align*}
The function $(t,x)\mapsto p(t,x)$ is the fundamental solution of the
heat equation
\begin{align*}
\left(\frac{\partial}{\partial t}
+\Delta_{\mathcal H}\right)u=0,
\end{align*}
and will be referred to as the hypoelliptic heat kernel.
Finally, using the equivalence between the norm induced by Carnot--Carath\'eodory
distance and any homogeneous norm $\|\cdot\|$ on $\mathbb G$, it is known from \cite[p.~50]{VaropoulosSaloff-CosteCoulhonBook1992} that there
exists a constant $c\geq1$ such that
\begin{align}\label{eq:heat_bound_1}
c^{-1}t^{-\frac Q2}
\exp\left(-\frac{c\|x\|^2}{t}\right)
\leq
p(t,x)
\leq
ct^{-\frac Q2}
\exp\left(-\frac{\|x\|^2}{ct}\right)
\end{align}
for every $x\in\mathbb G$ and every $t>0$.
\section{Main results}\label{s:main_results}

\subsection{Subordination of the sub-Laplacian and eigenvalue comparison}

Suppose that $\phi: [0,\infty) \longrightarrow [0,\infty)$ is a Bernstein
function, that is, $\phi|_{(0,\infty)}\in C^\infty(0,\infty)$ and
\begin{align*}
(-1)^{n-1}\phi^{(n)}(u)\geq0
\qquad
\text{for every }n\geq1\text{ and }u>0.
\end{align*}
Every Bernstein function admits a unique L\'evy--Khintchine
representation
\begin{align}\label{eq:Bernstein}
\phi(u)
=
\phi(0)+bu+\int_0^\infty(1-e^{-ur})\,\nu(dr),
\end{align}
where $b\geq0$ and $\nu$ is a Radon measure on $(0,\infty)$
satisfying
\begin{align*}
\int_0^\infty\min\{1,r\}\,\nu(dr)<\infty.
\end{align*}
In particular, $\phi$ has a continuous extension to $[0,\infty)$. We call $\phi$ a \emph{complete} Bernstein function if $\nu$ admits a completely monotone density $m$, that is, 
\begin{align*}
    (-1)^n m^{(n)}(r)\ge 0 \quad \mbox{for all $r\ge 0$ and $n=0,1,2,\ldots$}
\end{align*}
Throughout the paper we assume that $\phi$ is a non-constant Bernstein function.
Since $\Delta_{\mathcal H}$ is a non-negative self-adjoint operator on
$L^2(\mathbb G,dx)$, for any Bernstein function (not necessarily complete) $\phi$ the Borel functional calculus (see \cite[Theorem~VIII.5, p.~262]{ReedSimonBook}) allows us to
define
\begin{align}\label{eq:subordinated_subL}
\Delta^\phi_{\mathcal H}
:=
\phi(\Delta_{\mathcal H})
=
\int_{[0,\infty)}\phi(\lambda)\,E(d\lambda),
\end{align}
where $E$ denotes the spectral measure of $\Delta_{\mathcal H}$. The
domain of $\Delta^\phi_{\mathcal H}$ is
\begin{align*}
\mathcal D(\Delta^\phi_{\mathcal H})
=
\left\{
f\in L^2(\mathbb G,dx):
\int_{[0,\infty)}
\phi(\lambda)^2\,d\mu_f(\lambda)<\infty
\right\},
\end{align*}
where
\begin{align*}
\mu_f(B)
:=
\langle E(B)f,f\rangle_{L^2(\mathbb G, dx)}
\end{align*}
for every Borel set $B\subset[0,\infty)$. 
The operator $\Delta^\phi_{\mathcal H}$ is self-adjoint and
non-negative on $L^2(\mathbb G,dx)$. Its associated semigroup is
\begin{align}\label{eq:subordinated_semigroup}
P_t^\phi
:=
e^{-t\Delta^\phi_{\mathcal H}}
=
e^{-t\phi(\Delta_{\mathcal H})}.
\end{align}
This is the semigroup obtained by subordinating the heat semigroup
$P_t=e^{-t\Delta_{\mathcal H}}$ by the Bernstein function $\phi$.
Accordingly, we call $\Delta^\phi_{\mathcal H}$ the \emph{sub-Laplacian
subordinated by $\phi$}. More precisely, there exists a convolution semigroup
$(\eta_t)_{t\geq0}$ of sub-probability measures on $[0,\infty)$ (i.e. $\eta_t([0,\infty))\leq 1$ for every $t\geq 0$) such
that
\begin{align*}
\int_{[0,\infty)}e^{-\lambda s}\,\eta_t(ds)
=
e^{-t\phi(\lambda)}
\end{align*}
and
\begin{align*}
P_t^\phi f
=
\int_{[0,\infty)}P_sf\,\eta_t(ds).
\end{align*}
If $\phi(0)=0$, then each $\eta_t$ is a probability measure and the
subordinated semigroup is conservative. We refer to Section~\ref{s.Subordination} for details about the probabilistic interpretation of $P^\phi_t$. In particular, if $0<\alpha<2$ and
\begin{align}\label{eq:Bernstein_alpha}
\phi(0)=0=b,
\qquad
\nu(dr)
=
\frac{\alpha}{2\Gamma(1-\frac{\alpha}{2})}
r^{-1-\frac{\alpha}{2}}\,dr,
\end{align}
then $\phi(u)=u^{\alpha/2}$, and in this case $\phi$ is a complete Bernstein function. Consequently, $\Delta^\phi_{\mathcal H}=\Delta_{\mathcal H}^{\alpha/2}$.
With our sign convention,
$\Delta_{\mathcal H}^{\alpha/2}$ is the non-negative fractional
sub-Laplacian, while $-\Delta_{\mathcal H}^{\alpha/2}$ is the non-positive generator of the fractional heat semigroup
\begin{align*}
P_t^\phi=e^{-t\Delta_{\mathcal H}^{\alpha/2}}.
\end{align*}
\subsubsection{Subordinated sub-Laplacian with Dirichlet boundary condition} Let us consider the Dirichlet form $\mathcal{E}^\phi(\cdot, \cdot)$ associated with the subordinated semigroup $(P^\phi_t)_{t\ge 0}$ defined in \eqref{eq:subordinated_semigroup}, that is, 
\begin{align}\label{eq:subordinated_DF}
    \mathcal{E}^\phi(f,g)=\langle(\Delta^\phi_{\mathcal H})^{1/2} f, (\Delta^\phi_{\mathcal H})^{1/2}g\rangle_{L^2(\mathbb G, dx)} \quad \mbox{for all $f\in \mathcal{D}((\Delta^\phi_{\mathcal H})^{1/2})$}.
\end{align}
In this case, the domain of Dirichlet form is also given by 
\begin{align*}
\mathcal{D}(\mathcal E^\phi)=\mathcal{D}\left(\left(\Delta^\phi_\mathcal{H}\right)^{\frac12}\right)=\left\{f\in L^2(\mathbb G, dx): \int_{[0,\infty)} \phi(\lambda)d\mu_f(\lambda)<\infty\right\}.
\end{align*}
By Lemma~\ref{lem:form_core_1}, we have $C^\infty_c(\mathbb G)\subset \mathcal{D}(\mathcal E^\phi)$. Hence, $\mathcal{E}^\phi$ is a regular Dirichlet form.
Moreover, $C^\infty_c(\mathbb G)$ is a form core for $\mathcal{E}^\phi$ by Lemma~\ref{lem:form_core_1}. For any open subset $\Omega\subset \mathbb G$, let $(\mathcal{E}^{\phi,\Omega},(\mathcal{F}^\phi)^\Omega)$ denote the \emph{part of the Dirichlet form on $\Omega$}, with
\begin{align*}
    (\mathcal{F}^\phi)^\Omega=\{f\in\mathcal{D}(\mathcal{E}^\phi): \widetilde{f}=0 \quad \mbox{$\mathcal{E}^\phi$-q.e. on $\mathbb{G}\setminus \Omega$}\},
\end{align*}
where $\widetilde{f}$ denotes a quasi-continuous version of $f$,
and we refer to \cite[p.~173]{Fukushima_et_alBook} for details. Then by \cite[Theorem~4.4.3(i)]{Fukushima_et_alBook}, $(\mathcal{E}^{\phi,\Omega}, (\mathcal{F}^\phi)^\Omega)$ is a regular Dirichlet form on $L^2(\Omega, dx)$.
\begin{definition}\label{def:dirichlet_subL}
    For any open set $\Omega\subset \mathbb G$, the generator associated with the Dirichlet form $\mathcal{E}^{\phi,\Omega}$, denoted by $\Delta^{\phi,\Omega}_{\mathcal H}$, is called the \emph{subordinated sub-Laplcian with Dirichlet boundary condition on $\Omega$}.
\end{definition}
\begin{remark}
    The space of smooth functions $C^\infty_c(\Omega)$ is a form core for $\mathcal{E}^{\phi,\Omega}$. As a result, $\Delta^{\phi,\Omega}_{\mathcal H}$ is given by the Friedrichs extension of $(\mathcal{L}^{\phi}_{\mathcal H}, C^\infty_c(\Omega))$, where for any $f\in C^\infty_c(\Omega)$,
    \begin{align*}
        \mathcal{L}^\phi_\Omega f=\left.\Delta^\phi_{\mathcal H} f^0\right|_{\Omega},
    \end{align*}
    with $f^0$ being the zero extension of $f$ on $\mathbb G$.
    We refer to Lemma~\ref{lem:form_core_2} for details.
\end{remark}
\begin{remark}
    The operator $\Delta^{\phi,\Omega}_{\mathcal H}$ should not be confused with $\phi(\Delta^\Omega_{\mathcal H})$, where $\Delta^{\Omega}_{\mathcal H}$ is the sub-Laplacian with Dirichlet boundary condition on $\Omega$. In probabilistic terms, the first operator (up to a change of sign) is the generator of a killed subordinated Markov process (subordinated first, then killed), whereas the latter (up to a change of sign) is the generator of a subordinated killed (killed first, then subordinated) Markov process, see Section~\ref{s:probability} for details.
\end{remark}

Motivated by the work of Chen and Song \cite{ChenSong2005} relating eigenvalues of subordinated Laplacian with Dirichlet boundary condition on Euclidean spaces, we relate the eigenvalues of subordinated Dirichlet sub-Laplacian to the spectrum of the Dirichlet sub-Laplacian on Carnot groups. Unlike in the Euclidean case, the sub-Laplacian on Carnot groups is not elliptic, and the underlying space is not a linear space. 
 
 Some regularity assumptions on the boundary of the domain are needed to prove the eigenvalue comparison results even in the Euclidean case. One such natural condition, often assumed in the mathematical literature is the \emph{exterior cone condition}, which assumes that there exists a cone $K\subset \R^n$ centered at the origin and $r_0>0$ such that for each point $p$ on the boundary of $\Omega$, there is a  cone $K_p\subset \R^n$ with the vertex at $p$ and isometric to $K$, and a ball $B(p,r_0)$ centered at $p$ and having radius $r_0$ that satisfy 
 \begin{align*}
 K_p\cap B(p,r_0)\subseteq \Omega^c.
 \end{align*}

 For Carnot groups, even defining such cones involves subtleties of sub-Riemannian geometry of the group. Moreover, the class of isometric isomorphisms on Carnot groups is quite restrictive compared to Euclidean spaces, and it is not feasible to extend the notion of exterior cone condition from Euclidean spaces to Carnot groups. We refer to Section~\ref{s:cones} for details about the cones and exterior cone conditions on Carnot groups. One of our main results is stated below.

 \begin{theorem}\label{thm:eigenvalue_comparison}
Let $\phi$ be a Bernstein function such that 
\begin{align}\label{eq:Bernstein_cond}
    \int_0^\infty u^{\frac{Q}{2}-1} e^{-t\phi(u)}du<\infty
\end{align}
for some $t>0$. Then,
\begin{enumerate}[leftmargin=*]
\item $\Delta^{\phi,\Omega}_{\mathcal{H}}$ has purely point spectrum in $L^2(\Omega, dx)$ and the first eigenvalue is strictly larger than $\phi(0)$. Moreover, denoting the eigenvalues of $\Delta^{\phi,\Omega}_{\mathcal H}$ by 
\begin{align*}
\lambda_{1,\phi}\le \lambda_{2,\phi}\le \cdots,
\end{align*}
we have
\begin{align} \label{ineq.UpperBd.Eigenvalue_1}
    \lambda_{n,\phi}\le 4\phi(\lambda_n) \quad \mbox{for all $n$},
\end{align}
where $0<\lambda_1\le \lambda_2\le \cdots$ denote the eigenvalues of the Dirichlet sub-Laplacian $\Delta^{\Omega}_{\mathcal H}$. 
\item If $\phi$ is a complete Bernstein function, then 
\begin{align} \label{ineq.UpperBd.Eigenvalue}
    \lambda_{n,\phi}\le \phi(\lambda_n) \quad \mbox{for all $n$}.
\end{align}
If in addition $\Omega$ satisfies the intrinsic exterior cone condition (see Definition~\ref{def:cone_cond}), then 
\begin{align} \label{ineq.LowerBd.Eigenvalue}
    \lambda_{n,\phi}\ge c(\Omega)\phi(\lambda_n) \quad \mbox{for all $n$},
\end{align}
where $c(\Omega)>0$ is a constant independent of $n$ and $\phi$, and depends only on $\Omega$.
\end{enumerate}
\end{theorem}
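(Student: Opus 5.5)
The plan is to follow the Chen--Song strategy \cite{ChenSong2005}, replacing the Euclidean ingredients by the Gaussian heat kernel bounds \eqref{eq:heat_bound_1} and the intrinsic cone estimates of Section~\ref{s:cones}.

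\textbf{Discreteness of the spectrum.} The killed subordinated semigroup $P^{\phi,\Omega}_t$ is dominated by the free one. Its kernel therefore satisfies
\begin{align*}
p^{\phi,\Omega}(t,x,y)\le p^\phi(t,x^{-1}y)\le p^\phi(t,e)=\int_{[0,\infty)} p(s,e)\,\eta_t(ds).
\end{align*}
We rewrite the right-hand side via the Plancherel/spectral representation of $p(s,e)$. By \eqref{eq:heat_bound_1}, $p(s,e)\asymp s^{-Q/2}$, and the identity
\begin{align*}
s^{-Q/2}=\Gamma(Q/2)^{-1}\int_0^\infty u^{Q/2-1}e^{-su}\,du
\end{align*}
gives
\begin{align*}
p^\phi(t,e)\asymp \int_0^\infty u^{Q/2-1}e^{-t\phi(u)}\,du<\infty
\end{align*}
by \eqref{eq:Bernstein_cond}. (If the condition holds for one $t$, it holds for all larger $t$, and the semigroup property handles the rest.) Since $\Omega$ is bounded, $P^{\phi,\Omega}_t$ is then Hilbert--Schmidt, hence compact, so the spectrum is purely point.

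The strict inequality $\lambda_{1,\phi}>\phi(0)$ comes from two facts. First, $e^{-t\phi(0)}$ is exactly the total mass of the subordinator. Second, the killed process exits the bounded set $\Omega$ with positive probability, uniformly in $x\in\Omega$, because of the heat kernel lower bound. Together these give $\|P^{\phi,\Omega}_t\|_{\infty\to\infty}<e^{-t\phi(0)}$ for large $t$, and hence the strict gap.

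\textbf{Upper bounds.} For \eqref{ineq.UpperBd.Eigenvalue_1} I would use min--max. Take the span $V_n$ of the first $n$ Dirichlet eigenfunctions $\varphi_j$ of $\Delta^\Omega_{\mathcal H}$, extended by zero; these lie in $(\mathcal F^\phi)^\Omega$. For $f\in V_n$ we compute
\begin{align*}
\mathcal E^\phi(f,f)=\lim_{t\to0}\tfrac1t\langle f-P^\phi_tf,f\rangle=\phi(0)\|f\|^2+b\,\mathcal E(f,f)+\int(1-e^{-r\cdot})\text{-terms}.
\end{align*}
In the jump part we compare $\langle f-P_rf,f\rangle_{L^2(\mathbb G)}$ with $\langle f-P^\Omega_rf,f\rangle$. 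Since $P_r\ge P^\Omega_r$ on nonnegative functions but $f$ is signed, we instead use $\langle f-P_rf,f\rangle\le \min\{2\|f\|^2,\, r\,\mathcal E(f,f)\}$. Applying the spectral theorem for $\Delta^\Omega_{\mathcal H}$ on $V_n$ and the elementary inequality $\min(2,r\lambda)\le 4(1-e^{-r\lambda})$ for $r\lambda\ge0$ gives $\mathcal E^\phi(f,f)\le 4\phi(\lambda_n)\|f\|^2$. Min--max then yields \eqref{ineq.UpperBd.Eigenvalue_1}. The constant may need adjusting, and the bookkeeping should give $4$.

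For complete Bernstein $\phi$, I would invoke the Chen--Song comparison: the subordinate killed form satisfies $\mathcal E^{\phi,\Omega}\le$ the form of $\phi(\Delta^\Omega_{\mathcal H})$ on its domain. The argument is the representation $\phi(\lambda)=\int \frac{\lambda}{\lambda+s}\sigma(ds)$ plus linear terms, together with the operator inequality $(\Delta_{\mathcal H}+s)^{-1}\ge (\Delta^\Omega_{\mathcal H}+s)^{-1}$ on $L^2(\Omega)$, which is domain monotonicity of resolvents. Operator monotonicity of $\lambda\mapsto\frac{\lambda}{\lambda+s}$ then gives \eqref{ineq.UpperBd.Eigenvalue} via min--max.

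\textbf{Lower bound.} Following Chen--Song, I would prove that the ground-state-type quantity
\begin{align*}
\inf_{x\in\Omega}\ \mathbb P_x\bigl(\text{subordinated process exits }\Omega\text{ at its first jump of size}\gtrsim \operatorname{dist}\bigr)
\end{align*}
is controlled by the intrinsic exterior cone condition. From this I would deduce the pointwise comparison
\begin{align*}
P^{\phi,\Omega}_t\le \text{subordination of } P^\Omega_s \text{ with a reduced rate},
\end{align*}
equivalently $\mathcal E^{\phi,\Omega}(f,f)\ge c\,\langle\phi(\Delta^\Omega_{\mathcal H})f,f\rangle$. The concrete route is to compare the two semigroups through the subordinator: the process subordinated after killing dies when $B_{S_t}$ has left $\Omega$ at some intermediate time, while the killed subordinated process only checks at jump times. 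The cone condition guarantees, uniformly in the boundary point and via Lemma~\ref{lem:volumes}, that once the Brownian path leaves $\Omega$ it lies in an exterior cone with probability $\ge c$ at the next subordinator time. This yields $\mathbb P_x(\tau^{\phi}_\Omega>t)$-type kernel domination up to a constant, which translates into the eigenvalue bound via Chen--Song's Theorem~3.1 argument, with $c(\Omega)$ coming from the cone volume bounds.

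I expect this lower bound to be the main obstacle. The absence of rotations means uniform cone hitting probabilities must come from Lemma~\ref{lem:volumes} and scaling via $\delta_\lambda$ rather than from symmetry.
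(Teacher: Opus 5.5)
Your arguments for discreteness, for $\lambda_{1,\phi}>\phi(0)$, and for both upper bounds are sound. For $\lambda_{1,\phi}>\phi(0)$ you go through $\|P^{\phi,\Omega}_t\|_{\infty\to\infty}<e^{-t\phi(0)}$; passing to the $L^2$ norm then uses symmetry and interpolation. Your min--max estimate based on $\langle f-P_rf,f\rangle\le\min\{2\|f\|^2,r\,\mathcal E(f,f)\}$ and your resolvent-compression argument for complete Bernstein $\phi$ are self-contained versions of what the paper simply imports from \cite{ChenSong2005}.

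The gap is in the lower bound \eqref{ineq.LowerBd.Eigenvalue}. You name the right target, $\mathcal E^{\phi}(f,f)\ge c\,\langle\phi(\Delta^\Omega_{\mathcal H})f,f\rangle$, but the route you propose does not reach it.

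A pointwise or kernel domination of $P^{\phi,\Omega}_t$ by a time-changed subordinate killed semigroup ``up to a constant'' is false in general. Already for $\phi(u)=u^{\alpha/2}$ on a smooth bounded domain in the step-one group $\mathbb R^n$, the killed subordinate kernel decays like $\delta(x)^{\alpha/2}$ at the boundary, while the subordinate killed kernel decays like $\delta(x)$. Moreover, such a domination is not equivalent to the form inequality. At best it bounds $\|P^{\phi,\Omega}_t\|_{2\to2}$, i.e.\ $\lambda_{1,\phi}$, and says nothing about $\lambda_{n,\phi}$ for $n\ge 2$. The quantity you single out, exit of the subordinated process at its ``first jump'', is also not the relevant one: infinite-activity subordinators such as the stable ones have no first jump.

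The missing idea is to compare the two forms termwise in the L\'evy--Khintchine representation, using only the unsubordinated process. For $f\in L^2(\Omega)$ extended by zero and $r>0$, conservativeness of $P_r$ gives
\[
\langle f-P_rf,f\rangle_{L^2(\mathbb G)}=\tfrac12\int_\Omega\int_\Omega (f(x)-f(y))^2p(r,x,y)\,dx\,dy+\int_\Omega f(x)^2\,\mathbb P_x(B_r\notin\Omega)\,dx .
\]
The analogous identity holds for $P^\Omega_r$, with $p_\Omega\le p$ in the first term and $\mathbb P_x(T\le r)$ in the last, where $T$ is the exit time of $B$ from $\Omega$. The strong Markov property at $T$ gives $\mathbb P_x(B_r\notin\Omega)\ge(1-\gamma)\,\mathbb P_x(T\le r)$, where $\gamma:=\sup_{y\in\Omega^c,\,s>0}\mathbb P_y(B_s\in\Omega)$. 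Hence $\langle f-P_rf,f\rangle\ge(1-\gamma)\langle f-P^\Omega_rf,f\rangle$. Integrating against $\nu(dr)$ and adding the killing and drift terms yields $\mathcal E^\phi(f,f)\ge(1-\gamma)\langle\phi(\Delta^\Omega_{\mathcal H})f,f\rangle$; completeness of $\phi$ is not used here. Min--max then gives \eqref{ineq.LowerBd.Eigenvalue} with $c(\Omega)=1-\gamma$.

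This is the content of \cite[Theorem~4.4]{ChenSong2005}, which the paper invokes. The only geometric input is $\gamma<1$, which is Proposition~\ref{prop:unif_prob_bound}. Your remark about the Brownian path lying in an exterior cone ``at the next subordinator time'' points in this direction, but the bound must hold uniformly over all $s>0$. The cone condition together with $\delta_{1/\sqrt s}$-scaling and Proposition~\ref{prop:unif_lower_bound} only controls $s\le 1$, because a cone of finite height is eventually left. For $s\ge1$ you need the boundedness of $\Omega$: for $x\in\partial\Omega$ and $R>\operatorname{diam}\Omega$, $\mathbb P_x(B_s\in\Omega)\le\mathbb P_e\bigl(B_1\in\mathbb B(e,R)\bigr)<1$.
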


\begin{remark}
 The bound  \eqref{eq:Bernstein_cond} is satisfied whenever the drift coefficient $b$ in \eqref{eq:Bernstein} is strictly positive, or $\phi(u)\gtrsim \log u$ as $u\to\infty$. In particular, \eqref{eq:Bernstein_cond} holds for $\phi(u)=u^\beta$ for $0<\beta<1$.
\end{remark}
\begin{remark}
In the special case when $\phi(u)=u^{\beta}$, $0<\beta<1$, the above upper and lower bound for the eigenvalues $\lambda_{n,\phi}$ with explicit constants can be found in \cite[Corollary~1.1, Theorem~1.3]{ChenChenLi2026}. Moreover, the authors proved such estimates for domains $\Omega$ satisfying the condition
\begin{align}\label{eq:sup_boundary}
    \sup_{r>0}\frac{|\left\{x\in\Omega: d(x,\partial\Omega)<r\right\}|}{r}<\infty.
\end{align}
While it is difficult to adapt the argument in \cite{ChenChenLi2026} for arbitrary Bernstein functions $\phi$, our proof uses a probabilistic argument motivated by \cite{ChenSong2005}. Our approach requires an exterior cone condition on the boundary of the domain, which is different from \eqref{eq:sup_boundary}. In particular, theorem \ref{thm:eigenvalue_comparison} holds for all bounded open sets with $C^1$ boundary with no characteristic points, see Theorem~\ref{thm:noncharacteristic-cones}.
\end{remark}

\subsection{Heat content of fractional sub-Laplacian} \label{sec.HeatContent}
Let $\Omega$ be a bounded open subset of $\mathbb G$. The heat content
of $\Omega$ associated with the subordinated sub-Laplacian
$\Delta_{\mathcal H}^{\phi}$ is defined by
\begin{align*}
\widetilde H_\Omega^\phi(t)
:=
\int_\Omega
e^{-t\Delta_{\mathcal H}^{\phi,\Omega}}
\mathbbm 1_\Omega(x)\,dx
=
\left\langle
e^{-t\Delta_{\mathcal H}^{\phi,\Omega}}\mathbbm 1_\Omega,
\mathbbm 1_\Omega
\right\rangle_{L^2(\Omega, dx)}.
\end{align*}
Here $\Delta_{\mathcal H}^{\phi,\Omega}$ denotes the subordinated sub-Laplacian with Dirichlet boundary condition introduced in Definition~\ref{def:dirichlet_subL}. By the standard theory of strongly continuous semigroups of linear
operators, see, for instance, \cite[p.~436]{EngelNagelBook2000}, the
function
\begin{align*}
u_\phi(t)
:=
e^{-t\Delta_{\mathcal H}^{\phi,\Omega}}\mathbbm 1_\Omega
\end{align*}
belongs to $C([0,\infty);L^2(\Omega, dx))$ and is the unique mild solution
of the abstract Cauchy problem
\begin{equation}\label{eq:heat_eq_Dirichlet}
\begin{cases}
\partial_tu_\phi(t)
=
-\Delta_{\mathcal H}^{\phi,\Omega}u_\phi(t),
& t>0,\\ 
u_\phi(0)=\mathbbm 1_\Omega,
& \text{in }L^2(\Omega, dx).
\end{cases}
\end{equation}
Moreover, for every $t>0$, the function $u_\phi(t)$ belongs to
$\mathcal D(\Delta_{\mathcal H}^{\phi,\Omega})$, and hence the above
equation holds in $L^2(\Omega, dx)$ in the strong sense.

Equivalently, if $\widetilde u_\phi(t,\cdot)$ denotes the extension of
$u_\phi(t,\cdot)$ by zero to $\mathbb G$, the corresponding non-local
Dirichlet problem can be formally written as
\begin{equation}\label{eq:heat_eq_exterior_Dirichlet}
\begin{cases}
\partial_t\widetilde{u}_\phi(t,x)
=
-\Delta_{\mathcal H}^{\phi}\widetilde u_\phi(t,x),
& (t,x)\in(0,\infty)\times\Omega,\\
\widetilde u_\phi(t,x)=0,
& (t,x)\in(0,\infty)\times(\mathbb G\setminus\Omega),\\
u_\phi(0,x)=1,
& x\in\Omega.
\end{cases}
\end{equation}
The last formulation is understood in the appropriate weak or
$L^2$ sense unless additional regularity assumptions are imposed.
We are interested in understanding the large-time and small-time behavior of $\widetilde{H}^\phi_\Omega(t)$. While the large-time asymptotics would follow from the spectral theory of the subordinated sub-Laplacian on bounded domains, the derivation of small-time asymptotics would require some geometric conditions on the domain $\Omega$. Our first result states the following:
\begin{theorem}\label{thm:heat_content_large}
    Assume that $\phi$ satisfies \eqref{eq:Bernstein_cond} and $\Omega\subset\mathbb{G}$ is a bounded, open, connected subset. Then the first eigenvalue $\lambda_{1,\phi}$ of $\Delta^{\phi,\Omega}_{\mathcal H}$ is simple and admits an eigenfunction $f_\phi$ such that $f_\phi(x)>0$ for all $x\in\Omega$, and $\|f_\phi\|_{L^2(\mathbb G, dx)}=1$. Moreover, 
    \begin{align*}
        \lim_{t\to \infty} e^{t\lambda_{1,\phi}}\widetilde{H}^\phi_\Omega(t)=\langle f_\phi,\mathbbm{1}_\Omega\rangle^2_{L^2(\Omega,dx)}.
    \end{align*}
\end{theorem}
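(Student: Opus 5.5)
Our plan is to use the spectral decomposition of the killed semigroup $Q_t:=e^{-t\Delta^{\phi,\Omega}_{\mathcal H}}$ on $L^2(\Omega,dx)$, together with a positivity-improving property that gives simplicity and positivity of the ground state.

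\emph{Step 1: compactness.} Condition \eqref{eq:Bernstein_cond} controls the on-diagonal size of the subordinated kernel. Indeed, using $P^\phi_t=\int P_s\,\eta_t(ds)$, the bound \eqref{eq:heat_bound_1}, and homogeneity, we get
\begin{align*}
p^\phi(t,e)=\int p(s,e)\,\eta_t(ds)\asymp\int s^{-Q/2}\eta_t(ds)=\frac{1}{\Gamma(Q/2)}\int_0^\infty u^{\frac Q2-1}e^{-t\phi(u)}\,du<\infty .
\end{align*}
Hence $p^\phi(t,\cdot)$ is bounded for large $t$, and by the semigroup property for all $t\ge t_0$. The killed kernel $p^\phi_\Omega(t,x,y)$ is dominated by $p^\phi(t,x^{-1}y)$. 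On the bounded set $\Omega$ it is therefore square integrable, so $Q_t$ is Hilbert--Schmidt. This gives purely discrete spectrum, which is part (1) of Theorem~\ref{thm:eigenvalue_comparison} and may be taken as known. Writing $\{\lambda_{n,\phi}, f_n\}$ for the eigenpairs with an orthonormal basis, we obtain for $t\ge t_0$
\begin{align*}
\widetilde H^\phi_\Omega(t)=\sum_{n} e^{-t\lambda_{n,\phi}}\langle f_n,\mathbbm 1_\Omega\rangle^2 .
\end{align*}

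\emph{Step 2: simplicity and positivity (the main obstacle).} We will show that $Q_t$ is positivity improving on $L^2(\Omega)$: for $f\ge0$ with $f\not\equiv0$, we want $Q_tf>0$ a.e.\ on $\Omega$. Our plan is to follow the argument of \cite{CarfagniniGordina2024}. The killed hypoelliptic semigroup $P^\Omega_s$ on a connected open $\Omega$ is irreducible; this is where connectedness is used, via the Hörmander condition and positivity of $p$. We then compare the killed subordinated process with the subordinated killed process. The subordinated process that is killed on exit dominates, pathwise, the event that the underlying hypoelliptic Brownian motion stays in $\Omega$ up to the subordinator time. This gives
\begin{align*}
Q_t f\ \ge\ \int P^\Omega_s f\,\eta_t(ds),
\end{align*}
which is strictly positive because $P^\Omega_s f>0$ for every $s>0$ and $\eta_t\ne\delta_0$ ($\phi$ is non-constant). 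This kernel domination, made rigorous using Section~\ref{s:probability}, is the delicate point. Once it is in hand, the Krein--Rutman theorem (or Perron--Frobenius for positivity-improving compact self-adjoint operators) applied to $Q_{t_0}$ shows that $e^{-t_0\lambda_{1,\phi}}$ is a simple eigenvalue with a strictly positive eigenfunction $f_\phi$, normalized so that $\|f_\phi\|_2=1$. Positivity holds everywhere on $\Omega$ and not just almost everywhere, because we may choose the continuous version $f_\phi=e^{t\lambda_{1,\phi}}Q_tf_\phi$, whose kernel is continuous.

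\emph{Step 3: the limit.} From Step 1,
\begin{align*}
e^{t\lambda_{1,\phi}}\widetilde H^\phi_\Omega(t)-\langle f_\phi,\mathbbm 1_\Omega\rangle^2=\sum_{n\ge2}e^{-t(\lambda_{n,\phi}-\lambda_{1,\phi})}\langle f_n,\mathbbm 1_\Omega\rangle^2 .
\end{align*}
By simplicity, $\lambda_{n,\phi}-\lambda_{1,\phi}\ge\lambda_{2,\phi}-\lambda_{1,\phi}=:\gamma>0$ for all $n\ge2$. The tail is then at most $e^{-(t-t_0)\gamma}\sum_n e^{-t_0(\lambda_{n,\phi}-\lambda_{1,\phi})}\langle f_n,\mathbbm 1_\Omega\rangle^2$. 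This last sum is bounded by $e^{t_0\lambda_{1,\phi}}\widetilde H^\phi_\Omega(t_0)\le e^{t_0\lambda_{1,\phi}}\Vol(\Omega)$. The tail therefore tends to $0$ as $t\to\infty$, which gives the claimed limit.
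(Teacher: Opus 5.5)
Your proposal is correct and follows essentially the same route as the paper. The shared steps are: Hilbert--Schmidt compactness inherited from Theorem~\ref{thm:eigenvalue_comparison}; the Song--Vondra\v{c}ek domination of the killed subordinated kernel by $\int p_\Omega(s,x,y)\,\eta_t(ds)$, combined with strict positivity of the Dirichlet hypoelliptic kernel from \cite{CarfagniniGordina2024}; Krein--Rutman for a positive ground state; and the spectral expansion for the limit. The paper derives simplicity separately via positivity of the resolvent and irreducibility, whereas your positivity-improving/Perron--Frobenius step gives positivity and simplicity at once.

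The one unsupported claim is continuity of the killed kernel, which is not established anywhere. You do not need it: everywhere-positivity of $f_\phi=e^{t\lambda_{1,\phi}}Q_tf_\phi$ already follows from your pointwise bound $Q_tf_\phi(x)\ge\int P^\Omega_sf_\phi(x)\,\eta_t(ds)>0$ for every $x\in\Omega$.
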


For the small-time asymptotics of the heat content, we only focus on the case when $\phi(u)=u^{\alpha/2}$, where $0<\alpha<2$. In this case, the subordinated sub-Laplacian is equal to the fractional sub-Laplacian, and we denote the corresponding heat content by $\widetilde{H}^{(\alpha)}_\Omega(t)$, and we call it the \emph{fractional heat content}. For the small-time asymptotic behavior of $\widetilde{H}^{(\alpha)}_\Omega(t)$, we require the following notions of horizontal perimeter and fractional horizontal perimeter.
\begin{definition}[Horizontal perimeter]
    For a measurable set $\Omega\subset \mathbb{G}$, its \emph{horizontal perimeter} is defined by
\begin{align*}
    \Per_{\mathcal H}(\Omega)=\sup\left\{\int_{\Omega}\sum_{i=1}^{d} X_i \rho_i: \|\sum_{i=1}^d \rho^2_i\|_\infty\le 1, \rho_i\in C_c^\infty(\mathbb G)\right\},
\end{align*}
where $\{X_1, \ldots, X_{d}\}$ is an orthonormal frame in $V_1$. A measurable set $\Omega\subset \mathbb{G}$ is called a \emph{Caccioppoli set} if $\Per_{\mathcal H}(\Omega)<\infty$.
\end{definition}
\begin{definition}[Fractional horizontal perimeter]\label{def:fract_horiz_per}
 For $\alpha\in (0,1)$, the $\alpha$-\emph{fractional horizontal perimeter} of a measurable set $\Omega$ is defined as 
 \begin{align*}
     \Per^{(\alpha)}_\mathcal{H}(\Omega)=\int_{\Omega}\int_{\Omega^c} \frac{1}{\|x^{-1}y\|_\alpha}dx dy,
 \end{align*}
 where $\|\cdot\|_\alpha$ is a homogeneous quasi-norm defined by
 \begin{align}
    \|x\|_\alpha=(\widetilde{R}_\alpha(x))^{-\frac{1}{\alpha+Q}}, \quad \widetilde{R}_\alpha(x)=\frac{\alpha}{2\Gamma\left(1-\frac\alpha 2\right)}\int_0^\infty t^{-\frac{\alpha}{2}-1} p(t,x) dt
\end{align}
with $p(t,\cdot)$ being the hypoelliptic heat kernel defined in \eqref{eq:heat_kernel}.
\end{definition}
The homogeneous quasinorm $\|\cdot\|_\alpha$ was originally introduced by Ferrari and Franchi \cite{FerrariFranchi2015}. This is equivalent to any homogeneous quasinorm, that is, for any homogeneous quasinorm $\|\cdot\|$ on $\mathbb G$, there exists a constant $c>0$ such that 
\begin{align}\label{eq:norm_equiv}
    c^{-1} \|x\| \le \|x\|_\alpha\le c \|x\|,
\end{align}
see \cite[Eq. (1.9)]{FFMPPS2028} for details.
Moreover, for any $u\in\mathcal{S}(\mathbb G)$, the Schwartz space on $\mathbb G$ and $\alpha\in (0,2)$ one has 
\begin{align*}
    \Delta_{\mathcal H}^{\alpha/2} u(x)=\mathrm{p.v.} \int_{\mathbb G} \frac{u(x)-u(y)}{\|x^{-1} y\|_\alpha^{Q+\alpha}} dy.
\end{align*}
When $\alpha\in [1,2)$, the following small time asymptotics for the heat content of fractional sub-Laplacian has been derived in \cite{Sarkar2026}, assuming that the boundary of the domain is non-characteristic (see Definition~\ref{def:characteristic_pt}).
\begin{theorem}[Theorem~1.1 in \cite{Sarkar2026}]\label{thm:spectral_heat1}
   Let $\Omega$ be a bounded, open subset of $\mathbb{G}$. If $\Omega$ has $C^2$ boundary with no characteristic points, then
    \begin{enumerate}[leftmargin=*]
        \item \label{sh1} For $1<\alpha<2$, 
        \begin{align*}
            \lim_{t\to 0}\frac{\Vol(\Omega)-\widetilde{H}^{(\alpha)}_\Omega(t)}{t^{\frac{1}{\alpha}}}= \mathbb{E}\left[\sup_{0\le t\le 1} Y_t\right]\Per_{\mathcal H}(\Omega),
        \end{align*}
        where $(Y_t)_{t\ge 0}$ is a one-dimensional symmetric $\alpha$-stable L\'evy process such that $\mathbb{E}[e^{i \xi Y_t}]=e^{-t|\xi|^\alpha}$ for all $t\ge 0$ and $\xi\in\mathbb R$.
        \item \label{sh2} When $\alpha=1$, 
        \begin{align*}
            \lim_{t\to 0}\frac{\Vol(\Omega)-\widetilde{H}^{(1)}_\Omega(t)}{t\log(1/t)}= \frac{1}{\pi}\Per_{\mathcal H}(\Omega).
        \end{align*}
    \end{enumerate}
\end{theorem}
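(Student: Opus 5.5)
The plan follows the probabilistic route of the paper: write the complement of the heat content as an exit-time integral and compare the subordinated motion with a one-dimensional stable process in the normal direction near the boundary.

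\textbf{Representation.} Let $X^{(\alpha)}_t = B_{S_t}$ be the subordinated hypoelliptic Brownian motion, where $S_t$ is an independent $\alpha/2$-stable subordinator. Then
\begin{align*}
\Vol(\Omega)-\widetilde H^{(\alpha)}_\Omega(t)=\int_\Omega \mathbb P_x(\tau_\Omega\le t)\,dx .
\end{align*}
We split $\Omega$ into an interior region $\{x\in\Omega: d(x,\partial\Omega)>\delta\}$ and a collar $\{x\in\Omega: d(x,\partial\Omega)\le \delta\}$.

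\textbf{Interior region.} A point with $d(x,\partial\Omega)>\delta$ can exit by time $t$ only if the path leaves the ball $\mathbb B(x,\delta)$. By the maximal inequality for subordinated processes, together with the heat kernel bounds \eqref{eq:heat_bound_1}, this probability is $O(t\delta^{-\alpha})$. Since $t = o(t^{1/\alpha})$ for $\alpha>1$ and $t = o(t\log(1/t))$ for $\alpha=1$, the interior contributes nothing in the limit.

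\textbf{Collar: local model.} Because $\partial\Omega$ is $C^2$ and non-characteristic, the horizontal normal $\nu_{\mathcal H}$ is continuous and nonzero. We use horizontal-normal coordinates: each collar point is written $x = y\exp(-s\nu_{\mathcal H}(y))$ with $y\in\partial\Omega$ and $s\in(0,\delta)$. The volume element in these coordinates is $|\nabla_{\mathcal H}\rho|^{-1}\,ds\,\sigma(dy)$, approximately, and integrating over $y$ yields $\Per_{\mathcal H}(\Omega)$.

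Near $y$, and at scales $\ll 1$, the set $\Omega$ looks after dilation like the half-space $\{\langle \pi_1(\cdot),\nu\rangle<0\}$; here we use non-characteristicity, since the blow-up of $\Omega$ at $y$ is a vertical half-space. For a vertical half-space, exiting depends only on the projection $\langle\pi_1(B_{S_t}),\nu\rangle$. That projection is a one-dimensional Brownian motion subordinated by $S_t$, hence a symmetric $\alpha$-stable process $Y$ with the normalization $e^{-t|\xi|^\alpha}$. Consequently, from depth $s$,
\begin{align*}
\mathbb P_x(\tau\le t)\approx \mathbb P\Big(\sup_{r\le t}Y_r\ge s\Big).
\end{align*}

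\textbf{Collar: scaling.} For $\alpha>1$,
\begin{align*}
\int_0^\infty \mathbb P\Big(\sup_{r\le t}Y_r\ge s\Big)\,ds = t^{1/\alpha}\,\mathbb E\Big[\sup_{r\le 1}Y_r\Big],
\end{align*}
which gives part (1) after integrating over $\partial\Omega$.

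For $\alpha=1$ the supremum has infinite mean, so the integral must be truncated at $\delta$. The tail is $\mathbb P(\sup_{r\le t} Y_r\ge s)\sim \mathbb P(Y_t\ge s)\cdot 2 \sim t/(\pi s)$ for $s\gg t$. Its integral over $s\in(t,\delta)$ is $(t/\pi)\log(\delta/t)\sim (1/\pi)\,t\log(1/t)$, which gives part (2).

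\textbf{Main obstacle.} The hardest step is justifying the half-space approximation uniformly over the collar. Two things have to be controlled:
\begin{itemize}
\item the curvature error between $\Omega$ and its tangent vertical half-space, measured with the homogeneous norm at scale $s$;
\item large jumps of the process, which can reach distance $\gg s$.
\end{itemize}
My first attempt would be to sandwich $\Omega$ locally between two intrinsic "paraboloid" regions $\{\langle\pi_1(y^{-1}z),\nu\rangle< \pm C\|y^{-1}z\|^2\}$. These are valid by the $C^2$ regularity and the non-characteristic assumption, uniformly on compact $\partial\Omega$. Exit probabilities from such regions can be compared with the half-space ones up to $o(1)$ relative error when $s\to0$. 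Jumps of size larger than $\sqrt s$ occur by time $t$ with probability $O(t s^{-\alpha/2})$, which is negligible after integrating in $s$, but this needs checking carefully in the critical case $\alpha=1$.
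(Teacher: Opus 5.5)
The paper does not reprove this statement; it quotes \cite[Theorem~1.1]{Sarkar2026} and only describes its structure. That structure is an $O(t)$ bound on the interior contribution (\cite[Lemma~4.3]{Sarkar2026}), which is negligible against $t^{1/\alpha}$ and $t\log(1/t)$, followed by a boundary analysis that uses $C^2$ regularity and the absence of characteristic points to produce $\Per_{\mathcal H}(\Omega)$. Your outline follows this route, and its core reduction is right. For a vertical half-space, exit depends only on $\langle\pi_1(\cdot),N\rangle$, where $N(y)$ is the unit horizontal normal. That projection of $B_{S_t}$ is the symmetric $\alpha$-stable process with $\mathbb E[e^{i\xi Y_t}]=e^{-t|\xi|^\alpha}$, since the horizontal coordinates have variance $2t$. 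A $C^2$ non-characteristic boundary does lie locally in $\{z:|\langle\pi_1(y^{-1}z),N\rangle|\le C\|y^{-1}z\|^2\}$. Finally, $\int_0^\infty\mathbb P(\sup_{r\le t}Y_r\ge s)\,ds=t^{1/\alpha}\mathbb E[\sup_{r\le 1}Y_r]$ for $\alpha>1$.

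Three steps are wrong as written.

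\textbf{The factor $2$ at $\alpha=1$.} The claim $\mathbb P(\sup_{r\le t}Y_r\ge s)\sim 2\,\mathbb P(Y_t\ge s)$ is false. Doubling is a feature of continuous paths; for a symmetric jump process, reflection only gives $\mathbb P(Y_t\ge s)\le\mathbb P(\sup_{r\le t}Y_r\ge s)\le 2\,\mathbb P(Y_t\ge s)$. Since $\mathbb P(Y_t\ge s)=\frac1\pi\arctan(t/s)\sim t/(\pi s)$, your chain taken literally gives $2t/(\pi s)$ and the wrong constant $2/\pi$. The correct input is the single-big-jump asymptotics for regularly varying L\'evy tails: $\mathbb P(\sup_{r\le t}Y_r\ge s)\sim\mathbb P(Y_t\ge s)\sim t\,\Pi_Y((s,\infty))=t/(\pi s)$ as $s/t\to\infty$, with $\Pi_Y(dy)=dy/(\pi y^2)$. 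This is consistent with Theorem~\ref{thm:heat_content}(2), where the relative heat content (half-space model $\int\mathbb P(Y_t\ge s)\,ds$) has the same constant $\frac1\pi$.

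\textbf{The volume element.} Take $s$ as the parameter along $N(y)$ and $\sigma$ as Euclidean surface measure. The Jacobian of $(y,s)\mapsto y\exp(-sN(y))$ at $s=0$ is the normal speed $\langle N(y),\nu(y)\rangle=|\nu_{\mathcal H}(y)|=|\nabla_{\mathcal H}\rho|/|\nabla\rho|$, not $|\nabla_{\mathcal H}\rho|^{-1}$. It is $|\nu_{\mathcal H}|\,d\sigma$ that integrates to $\Per_{\mathcal H}(\Omega)$, by the formula quoted in the proof of Proposition~\ref{prop:frac_per_bound}.

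\textbf{The localisation scale.} Inside $\mathbb B(y,R)$ the paraboloids deviate from the tangent half-space by $CR^2$. At $R=\sqrt s$ this is comparable to the depth $s$, so the sandwich only gives an $O(1)$ relative error. You need $s\ll R(s)$ and $R(s)^2\ll s$, e.g.\ $R=s^{3/4}$. Then the curvature error is $O(s^{1/2})$ in relative terms, and discarding jumps larger than $R$ costs $\min\{1,ts^{-3\alpha/4}\}$ per point. At $\alpha=1$ this is $o(t/s)$ pointwise and integrates to $O(t)=o(t\log(1/t))$; for $\alpha>1$ it integrates to $o(t^{1/\alpha})$. At $\alpha=1$ every dyadic depth in $(Kt,\eta)$ contributes equally to $t\log(1/t)$, so you genuinely need this $o(1)$ relative error uniformly in $s\in(Kt,\eta)$, not only an integrated bound. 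With these corrections, and uniformity over $\partial\Omega$ by compactness, your sketch is a sound outline of the argument the paper attributes to \cite{Sarkar2026}.
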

As explained before, the technique used in \cite{Sarkar2026} to prove the above theorem does not extend to the case when $\alpha\in (0,1)$. In the following result,  we settle the question of the existence of limits when $\alpha\in (0,1)$. For this, we require a different regularity condition on the boundary, known as the volume density condition introduced in Definition~\ref{def:VDC}. We refer to Section~\ref{s:VDC} for more details.
\begin{theorem}\label{thm:spectral_heat}
 Let $\Omega\subset \mathbb G$ be bounded, open set satisfying the volume density condition (see Definition~\ref{def:VDC}). Then for all $0<\alpha<1$, 
        \begin{align}\label{eq:lim_0_1}
            \lim_{t\to 0}\frac{\Vol(\Omega)-\widetilde{H}^{(\alpha)}_\Omega(t)}{t}=\Per^{(\alpha)}_{\mathcal H}(\Omega)
        \end{align}
        whenever $\Per^{(\alpha)}_{\mathcal H}(\Omega)<\infty$.
         In particular, \eqref{eq:lim_0_1} holds for all bounded domains with $C^1$ boundary that does not have any characteristic points.
\end{theorem}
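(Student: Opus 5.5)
The plan is to compare the heat content with the \emph{relative} heat content (Theorem~\ref{thm:heat_content}), which for $0<\alpha<1$ already gives the $t$-scaling for any set with finite fractional horizontal perimeter. Write $X_t$ for the subordinated hypoelliptic Brownian motion with $\phi(u)=u^{\alpha/2}$ and $\tau_\Omega$ for its exit time. The relative heat content is $H_\Omega(t)=\int_\Omega \mathbb P_x(X_t\in\Omega)\,dx$, and we have
\begin{align*}
\Vol(\Omega)-H_\Omega(t)=\int_\Omega \mathbb P_x(X_t\in\Omega^c)\,dx,\qquad \Vol(\Omega)-\widetilde H^{(\alpha)}_\Omega(t)=\int_\Omega\mathbb P_x(\tau_\Omega\le t)\,dx .
\end{align*}
Since $\{X_t\in\Omega^c\}\subset\{\tau_\Omega\le t\}$, the lower bound
\begin{align*}
\liminf_{t\to0}\frac{\Vol(\Omega)-\widetilde H^{(\alpha)}_\Omega(t)}{t}\ge \Per^{(\alpha)}_{\mathcal H}(\Omega)
\end{align*}
follows immediately from the relative result. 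It remains to show that
\begin{align*}
\int_\Omega \mathbb P_x\bigl(\tau_\Omega\le t,\ X_t\in\Omega\bigr)\,dx=o(t).
\end{align*}

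For the upper bound I use the Ikeda--Watanabe formula. The jump kernel is $J(x,y)=\|x^{-1}y\|_\alpha^{-Q-\alpha}$, and Proposition~\ref{prop:skip_boundary} says that under the volume density condition $X_{\tau_\Omega}\notin\partial\Omega$ a.s. Hence the exit happens by a jump into the open set $\overline\Omega^{\,c}$, and
\begin{align*}
\mathbb P_x(\tau_\Omega\le t)=\int_0^t\int_\Omega p^\Omega(s,x,z)\int_{\overline\Omega^c}J(z,y)\,dy\,dz\,ds,
\end{align*}
where $p^\Omega$ is the killed kernel. I then integrate over $x\in\Omega$, using symmetry of $p^\Omega$ to get $\int_\Omega p^\Omega(s,x,z)\,dx=\mathbb P_z(\tau_\Omega>s)\le1$. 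This gives
\begin{align*}
\Vol(\Omega)-\widetilde H^{(\alpha)}_\Omega(t)\le t\int_\Omega\int_{\overline\Omega^c}J(z,y)\,dy\,dz\le t\,\Per^{(\alpha)}_{\mathcal H}(\Omega).
\end{align*}
This bound uses that $|\partial\Omega|=0$, which should follow from the volume density condition via a Lebesgue density argument. Combined with the lower bound, this proves \eqref{eq:lim_0_1}. Note that without the skip-boundary property the formula would miss exits by creeping onto $\partial\Omega$; this is exactly where the hypothesis enters.

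The main obstacle is making the Ikeda--Watanabe identity rigorous in the Carnot setting. It needs the Lévy system of $X$ to be $J(x,y)\,dy\,dt$, with no diffusion part since $b=0$, and it needs the identification $\{\tau_\Omega\le t\}=\{X_{\tau_\Omega}\in\overline\Omega^c,\ \tau_\Omega\le t\}$ up to null sets. I take the former from the subordination representation in Section~\ref{s:probability} and the latter from Proposition~\ref{prop:skip_boundary}. With these, the upper bound is a direct Fubini computation, and monotone convergence handles the case where the integrand is not a priori finite, since we assume $\Per^{(\alpha)}_{\mathcal H}(\Omega)<\infty$. If one wants an exact limit rather than only an upper bound, dominated convergence with $\mathbb P_z(\tau_\Omega>s)\to1$ as $s\to0$ for $z\in\Omega$ gives it directly as well.

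For the final claim, I invoke Theorem~\ref{thm:noncharacteristic-cones}: a bounded domain with $C^1$ boundary and no characteristic points satisfies the intrinsic exterior cone condition, and hence the volume density condition (interior cones plus the exterior ones). By Lemma~\ref{lem:volumes}, the cones have volume comparable to balls. For finiteness of $\Per^{(\alpha)}_{\mathcal H}(\Omega)$ when $\alpha<1$, I bound $\int_{\Omega^c}\|x^{-1}y\|^{-Q-\alpha}\,dy\lesssim d(x,\partial\Omega)^{-\alpha}$. The $C^1$ non-characteristic boundary gives $|\{x\in\Omega: d(x,\partial\Omega)<r\}|\lesssim r$, and since $\alpha<1$, the layer-cake formula then makes $\int_\Omega d(x,\partial\Omega)^{-\alpha}\,dx$ finite.
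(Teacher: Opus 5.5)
Your proof is correct, but it is organized differently from the paper's.

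\textbf{The paper's route.} It writes $\widetilde H^{(\alpha)}_\Omega(t)=H^{(\alpha)}_\Omega(t)-I(t)-II(t)$. Here $I$ and $II$ collect the paths that exit (into $\overline\Omega^c$, respectively onto $\partial\Omega$) and are back in $\Omega$ at time $t$. It kills $II$ with Proposition~\ref{prop:skip_boundary}. It shows $I(t)=o(t)$ using Ikeda--Watanabe, the contraction property of the killed semigroup, and dominated convergence (with $\mathbb P_z(B^{(\alpha)}_t\in\Omega)\to0$ for $z\in\overline\Omega^c$). It then imports the limit of $H^{(\alpha)}_\Omega$ from Theorem~\ref{thm:heat_content}.

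\textbf{Your route.} You sandwich instead. The lower bound is the trivial inclusion plus Theorem~\ref{thm:heat_content}. The upper bound applies Ikeda--Watanabe to the exit probability itself, which gives the killing-measure identity
\begin{align*}
\Vol(\Omega)-\widetilde H^{(\alpha)}_\Omega(t)=\int_0^t\int_\Omega \mathbb P_z(\tau_\Omega>s)\,\kappa_\Omega(z)\,dz\,ds,\qquad \kappa_\Omega(z)=\int_{\overline\Omega^c}\frac{dy}{\|z^{-1}y\|_\alpha^{Q+\alpha}}.
\end{align*}

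\textbf{What each buys.} Your route gives the non-asymptotic bound $\Vol(\Omega)-\widetilde H^{(\alpha)}_\Omega(t)\le t\,\Per^{(\alpha)}_{\mathcal H}(\Omega)$ for every $t>0$. As you note, monotone or dominated convergence in this identity even yields the limit without Theorem~\ref{thm:heat_content}. The paper's route identifies $H^{(\alpha)}_\Omega-\widetilde H^{(\alpha)}_\Omega$ as an exit-and-return term, which explains why heat content and relative heat content share their first-order asymptotics. Both rest on the same two inputs: Lemma~\ref{lem:IW-check} and Proposition~\ref{prop:skip_boundary}.

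\textbf{Small corrections.}
\begin{itemize}
\item The inequality $\int_\Omega\kappa_\Omega\le\Per^{(\alpha)}_{\mathcal H}(\Omega)$ is trivial because $\overline\Omega^c\subset\Omega^c$. So $\Vol(\partial\Omega)=0$ (Proposition~\ref{prop:VDC_conseq}) is needed only in your direct-limit variant, to identify $\int_\Omega\kappa_\Omega$ with $\Per^{(\alpha)}_{\mathcal H}(\Omega)$.
\item Theorem~\ref{thm:Ikeda-Watanabe} is stated for $d(E,\Omega)>0$, but $E=\overline\Omega^c$ has $d(E,\Omega)=0$. Apply it to $E_n=\{y:d(y,\Omega)>1/n\}$ and let $n\to\infty$ by monotone convergence.
\end{itemize}

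\textbf{The final claim.} Interior cones play no role. The volume density condition follows from the exterior cones for small $r$ and from boundedness for large $r$, exactly as in Proposition~\ref{prop:VDC_conseq}.

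Your layer-cake proof that $\Per^{(\alpha)}_{\mathcal H}(\Omega)<\infty$ also differs from the paper's. The paper uses the interpolation inequality $\Per^{(\alpha)}_{\mathcal H}(\Omega)\le C_\alpha\Vol(\Omega)^{1-\alpha}\Per_{\mathcal H}(\Omega)^\alpha$ of Proposition~\ref{prop:frac_per_bound}. Your version works once you justify the tube estimate $|\{x\in\Omega:d(x,\partial\Omega)<r\}|\lesssim r$. It follows because the Carnot--Carath\'eodory distance dominates the Euclidean distance on compact sets, which reduces it to the Euclidean tube estimate for compact $C^1$ hypersurfaces. The non-characteristic assumption is not needed for that step.
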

\begin{remark}
    The volume density condition is flexible enough to include various domains whose boundaries are non-smooth or contain characteristic points. We refer to Proposition~\ref{prop:VDC_ball} and Examples~\ref{ex:1}--\ref{ex:3} for details.
\end{remark}

\subsection{Relative heat content of subordinated sub-Laplacian} For a bounded open subset $\Omega\subset \mathbb{G}$, its relative heat content corresponding to the fractional sub-Laplacian $\Delta^{\alpha/2}_\mathcal{H}$ is defined as 
\begin{align*}
    H^{(\alpha)}_\Omega(t)=\int_{\Omega} v_\alpha(t,x)dx,
\end{align*}
where $v_\alpha(t,x)=e^{-t\Delta^{\alpha/2}_\mathcal{H}}\mathbbm{1}_\Omega(x)$ is the mild solution to the Cauchy problem
\begin{align*}
    \partial_tv_\alpha(t,x)&=-\Delta^{\alpha/2}_\mathcal{H} v_\alpha(t,x) \\
    v_\alpha(0,x)&=\mathbbm{1}_\Omega(x).
\end{align*}
When $\alpha=2$, we denote the relative heat content by $H_\Omega(t)$. The small time asymptotics of the relative heat content for compact sub-Riemannian manifolds have been recently obtained by Agrachev, Rizzi, and Rossi \cite{AgrachevRizziRossi2024}. Their method applies to Carnot groups, and one has the following asymptotic expansion
\begin{align*}
    H_\Omega(t)=\Vol(\Omega)-\sqrt{\frac{t}{\pi}}\Per_\mathcal{H}(\Omega)+ O(t^{3/2})
\end{align*}
whenever $\Omega$ has $C^\infty$ boundary with no characteristic points. In the next theorem, we generalize the above asymptotics for the fractional relative heat content.
\begin{theorem}\label{thm:heat_content}
     Let $\Omega$ be a bounded set in $\mathbb{G}$ whose $C^\infty$ boundary $\partial \Omega$ does contain no characteristic points. Then,  the following holds.
     \begin{enumerate}[leftmargin=*]
     \item\label{it:h1} For $1<\alpha< 2$,
     \begin{align*}
         \lim_{t\to 0} \frac{\Vol(\Omega) - H^{(\alpha)}_\Omega(t)}{t^{\frac{1}{\alpha}}}=\frac{1}{\pi}\Gamma\left(1-\frac{1}{\alpha}\right)\Per_\mathcal{H}(\Omega).
     \end{align*}
     \item\label{it:h2} When $\alpha=1$,
     \begin{align*}
         \lim_{t\to 0} \frac{\Vol(\Omega) - H^{(\alpha)}_\Omega(t)}{t\log(1/t)}=\frac{1}{\pi}\Per_\mathcal{H}(\Omega).
     \end{align*}
     \end{enumerate}
    For $0<\alpha<1$ and for any bounded open set $\Omega$ satisfying $\Per^{(\alpha)}_{\mathcal H}(\Omega)<\infty$ we have
    \begin{align*}
        \lim_{t\to 0} \frac{\Vol(\Omega) - H^{(\alpha)}_\Omega(t)}{t}=\Per^{(\alpha)}_{\mathcal H}(\Omega).
    \end{align*}
\end{theorem}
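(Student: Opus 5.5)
The starting point is the identity
\begin{align*}
\Vol(\Omega)-H^{(\alpha)}_\Omega(t)=\int_\Omega\int_{\Omega^c}p^{(\alpha)}(t,x^{-1}y)\,dy\,dx,
\end{align*}
where $p^{(\alpha)}(t,x)=\int_0^\infty p(s,x)\,\eta_t(ds)$ is the subordinated heat kernel and $\eta_t$ is the law of the $\alpha/2$-stable subordinator. It follows from $P^\phi_t\mathbbm 1=1$ and symmetry of the kernel. From here the three regimes are handled separately.

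\textbf{Case $0<\alpha<1$.} I expect this to be the soft case. The L\'evy measure of the subordinated process has density
\begin{align*}
J(x)=\int_0^\infty p(s,x)\,\nu(ds)=\widetilde R_\alpha(x)=\|x\|_\alpha^{-Q-\alpha}.
\end{align*}
The standard fact $t^{-1}p^{(\alpha)}(t,x)\to J(x)$ pointwise for $x\neq e$ follows from $t^{-1}\eta_t\to\nu$ vaguely on $(0,\infty)$ together with the Gaussian bounds \eqref{eq:heat_bound_1}. To pass the limit through the integral I would use dominated convergence, for which I need a uniform bound
\begin{align*}
t^{-1}p^{(\alpha)}(t,z)\le C\min\{t^{-1-Q/\alpha},\ \|z\|^{-Q-\alpha}\}\le C\|z\|^{-Q-\alpha}.
\end{align*}
This comes from subordinating \eqref{eq:heat_bound_1} with the known bound $\eta_t(ds)\lesssim t s^{-1-\alpha/2}ds$ together with scaling $\eta_t(\cdot)=\eta_1(t^{-2/\alpha}\cdot)$, which gives the dilation property $p^{(\alpha)}(t,x)=t^{-Q/\alpha}p^{(\alpha)}(1,\delta_{t^{-1/\alpha}}x)$. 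By \eqref{eq:norm_equiv}, the dominating function is a constant multiple of the integrand defining $\Per^{(\alpha)}_{\mathcal H}(\Omega)$, which is finite by assumption. This gives the claimed limit. I should double-check that the intended kernel in Definition~\ref{def:fract_horiz_per} matches $J$ with the same normalising constant; the exponent convention there looks like it should read $\|x^{-1}y\|_\alpha^{Q+\alpha}$.

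\textbf{Case $\alpha\ge1$.} Subordination converts the relative heat content into the one for the sub-Laplacian:
\begin{align*}
\Vol(\Omega)-H^{(\alpha)}_\Omega(t)=\int_0^\infty\bigl(\Vol(\Omega)-H_\Omega(s)\bigr)\eta_t(ds).
\end{align*}
Write $F(s)=\Vol(\Omega)-H_\Omega(s)$. By the Agrachev--Rizzi--Rossi expansion for $C^\infty$ non-characteristic boundaries, $F(s)=\sqrt{s/\pi}\,\Per_{\mathcal H}(\Omega)+O(s^{3/2})$ for small $s$. Trivially $0\le F(s)\le\Vol(\Omega)$, and in fact $F(s)\le \sqrt{s/\pi}\,\Per_{\mathcal H}(\Omega)$ globally by the standard BV estimate $\|P_s\mathbbm1_\Omega-\mathbbm1_\Omega\|_{L^1}\lesssim\sqrt s\,\Per_{\mathcal H}$. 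So $F(s)\le C\min\{\sqrt s,1\}$.

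For $1<\alpha<2$, scaling gives
\begin{align*}
\int F\,d\eta_t=\mathbb E\bigl[F(t^{2/\alpha}S_1)\bigr].
\end{align*}
Divide by $t^{1/\alpha}$ and use $\mathbb E[S_1^{1/2}]<\infty$, which holds because $1/2<\alpha/2$. Dominated convergence then gives the limit $\pi^{-1/2}\Per_{\mathcal H}(\Omega)\,\mathbb E[S_1^{1/2}]$. Finally, $\mathbb E[S_1^{1/2}]=\Gamma(1-\tfrac1\alpha)/\Gamma(\tfrac12)$ from the moment formula $\mathbb E[S_1^{p}]=\Gamma(1-p/\beta)/\Gamma(1-p)$ with $\beta=\alpha/2$ and $p=1/2$. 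This reproduces the constant $\pi^{-1}\Gamma(1-\tfrac1\alpha)$.

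\textbf{Case $\alpha=1$ (main obstacle).} Here $\mathbb E[S_1^{1/2}]=\infty$, so the logarithm has to be extracted by hand. I would split the integral at $s=\epsilon$.
\begin{itemize}
\item On $\{s>\epsilon\}$ the integrand is bounded, so this piece is at most $\Vol(\Omega)\,\eta_t((\epsilon,\infty))=O(t)$.
\item On $\{s\le\epsilon\}$ I replace $F(s)$ by $\sqrt{s/\pi}\,\Per_{\mathcal H}$ up to an error $Cs^{3/2}$, which contributes $O(t)$.
\end{itemize}
The main term is then $\int_0^\epsilon\sqrt s\,\eta_t(ds)$. Using the explicit $1/2$-stable density
\begin{align*}
\eta_t(ds)=\frac{t}{2\sqrt\pi}\,s^{-3/2}e^{-t^2/(4s)}\,ds,
\end{align*}
this equals $\frac{t}{2\sqrt\pi}\int_0^\epsilon s^{-1}e^{-t^2/4s}\,ds=\frac{t}{\sqrt\pi}\log(1/t)+O(t)$. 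Multiplying by $\Per_{\mathcal H}/\sqrt\pi$ gives $\pi^{-1}\Per_{\mathcal H}(\Omega)\,t\log(1/t)$. The delicate points are making the error terms truly $O(t)$ rather than $o(t\log(1/t))$ only, and confirming the exact normalisation of $\eta_t$ from \eqref{eq:Bernstein_alpha}.
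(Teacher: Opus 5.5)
Your proposal is correct and follows essentially the paper's route in all three regimes: dominated convergence with $t^{-1}p^{(\alpha)}(t,z)\lesssim\|z\|_\alpha^{-Q-\alpha}$ for $0<\alpha<1$; subordination, scaling and the BV bound $F(s)\lesssim\sqrt{s}\,\Per_{\mathcal H}(\Omega)$ for $1<\alpha<2$; and a split at a fixed small time for $\alpha=1$. You are also right that the kernel in Definition~\ref{def:fract_horiz_per} should carry the exponent $Q+\alpha$.

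The differences are minor. For $\alpha=1$ the paper uses only the first-order limit in Theorem~\ref{t:heat_content}, with an $\varepsilon$-squeeze and Park--Song's moment lemma, rather than your explicit $\tfrac12$-stable density and $O(s^{3/2})$ remainder. The sharp constant in your global bound $F(s)\le\sqrt{s/\pi}\,\Per_{\mathcal H}(\Omega)$ does not follow from the BV estimate you cite (the paper gets $\sqrt{Q/2}$), but this is harmless because you only use $F(s)\le C\min\{\sqrt{s},1\}$.
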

\begin{remark}
    When $\mathbb{G}$ is a Carnot group of step 2 and $\alpha=2$, Garofalo and Tralli \cite{GarofaloTralli2023} proved the heat content asymptotics for any bounded open set with finite perimeter. By the same technique used in the proof of the Theorem~\ref{thm:heat_content}, one can prove \eqref{it:h1} and \eqref{it:h2} for any bounded open set having finite perimeter provided that $\mathbb{G}$ is a Carnot group of step 2. It still remains an open question whether the heat content asymptotics hold (even when $\alpha=2$) for bounded open sets of general Carnot groups without assuming any regularity conditions on the boundary.
\end{remark}
We also note that as an interesting consequence of \cite[Theorem~1.1]{AgrachevRizziRossi2024}, one obtains the following version of the Bourgain-Brezis-Mironescu-D\'avila theorem for Carnot groups which was originally proved by Garofalo and Tralli \cite{GarofaloTralli2023} for step-2 Carnot groups.
\begin{corollary}\label{cor:BBMD}
    Let $\Omega\subset \mathbb{G}$ be a bounded domain with smooth boundary with no characteristic points. Then, 
    \begin{align*}
    \lim_{\alpha\nearrow 1} (1-\alpha) \Per^{(\alpha)}_\mathcal{H}(\Omega)=\frac{1}{\pi}\Per_\mathcal{H}(\Omega).
    \end{align*}
    Also, for any measurable set $\Omega$ with $\Vol(\Omega)<\infty$ and $\Per_{\mathcal{H}}(\Omega)<\infty$,
    \begin{align*}
    \lim_{\alpha\searrow 0} \Per^{(\alpha)}_{\mathcal H}(\Omega)=\Vol(\Omega).
    \end{align*}
\end{corollary}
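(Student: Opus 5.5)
The plan is to read both limits off the relative heat content asymptotics of Theorem~\ref{thm:heat_content}, using the relation between the relative heat content and the fractional perimeter for each fixed $\alpha$.

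\textbf{First limit ($\alpha\nearrow 1$).} Theorem~\ref{thm:heat_content} has no $\alpha$-uniformity, so the $t\to0$ and $\alpha\nearrow1$ limits cannot simply be interchanged. Instead I will use a subordination identity that is exact for each $\alpha$. Write $\eta^{(\alpha)}_t$ for the law of the $\alpha/2$-stable subordinator at time $t$. Then
\begin{align*}
\Vol(\Omega)-H^{(\alpha)}_\Omega(t)=\int_0^\infty \left(\Vol(\Omega)-H_\Omega(s)\right)\eta^{(\alpha)}_t(ds).
\end{align*}
From the L\'evy measure \eqref{eq:Bernstein_alpha} and the Fubini argument behind the $0<\alpha<1$ case of Theorem~\ref{thm:heat_content}, I expect the identity
\begin{align*}
\Per^{(\alpha)}_{\mathcal H}(\Omega)=\frac{\alpha}{2\Gamma(1-\frac\alpha2)}\int_0^\infty s^{-1-\frac\alpha2}\left(\Vol(\Omega)-H_\Omega(s)\right)ds .
\end{align*}
This holds because $\Vol(\Omega)-H_\Omega(s)=\int_\Omega\int_{\Omega^c}p(s,x^{-1}y)\,dy\,dx$ and $\widetilde R_\alpha=\|\cdot\|_\alpha^{-(Q+\alpha)}$. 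I note that Definition~\ref{def:fract_horiz_per} writes the kernel as $\|x^{-1}y\|_\alpha^{-1}$. I read this as $\|x^{-1}y\|_\alpha^{-(Q+\alpha)}$, matching the p.v.\ formula; otherwise the stated limits would not hold.

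The \cite{AgrachevRizziRossi2024} expansion gives $\Vol(\Omega)-H_\Omega(s)=\sqrt{s/\pi}\,\Per_{\mathcal H}(\Omega)+O(s^{3/2})$ as $s\to0$. Also $0\le \Vol(\Omega)-H_\Omega(s)\le \Vol(\Omega)$. I split the integral at $s=1$.
\begin{enumerate}[leftmargin=*]
\item The tail over $[1,\infty)$ is bounded by $\Vol(\Omega)\cdot\frac{\alpha}{2\Gamma(1-\alpha/2)}\cdot\frac{2}{\alpha}$. This stays bounded, so multiplying by $(1-\alpha)$ sends it to $0$.
\item The $O(s^{3/2})$ term contributes $\int_0^1 s^{1/2-\alpha/2}\,ds$, which is uniformly bounded. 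It also disappears after multiplying by $(1-\alpha)$.
\item The main term is
\begin{align*}
\frac{\alpha}{2\Gamma(1-\frac\alpha2)}\,\frac{\Per_{\mathcal H}(\Omega)}{\sqrt\pi}\int_0^1 s^{-\frac12-\frac\alpha2}\,ds
=\frac{\alpha}{2\Gamma(1-\frac\alpha2)}\,\frac{\Per_{\mathcal H}(\Omega)}{\sqrt\pi}\,\frac{2}{1-\alpha}.
\end{align*}
Multiplying by $(1-\alpha)$ and letting $\alpha\to1$ gives $\frac{1}{\Gamma(1/2)\sqrt\pi}\Per_{\mathcal H}(\Omega)=\frac1\pi\Per_{\mathcal H}(\Omega)$.
\end{enumerate}

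\textbf{Second limit ($\alpha\searrow0$).} I use the same identity. For $\Per_{\mathcal H}(\Omega)<\infty$ I need a global bound $\Vol(\Omega)-H_\Omega(s)\le C\sqrt s\,\Per_{\mathcal H}(\Omega)$ for all $s$. This is the standard estimate $\|P_s\mathbbm 1_\Omega-\mathbbm 1_\Omega\|_{L^1}\lesssim \sqrt s\,|D_{\mathcal H}\mathbbm 1_\Omega|(\mathbb G)$, which holds on Carnot groups by the gradient bounds for the heat kernel.
\begin{enumerate}[leftmargin=*]
\item On $[0,1]$, this bound times $s^{-1-\alpha/2}$ is integrable uniformly in $\alpha$, while the prefactor $\frac{\alpha}{2\Gamma(1-\alpha/2)}\to0$. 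So this part vanishes.
\item On $[1,\infty)$, I claim $H_\Omega(s)\to0$ as $s\to\infty$. Indeed $H_\Omega(s)\le \|p(s,\cdot)\|_\infty\Vol(\Omega)^2\lesssim s^{-Q/2}$ by \eqref{eq:heat_bound_1}.
\item Then $\frac{\alpha}{2\Gamma(1-\frac\alpha2)}\int_1^\infty s^{-1-\frac\alpha2}\left(\Vol(\Omega)-H_\Omega(s)\right)ds$ has weight $\frac{\alpha}{2}s^{-1-\alpha/2}\,ds$ on $[1,\infty)$, whose total mass is $1$ and which concentrates at infinity as $\alpha\to0$. Hence this part tends to $\Vol(\Omega)\lim_{s\to\infty}(1-H_\Omega(s)/\Vol(\Omega))=\Vol(\Omega)$, since $\Gamma(1)=1$.
\end{enumerate}

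\textbf{Main obstacle.} The hardest step is justifying the subordination identity and its Fubini interchange for general measurable $\Omega$. The global $\sqrt s$ perimeter bound on arbitrary Carnot groups is also delicate; I would cite it from BV theory on Carnot groups rather than prove it.
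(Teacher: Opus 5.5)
Your proposal is correct and follows essentially the paper's route. The paper also uses the representation $\Per^{(\alpha)}_{\mathcal H}(\Omega)=\frac{\alpha}{2\Gamma(1-\alpha/2)}\int_0^\infty s^{-1-\alpha/2}\bigl(\Vol(\Omega)-H_\Omega(s)\bigr)\,ds$ from Lemma~\ref{lem:fract_perimeter}, and your reading of the kernel as $\|x^{-1}y\|_\alpha^{-(Q+\alpha)}$ is the intended one. It splits the integral into small and large $s$, uses Theorem~\ref{t:heat_content} for $\alpha\nearrow1$, and uses Lemma~\ref{prop:heat_content_upperbound} together with $H_\Omega(s)\lesssim s^{-Q/2}$ for $\alpha\searrow0$. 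The differences are cosmetic: you use a fixed split at $s=1$ with the $O(s^{3/2})$ remainder (enlarge its constant to cover all of $(0,1]$), where the paper uses an $\varepsilon$--$\delta$ split needing only the limit. The ``main obstacle'' you flag is just Tonelli for a nonnegative integrand.
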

\begin{remark}
The normalization used here differs from the heat-Besov normalization
of Garofalo and Tralli \cite{GarofaloTralli2023}. If
$\mathfrak P_{H,s}$ denotes their nonlocal horizontal perimeter, then
\begin{align*}
\Per_{\mathcal H}^{(\alpha)}(\Omega)
=
\frac{\alpha}{4\Gamma(1-\alpha/2)}
\mathfrak P_{H,\alpha/2}(\Omega).
\end{align*}
Hence, their limit formula
\begin{align*}
\lim_{s\nearrow1/2}
(1-2s)\mathfrak P_{H,s}(\Omega)
=
\frac{4}{\sqrt{\pi}}\Per_{\mathcal H}(\Omega)
\end{align*}
is equivalent to
\begin{align*}
\lim_{\alpha\nearrow1}
(1-\alpha)\Per_{\mathcal H}^{(\alpha)}(\Omega)
=
\frac{1}{\pi}\Per_{\mathcal H}(\Omega).
\end{align*}
Their result applies to finite-perimeter sets in step-two Carnot
groups, while ours holds in arbitrary Carnot groups under smoothness
and non-characteristic assumptions on the boundary.
\end{remark}

\section{Probabilistic preliminaries: hypoelliptic Brownian motion and subordination}\label{s:probability}
\subsection{Hypoelliptic Brownian motion} The negative of the sub-Laplacian $-\Delta_{\mathcal H}$ defined in \eqref{eq:sub-Laplacian} is the generator of a Markov process $B=(B_t)_{t\ge 0}$ on $\mathbb{G}$. This is known as the \emph{hypoelliptic Brownian motion} on $\mathbb G$, which satisfies the following properties analogous to those of Brownian motion in Euclidean spaces:
\begin{enumerate}[leftmargin=*]
\item For any $s\ge 0$, the processes $\left(B^{-1}_s B_{t+s}\right)_{t\ge 0}$ and $(B_t)_{t\ge 0}$ have the same law, and are independent.
\item The sample paths of $B$ are almost surely continuous.
\end{enumerate}
With the identification $\mathbb{G}\cong \mathbb{R}^{m_1}\times\cdots\times \mathbb{R}^{m_k}$, the hypoelliptic Brownian motion $B_t$ can be realized as the solution to the following Stratonovich stochastic differential equation:
\begin{align*}
    dB_t &=\sum_{i=1}^m X_i(B_t) \circ dW^{(i)}_t, \ X_0=x,
\end{align*}
where $W^{(1)}, \ldots, W^{(m)}$ are $m$ independent Brownian motions on $\mathbb{R}$ with $\mathrm{Var}(W^{(i)}_t)=2t$ for each $i=1,\ldots, m$. The heat kernel $p(t,x,y):=p(t,x^{-1} y)$ introduced in \eqref{eq:heat_kernel} coincides with the transition density of $B_t$, and for any bounded measurable function $f$ on $\mathbb G$, the heat semigroup $(P_t)_{t\ge 0}$ can be realized as 
\begin{align*}
    P_t f(x) =\mathbb{E}_x\left[f(B_t)\right] \quad \mbox{for all $x\in\mathbb G, \ t\ge 0$}.
\end{align*}

\subsection{Subordination}\label{s.Subordination}

Let $\phi$ be the Bernstein function introduced in
\eqref{eq:Bernstein}. There exists a convolution semigroup
$(\eta_t)_{t\geq0}$ of sub-probability measures on $[0,\infty)$ such
that
\begin{align}\label{eq:LK_subordinator}
\int_{[0,\infty)}e^{-us}\,\eta_t(ds)
=
e^{-t\phi(u)}
\qquad
\text{for every }t,u\geq0.
\end{align}
Moreover,
\begin{align*}
\eta_t([0,\infty))
=
e^{-t\phi(0)}
=
e^{-at}.
\end{align*}
In particular, if $a=0$, then each $\eta_t$ is a probability measure
and there exists a conservative, translation invariant, nonnegative and increasing Markov process
$S=(S_t)_{t\geq0}$ such that
\begin{align*}
\eta_t(ds)=\mathbb P(S_t\in ds)
\end{align*}
and
\begin{align*}
\mathbb E\left[e^{-uS_t}\right]
=
e^{-t\phi(u)}.
\end{align*}
Such Markov processes are known as \emph{subordinators}.
If $a>0$, the corresponding subordinator is killed at rate $a$. We
refer to \cite{Sato_Book} for the general theory of subordinators. By \cite[Theorem~31.5]{Sato_Book},  the generator of $S_t$ in $C_0([0,\infty))$ is given by 
\begin{equation}\label{eq:subordinator_generator}
\begin{aligned}
    &\mathcal{A}_\phi u(r)=-au(r)+b u'(r)+\int_0^\infty (u(s+r)-u(r))\nu(ds), \ \mbox{and} \\
    &C^2_0([0,\infty)):=\{u\in C^2([0,\infty)): u, u', u'' \ \mbox{vanish at $0$ and $\infty$}\}\subseteq \mathcal{D}(\mathcal{A}_\phi)
\end{aligned}
\end{equation}
where $a,b,\nu$ are given by \eqref{eq:Bernstein}.

Let $B=(B_t)_{t\geq0}$ be the hypoelliptic Brownian motion on
$\mathbb G$, whose transition semigroup is
\begin{align*}
P_t=e^{-t\Delta_{\mathcal H}},
\end{align*}
and assume that $B$ is independent of the subordinator. When $a=0$,
define the time-changed process
\begin{align}\label{eq:time-change}
B_t^\phi:=B_{S_t}.
\end{align}
When $a>0$, the same definition is used before the lifetime of the
subordinator, after which the process is sent to a cemetery state.
The transition semigroup of the subordinated process is given by
\begin{align*}
P_t^\phi f(x)
:=
\mathbb E_x
\bigl[f(B_t^\phi);\,t<\zeta_S\bigr],
\end{align*}
where $\zeta_S$ denotes the lifetime of the subordinator. By
conditioning with respect to the time change and using independence,
we obtain
\begin{align}\label{eq:Bochner_subordination}
P_t^\phi f
=
\int_{[0,\infty)}P_sf\,\eta_t(ds).
\end{align}
For $f\in L^2(\mathbb G,dx)$, the integral in
\eqref{eq:Bochner_subordination} is understood in the Bochner sense.
The semigroup $(P_t^\phi)_{t\geq0}$ is a strongly continuous
self-adjoint contraction semigroup on $L^2(\mathbb G,dx)$. By the
Bochner-Phillips functional calculus (see \cite{Phillips1952}),
\begin{align*}
P_t^\phi
=
e^{-t\phi(\Delta_{\mathcal H})}
=
e^{-t\Delta_{\mathcal H}^\phi},
\end{align*}
where $\Delta_{\mathcal H}^\phi$ is the non-negative self-adjoint
operator defined in \eqref{eq:subordinated_subL}. Thus, the
infinitesimal generator of $(P_t^\phi)_{t\geq0}$ is
$
-\Delta_{\mathcal H}^\phi.
$
From \eqref{eq:Bochner_subordination} it follows that the Haar measure satisfies
\begin{align*}
\int_{\mathbb G}P_t^\phi f(x)\,dx
=
e^{-at}
\int_{\mathbb G}f(x)\,dx
\end{align*}
for every $f\in L^1(\mathbb G,dx)$. Consequently, when $a=0$, the
Haar measure is invariant and the subordinated semigroup is
conservative. The process $B^\phi$ defined by \eqref{eq:time-change} is a possibly killed L\'evy process on
$\mathbb G$, that is,
\begin{enumerate}[leftmargin=*]
\item for every $s\geq0$, the process
\begin{align*}
\left(
(B_s^\phi)^{-1}B_{s+t}^\phi
\right)_{t\geq0}
\end{align*}
has the same distribution as $(B_t^\phi)_{t\geq0}$ and is independent
of the natural filtration up to time $s$, and 

\item the mapping
$t\mapsto B_t^\phi$ is continuous in probability. 
\end{enumerate}
We refer to
\cite{LiaoMingBookLevyProcessesinLieGroups} for a detailed account of the theory L\'evy processes on Lie groups.

When the integrability condition \eqref{eq:Bernstein_cond} holds, the transition measure of $B^\phi$ starting from the identity admits
the decomposition
\begin{align*}
P_t^\phi(e,dx)
=
\eta_t(\{0\})\delta_e(dx)
+
p^\phi(t,x)dx,
\end{align*}
where
\begin{align}\label{eqn.SubordinatedHK}
p^\phi(t,x)
:=
\int_{(0,\infty)}p(s,x)\,\eta_t(ds).
\end{align}
In particular, if $\eta_t(\{0\})=0$, then $p^\phi(t,\cdot)$ is the
transition density of $B^\phi$ with respect to the Haar measure.

If $b=0$, then, apart from the possible killing, $B^\phi$ is a
pure-jump process. If $b>0$, it also has a diffusion component with
generator $-b\Delta_{\mathcal H}$. According to
\cite[p.~16]{LiaoMingBookLevyProcessesinLieGroups} (see also \cite{AlbeverioGordina2007} for matrix Lie groups), the L\'evy
measure $\Pi_\phi$ of $B^\phi$ is characterized by
\begin{align*}
\lim_{t\searrow0}
\frac{P_t^\phi f(e)}{t}
=
\int_{\mathbb G\setminus\{e\}}
f(x)\,\Pi_\phi(dx)
\end{align*}
for every $f\in C_c^\infty(\mathbb G\setminus\{e\})$.
\begin{lemma}
    For any Borel set $A\subset \mathbb{G}\setminus \{e\}$, 
    \begin{align*}
        \Pi_\phi(A)=\int_{0}^\infty\left(\int_{A} p(s,x)dx\right)\nu(ds).
    \end{align*}
\end{lemma}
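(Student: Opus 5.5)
The plan is to identify the limit defining $\Pi_\phi$ directly from the subordination formula \eqref{eq:Bochner_subordination}. Fix $f\in C_c^\infty(\mathbb G\setminus\{e\})$ with compact support $K$, and let $\delta:=\operatorname{dist}(e,K)>0$ in the homogeneous norm. Then
\begin{align*}
\frac{P_t^\phi f(e)}{t}=\frac1t\int_{[0,\infty)}P_sf(e)\,\eta_t(ds)=\frac1t\int_{[0,\infty)}g(s)\,\eta_t(ds),\qquad g(s):=\int_{\mathbb G}f(x)p(s,x)\,dx ,
\end{align*}
with $g(0):=f(e)=0$. The key claim is that $\frac1t\int g\,d\eta_t\to\int_0^\infty g(s)\,\nu(ds)$. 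Once this holds for every such $f$, the measure $A\mapsto\int_0^\infty\int_A p(s,x)\,dx\,\nu(ds)$ agrees with $\Pi_\phi$ on $C_c^\infty(\mathbb G\setminus\{e\})$. By Fubini its integral against $f$ is exactly $\int g\,d\nu$. Both measures are Radon on $\mathbb G\setminus\{e\}$: the new one is finite on sets away from $e$ by the bound below, and $\Pi_\phi$ is Radon as a Lévy measure. Uniqueness in the Riesz representation then gives equality on all Borel $A$.

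To prove the claim, I will use the generator \eqref{eq:subordinator_generator} of the subordinator. Since $\eta_t$ has total mass $e^{-at}$,
\[
\frac1t\int g\,d\eta_t=\frac{\mathbb E[g(S_t);t<\zeta_S]-g(0)}{t}\to \mathcal A_\phi g(0)=-a g(0)+b g'(0)+\int_0^\infty (g(s)-g(0))\,\nu(ds),
\]
provided $g\in\mathcal D(\mathcal A_\phi)$, for instance $g\in C^2_0([0,\infty))$. The Gaussian bound \eqref{eq:heat_bound_1} gives $|g(s)|\le c\|f\|_\infty \Vol(K)\, s^{-Q/2}e^{-\delta^2/(cs)}$. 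So $g$ and all its $s$-derivatives vanish to infinite order at $s=0$; for the derivatives I use $\partial_s p=-\Delta_{\mathcal H}p$ and move the $X_i$ onto $f$, so that $g^{(j)}(s)=\int (-\Delta_{\mathcal H})^jf\,p(s,\cdot)$, which has the same type of bound. Likewise $g$ and its derivatives tend to $0$ as $s\to\infty$, since $p(s,x)\le cs^{-Q/2}$. Hence $g\in C_0^2([0,\infty))$ with $g(0)=g'(0)=0$, and the limit reduces to $\int_0^\infty g\,d\nu$, as required.

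The main obstacle is justifying regularity of $g$ near $s=0$, and convergence of $\int g\,d\nu$ near $0$, where $\nu$ may be infinite. The bound $|g(s)|\lesssim e^{-\delta^2/(cs)}s^{-Q/2}\le C s$ near $0$, together with $\int\min(1,r)\,\nu(dr)<\infty$, handles integrability. Smoothness at $0$ uses the derivative identity for the hypoelliptic heat kernel. If the domain statement in \eqref{eq:subordinator_generator} is deemed insufficient, I would instead argue directly. Split $\frac1t\int g\,d\eta_t$ using $\frac1t\eta_t|_{(\epsilon,\infty)}\to\nu|_{(\epsilon,\infty)}$ vaguely at continuity points $\epsilon$, which is the standard characterization of the Lévy measure of a subordinator. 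Then control the part on $[0,\epsilon]$ by $\frac1t\int_{[0,\epsilon]}Cs\,\eta_t(ds)$, which is $\lesssim b+\int_0^\epsilon r\,\nu(dr)$. That bound only shows the $[0,\epsilon]$ part is bounded, not small, which is why I use a bound $o(s)$ instead. Since $e^{-\delta^2/(cs)}$ decays faster than any power, $|g(s)|\le C_\epsilon' s\,\theta(\epsilon)$ on $[0,\epsilon]$ with $\theta(\epsilon)\to0$. The contribution from $[0,\epsilon]$ is then at most $\theta(\epsilon)\bigl(b+\int_0^\epsilon r\,\nu(dr)\bigr)$, which vanishes as $\epsilon\to0$.
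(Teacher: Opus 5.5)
Your proposal is correct and follows essentially the same route as the paper. You show that $g(s)=P_sf(e)$ belongs to $C^2_0([0,\infty))$ with all derivatives vanishing at $0$, so the limit equals $\mathcal A_\phi g(0)=\int_0^\infty g\,d\nu$. The only differences are that you obtain the derivative bounds by moving $\Delta_{\mathcal H}$ onto $f$ instead of citing the time-derivative heat kernel estimate, and you spell out the Riesz uniqueness step that the paper leaves implicit.
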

\begin{proof}
By \cite[Theorem~IV.4.2]{VaropoulosSaloff-CosteCoulhonBook1992} it is known that the function $t\mapsto p(t,x)$ is smooth for each $x\in\mathbb G$, and there exists $C>0$ such that
\begin{align}\label{eq:heat_deriv_bound}
    \left|\frac{\partial^m}{\partial t^m} p(t,x)\right|\le C t^{-m-\frac{Q}{2}} e^{-\frac{d_c(x)^2}{8t}} \quad \mbox{for all $x\in \mathbb G$},
\end{align}
where $d_c(x)$ is the Carnot-Carath\'eodory distance between the identity element $e$ and $x$.
Hence for any $f\in C^\infty_c(\mathbb G\setminus \{e\})$ the function
\begin{align*}
    t\longmapsto P_t f(e)=\int_{\mathbb G} f(x) p(t,x)dx\in C^\infty([0,\infty)),
\end{align*}
and all derivatives of $t\mapsto P_t f(e)$ vanish at $t=0$ and $t=\infty$. By \eqref{eq:subordinator_generator}, $t\mapsto P_t f(e)\in \mathcal{D}(\mathcal{A}_\phi)$. As a result, 
\begin{align*}
    \lim_{t\to 0}\frac{P^\phi_t f(e)}{t}=\lim_{t\to 0}\frac{\mathbb{E}_0[P_{S_t} f(e)]- P_0 f(e)}{t}
    =\int_0^\infty P_s f(e) \nu(ds).
\end{align*}
Since 
\begin{align*}
    \int_0^\infty P_s f(e) \nu(ds)=\int_0^\infty \int_{\mathbb G} f(x) p(s,x)dx\nu(ds),
\end{align*}
we conclude that $\Pi_\phi(A)=\int_{0}^\infty \int_{A}p(s,x)\nu(ds)$. This concludes the proof of the lemma.
\end{proof} 

Let $\Omega\subset\mathbb G$ be a bounded open set, and let
$\Delta_{\mathcal H}^{\phi,\Omega}$ be the non-negative subordinated
sub-Laplacian with exterior Dirichlet condition introduced in
Definition~\ref{def:dirichlet_subL}. Define
\begin{align*}
\tau_\Omega
:=
\inf\{t>0:B_t^\phi\notin\Omega\}
\end{align*}
and
\begin{align*}
B_t^{\phi,\Omega}
:=
\begin{cases}
B_t^\phi,
& t<\tau_\Omega,\\
\partial,
& t\geq\tau_\Omega,
\end{cases}
\end{align*}
where $\partial$ denotes the cemetery state.

The transition semigroup of the killed process is
\begin{align*}
P_t^{\phi,\Omega}f(x)
=
\mathbb E_x
\left[
f(B_t^\phi);\,t<\tau_\Omega
\right]
=
e^{-t\Delta_{\mathcal H}^{\phi,\Omega}}f(x).
\end{align*}
Consequently, the infinitesimal generator of the killed process is $-\Delta_{\mathcal H}^{\phi,\Omega}.$
In particular,
\begin{align*}
e^{-t\Delta_{\mathcal H}^{\phi,\Omega}}
\mathbbm{1}_\Omega(x)
=
\mathbb P_x(\tau_\Omega>t),
\qquad
x\in\Omega.
\end{align*}
Hence, the Dirichlet heat content of $\Omega$ associated with the
subordinated sub-Laplacian is
\begin{align*}
\widetilde H_\Omega^\phi(t)
=
\int_\Omega
\mathbb P_x(\tau_\Omega>t)\,dx.
\end{align*}
Similarly, the relative heat content associated with the global
subordinated process is
\begin{align*}
H_\Omega^\phi(t)
=
\int_\Omega
\mathbb P_x(B_t^\phi\in\Omega)\,dx
=
\left\langle
P_t^\phi\mathbbm 1_\Omega,
\mathbbm 1_\Omega
\right\rangle_{L^2(\mathbb G,dx)}.
\end{align*}
From the above representations it follows that for all $t\ge 0$,
\begin{align*}
    0\le \widetilde{H}^\phi_\Omega(t)\le H^\phi_\Omega(t).
\end{align*}

\subsection{Stable subordinators and fractional sub-Riemannian heat kernel}
As noted before in \eqref{eq:Bernstein_alpha}, for any $\alpha\in (0,2)$, the function $\phi_\alpha(u)=u^{\alpha/2}$ is a Bernstein function. The unique L\'evy subordinator $S^{(\alpha)}=(S^{(\alpha)}_t)_{t\ge 0}$ corresponding to $\phi_\alpha$ is also a stable process, that is, for any $t>0$,
\begin{align}\label{eq:self-similarity}
        S^{(\alpha)}_t\overset{d}{=} t^{\frac{2}{\alpha}} S^{(\alpha)}_1.
    \end{align}
Because of the above scaling property, $S^{(\alpha)}$ is called a \emph{stable subordinator}. Therefore, the time-changed hypoelliptic Brownian motion 
\begin{align*}
B^{(\alpha)}_t := B_{S^{(\alpha)}_t}
\end{align*}
is a Markov process with generator given by $-\Delta_{\mathcal H}^{\alpha/2}$.
\begin{notation}
Throughout the article, we denote the Markov semigroup corresponding to $B^{(\alpha)}_t$ by $P^{(\alpha)}_t$, and its transition kernel by $p^{(\alpha)}(t,x)$ for any $t>0$ and any $x\in \mathbb G$.
\end{notation}
\begin{definition}\label{def:frac_heat_kernel}
    The transition kernel $p^{(\alpha)}(t,\cdot)$ is called the \emph{fractional hypoelliptic heat kernel}.
\end{definition}
We end this section with an upper bound and asymptotics for the fractional hypoelliptic heat kernel.
\begin{proposition}\label{prop:heat_asymp}
For $0<\alpha<2$, there exists a constant $c>0$ such that for all $x\in \mathbb G$ and $t>0$,
\begin{align}\label{eq:frac_heat_bound}
     p^{(\alpha)}(t,x)\le  \frac{ct}{\|x\|^{Q+\alpha}_\alpha}.
\end{align}
Moreover, for all $x\in\mathbb{G}$,
\begin{align}\label{eq:heat_asymp}
    \lim_{t\to 0} \frac{p^{(\alpha)}(t,x)}{t}=\frac{1}{\|x\|^{Q+\alpha}_\alpha}.
\end{align}
\end{proposition}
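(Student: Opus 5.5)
The plan is to use the subordination formula \eqref{eqn.SubordinatedHK} with the stable subordinator $S^{(\alpha)}$:
\begin{align*}
p^{(\alpha)}(t,x)=\int_0^\infty p(s,x)\,\eta^{(\alpha)}_t(ds),
\end{align*}
where $\eta^{(\alpha)}_t$ is the law of $S^{(\alpha)}_t$. This law has a density $g_t(s)$, with $g_t(s)=t^{-2/\alpha}g_1(st^{-2/\alpha})$ by \eqref{eq:self-similarity}. Two classical facts about the one-sided $\alpha/2$-stable density will be used.
\begin{itemize}
\item[(i)] The uniform bound $g_t(s)\le C\,t\,s^{-1-\alpha/2}$ holds for all $s,t>0$. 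By scaling, this reduces to $g_1(u)\le Cu^{-1-\alpha/2}$. That follows from the tail asymptotics $g_1(u)\sim \frac{\alpha/2}{\Gamma(1-\alpha/2)}u^{-1-\alpha/2}$ as $u\to\infty$, together with the boundedness of $u^{1+\alpha/2}g_1(u)$ near $0$, since $g_1$ vanishes faster than any power there.
\item[(ii)] The pointwise limit is
\begin{align*}
\lim_{t\to0}\frac{g_t(s)}{t}=\frac{\alpha}{2\Gamma(1-\frac\alpha2)}s^{-1-\frac\alpha2}
\end{align*}
for each $s>0$, which is the Lévy density $\nu$ in \eqref{eq:Bernstein_alpha}. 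This follows from the same tail asymptotics, applied with $u=st^{-2/\alpha}\to\infty$.
\end{itemize}

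For the bound \eqref{eq:frac_heat_bound}, insert (i) into the subordination formula:
\begin{align*}
p^{(\alpha)}(t,x)\le C\,t\int_0^\infty s^{-1-\alpha/2}p(s,x)\,ds=C' t\,\widetilde R_\alpha(x)=\frac{C't}{\|x\|_\alpha^{Q+\alpha}}.
\end{align*}
The last two equalities are just Definition~\ref{def:fract_horiz_per}. It remains to check that $\widetilde R_\alpha(x)$ is finite for $x\neq e$. Using \eqref{eq:heat_bound_1}, the integrand is $O(s^{-1-\alpha/2-Q/2}e^{-\|x\|^2/(cs)})$ near $0$ and $O(s^{-1-\alpha/2-Q/2})$ near $\infty$, so it is integrable at both ends. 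Substituting $s=\|x\|^2r$ also gives the homogeneity $\widetilde R_\alpha(x)\asymp\|x\|^{-Q-\alpha}$, consistent with \eqref{eq:norm_equiv}. For $x=e$ the bound is trivial.

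For the limit \eqref{eq:heat_asymp}, fix $x\neq e$ and write
\begin{align*}
\frac{p^{(\alpha)}(t,x)}{t}=\int_0^\infty p(s,x)\,\frac{g_t(s)}{t}\,ds.
\end{align*}
By (ii), the integrand converges pointwise to $p(s,x)\,\nu(s)$. By (i), it is dominated by $C\,p(s,x)s^{-1-\alpha/2}$, which is integrable by the computation above. Dominated convergence then yields $\widetilde R_\alpha(x)=\|x\|_\alpha^{-Q-\alpha}$. At $x=e$ both sides equal $+\infty$: the right side by convention, and the left side because $p^{(\alpha)}(t,e)\ge c\,t^{-Q/\alpha}$ by scaling, so $p^{(\alpha)}(t,e)/t\to\infty$.

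The main obstacle is step (i): getting a uniform-in-$(s,t)$ bound on the stable subordinator density rather than only its asymptotics. I plan to cite the standard series expansion and tail behaviour of one-sided stable densities (Pollard, or Sato's book). An alternative that avoids explicit densities uses $\eta_t([s,\infty))\le C t s^{-\alpha/2}$ and integrates by parts against the monotone decreasing tail $s\mapsto p(s,x)$ for large $s$. The domination is easier through (i), so I would go with it.
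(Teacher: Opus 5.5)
Your proof is correct. For the bound \eqref{eq:frac_heat_bound} you do essentially what the paper does. Both arguments combine the stable-density estimate $\eta^{(\alpha)}_1(u)\le C\min\{1,u^{-1-\alpha/2}\}$ (the paper's \eqref{eq:stable_asymptotic}) with self-similarity to get $\eta^{(\alpha)}_t(s)\le C t s^{-1-\alpha/2}$. The paper then inserts the Gaussian upper bound \eqref{eq:heat_bound} and evaluates the resulting integral. You instead notice that $\int_0^\infty s^{-1-\alpha/2}p(s,x)\,ds$ equals $\widetilde R_\alpha(x)=\|x\|_\alpha^{-Q-\alpha}$ up to the factor $\frac{\alpha}{2\Gamma(1-\alpha/2)}$. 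That is slightly more direct and uses the heat kernel bounds only to check finiteness.

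For the limit \eqref{eq:heat_asymp} the routes genuinely differ. The paper treats $s\mapsto p(s,x)$ as an element of the domain of the generator $\mathcal A_\alpha$ of $S^{(\alpha)}$, which uses the time-derivative bounds \eqref{eq:heat_deriv_bound} and $C^2_0\subset\mathcal D(\mathcal A_\alpha)$. Then $\frac1t\mathbb E[p(S^{(\alpha)}_t,x)]\to \mathcal A_\alpha p(\cdot,x)(0)$, which equals $\widetilde R_\alpha(x)$ because $p(0,x)=0$. You instead use the sharp tail asymptotic of the one-sided stable density to get $\eta^{(\alpha)}_t(s)/t\to\frac{\alpha}{2\Gamma(1-\alpha/2)}s^{-1-\alpha/2}$ pointwise. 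You then conclude by dominated convergence, reusing the bound from the first part as the dominating function.

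What each route buys:
\begin{itemize}
\item Your route is elementary and needs no regularity of $s\mapsto p(s,x)$ beyond integrability. However, it needs the exact asymptotic constant of $\eta^{(\alpha)}_1$ at infinity, not just the upper bound the paper cites.
\item The paper's generator argument needs no density asymptotics and works verbatim for any subordinator; it is the same computation as the L\'evy-measure lemma in Section~\ref{s.Subordination}. The price is verifying the domain condition.
\end{itemize}
You also treat $x=e$ explicitly, which the paper's proof passes over: its use of $p(0,x)=0$ only makes sense for $x\neq e$.
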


\begin{proof}
    Using the two-sided heat kernel estimate \eqref{eq:heat_bound_1} for $p(t,\cdot)$ and the equivalence of norms in \eqref{eq:norm_equiv}, we have the following two-sided estimate of the sub-Riemannian heat kernel:
    \begin{align}\label{eq:heat_bound}
        c^{-1} t^{-\frac{Q}{2}} e^{-\frac{\|x\|^2_\alpha}{ct}}\le p(t,x)\le c t^{-\frac{Q}{2}} e^{-\frac{c\|x\|^2_\alpha}{t}}.
    \end{align}
    From \cite[Proposition~28.3]{Sato_Book} it follows that $S^{(\alpha)}_1$ is absolutely continuous, and let us denote its density by $\eta^{(\alpha)}_1$.
    In particular, from \cite[p.~97, Eq (5.19)]{Bogdan_et_al2009} it is known that for all $s>0$,
    \begin{align}\label{eq:stable_asymptotic}
        \eta^{(\alpha)}_1(s)\le C\min\{1, s^{-1-\frac{\alpha}{2}}\}
    \end{align}
    for some positive constant $C$.
    Using \eqref{eq:heat_bound} and \eqref{eq:stable_asymptotic} along with the self-similarity in \eqref{eq:self-similarity} we obtain
    \begin{align*}
        p^{(\alpha)}(t,x)\le t\int_0^\infty c s^{-\frac{Q}{2}} e^{-\frac{\|x\|^2_\alpha}{cs}} s^{-1-\frac{\alpha}{2}}ds.
    \end{align*}
    Evaluating the above integral, we conclude the proof of \eqref{eq:frac_heat_bound}.
    
    To prove \eqref{eq:heat_asymp}, let $\mathcal{A}_\alpha$ denote the infinitesimal generator of $S^{(\alpha)}_t$ in $C_0((0,\infty))$, the space of all continuous functions on $(0,\infty)$ vanishing at $0$ and $\infty$. Then by \eqref{eq:subordinator_generator}, $C^2_0((0,\infty))\subseteq \mathcal{D}(\mathcal{A}_\alpha)$, and
    \begin{align*}
        \mathcal{A}_\alpha u(t)=\frac{\alpha}{2\Gamma\left(1-\frac\alpha 2\right)}\int_0^\infty\frac{u(t+s)-u(t)}{s^{1+\frac{\alpha}{2}}} ds, \quad u\in C^2_0([0,\infty)).
    \end{align*}
     Since for any $x\in\mathbb G$, $t\mapsto p(t,x)\in C^\infty_0([0,\infty))$ and \eqref{eq:heat_deriv_bound} holds, it follows that $t\mapsto p(t,x)\in \mathcal{D}(\mathcal{A}_\alpha)$.
    Since $p(0,x)=0$ for all $x\in\mathbb G$, we have
    \begin{align*}
        \lim_{t\to 0}\frac{1}{t}p^{(\alpha)}(t,x)&=\lim_{t\to 0}\frac{1}{t}\int_0^\infty p(s,x)\mathbb P(S_t\in ds) \\
        &= \mathcal{A}_\alpha p(\cdot, x)(0) \\
        &=\frac{\alpha}{2\Gamma\left(1-\frac\alpha 2\right)}\int_0^\infty p(s,x) s^{-1-\frac{\alpha}{2}} ds \\
        &= \frac{1}{\|x\|^{Q+\alpha}_\alpha}
    \end{align*}
    This completes the proof of the proposition.
\end{proof}

\section{Regularity conditions on the boundary of the domain} \label{sec.Boundary}

\subsection{Characteristic points} In several results we assume that the boundary of the domain does not contain any characteristic points, which are defined formally as follows.
\begin{definition}[Characteristic points]\label{def:characteristic_pt}
    For an open set $\Omega\subset\mathbb{G}$ with $C^1$-boundary, a point $p\in\partial \Omega$ is said to be a \emph{characteristic point} if $\mathcal{H}_p\subseteq T_p(\partial\Omega)$, that is, the horizontal  normal ( i.e. the horizontal projection of the Euclidean normal) at $p$ to the boundary of $\Omega$ is zero. We say that $\partial \Omega$ is completely non-characteristic if it does not have any characteristic points.
\end{definition}
The boundary of a domain can be infinitely smooth and may still contain characteristic points. For example, the Kor\'anyi ball in the 3-dimensional Heisenberg group defined by
\begin{align*}
    \{(x,y,z): (x^2+y^2)^2+16z^2\le 1\}
\end{align*}
has $C^\infty$ boundary. Both the points $(0,0,1)$ and $(0,0,-1)$ are characteristic points. In sub-Riemannian geometry, characteristic points play a crucial role in determining smoothness of natural functions. For instance, by \cite[Theorem~1.1]{BossioRizziRossi2024Arxiv}, when $\Omega$ has $C^k$ boundary ($k\ge 2$) without characteristic points, the signed distance function $\delta$ defined by
\begin{align*}
    \delta(x)=\begin{cases}
        d(x,\partial\Omega) & \mbox{if $x\in\Omega$} \\
        -d(x,\partial\Omega) & \mbox{if $x\in\Omega^c$},
    \end{cases}
\end{align*}
    is also $C^{k}$ in a neighborhood of $\partial\Omega$. On the other hand, when $\Omega$ has $C^\infty$ boundary with characteristic points, $\delta$ may not even be Lipschitz with respect to natural coordinates, see \cite{AlbanoCannarsaScarinci2018} for details.

\subsection{The intrinsic exterior cone condition}\label{s:cones}
The following notion of intrinsic cones in Carnot groups was introduced by
Franchi and Serapioni \cite{FranchiSerapioni2016}; in the Heisenberg group
the definition goes back to \cite{MR2836591}. See also
\cite{MR3587666} for an introduction to the subject.

\begin{definition}
A subgroup $\mathbb K$ of a Carnot group is said to be \emph{homogeneous} if
\begin{align*}
\delta_\lambda(\mathbb K)=\mathbb K
\qquad\text{for every }\lambda>0.
\end{align*}
\end{definition}

\begin{definition}\label{def:oriented_cone}
Let $\mathbb G$ be a Carnot group and let $\mathbb H,\mathbb K$ be two
homogeneous subgroups of $\mathbb G$. We say that $\mathbb H$ and
$\mathbb K$ are \emph{complementary} if
\begin{align*}
\mathbb H\cap\mathbb K=\{e\}, \quad \mbox{and} \quad \mathbb G=\mathbb H\mathbb K.
\end{align*}
Assume moreover that $\mathbb K$ is a one-dimensional horizontal homogeneous
subgroup. Thus
\begin{align*}
\mathbb K=\exp(\mathbb RX)
\end{align*}
for some nonzero $X\in V_1$. Set
\begin{align*}
\mathbb K^+:=\{\exp(tX):t\geq0\}.
\end{align*}
For $p\in\mathbb G$ and $\beta,\rho>0$, we define the
\emph{oriented intrinsic cone} associated with the decomposition
$\mathbb G=\mathbb H\mathbb K$ by
\begin{align*}
\mathcal C_p^+(\beta,\rho;\mathbb H,\mathbb K)
:=
p\left\{
hk\in\mathbb H\mathbb K:
k\in\mathbb K^+,\ 
\|h\|\leq\beta\|k\|,\ 
\|k\|<\rho
\right\}.
\end{align*}
\end{definition}
In the above definition, $\beta, \rho$ are called the \emph{aperture} and \emph{height} of the cone respectively. It follows from the definition that the oriented intrinsic cones have the following left-translation property:
\begin{align*}
    \mathcal C_p^+(\beta,\rho;\mathbb H,\mathbb K)=L_p\left(\mathcal{C}_e^+(\beta,\rho;\mathbb H,\mathbb K)\right),
\end{align*}
where $e$ is the identity element, and $L_p$ denotes the left-translation map $L_p(x)=px$.
For the purposes of this section, it is convenient to use a canonical
complement for every horizontal direction. Let $X\in V_1$ satisfy $\langle X,X\rangle_{\mathcal H}=1$, and set
\begin{align*}
W_X:=\{Y\in V_1:\langle X,Y\rangle_{\mathcal H}=0\}.
\end{align*}
Then
\begin{align*}
\mathfrak{g}=\mathbb RX\oplus W_X\oplus V_2\oplus\cdots\oplus V_k.
\end{align*}
Define
\begin{align*}
\mathfrak h_X
:=
W_X\oplus V_2\oplus\cdots\oplus V_k.
\end{align*}
Since $[V_i,V_j]\subseteq V_{i+j}$ for all $1\le i,j\le k$, the subspace $\mathfrak h_X$ is a homogeneous ideal of $\mathfrak g$.
Hence
\begin{align*}
\mathbb H_X:=\exp(\mathfrak h_X)
\end{align*}
is a homogeneous normal subgroup of $\mathbb G$. Setting
\begin{align*}
\mathbb K_X:=\{\exp(tX):t\in\mathbb R\}.
\end{align*}
we note that $\mathbb H_X$ and $\mathbb K_X$ are homogeneous subgroups of $\mathbb G$, and by the Baker-Campbell-Dynkin-Hausdorff formula, they are also complementary subgroups, that is, 
\begin{align*}
\mathbb G=\mathbb H_X\mathbb K_X,
\qquad
\mathbb H_X\cap\mathbb K_X=\{e\}.
\end{align*}
In particular, every $g\in\mathbb G$ admits a unique decomposition
\begin{align*}
g=hk,
\qquad
h\in\mathbb H_X,\quad k\in\mathbb K_X,
\end{align*}
and the multiplication map
\begin{align*}
\Phi_X:\mathbb H_X\times\mathbb K_X\longrightarrow\mathbb G,
\qquad
\Phi_X(h,k)=hk,
\end{align*}
is a smooth diffeomorphism.

\begin{definition}
\label{def:oriented_cone_horiz}
Let $X$ be a unit vector in $V_1$. For $p\in\mathbb G$ and $\beta,\rho>0$, the \emph{oriented intrinsic horizontal cone} in the direction of $X$ is defined as
\begin{align*}
\mathcal C_p^+(\beta,\rho;X):=\mathcal{C}^+_p(\beta,\rho; \mathbb{H}_X,\mathbb{K}_X).
\end{align*}
\end{definition}

The following dilation property follows immediately from the definition.

\begin{lemma}\label{lem:cone_dilation}
For every $\lambda>0$,
\begin{align*}
\delta_\lambda
\bigl(\mathcal C_e^+(\beta,\rho;X)\bigr)
=
\mathcal C_e^+(\beta,\lambda\rho;X).
\end{align*}
\end{lemma}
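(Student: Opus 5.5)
The plan is to show both inclusions directly from the definition of the cone. The tools are three facts about dilations: $\delta_\lambda$ is a group automorphism of $\mathbb G$, it preserves the homogeneous subgroups $\mathbb H_X$ and $\mathbb K_X$, and it scales the homogeneous norm by $\lambda$. Since $\delta_\lambda$ is a bijection with inverse $\delta_{1/\lambda}$, I will only prove $\delta_\lambda(\mathcal C_e^+(\beta,\rho;X))\subseteq\mathcal C_e^+(\beta,\lambda\rho;X)$ for all $\lambda,\rho>0$. Applying this with $1/\lambda$ and $\lambda\rho$ in place of $\lambda$ and $\rho$ gives $\delta_{1/\lambda}(\mathcal C_e^+(\beta,\lambda\rho;X))\subseteq \mathcal C_e^+(\beta,\rho;X)$, and applying $\delta_\lambda$ to both sides yields the reverse inclusion.

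For the forward inclusion, take $g=hk\in\mathcal C_e^+(\beta,\rho;X)$ with $h\in\mathbb H_X$ and $k=\exp(tX)$ for some $t\ge0$. These satisfy $\|h\|\le\beta\|k\|$ and $\|k\|<\rho$. I will check the following three facts.

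\begin{enumerate}[leftmargin=*]
\item $\delta_\lambda$ is a Lie group automorphism whose differential acts on $V_i$ by $\lambda^i$. Hence $\delta_\lambda(\exp Y)=\exp(\delta_\lambda Y)$, where $\delta_\lambda$ also denotes the induced Lie algebra dilation. It follows that $\delta_\lambda(hk)=\delta_\lambda(h)\,\delta_\lambda(k)$.
\item Since $\mathfrak h_X=W_X\oplus V_2\oplus\cdots\oplus V_k$ is a sum of graded pieces, it is invariant under the Lie algebra dilation. Therefore $\delta_\lambda(h)\in\mathbb H_X$. Similarly, $\delta_\lambda(\exp(tX))=\exp(\lambda tX)$, and $\lambda t\ge 0$, so $\delta_\lambda(k)\in\mathbb K_X^+$.
\item By homogeneity of the norm, $\|\delta_\lambda h\|=\lambda\|h\|\le\beta\lambda\|k\|=\beta\|\delta_\lambda k\|$ and $\|\delta_\lambda k\|=\lambda\|k\|<\lambda\rho$.
\end{enumerate}

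These three facts show that $\delta_\lambda g=(\delta_\lambda h)(\delta_\lambda k)$ lies in $\mathcal C_e^+(\beta,\lambda\rho;X)$. The decomposition $g=hk$ is unique because $\Phi_X$ is a diffeomorphism, so the cone is well described by the pair $(h,k)$ and no ambiguity arises.

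The only step needing care is the first fact, namely that dilations commute with group multiplication and with the exponential map. This is standard for homogeneous Carnot groups; see \cite{BonfiglioliLanconelliUguzzoniBook}. It holds because the Baker--Campbell--Dynkin--Hausdorff group law is built from iterated brackets, and the dilation respects brackets since $[V_i,V_j]\subseteq V_{i+j}$. With this in hand, the remaining checks are immediate.
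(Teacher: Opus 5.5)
Your proof is correct and follows the same route as the paper: $\delta_\lambda$ is an automorphism that preserves the homogeneous subgroups $\mathbb H_X$ and $\mathbb K_X^+$ and scales the homogeneous norm by $\lambda$, so the cone conditions transfer directly. The only difference is that you write out the reverse inclusion via $\delta_{1/\lambda}$, which the paper leaves implicit in ``the conclusion follows directly.''
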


\begin{proof}
Since $\mathbb H_X$ and $\mathbb K_X$ are homogeneous subgroups,
\begin{align*}
\delta_\lambda(hk)
=
\delta_\lambda(h)\delta_\lambda(k),
\qquad
\delta_\lambda(h)\in\mathbb H_X,
\quad
\delta_\lambda(k)\in\mathbb K_X^+.
\end{align*}
Moreover,
\begin{align*}
\|\delta_\lambda(h)\|
=
\lambda\|h\|,
\qquad
\|\delta_\lambda(k)\|
=
\lambda\|k\|.
\end{align*}
The conclusion follows directly.
\end{proof}

\begin{definition}
\label{def:cone_cond}
An open set $\Omega\subset\mathbb G$ is said to satisfy the
\emph{intrinsic exterior cone condition} if there exist $\beta,\rho>0$ such
that, for every $p\in\partial\Omega$, there exists a unit vector
$X_p\in V_1$ satisfying
\begin{align*}
\mathcal C_p^+(\beta,\rho;X_p)\setminus\{p\}
\subseteq
\mathbb G\setminus\overline{\Omega}.
\end{align*}
\end{definition}

Unlike Euclidean cones with fixed aperture and height, cones corresponding
to different horizontal directions need not be images of a single fixed
cone under measure-preserving graded automorphisms. Indeed, an orthogonal
transformation of $V_1$ mapping one horizontal direction into another need
not extend to a graded automorphism of $\mathbb G$. We shall nevertheless
obtain uniform lower bounds for both their volumes and the integrals needed
below.

For a unit vector $X\in V_1$, let $d\nu_X$ denote the Haar measure on
$\mathbb H_X$ obtained by transporting the Euclidean Lebesgue measure on
$\mathfrak h_X$ through the exponential map.

\begin{lemma}\label{lem:quotient_integral}
For every $X\in V_1$ with $|X|_{\mathcal H}=1$ and every $f\in L^1(\mathbb G, dx)$,
\begin{align*}
\int_{\mathbb G}f(g)\,dg
=
\int_{\mathbb R}
\int_{\mathbb H_X}
f\bigl(h\exp(tX)\bigr)\,d\nu_X(h)\,dt.
\end{align*}
\end{lemma}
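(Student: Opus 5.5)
The plan is to reduce the statement to a change of variables for the diffeomorphism $\Phi_X(h,k)=hk$. We parametrize $\mathbb H_X\times\mathbb K_X$ by $\mathfrak h_X\times\mathbb R$ through $(Y,t)\mapsto \exp(Y)\exp(tX)$. In exponential coordinates, Lebesgue measure on $\mathfrak g\cong\mathbb R^n$ is the Haar measure $dg$. By construction, $d\nu_X(h)\,dt$ is Lebesgue measure on $\mathfrak h_X\times\mathbb R$. So it suffices to show that the map
\begin{align*}
F:\mathfrak h_X\times\mathbb R\longrightarrow\mathfrak g,\qquad F(Y,t)=\log\bigl(\exp(Y)\exp(tX)\bigr)
\end{align*}
is a diffeomorphism with Jacobian determinant identically equal to $1$. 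The diffeomorphism part is already recorded before the lemma, via the complementarity of $\mathbb H_X$ and $\mathbb K_X$ and the smoothness of $\Phi_X$. The claimed identity then follows from the change of variables formula together with Fubini, first for $f\ge 0$ and then for $f\in L^1$ by splitting into positive and negative parts.

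The key step is the Jacobian computation. We use the Baker--Campbell--Dynkin--Hausdorff formula, which is a finite sum since $\mathbb G$ is nilpotent:
\begin{align*}
F(Y,t)=Y+tX+\tfrac12[Y,tX]+\cdots.
\end{align*}
Write coordinates adapted to $\mathfrak g=\mathbb RX\oplus W_X\oplus V_2\oplus\cdots\oplus V_k$, and decompose $Y=Y_1+Y_2+\cdots+Y_k$ with $Y_1\in W_X$ and $Y_j\in V_j$ for $j\ge2$. The degree-one components of $F$ are then exactly $t$ (along $X$) and $Y_1$ (in $W_X$), since every bracket lies in $V_2\oplus\cdots\oplus V_k$. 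For $j\ge2$, every bracket term contributing to the $V_j$ component of $F$ is an iterated bracket of total degree $j$. Such a term involves $tX$ and components $Y_i$ with $i<j$ only, because at least one factor $tX$ of degree $1$ must appear and the degrees add up to $j$. Hence the $V_j$ component has the form $Y_j+P_j(t,Y_1,\dots,Y_{j-1})$ with $P_j$ polynomial.

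It follows that, in this ordering of variables, the Jacobian matrix of $F$ is block lower triangular with identity diagonal blocks, so its determinant equals $1$. This triangular structure also gives the inverse explicitly and recursively, which independently confirms that $F$ is a bijection.

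The main obstacle is the bookkeeping showing that the graded components have this triangular form. In particular one must check that brackets containing $Y_j$ itself never reappear in the $V_j$ component. This holds because any such bracket also contains a factor of positive degree, namely $tX$ or some $Y_i$, which pushes it into $V_{j+1}\oplus\cdots\oplus V_k$. Once this degree count is in place, the rest of the argument is routine measure theory.
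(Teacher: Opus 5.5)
Your proposal is correct and follows essentially the same route as the paper: both reduce the identity to the change of variables $(Y,t)\mapsto\log\bigl(\exp(Y)\exp(tX)\bigr)$, and both use the Baker--Campbell--Dynkin--Hausdorff formula to show that its differential is block triangular with identity diagonal blocks, hence has unit Jacobian. The paper states explicitly that $\mathfrak g=\mathfrak h_X\oplus\mathbb RX$ is an orthogonal sum and that $|X|_{\mathcal H}=1$; this is what makes $d\nu_X\,dt$ agree with Lebesgue measure on $\mathfrak g$ in your adapted coordinates, a point you use only implicitly, whereas you are slightly more careful than the paper about bijectivity.
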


\begin{proof}
Since
\begin{align*}
\mathfrak g=\mathfrak h_X\oplus\mathbb RX
\end{align*}
is an orthogonal direct sum and $|X|_{\mathcal H}=1$, the product of the Euclidean
Lebesgue measure on $\mathfrak h_X$ and the Lebesgue measure $dt$ is exactly
the Euclidean Lebesgue measure on $\mathfrak g$ under the linear
identification
\begin{align*}
(Y,t)\longmapsto Y+tX.
\end{align*}
We now consider the map
\begin{align*}
\Theta_X:\mathfrak h_X\times\mathbb R\longrightarrow\mathfrak g,
\qquad
\Theta_X(Y,t)
:=
\log\bigl(\exp(Y)\exp(tX)\bigr).
\end{align*}
Write
\begin{align*}
Y=Y_1+\cdots+Y_k,
\qquad
Y_1\in W_X,\quad Y_j\in V_j\quad (j\geq2).
\end{align*}
By the Baker-Campbell-Dynkin-Hausdorff formula, the component of
$\Theta_X(Y,t)$ in $V_1$ is exactly
\begin{align*}
Y_1+tX,
\end{align*}
whereas, for every $j\geq2$, its $V_j$-component has the form
\begin{align*}
Y_j+P_j(Y_1,\ldots,Y_{j-1},t),
\end{align*}
where $P_j$ is a polynomial depending only on coordinates of strictly
smaller homogeneous degree and on $t$. Thus, if the coordinates are ordered according to the stratification,
the differential of $\Theta_X$ is triangular with identity diagonal
blocks. Consequently,
\begin{align*}
\left|\det D\Theta_X(Y,t)\right|=1
\end{align*}
for every $(Y,t)\in\mathfrak h_X\times\mathbb R$.

Since the exponential map transports Euclidean Lebesgue measure on
$\mathfrak g$ to the fixed Haar measure $dg$, while it transports Euclidean
Lebesgue measure on $\mathfrak h_X$ to $d\nu_X$, the change-of-variables
formula gives
\begin{align*}
\int_{\mathbb G}f(g)\,dg
=
\int_{\mathbb R}\int_{\mathfrak h_X}
f\bigl(\exp(Y)\exp(tX)\bigr)\,dY\,dt,
\end{align*}
which is precisely the claimed formula.
\end{proof}

For later use, define
\begin{align*}
\alpha_X:=\|\exp(X)\|.
\end{align*}
By homogeneity,
\begin{align*}
\|\exp(tX)\|=\alpha_X|t|,
\qquad t\in\mathbb R.
\end{align*}

\begin{lemma}\label{lem:volumes}
Let $\mathbb G=\mathbb H_X\mathbb K_X$ be the decomposition from
Definition~\ref{def:oriented_cone_horiz}. Then
\begin{align}\label{eq:cone_volume}
\Vol(\mathcal C_e^+(\beta,\rho;X))
=
\frac{\beta^{Q-1}\rho^Q}{Q\alpha_X}
\nu_X\bigl(\mathbb B(e,1)\cap\mathbb H_X\bigr).
\end{align}
Moreover, for every $c>0$,
\begin{align}\label{eq:cone_integral}
\int_{\mathcal C_e^+(\beta,\rho;X)}
e^{-c\|g\|^2}\,dg
\geq
\frac{\beta^{Q-1}}{\alpha_X}
\nu_X\bigl(\mathbb B(e,1)\cap\mathbb H_X\bigr)
\int_0^\rho
r^{Q-1}e^{-c(1+\beta)^2r^2}\,dr.
\end{align}
\end{lemma}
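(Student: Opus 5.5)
Both statements follow from Lemma~\ref{lem:quotient_integral}, which disintegrates Haar measure along the decomposition $\mathbb G=\mathbb H_X\mathbb K_X$, combined with a dilation argument on the $\mathbb H_X$-fibres. By Lemma~\ref{lem:quotient_integral} applied to $f=\mathbbm 1_{\mathcal C_e^+(\beta,\rho;X)}$ we have
\begin{align*}
\Vol(\mathcal C_e^+(\beta,\rho;X))
=\int_{\mathbb R}\nu_X\bigl(\{h\in\mathbb H_X: h\exp(tX)\in \mathcal C_e^+(\beta,\rho;X)\}\bigr)\,dt.
\end{align*}
By uniqueness of the decomposition $g=hk$, $h\exp(tX)$ lies in the cone exactly when $t\ge 0$, $\alpha_X t<\rho$ and $\|h\|\le \beta\alpha_X t$. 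So the fibre over $t$ is $\overline{\mathbb B}(e,\beta\alpha_X t)\cap\mathbb H_X$ for $0\le t<\rho/\alpha_X$, and empty otherwise.

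Next I compute the $\nu_X$-measure of these fibres. Since $\mathfrak h_X=W_X\oplus V_2\oplus\cdots\oplus V_k$ and $\dim W_X=m_1-1$, the dilation $\delta_\lambda$ restricted to $\mathfrak h_X$ has Jacobian $\lambda^{(m_1-1)+2m_2+\cdots+km_k}=\lambda^{Q-1}$. Hence
\begin{align*}
\nu_X(\mathbb B(e,r)\cap\mathbb H_X)=r^{Q-1}\nu_X(\mathbb B(e,1)\cap\mathbb H_X).
\end{align*}
The sphere $\{\|h\|=r\}$ has $\nu_X$-measure zero by the same scaling: the measure of closed balls is continuous in $r$. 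So open versus closed balls do not matter. Integrating gives
\begin{align*}
\Vol=\nu_X(\mathbb B(e,1)\cap\mathbb H_X)(\beta\alpha_X)^{Q-1}\int_0^{\rho/\alpha_X}t^{Q-1}dt.
\end{align*}
This equals $\nu_X(\mathbb B(e,1)\cap\mathbb H_X)\,\beta^{Q-1}\rho^Q/(Q\alpha_X)$, which is \eqref{eq:cone_volume}.

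For \eqref{eq:cone_integral}, I again apply Lemma~\ref{lem:quotient_integral}, now with $f(g)=e^{-c\|g\|^2}\mathbbm 1_{\mathcal C_e^+}(g)$. On the cone, subadditivity of the homogeneous norm gives
\begin{align*}
\|h\exp(tX)\|\le \|h\|+\alpha_X t\le (1+\beta)\alpha_X t.
\end{align*}
Hence $e^{-c\|g\|^2}\ge e^{-c(1+\beta)^2\alpha_X^2t^2}$ throughout the fibre over $t$. Bounding the integrand below by this on each fibre and using the fibre measure computed above yields
\begin{align*}
\int_{\mathcal C_e^+}e^{-c\|g\|^2}dg\ge \nu_X(\mathbb B(e,1)\cap\mathbb H_X)\beta^{Q-1}\int_0^{\rho/\alpha_X}(\alpha_Xt)^{Q-1}e^{-c(1+\beta)^2\alpha_X^2t^2}dt.
\end{align*}
Substituting $r=\alpha_X t$, so that $dt=dr/\alpha_X$, gives exactly the stated bound. (The claimed inequality is in fact sharper than needed, but it matches this computation.)

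The only delicate step is checking that $\delta_\lambda$ preserves $\mathbb H_X$ and scales $\nu_X$ by $\lambda^{Q-1}$. The first holds because $\mathfrak h_X$ is a graded subspace and $\exp$ intertwines the Lie algebra dilations with $\delta_\lambda$. The second holds because, in exponential coordinates, $\delta_\lambda$ is the linear map acting by $\lambda^j$ on $V_j$, so its Jacobian is computed directly. Measurability of the fibres is clear since the cone is a Borel set.
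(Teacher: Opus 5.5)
Your proposal is correct and follows essentially the same route as the paper: you disintegrate Haar measure via Lemma~\ref{lem:quotient_integral}, use the $(Q-1)$-homogeneity of $\nu_X$ on the fibres $\mathbb B(e,\beta\alpha_X t)\cap\mathbb H_X$, and for the Gaussian bound use subadditivity to get $\|g\|\le(1+\beta)\alpha_X t$ before substituting $r=\alpha_X t$. Your extra remark that the spheres $\{\|h\|=r\}\cap\mathbb H_X$ are $\nu_X$-null is a small point the paper passes over silently; it is what justifies replacing the closed fibres coming from $\|h\|\le\beta\|k\|$ by open balls.
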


\begin{proof}
By Lemma~\ref{lem:quotient_integral}, writing
$k=\exp(tX)$ with $t\geq0$, we obtain
\begin{align}\label{eq:volume_cone}
\Vol(\mathcal C_e^+(\beta,\rho;X))
=
\int_0^{\rho/\alpha_X}
\nu_X\bigl(
\mathbb B(e,\beta\alpha_Xt)\cap\mathbb H_X
\bigr)\,dt.
\end{align}
The homogeneous dimension of $\mathbb H_X$ is $Q-1$. Therefore
\begin{align*}
\nu_X\bigl(
\mathbb B(e,r)\cap\mathbb H_X
\bigr)
=
r^{Q-1}
\nu_X\bigl(
\mathbb B(e,1)\cap\mathbb H_X
\bigr).
\end{align*}
Substitution into \eqref{eq:volume_cone} gives
\begin{align*}
\begin{split}
\Vol(\mathcal C_e^+(\beta,\rho;X))
&=
\beta^{Q-1}\alpha_X^{Q-1}
\nu_X\bigl(\mathbb B(e,1)\cap\mathbb H_X\bigr)
\int_0^{\rho/\alpha_X}t^{Q-1}\,dt
\\
&=
\frac{\beta^{Q-1}\rho^Q}{Q\alpha_X}
\nu_X\bigl(\mathbb B(e,1)\cap\mathbb H_X\bigr),
\end{split}
\end{align*}
which proves \eqref{eq:cone_volume}.

To prove \eqref{eq:cone_integral}, let
\begin{align*}
g=h\exp(tX)\in\mathcal C_e^+(\beta,\rho;X)
\end{align*}
and set
\begin{align*}
r:=\|\exp(tX)\|=\alpha_Xt.
\end{align*}
Since $\|h\|\leq\beta r$, the triangle inequality gives
\begin{align*}
\|g\|
\leq
\|h\|+\|\exp(tX)\|
\leq
(1+\beta)r.
\end{align*}
Consequently,
\begin{align*}
e^{-c\|g\|^2}
\geq
e^{-c(1+\beta)^2r^2}.
\end{align*}
Using Lemma~\ref{lem:quotient_integral} once more,
\begin{align*}
\begin{split}
\int_{\mathcal C_e^+(\beta,\rho;X)}
e^{-c\|g\|^2}\,dg
&\geq
\int_0^{\rho/\alpha_X}
e^{-c(1+\beta)^2\alpha_X^2t^2}
\nu_X\bigl(
\mathbb B(e,\beta\alpha_Xt)\cap\mathbb H_X
\bigr)\,dt
\\
&=
\beta^{Q-1}
\nu_X\bigl(\mathbb B(e,1)\cap\mathbb H_X\bigr)
\alpha_X^{Q-1}
\\
&\qquad\times
\int_0^{\rho/\alpha_X}
t^{Q-1}
e^{-c(1+\beta)^2\alpha_X^2t^2}\,dt.
\end{split}
\end{align*}
The change of variables $r=\alpha_Xt$ yields
\eqref{eq:cone_integral}.
\end{proof}

\begin{lemma}
\label{lem:continuity_HM}
There exist positive constants $a_*,a^*,\alpha_*,\alpha^*$, depending only on
$\mathbb G$, the fixed inner product, and the homogeneous norm, such that
for every $X\in V_1$ with $|X|_{\mathcal H}=1$,
\begin{align*}
a_*
\leq
\nu_X\bigl(\mathbb B(e,1)\cap\mathbb H_X\bigr)
\leq
a^*
\end{align*}
and
\begin{align*}
\alpha_*
\leq
\alpha_X
\leq
\alpha^*.
\end{align*}
\end{lemma}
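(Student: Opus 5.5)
The plan is a compactness and continuity argument on the unit sphere $S:=\{X\in V_1:|X|_{\mathcal H}=1\}$, which is compact. I will show that the two maps
\begin{align*}
X\longmapsto \alpha_X=\|\exp(X)\|
\quad\text{and}\quad
X\longmapsto \nu_X\bigl(\mathbb B(e,1)\cap\mathbb H_X\bigr)
\end{align*}
are continuous and strictly positive on $S$. The bounds $a_*,a^*,\alpha_*,\alpha^*$ are then just the minimum and maximum of each map over $S$.

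\textbf{The map $X\mapsto\alpha_X$.} This is the easy part. The exponential map is a smooth diffeomorphism, and $\|\cdot\|$ is continuous, so $X\mapsto\alpha_X$ is continuous. Since $X\neq0$ we have $\exp(X)\neq e$, hence $\alpha_X>0$ by property (2) of a homogeneous quasinorm. Compactness of $S$ then gives $0<\alpha_*\le\alpha_X\le\alpha^*$.

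\textbf{The map $X\mapsto\nu_X(\cdot)$: reduction to a fixed space.} Here I parametrize everything over a fixed model space. Fix $X_0\in S$. For $X\in S$, choose a linear isometry $R_X$ of $V_1$ with $R_X X_0=X$; near any point of $S$ this can be done continuously in $X$, for instance by a local smooth section of $O(m)\to S^{m-1}$. Extend $R_X$ by the identity on $V_2\oplus\cdots\oplus V_k$. This extension is a Euclidean isometry of $\mathfrak g$ mapping $\mathfrak h_{X_0}$ onto $\mathfrak h_X$. It need not be a Lie algebra automorphism, but that does not matter: it is only used to pull back Lebesgue measure. We get
\begin{align*}
\nu_X\bigl(\mathbb B(e,1)\cap\mathbb H_X\bigr)
=\int_{\mathfrak h_{X_0}}\mathbbm 1_{\{\|\exp(R_XY)\|<1\}}\,dY.
\end{align*}

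\textbf{Continuity and positivity of the integral.} The integrand converges pointwise as $X\to X'$ at every $Y$ with $\|\exp(R_{X'}Y)\|\neq1$. Continuity then follows from dominated convergence, provided two facts hold.
\begin{itemize}
\item[(a)] \emph{Uniform domination.} There is a fixed bounded set $K\subset\mathfrak h_{X_0}$ containing all $Y$ with $\|\exp(R_XY)\|<1$. This follows from the equivalence of $\|\cdot\|$ with the quasinorm $\sum_j|\xi_j|^{1/j}$. That quasinorm is invariant under $R_X$, since $R_X$ preserves each layer's Euclidean norm. Hence $\|\exp(R_XY)\|<1$ forces $\sum_j|Y_j|^{1/j}\le C$.
\item[(b)] \emph{Null level set.} The set $\{Y:\|\exp(R_{X'}Y)\|=1\}$ has $\nu$-measure zero. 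This is the delicate point. Its image under $\exp\circ R_{X'}$ is $\partial\mathbb B(e,1)\cap\mathbb H_{X'}$, and by homogeneity it is the level set $\{h\in\mathbb H_{X'}:\|h\|=1\}$ of a $1$-homogeneous function under the dilations of the homogeneous group $\mathbb H_{X'}$. Using dilation polar coordinates $h=\delta_r\omega$, we have $\nu(\{\|h\|\in[1-\epsilon,1+\epsilon]\})=((1+\epsilon)^{Q-1}-(1-\epsilon)^{Q-1})\,\nu(\{\|h\|<1\})\to0$. So the level set is null.
\end{itemize}

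Positivity is immediate: $\mathbb B(e,1)\cap\mathbb H_X$ is a relatively open neighborhood of $e$ in the manifold $\mathbb H_X$, so it has positive $\nu_X$-measure. Continuity plus positivity on the compact set $S$ gives $0<a_*\le\nu_X(\cdot)\le a^*$. In fact the upper bound also follows directly from (a), since $\nu_X(\cdot)\le\operatorname{Leb}(K)$.

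I expect the main obstacle to be the continuity step, in particular handling the boundary of the ball (b). If the dominated convergence argument proves awkward, the fallback is to bypass continuity altogether. Compare with the box quasinorm $\|\cdot\|_\infty=\max_j|\xi_j|^{1/j}$, noting $c^{-1}\|\cdot\|_\infty\le\|\cdot\|\le c\|\cdot\|_\infty$ with $c$ independent of $X$. Then $\nu_X(\mathbb B\cap\mathbb H_X)$ is squeezed between the $\nu_X$-measures of two $\|\cdot\|_\infty$-boxes in $\mathfrak h_X$, with radii $c^{-1}$ and $c$. These boxes are products of Euclidean balls in $W_X$ and in each $V_j$, so their volumes are explicit and independent of $X$.
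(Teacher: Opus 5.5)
Your proof is correct, but your main route differs from the paper's. The paper never proves that $X\mapsto\nu_X(\mathbb B(e,1)\cap\mathbb H_X)$ is continuous. Instead it does exactly what you list as a fallback. It compares $\|\cdot\|$ with the box quasinorm $N_0(\exp Y)=\max_j|Y_j|^{1/j}$ via $c_0N_0\le\|\cdot\|\le C_0N_0$. It then notes that $\nu_X(\{N_0<r\})=\kappa r^{Q-1}$ with $\kappa$ independent of $X$, because this set is a product of Euclidean balls in $X^\perp\cong\mathbb R^{m_1-1}$ and in each $V_j$. This yields the explicit bounds $\kappa C_0^{-(Q-1)}\le\nu_X(\mathbb B(e,1)\cap\mathbb H_X)\le\kappa c_0^{-(Q-1)}$. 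The bounds on $\alpha_X$ are obtained exactly as you do.

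Your continuity-plus-compactness argument proves more, namely continuity in $X$. The cost is the local sections of $O(m)\to S^{m-1}$, dominated convergence, and the null-level-set step, none of which the lemma needs. Your step (a) already contains the key comparison. Running it in the other direction, $\|\exp(R_XY)\|\le C\sum_j|Y_j|^{1/j}$, puts a fixed set of positive measure inside every $\{\|\exp(R_XY)\|<1\}$. That gives the lower bound uniformly in $X$, with an explicit constant and no compactness argument.

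One small fix in (b). Your identity uses the closed shell $\{\|h\|\in[1-\epsilon,1+\epsilon]\}$, which tacitly assumes the sphere $\{\|h\|=1+\epsilon\}$ is null; that is what you are trying to prove. Use instead the half-open shell $\{1-\epsilon\le\|h\|<1+\epsilon\}=\{\|h\|<1+\epsilon\}\setminus\{\|h\|<1-\epsilon\}$. Its measure is exactly $((1+\epsilon)^{Q-1}-(1-\epsilon)^{Q-1})\nu(\{\|h\|<1\})$, and it contains $\{\|h\|=1\}$, so the conclusion follows without circularity.
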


\begin{proof}
For
\begin{align*}
Y=Y_1+\cdots+Y_s,
\qquad
Y_j\in V_j,
\end{align*}
define the standard homogeneous quasinorm
\begin{align*}
N_0(\exp Y)
:=
\max_{1\leq j\leq s}|Y_j|^{1/j}.
\end{align*}
Since $N_0$ and $\|\cdot\|$ are continuous, positive away from the identity,
and homogeneous of degree one, by \cite[Proposition~5.1.4]{BonfiglioliLanconelliUguzzoniBook} there exist constants $c_0,C_0>0$ such that
\begin{align*}
c_0N_0(g)\leq\|g\|\leq C_0N_0(g)
\qquad\text{for every }g\in\mathbb G.
\end{align*}
Let $m_j=\dim V_j$ be defined as before. For every unit $X\in V_1$, the first layer of $\mathfrak h_X$ is
$W_X=X^\perp$, which has dimension $m_1-1$. Since $d\nu_X$ is induced by
the Euclidean Lebesgue measure on
\begin{align*}
X^\perp\oplus V_2\oplus\cdots\oplus V_k,
\end{align*}
there exists a constant $\kappa>0$, independent of $X$, such that, for
every $r>0$,
\begin{align}\label{eq:HM_equality}
\nu_X\bigl(
\{h\in\mathbb H_X:N_0(h)<r\}
\bigr)
=
\kappa r^{Q-1}.
\end{align}
Indeed, in exponential coordinates the set on the left-hand side is the
Cartesian product
\begin{align*}
\{|Y_1|<r\}\subset X^\perp,
\qquad
\{|Y_j|<r^j\}\subset V_j,
\quad j\geq2.
\end{align*}
Its Euclidean volume is therefore
\begin{align*}
\kappa
r^{m_1-1}
\prod_{j=2}^k r^{jm_j}
=
\kappa r^{Q-1},
\end{align*}
where $\kappa$ depends only on the dimensions of the layers. The equivalence of the two homogeneous quasinorms gives
\begin{align}\label{BallInclusion}
\{h\in\mathbb H_X:N_0(h)<C_0^{-1}\}
\subset
\mathbb B(e,1)\cap\mathbb H_X
\subset
\{h\in\mathbb H_X:N_0(h)<c_0^{-1}\}.
\end{align}
Combining \eqref{eq:HM_equality} and \eqref{BallInclusion}, we obtain
\begin{align}\label{eq:HM_ineq}
\kappa C_0^{-(Q-1)}
\leq
\nu_X\bigl(\mathbb B(e,1)\cap\mathbb H_X\bigr)
\leq
\kappa c_0^{-(Q-1)}
\end{align}
for every unit $X$. This proves the first assertion.

Finally, the map
\begin{align*}
X\longmapsto\alpha_X=\|\exp(X)\|
\end{align*}
is continuous and strictly positive on the Euclidean unit sphere of $V_1$.
Since this sphere is compact, there exist $\alpha_*,\alpha^*>0$ such that
\begin{align*}
\alpha_*
\leq
\alpha_X
\leq
\alpha^*
\end{align*}
for every $|X|=1$.
\end{proof}

Combining the preceding lemmas gives the following uniform estimates.

\begin{proposition}\label{prop:unif_lower_bound}
Let $B=(B_t)_{t\geq0}$ denote the hypoelliptic Brownian motion on
$\mathbb G$ starting from the identity $e$, and let
$\mathcal C_e^+(\beta,\rho;X)$ be as in
Definition~\ref{def:oriented_cone_horiz}. Then
\begin{align}\label{eq:min_HM}
\inf_{\substack{X\in V_1\\ |X|=1}}
\Vol(\mathcal C_e^+(\beta,\rho;X))
>0.
\end{align}
Moreover, for every fixed $t>0$,
\begin{align}\label{eq:int_cone_min}
\inf_{\substack{X\in V_1\\ |X|=1}}
\mathbb P_e\bigl(B_t\in\mathcal C_e^+(\beta,\rho;X)\bigr)
>0.
\end{align}
\end{proposition}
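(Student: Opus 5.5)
The plan is to combine the exact formulas of Lemma~\ref{lem:volumes} with the uniform constants of Lemma~\ref{lem:continuity_HM}. For \eqref{eq:min_HM}, apply \eqref{eq:cone_volume}:
\begin{align*}
\Vol(\mathcal C_e^+(\beta,\rho;X))=\frac{\beta^{Q-1}\rho^Q}{Q\alpha_X}\nu_X\bigl(\mathbb B(e,1)\cap\mathbb H_X\bigr).
\end{align*}
Lemma~\ref{lem:continuity_HM} gives $\nu_X(\mathbb B(e,1)\cap\mathbb H_X)\ge a_*$ and $\alpha_X\le\alpha^*$ uniformly over unit $X\in V_1$. Hence the volume is bounded below by $\beta^{Q-1}\rho^Q a_*/(Q\alpha^*)>0$, independently of $X$, and \eqref{eq:min_HM} follows.

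For \eqref{eq:int_cone_min}, fix $t>0$. Write the probability as an integral against the heat kernel, $\mathbb P_e(B_t\in\mathcal C_e^+(\beta,\rho;X))=\int_{\mathcal C_e^+(\beta,\rho;X)}p(t,g)\,dg$. The lower Gaussian bound in \eqref{eq:heat_bound_1}, stated for the fixed homogeneous norm $\|\cdot\|$, then gives
\begin{align*}
\mathbb P_e\bigl(B_t\in\mathcal C_e^+(\beta,\rho;X)\bigr)\ge c^{-1}t^{-\frac Q2}\int_{\mathcal C_e^+(\beta,\rho;X)}e^{-\frac{c}{t}\|g\|^2}\,dg .
\end{align*}
Next apply \eqref{eq:cone_integral} with the constant $c/t$ in place of $c$. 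This bounds the right-hand side below by
\begin{align*}
c^{-1}t^{-\frac Q2}\frac{\beta^{Q-1}}{\alpha_X}\nu_X\bigl(\mathbb B(e,1)\cap\mathbb H_X\bigr)\int_0^\rho r^{Q-1}e^{-\frac{c}{t}(1+\beta)^2r^2}\,dr .
\end{align*}
The $r$-integral is a strictly positive number depending only on $t,\beta,\rho,c,Q$, and not on $X$. Invoking Lemma~\ref{lem:continuity_HM} once more to replace $\nu_X(\mathbb B(e,1)\cap\mathbb H_X)/\alpha_X$ by $a_*/\alpha^*$ yields a positive lower bound independent of $X$. Taking the infimum gives \eqref{eq:int_cone_min}.

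There is no genuine obstacle, since the substance is already in Lemmas~\ref{lem:volumes} and \ref{lem:continuity_HM}. The only point needing care is that the Gaussian lower bound and the cone integral must use the same homogeneous norm. Here \eqref{eq:heat_bound_1} is stated for the fixed homogeneous norm $\|\cdot\|$ that also defines the cones, so no further norm comparison is needed. Note also that the triangle inequality used inside \eqref{eq:cone_integral} relies on $\|\cdot\|$ being a genuine norm, which holds by the standing convention.
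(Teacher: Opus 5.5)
Your proof is correct and follows the paper's argument essentially verbatim. The volume bound comes from \eqref{eq:cone_volume} together with the uniform constants $a_*$ and $\alpha^*$ of Lemma~\ref{lem:continuity_HM}, and the probability bound comes from the Gaussian lower bound \eqref{eq:heat_bound_1}, then \eqref{eq:cone_integral} applied with constant $c/t$, then the same uniform constants.
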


\begin{proof}
By \eqref{eq:cone_volume} and Lemma~\ref{lem:continuity_HM},
\begin{align*}
\Vol(\mathcal C_e^+(\beta,\rho;X))
\geq
\frac{\beta^{Q-1}\rho^Q}{Q\alpha^*}\,a_*,
\end{align*}
uniformly over all unit $X\in V_1$. This proves
\eqref{eq:min_HM}.

For the second assertion, let $p(t,\cdot)$ be the horizontal heat kernel of $B$. By the
Gaussian lower bound \eqref{eq:heat_bound_1}, there exist $C_1,c_1>0$ such
that
\begin{align*}
p(t,x)
\geq
C_1t^{-Q/2}
\exp\left(-c_1\frac{\|x\|^2}{t}\right).
\end{align*}
Hence
\begin{align*}
\mathbb P_e\bigl(B_t\in\mathcal C_e^+(\beta,\rho;X)\bigr)
\geq
C_1t^{-Q/2}
\int_{\mathcal C_e^+(\beta,\rho;X)}
\exp\left(-c_1\frac{\|x\|^2}{t}\right)dx.
\end{align*}
Applying \eqref{eq:cone_integral} with $c=c_1/t$ and then
Lemma~\ref{lem:continuity_HM}, we obtain
\begin{align*}
\begin{split}
\mathbb P_e\bigl(B_t\in\mathcal C_e^+(\beta,\rho;X)\bigr)
&\geq
C_1t^{-Q/2}
\frac{\beta^{Q-1}a_*}{\alpha^*}
\\
&\qquad\times
\int_0^\rho
r^{Q-1}
\exp\left(
-\frac{c_1(1+\beta)^2r^2}{t}
\right)\,dr.
\end{split}
\end{align*}
The right-hand side is strictly positive and independent of $X$, proving
\eqref{eq:int_cone_min}.
\end{proof}
We now establish the exterior cone condition for regular
non-characteristic boundaries.
\begin{theorem}
\label{thm:noncharacteristic-cones}
Let $\Omega\subset\mathbb{G}$ be a bounded domain with compact $C^1$ boundary and no
characteristic points.  Then $\Omega$ satisfies the intrinsic exterior cone
condition introduced in Definition~\ref{def:cone_cond}.
\end{theorem}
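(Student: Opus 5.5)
Near each boundary point $p$, I will write $\partial\Omega$ locally as the zero set of a $C^1$ defining function $f$, with $\Omega=\{f<0\}$ near $p$ and $\nabla f\neq0$ on $\partial\Omega$. Since $p$ is non-characteristic, the horizontal gradient
\begin{align*}
\nabla_{\mathcal H}f(p)=\sum_{i=1}^m (X_if)(p)X_i
\end{align*}
is nonzero. I then set $X_p:=\nabla_{\mathcal H}f(p)/|\nabla_{\mathcal H}f(p)|_{\mathcal H}$, the outward horizontal unit normal. By compactness of $\partial\Omega$ and continuity of $\nabla f$, the horizontal normal is nonzero and continuous on all of $\partial\Omega$. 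Hence there are constants $\kappa>0$ and $L<\infty$ and a neighbourhood $U$ of $\partial\Omega$ with
\begin{align*}
|\nabla_{\mathcal H}f|_{\mathcal H}\ge\kappa \quad\text{on } \partial\Omega,
\end{align*}
and the moduli of continuity of $X_if$ uniform on $U$. If no single global $f$ is available, I use a finite atlas of local defining functions, normalised so that these bounds hold uniformly.

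The key step is to show that for $g=hk\in\mathcal C_e^+(\beta,\rho;X_p)$, with $k=\exp(tX_p)$, $t>0$, and $\|h\|\le\beta\|k\|$, one has $f(pg)>0$ once $\beta,\rho$ are small, uniformly in $p$. I first move along $k$. The curve $s\mapsto p\exp(sX_p)$ is horizontal with $\frac{d}{ds}f(p\exp(sX_p))=(X_pf)(p\exp(sX_p))$. At $s=0$ this equals $|\nabla_{\mathcal H}f(p)|\ge\kappa$, and by uniform continuity it stays $\ge\kappa/2$ for $s\le\rho/\alpha_*$ when $\rho$ is small. Hence $f(pk)\ge \kappa t/2\ge c\|k\|$, using $\|k\|=\alpha_{X_p}t\le\alpha^*t$ from Lemma~\ref{lem:continuity_HM}. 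Next I estimate $|f(pk\cdot k^{-1}hk)-f(pk)|$. Since $\mathbb H_{X_p}$ is normal, $hk=k(k^{-1}hk)$ with $h':=k^{-1}hk\in\mathbb H_{X_p}$ and $\|h'\|\le C(\|h\|+\|h\|^{1/k}\|k\|^{1-1/k})$ by standard conjugation estimates, which is $\le C\beta^{1/k}\|k\|$.

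The obstacle is that $f$ is only Euclidean $C^1$, so $|f(x\delta)-f(x)|$ is a priori controlled only by Euclidean increments. Those increments are of order $\|\delta\|$ in horizontal coordinates but only $\|\delta\|^j$ in layer $j$, so they are small relative to $\|\delta\|$ in the higher layers yet carry a coefficient involving $x$. I plan to use the sub-Riemannian mean value estimate for $C^1$ functions: join $x$ to $x\delta$ by a horizontal curve of length $\le C\|\delta\|$ (Chow–Rashevskii together with homogeneity, e.g.\ a concatenation of exponentials of horizontal vectors from a fixed finite family, with uniformly bounded lengths). Along that curve $|\frac{d}{ds}f|\le \sup_U|\nabla_{\mathcal H}f|$, which gives
\begin{align*}
|f(pkh')-f(pk)|\le M\|h'\|\le MC\beta^{1/k}\|k\|.
\end{align*}
Choosing $\beta$ with $MC\beta^{1/k}<c/2$ then yields $f(pg)\ge (c/2)\|k\|>0$ for $g\ne e$, so $pg\notin\overline\Omega$ in the coordinate patch.

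Finally, $\rho$ is taken small enough that all points $pg$ with $\|g\|\le(1+\beta)\rho$ stay inside the neighbourhood $U$ where $f$ defines $\Omega$. This is possible because the local patches have a uniform size by compactness, using left-invariance of the distance and a Lebesgue-number argument. With these uniform choices of $\beta$ and $\rho$, Definition~\ref{def:cone_cond} holds with $X_p$ as above.
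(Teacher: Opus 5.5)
Your proof is correct, but it is organized differently from the paper's.

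The paper does not use the horizontal normal at each boundary point. It fixes $X=X(p_0)$ on a whole boundary patch $\Sigma$ on which $\langle X(q),X\rangle_{\mathcal H}\ge \frac12$ and $|\nabla_{\mathcal H}\Phi(q)|_{\mathcal H}\ge\delta$. This gives $X\Phi\ge\delta/4$ on a neighbourhood $V$ of $\overline\Sigma$, and the paper then takes the minimum of the finitely many $\beta_i,\rho_i$ over a finite cover by such patches.

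More substantively, the paper keeps the cone's own ordering $g=qh\exp(sX)$. The transverse move is controlled by $|\Phi(qh)-\Phi(q)|\le C_0\|h\|$. This follows from $|qh-q|_{\mathrm{Euc}}\lesssim\|h\|$ for $\|h\|\le 1$ on compact sets, together with the Euclidean Lipschitz bound for $\Phi$. The paper then integrates $X\Phi\ge\delta/4$ along $r\mapsto qh\exp(rX)$, starting from the perturbed point, which gives $\Phi(g)\ge(\delta/4-C_0\beta_0\alpha_X)s>0$.

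In particular, the ``obstacle'' you describe is not one: Euclidean $C^1$ regularity already gives a bound linear in $\|h\|$. Because the paper integrates from $qh$ rather than from $q$, it needs neither the rewriting $hk=k(k^{-1}hk)$ with the conjugation estimate nor the horizontal-curve (Chow--Rashevskii) mean value inequality.

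What your route buys is a canonical direction, namely the exact horizontal normal $X_p$ with $X_pf(p)=|\nabla_{\mathcal H}f(p)|_{\mathcal H}$, and a perturbation constant depending only on $\sup|\nabla_{\mathcal H}f|_{\mathcal H}$. The cost is the extra machinery and an aperture loss of order $\beta^{1/k}$ ($k$ the step) instead of $\beta$; this is harmless, since $\beta$ may be chosen as small as needed. The paper's route is entirely elementary and uses only Euclidean calculus on compact sets.
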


\begin{proof} 
Fix $p_0\in\partial\Omega$.  Choose an open neighborhood $U$ of $p_0$ and
a defining function $\Phi\in C^1(U)$ such that
\begin{align*}
\Omega\cap U=\{\Phi<0\}, \quad \partial\Omega\cap U=\{\Phi=0\}, \quad U\setminus\overline\Omega=\{\Phi>0\}.
\end{align*}
Since $p_0$ is non-characteristic, $\nabla_{\mathcal H}\Phi(p_0)\neq 0$. Define 
\begin{align*}
	X(q):= \frac{\nabla_{\mathcal H}\Phi(q)}{|\nabla_{\mathcal H} \Phi(q)|_{\mathcal H}}, \quad q\in\partial\Omega,
\end{align*} 
and we denote $X:=X(p_0)$. Due to continuity of the mapping $q\mapsto X(q)$, we can find an open boundary patch $\Sigma\Subset \partial\Omega\cap U$ such that
\begin{align*}
	\langle X(q), X\rangle_{\mathcal H}\ge \frac12 \quad \mbox{for all $q\in\Sigma$}.
\end{align*}
Also, by the continuity of the map $q\mapsto |\nabla_{\mathcal H} \Phi(q)|$ on $\partial\Omega$, there exists $\delta>0$ such that 
\begin{align}\label{eq:nabla_lb}
	|\nabla_{\mathcal H}\Phi(q)|_{\mathcal H}\ge \delta \quad \mbox{for all $q\in \Sigma$}.
\end{align}
 Therefore for all $q\in\Sigma$,
 \begin{align*}
 	X\Phi(q)=\langle \nabla_{\mathcal H}\Phi(q), X\rangle_{\mathcal H}\ge |\nabla_{\mathcal H} \Phi(q)|_{\mathcal H}\langle X(q), X\rangle_{\mathcal H} \ge \frac{\delta}{2},
 \end{align*}
where with an abuse of notation, we have also used $X$ to denote the left-invariant vector field generated by it. We will first prove that there exists $\beta_0,\rho_0>0$ such that
\begin{align*}
	\mathcal{C}^+_q(\beta_0,\rho_0; X)\setminus\{q\}\subset \mathbb{G}\setminus \Omega \qquad \mbox{for all $q\in\Sigma$}.
\end{align*}
We note the following uniform estimate: there are constants
$C_0>0$ and $\varepsilon_0>0$ such that
\begin{equation}\label{eq:cone-transverse-estimate}
|\Phi(qh)-\Phi(q)|\leq C_0\|h\|
\end{equation}
for every $q\in\Sigma$ and every $h\in\mathbb H_X$ with
$\|h\|<\varepsilon_0$.  To see this, write
$h=\exp(Y_1+\cdots+Y_k)$, $Y_j\in V_j$.  Equivalence with homogeneous quasinorms gives
\begin{align*}
|Y_j|\leq C \|h\|^j,
\qquad j=1,\ldots,k.
\end{align*}
For $\|h\|\leq1$, this implies
$|Z_1+\cdots+Z_s|_{\mathrm{Euc}}\leq C'\|h\|$, where $|\cdot|_{\mathrm{Euc}}$ denotes the Euclidean norm. The multiplication map on $\mathbb G$ is
smooth, so on the compact set
\begin{align*}
	\overline{\Sigma}\times \{h\in\mathbb{H}_X:\|h\|\le \varepsilon_0\},
\end{align*}
the Euclidean coordinate (via the global exponential map) distance between $qh$ and $q$ is bounded by a constant multiple of $\|h\|$.
Estimate \eqref{eq:cone-transverse-estimate} now follows from the boundedness
of the total derivative $D\Phi$ on $\overline U$. Let us write 
\begin{align*}
\alpha_X:=\|\exp(X)\|>0.
\end{align*}
By homogeneity,
\begin{equation}\label{eq:horizontal-axis-norm}
	\|\exp(sX)\|=s\alpha_X,
\qquad s>0.
\end{equation}
For every $q\in\Sigma$ we have
\[
X\Phi(q)
=
\langle\nabla_H\Phi(q),X\rangle_H
\geq \frac{\delta}{2}.
\]
Since $X\Phi$ is continuous and $\overline{\Sigma}$ is compact, there
exists a neighborhood $V\Subset U$ of $\overline{\Sigma}$ such that
\[
X\Phi(z)\geq\frac{\delta}{4}
\qquad\text{for every }z\in V.
\]
After decreasing $\varepsilon_0$ and $s_0$ if necessary, we may also
assume that
\[
qh\exp(rX)\in V
\]
whenever $q\in\Sigma$, $\|h\|<\varepsilon_0$, and $0\leq r\leq s_0$.

Choose $\beta_0>0$ and $\rho_0>0$ so that
\[
C_0\beta_0\alpha_X<\frac{\delta}{8},
\qquad
\rho_0<\min\left\{\alpha_Xs_0,\frac{\varepsilon_0}{\beta_0}\right\}.
\]
If $g\in \mathcal{C}_q^+(\beta_0,\rho_0;X)\setminus\{q\}$, then
$g=qh\exp(sX)$ with
\[
\|h\|\leq\beta_0\alpha_Xs,
\qquad
s<\frac{\rho_0}{\alpha_X}.
\]
Using \eqref{eq:cone-transverse-estimate}, the bound
$X\Phi\geq\delta/4$ on $V$ and the fundamental theorem of
calculus, we obtain
\begin{align*}
\Phi(g)
&=\Phi(qh)+\int_0^s X\Phi(qh\exp(rX))\,dr\\
&\geq -C_0\|h\|+\frac{\delta}{4}s\\
&\geq
\left(\frac{\delta}{4}-C_0\beta_0\alpha_X\right)s
>\frac{\delta}{8}s>0.
\end{align*}
Therefore,
\begin{align*}
\mathcal{C}_q^+(\beta_0,\rho_0;X)\setminus\{q\}
\subset U\setminus\Omega
\qquad\text{for every }q\in\Sigma.
\end{align*}

The compact boundary $\partial\Omega$ is covered by finitely many patches $(\Sigma_i)_{1\le i\le N }$ of this type,
with directions $X_1,\ldots,X_N$ and parameters
$(\beta_i,\rho_i)$.  Set
\begin{align*}
\beta:=\min_{1\leq i\leq N}\beta_i,
\qquad
\rho:=\min_{1\leq i\leq N}\rho_i.
\end{align*}
If $p$ belongs to the $i$-th patch, then decreasing aperture and height only
shrinks the cone, and hence
\begin{align*}
\mathcal{C}_p^+(\beta,\rho;X_i)\setminus \{p\}
\subset
\mathcal{C}_p^+(\beta_i,\rho_i;X_i)\setminus \{p\}
\subset
\mathbb{G}\setminus\overline\Omega.
\end{align*}
This proves the intrinsic exterior cone condition with uniform parameters.
\end{proof}

\subsection{The volume density condition}\label{s:VDC}
This is another type of regularity condition on the boundary, which is weaker than the intrinsic exterior cone condition. 
\begin{definition}\label{def:VDC}
    An open set $\Omega\subset\mathbb{G}$ is said to satisfy the \emph{volume density condition} if there exists $c>0$ such that for all $r>0$ and $x\in\partial\Omega$,
    \begin{align*}
        \Vol(\overline{\Omega}^c\cap \mathbb{B}(x,r))\ge c r^Q,
    \end{align*}
    where $Q$ is the homogeneous dimension of $\mathbb{G}$. 
\end{definition}
\begin{remark}
    The volume density condition for Euclidean spaces was introduced in \cite{Jang-MeiWu2002} to study harmonic measures for symmetric stable processes.
\end{remark}
\begin{proposition}\label{prop:VDC_conseq}
    Suppose that $\Omega$ is an open set in $\mathbb{G}$.
    \begin{enumerate}[leftmargin=*]
    \item If $\Omega$ satisfies the volume density condition then $\Vol(\partial\Omega)=0$.
    \item If $\Omega$ satisfies the intrinsic exterior cone condition and $\overline{\Omega}$ has finite volume, then it also satisfies the volume density condition. In particular, if $\Omega$ is open, bounded with $C^1$ boundary having no characteristic points, it satisfies the volume density condition.
    \end{enumerate}
\end{proposition}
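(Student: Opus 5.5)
For part (1) we use a Lebesgue density argument. The Lebesgue differentiation theorem holds on the doubling metric measure space $(\mathbb G,\|\cdot\|,dx)$, since $\Vol(\mathbb B(x,r))=r^Q\Vol(\mathbb B(e,1))$. Suppose $\Vol(\partial\Omega)>0$. Then almost every $x\in\partial\Omega$ is a density point of $\partial\Omega$:
\begin{align*}
\frac{\Vol(\partial\Omega\cap\mathbb B(x,r))}{\Vol(\mathbb B(x,r))}\to 1 \quad \mbox{as } r\to0 .
\end{align*}
The sets $\partial\Omega$ and $\overline{\Omega}^c$ are disjoint, so
\begin{align*}
\Vol(\overline\Omega^c\cap\mathbb B(x,r))\le \Vol(\mathbb B(x,r))-\Vol(\partial\Omega\cap\mathbb B(x,r))=o(r^Q).
\end{align*}
This contradicts the volume density lower bound $cr^Q$ at such a point. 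Hence $\Vol(\partial\Omega)=0$.

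For part (2), fix $p\in\partial\Omega$ and let $\beta,\rho$ and $X_p$ be as in Definition~\ref{def:cone_cond}. The set $\mathcal C_p^+(\beta,\rho;X_p)\setminus\{p\}$ lies in $\overline\Omega^c$. By left translation it equals $p\,\mathcal C_e^+(\beta,\rho;X_p)$ minus a point. Any point $g=hk$ of $\mathcal C_e^+(\beta,r';X)$ satisfies
\begin{align*}
\|g\|\le\|h\|+\|k\|\le(1+\beta)\|k\|<(1+\beta)r'.
\end{align*}
Consequently $\mathcal C_e^+(\beta,r/(1+\beta);X)\subset\mathbb B(e,r)$. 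Left translation preserves both Haar measure and the balls $\mathbb B(p,r)=p\,\mathbb B(e,r)$, so we split into two cases according to the size of $r$.

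For $r\le(1+\beta)\rho$, set $r'=r/(1+\beta)\le\rho$. Then
\begin{align*}
\Vol(\overline\Omega^c\cap\mathbb B(p,r))\ge\Vol(\mathcal C_e^+(\beta,r';X_p))\ge \frac{\beta^{Q-1}a_*}{Q\alpha^*(1+\beta)^Q}\,r^Q,
\end{align*}
by \eqref{eq:cone_volume} together with Lemma~\ref{lem:continuity_HM}. This bound is uniform in $X_p$, hence in $p$.

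For large $r$ the cone has fixed height, so it only gives a constant lower bound and a different argument is needed. Here the finiteness of $\Vol(\overline\Omega)$ is used. Writing $\omega=\Vol(\mathbb B(e,1))$,
\begin{align*}
\Vol(\overline\Omega^c\cap\mathbb B(p,r))\ge \omega r^Q-\Vol(\overline\Omega).
\end{align*}
This is at least $\tfrac{\omega}{2}r^Q$ once $r^Q\ge 2\Vol(\overline\Omega)/\omega=:R^Q$.

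In the intermediate range $(1+\beta)\rho<r<R$ we use monotonicity in $r$. The volume is at least $c_1((1+\beta)\rho)^Q\ge c_1((1+\beta)\rho/R)^Q r^Q$, where $c_1$ is the constant from the small-$r$ case. Taking the minimum of the three constants gives the volume density condition. (If $R\le(1+\beta)\rho$, the intermediate range is empty and nothing needs to be done.)

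The final claim follows by combining this with Theorem~\ref{thm:noncharacteristic-cones}, since a bounded $\Omega$ has $\overline\Omega$ of finite volume.

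The only delicate point is the uniformity of the cone volumes over the horizontal directions $X_p$, and Lemma~\ref{lem:continuity_HM} already supplies it.
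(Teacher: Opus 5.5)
Your proof is correct and follows the paper's argument essentially step for step. Part (1) is a Lebesgue density-point contradiction. For part (2) you use the inclusion $\mathcal C_p^+(\beta,r/(1+\beta);X_p)\subset\mathbb B(p,r)$ together with the uniform cone-volume bounds from Lemma~\ref{lem:volumes} and Lemma~\ref{lem:continuity_HM} for small $r$, the estimate $\Vol(\mathbb B(p,r))-\Vol(\overline\Omega)$ for large $r$, and monotonicity in $r$ for the intermediate range. Your write-up is slightly cleaner than the paper's: it has explicit thresholds, and the intermediate-range estimate correctly uses $\overline\Omega^c$ where the paper's text has a typo ($\overline\Omega$).
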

\begin{proof}
    Assume that $\Vol(\partial\Omega)>0$. Since the Carnot-Carath\'eodory metric on $\mathbb{G}$ is volume doubling, by the Lebesgue differentiation theorem on volume doubling metric measure spaces (see \cite[Theorem~1.8]{HeinonenBook2001}), there exists $p\in\partial\Omega$ such that 
    \begin{align*}
        \lim_{r\to 0}\frac{\Vol(\partial\Omega\cap \mathbb{B}(p,r))}{\Vol(\mathbb{B}(p,r))}=1.
    \end{align*}
    On the other hand, due to the volume density condition, for any $r>0$ and $p\in\partial\Omega$,
    \begin{align*}
        \Vol(\partial\Omega\cap \mathbb{B}(p,r))\le \Vol(\mathbb{B}(p,r))-\Vol(\overline{\Omega}^c\cap \mathbb{B}(p,r))\le (1-c)\Vol(\mathbb{B}(p,r))
    \end{align*}
    for some $1-c\in [0,1)$, which shows that
    \begin{align*}
        \limsup_{r\to 0}\frac{\Vol(\partial\Omega\cap \mathbb{B}(p,r))}{\Vol(\mathbb{B}(p,r))}\le 1-c<1.
    \end{align*}
    This leads to a contradiction and hence $\Vol(\partial\Omega)=0$. 

  To prove the second assertion, we use the boundedness of $\Omega$. Since $\Omega$ satisfies the intrinsic exterior cone condition, there exists $r_0,\beta, \rho>0$ such that for each point $p\in\partial\Omega$, there exists an intrinsic cone $\mathcal{C}^+_p(\beta, \rho; X_p)$ in the sense of Definition~\ref{def:oriented_cone_horiz} such that 
    \begin{align*}
        \mathcal{C}^+_p(\beta, \rho; X_p)\setminus \{p\}\subset \overline{\Omega}^c.
    \end{align*}
    From the definition of the intrinsic cones it follows that for all $p\in\partial\Omega$,
    \begin{align*}
        \mathcal{C}^+_p\left(\beta, \frac{r}{1+\beta}; X_p\right)\subset \mathcal{C}^+_p(\beta, r; X_p)\cap \mathbb{B}(p,r).
    \end{align*}
    Therefore, for all $r\le\rho$, 
    \begin{equation}\label{eq:small_r}
    \begin{aligned}
        \Vol(\overline{\Omega}^c\cap \mathbb{B}(p,r))&\ge \Vol(\mathcal{C}^+_p(\beta,r; X_p)\cap \mathbb{B}(p, r)) \\
        &\ge \Vol\left(\mathcal{C}^+_p\left(\beta, \frac{r}{1+\beta}; X_p\right)\right) \\
        &=\Vol\left(\mathcal{C}^+_e\left(\beta,1; X_p\right)\right)\left(\frac{r}{1+\beta}\right)^Q
    \end{aligned}
    \end{equation}
    where the last identity follows from Lemma~\ref{lem:cone_dilation} along with the left translation invariance and the scaling property of the volume.  Since $\inf_{p\in\partial\Omega}\Vol(\mathcal{C}^+_e(\beta,1; X_p))>0$, thanks to Proposition~\ref{prop:unif_lower_bound}, \eqref{eq:small_r} implies that there exists a constant $c_1>0$ such that for all $p\in\partial\Omega$ and $r\le \rho$,
    \begin{align}\label{eq:small_r_1}
       \Vol(\overline{\Omega}^c\cap \mathbb{B}(p,r))\ge c_1 r^Q. 
    \end{align}
    On the other hand, for any $p\in\partial\Omega$ and $r>0$, 
    \begin{align*}
        \Vol(\overline{\Omega}^c\cap \mathbb{B}(p,r))\ge\Vol(\mathbb{B}(p,r))-\Vol(\overline{\Omega})=r^Q\Vol(\mathbb{B}(e,1))-\Vol(\overline{\Omega}).
    \end{align*}
    Therefore, there exists sufficiently large $K>0$ such that for all $r>K\rho$,
    \begin{align}\label{eq:large_r}
          \Vol(\overline{\Omega}^c\cap \mathbb{B}(p,r))\ge \frac{1}{2}r^Q\Vol(\mathbb{B}(e,1)).
    \end{align}
    Since $r\mapsto \Vol(\overline{\Omega}^c\cap \mathbb{B}(p,r))$ is an increasing function, for any $r\in (\rho, K\rho)$ and $p\in\partial\Omega$ we have
    \begin{align}\label{eq:intermediate_r}
        \Vol(\overline{\Omega}\cap \mathbb{B}(p,r))\ge  \Vol(\overline{\Omega}\cap \mathbb{B}(p,\rho))\ge c_1\rho^Q\ge \frac{c_1}{K^Q}r^Q.
    \end{align}
   Therefore, combining \eqref{eq:small_r_1}, \eqref{eq:large_r}, and \eqref{eq:intermediate_r} we conclude that $\Omega$ satisfies the volume density condition.
   
    Finally, by Theorem~\ref{thm:noncharacteristic-cones}, any bounded open set $\Omega$ with a $C^1$ non-characteristic boundary satisfies the intrinsic exterior cone condition, which implies the volume density condition for $\Omega$.
\end{proof}
A domain with a non-smooth boundary or with characteristic points may still satisfy the volume density condition. 

\begin{proposition}\label{prop:VDC_ball}
    Let $\mathbb{G}\cong\R^{m_1}\times\R^{m_2}$ be a Carnot group of step-2 and let $\|\cdot\|$ be any symmetric homogeneous quasinorm on $\mathbb{G}$ such that the unit ball at the identity
    \begin{align*}
        \mathbb{B}(e,1)=\{x\in\mathbb G: \|x\|<1\}
    \end{align*}
    is convex in Euclidean sense. Then for any $x\in\mathbb G$ and $R>0$, the metric balls $\mathbb B(x,R)$ satisfy the volume density condition.
\end{proposition}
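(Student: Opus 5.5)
By left translation and dilation, it suffices to treat the ball $\Omega=\mathbb B(e,1)$. Indeed, $L_x$ preserves volume and maps $\mathbb B(e,R)$ onto $\mathbb B(x,R)$. The dilation $\delta_R$ scales volume by $R^Q$ and maps balls of radius $r$ to balls of radius $Rr$. Hence the volume density condition with constant $c$ for $\mathbb B(e,1)$ transfers to every $\mathbb B(x,R)$ with the same constant. Since $\|\cdot\|$ is continuous and homogeneous, $\overline{\mathbb B(e,1)}=\{\|y\|\le 1\}$ and $\partial\mathbb B(e,1)=\{\|y\|=1\}$. So I must show that there is $c>0$ with $\Vol(\{\|y\|>1\}\cap\mathbb B(p,r))\ge cr^Q$ for all $\|p\|=1$ and $r>0$.

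\emph{Large $r$.} Fix $r_0>0$ small, to be chosen below. For $r\ge r_0$ I argue exactly as in the proof of Proposition~\ref{prop:VDC_conseq}. When $r\ge K$ for a large $K$, we have $\Vol(\overline\Omega^c\cap\mathbb B(p,r))\ge r^Q\Vol(\mathbb B(e,1))-\Vol(\overline\Omega)\ge \tfrac12 r^Q\Vol(\mathbb B(e,1))$. For $r_0\le r\le K$, monotonicity in $r$ gives the bound from the case $r=r_0$, at the cost of a factor $(r_0/K)^Q$. So everything reduces to small $r\le r_0$, uniformly in $p$.

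\emph{Small $r$: exterior half-space.} Here I use Euclidean convexity. The closed unit ball $\overline{\mathbb B(e,1)}$ is compact and convex in $\R^n$. Hence at each boundary point $p$ there is a Euclidean supporting hyperplane: a Euclidean unit vector $\nu_p$ with $\overline{\Omega}\subset\{y:\langle y-p,\nu_p\rangle_{\mathrm{Euc}}\le 0\}$. Consequently the open Euclidean half-space $E_p=\{\langle y-p,\nu_p\rangle>0\}$ lies in $\overline\Omega^c$. It remains to bound $\Vol(E_p\cap\mathbb B(p,r))$ from below by $cr^Q$ for $r\le r_0$.

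Write $\mathbb B(p,r)=p\,\delta_r\mathbb B(e,1)$. In step $2$, with coordinates $y=(y_1,y_2)$, the group law is $p\,y=(p_1+y_1,\,p_2+y_2+B(p_1,y_1))$ with $B$ bilinear, so left translation by $p$ is an affine map of $\R^n$ with determinant $1$. Hence $p^{-1}E_p$ is again a Euclidean half-space through $e$, namely $\{y:\langle y,\mu_p\rangle>0\}$, where $\mu_p=A_p^{T}\nu_p$ and $A_p$ is the linear part. Decompose $\mu_p=(\mu^1,\mu^2)$. After the dilation $y=\delta_r z$, the set $\{\langle y,\mu_p\rangle>0\}$ becomes
\[
H_r=\{z:\ r\langle z_1,\mu^1\rangle+r^2\langle z_2,\mu^2\rangle>0\},
\]
and $\Vol(E_p\cap\mathbb B(p,r))=r^Q\,\Vol(H_r\cap\mathbb B(e,1))$.

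\emph{Main obstacle: uniformity.} The difficulty is that $H_r$ is a half-space through the origin whose normal $(r\mu^1,r^2\mu^2)$ rotates as $r$ varies and depends on $p$. What rescues the argument is the symmetry of $\|\cdot\|$: since $x^{-1}=-x$, the ball $\mathbb B(e,1)$ is centrally symmetric. Any half-space $\{\langle z,w\rangle>0\}$ with $w\neq0$ therefore cuts exactly half of the ball off, up to the null hyperplane. Hence $\Vol(H_r\cap\mathbb B(e,1))=\tfrac12\Vol(\mathbb B(e,1))$ for every $r>0$ and every $p$, provided $\mu_p\ne 0$. This holds because $A_p$ is invertible. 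This gives $c=\tfrac12\Vol(\mathbb B(e,1))$ in the small-$r$ regime, uniformly in $p$. In fact the bound holds for all $r$, which makes the first reduction unnecessary. The only points to check carefully are that left translation is affine with unit Jacobian in step $2$, which follows from the BCH formula, and that convexity yields the supporting hyperplane at every boundary point.
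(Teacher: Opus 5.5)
Your proof is correct and follows essentially the paper's argument: convexity gives a Euclidean supporting hyperplane at $p$, left translations are affine in step two, and the central symmetry of the balls (from $\|x\|=\|-x\|$) shows the exterior half-space captures exactly half of $\mathbb B(p,r)$, giving $c=\tfrac12\Vol(\mathbb B(e,1))$ for every $r>0$. The differences are cosmetic: you normalize to $\mathbb B(e,1)$ and pull the half-space back to the identity instead of pushing the central symmetry forward to $p$, and, as you note yourself, the large-$r$ reduction is superfluous.
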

\begin{remark}
     Our proof of the above proposition uses the central symmetry (in the Euclidean sense) of metric balls in step-2 Carnot groups. This is a consequence of the fact that the left-translation maps on step-2 Carnot groups are affine, which preserves central symmetry. This argument fails when $\mathbb G$ is a higher step Carnot group, as the left-translation maps become polynomial maps, see Proposition~\ref{prop:step_2} below.
\end{remark}
We include some examples of quasinorms on step-2 Carnot groups which satisfy the convexity condition in Proposition~\ref{prop:VDC_ball}. We use $|\cdot|$ to denote the Euclidean norm.
\begin{example}[Non-smooth cases]\label{ex:1}
    Let $\rho_1$ and $\rho_\infty$ denote the quasinorms on $\mathbb G$ defined by 
    \begin{align*}
        \rho_1(x)&=\left(|\xi_1|^2+|\xi_2|\right)^{\frac12}, \quad x=(\xi_1,\xi_2)\in\mathbb{R}^{m_1}\times \mathbb{R}^{m_2} \\
        \rho_\infty(x)&=\max\{|\xi_1|, \varepsilon|\xi_2|^{\frac 12}\}, \quad x=(\xi_1,\xi_2)\in \mathbb{R}^{m_1}\times \mathbb{R}^{m_2}.
    \end{align*}
    In the above definition of $\rho_\infty$, $\varepsilon$ is a positive constant. For a certain choice of $\varepsilon$, $\rho_\infty$ becomes a norm, see \cite[Theorem~5.1]{FranchiSerapioniSerra_Cassano2003a}.
    It is immediate that both $\rho_1$ and $\rho_\infty$ are symmetric, homogeneous, and the their metric balls defined through them are convex subsets of $\R^{m_1+m_2}$. Also, these metric balls do not have smooth boundaries.
\end{example}
\begin{example}[Cygan-Kor\'anyi type quasinorm]\label{ex:2}
Define the quasinorm $\rho_2$ on $\mathbb G$ as
\begin{align*}
    \rho_2(x)=\left(|\xi_1|^4+c|\xi_2|^2\right)^{\frac14}, \quad x=(\xi_1,\xi_2)\in\mathbb{R}^{m_1}\times \mathbb{R}^{m_2},
\end{align*}
where $c$ is a positive constant.
When $\mathbb G$ is an H-type group and $c=16$, $\rho_2$ is a norm and it is known as the Cygan-Kor\'anyi norm, see \cite{Cygan1981, Koranyi1985}.
Clearly, $\{x\in\mathbb G:\rho_2(x)<1\}$ is a convex subset of $\R^{m_1+m_2}$. The next lemma identifies the characteristic points on the boundary of metric balls with respect to $\rho_2$. 
\begin{lemma}\label{lem:characteristic_pt1}
    Let $\Omega=\mathbb{B}_{\rho_2}(e,R)$. A point $p\in\partial\Omega$ is a characteristic point if and only if $p=(0,\xi_2)$ with $c|\xi_2|^2=R^4$.
\end{lemma}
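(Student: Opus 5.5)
The plan is to write $\partial\Omega$ as a level set of a $C^1$ defining function and compute its horizontal gradient explicitly. Since $\Omega=\mathbb B_{\rho_2}(e,R)=\{x:\rho_2(x)<R\}$, we take
\begin{align*}
\Phi(\xi_1,\xi_2)=|\xi_1|^4+c|\xi_2|^2-R^4,
\end{align*}
which is smooth with $\partial\Omega=\{\Phi=0\}$. The Euclidean gradient $(4|\xi_1|^2\xi_1,\,2c\,\xi_2)$ vanishes only at the origin, which is not on $\partial\Omega$, so the boundary is smooth. By Definition~\ref{def:characteristic_pt}, $p\in\partial\Omega$ is characteristic exactly when $X_j\Phi(p)=0$ for all $j=1,\dots,m_1$, that is, when $\nabla_{\mathcal H}\Phi(p)=0$.

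In exponential coordinates on a step-2 group, the BCH formula gives the left-invariant horizontal fields in the form
\begin{align*}
X_j=\partial_{\xi_1^{j}}+\frac12\sum_{l=1}^{m_2}\langle B^{(l)}\xi_1,e_j\rangle\,\partial_{\xi_2^{l}},
\end{align*}
where the $B^{(l)}$ are skew-symmetric $m_1\times m_1$ matrices encoding the bracket $V_1\times V_1\to V_2$, with signs fixed by the chosen convention. The exact constants do not matter. The only structural facts needed are that the $\partial_{\xi_1}$-part of $X_j$ is $e_j$, and that the $\partial_{\xi_2}$-part is linear in $\xi_1$ (after a linear change of coordinates in $V_1$, we may assume the horizontal inner product is the standard one). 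This gives
\begin{align*}
X_j\Phi=4|\xi_1|^2\xi_1^{j}+c\sum_{l}\xi_2^{l}\langle B^{(l)}\xi_1,e_j\rangle,
\end{align*}
so that
\begin{align*}
\nabla_{\mathcal H}\Phi=4|\xi_1|^2\xi_1+c\,B(\xi_2)\xi_1,\qquad B(\xi_2):=\sum_l \xi_2^{l}B^{(l)}.
\end{align*}

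The key step is to pair this with $\xi_1$. Because $B(\xi_2)$ is skew-symmetric, $\langle B(\xi_2)\xi_1,\xi_1\rangle=0$, and therefore
\begin{align*}
\langle\nabla_{\mathcal H}\Phi,\xi_1\rangle=4|\xi_1|^4.
\end{align*}
If $p$ is characteristic, this forces $\xi_1=0$, and then $p\in\partial\Omega$ gives $c|\xi_2|^2=R^4$. Conversely, if $\xi_1=0$, both terms of $\nabla_{\mathcal H}\Phi$ vanish, since each is linear (or cubic) in $\xi_1$. Hence every point $(0,\xi_2)$ with $c|\xi_2|^2=R^4$ is characteristic.

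The only real obstacle is the bookkeeping for the vector fields: justifying that in exponential coordinates of a step-2 group the $V_2$-coefficients of $X_j$ are exactly $\frac12[\xi_1,e_j]$, hence skew and linear in $\xi_1$. This follows from $x\star y=x+y+\frac12[x,y]$ by differentiating $x\star \exp(se_j)$ at $s=0$. The argument is independent of the constant $c$, of the normalization of the inner product on $V_2$, and of any H-type assumption.
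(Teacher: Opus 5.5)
Your proof is correct and is essentially the paper's argument. It uses the same defining function $|\xi_1|^4+c|\xi_2|^2-R^4$, the same step-two form of the $X_j$, and the same pairing of $\nabla_{\mathcal H}\Phi$ with $\xi_1$, where skew-symmetry leaves only $4|\xi_1|^4$. You also state the easy converse, which the paper leaves implicit.

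One quibble: the parenthetical change of coordinates in $V_1$ is unnecessary, and it is not quite free, since it would alter the $|\xi_1|$ appearing in $\rho_2$. Whether a point is characteristic depends only on the span of the $X_j$, so you can work directly with the coordinate fields, where $\sum_j \xi_1^j X_j\Phi = 4|\xi_1|^4$ holds exactly as you wrote it.
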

\begin{proof}
    Since $\mathbb G$ has step 2, the left-invariant vector fields generated by the basis elements $\{X_1,\ldots, X_{m_1}\}$ of $V_1$ can be written as
    \begin{align*}
        X_i = \partial_{\xi^i_1}+\frac{1}{2}\sum_{l=1}^{m_2}\langle J_l \xi_1, e_i\rangle\partial_{\xi^l_2}, \quad i=1,\ldots, m_1,
    \end{align*}
    where $(J_l)_{l=1}^{m_2}$ are skew-symmetric matrices on $\R^{m_1}$, and $\{e_1,\ldots, e_{m_1}\}$ is the canonical basis of $\R^{m_1}$. Writing 
    \begin{align*}
        F(\xi_1, \xi_2)=|\xi_1|^4+c |\xi_2|^2-R^4,
    \end{align*}
    a point $p=(\xi_1,\xi_2)$ is a characteristic point if $X_i F(p)=0$ for all $i=1,\ldots, m_1$.
    Note that
    \begin{align*}
        \partial_{\xi_1} F(\xi_1, \xi_2)=4|\xi_1|^2\xi_1, \quad \partial_{\xi_2} F(\xi_1,\xi_2)=2c\xi_2.
    \end{align*}
    Therefore, a simple computation shows that
    \begin{align*}
        X_i F(\xi_1, \xi_2)=4 |\xi_1|^2 \xi^i_1 +c\sum_{l=1}^{m_2}\xi^l_2\langle J_l\xi_1, e_i\rangle
    \end{align*}
    Denoting
    \begin{align*}
        J_{\xi_2}:=\sum_{l=1}^{m_2}\xi^l_2 J_l
    \end{align*}
    we can therefore write 
    \begin{align*}
        X_i F(\xi_1,\xi_2)=\langle 4|\xi_1|^2\xi_1+ cJ_{\xi_2}\xi_1, e_i\rangle, \quad 
        \nabla_{\mathcal H} F(\xi_1,\xi_2)= 4|\xi_1|^2\xi_1+ cJ_{\xi_2}\xi_1.
    \end{align*}
    Hence, 
    \begin{align*}
        \nabla_{\mathcal H} F(\xi_1, \xi_2)=0 \quad \iff \quad  \nabla_{\mathcal H} F(\xi_1,\xi_2)= 4|\xi_1|^2\xi_1+ cJ_{\xi_2}\xi_1=0.
    \end{align*}
    Since $J_{\xi_2}$ is skew-symmetric, the last identity implies that 
    \begin{align*}
        \langle 4|\xi_1|^2\xi_1+ cJ_{\xi_2}\xi_1,\xi_1\rangle=0 \quad \iff \quad 4|\xi_1|^4=0,
    \end{align*}
    which shows that $\xi_1=0$. Hence, $c|\xi_2|^2=R^4$. This completes the proof of the lemma.
\end{proof}
\end{example}
\begin{example}[Hebisch-Sikora norm]\label{ex:3} Hebisch and Sikora \cite{HebischSikora1990} proved the following result for arbitrary homogeneous Carnot groups:
\begin{theorem}[Theorem~2 in \cite{HebischSikora1990}]
    Let $\mathbb G$ be any homogeneous Carnot group equipped with the  dilations $(\delta_\lambda)_{\lambda>0}$. Then there exists $\varepsilon_0>0$ such that for every $0<\kappa<\varepsilon_0$, 
    \begin{align*}
        \rho(x):=\inf\{t>0: |\delta_{1/t} x|<\kappa \}
    \end{align*}
    defines a symmetric homogeneous norm on $\mathbb G$. In particular, the unit metric ball $\{x\in\mathbb G: \rho(x)<1\}$ coincides with the Euclidean ball $B_{\R^n}(0,\kappa)$.
\end{theorem}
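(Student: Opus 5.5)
We verify the norm properties directly from the definition; only subadditivity requires real work. Throughout we use exponential coordinates $x=(x_1,\dots,x_k)$, $x_j\in\R^{m_j}$, the Euclidean norm $|\cdot|$, and the identity $x^{-1}=-x$.

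\emph{Elementary properties.} Fix $x\neq e$. The map $t\mapsto|\delta_{1/t}x|^2=\sum_j t^{-2j}|x_j|^2$ is continuous and strictly decreasing on $(0,\infty)$. It tends to $\infty$ as $t\to0$ and to $0$ as $t\to\infty$. Hence $\rho(x)$ is the unique $t>0$ with $|\delta_{1/t}x|=\kappa$, and $\rho(e)=0$. Continuity of $\rho$ follows by the implicit function theorem, since the $t$-derivative is nonzero, together with the bound $\rho(x)\to0$ as $x\to e$.

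Homogeneity follows from $\delta_{1/t}\delta_\lambda x=\delta_{\lambda/t}x$, which gives $\rho(\delta_\lambda x)=\lambda\rho(x)$. Symmetry follows from $|\delta_{1/t}(-x)|=|\delta_{1/t}x|$. By monotonicity, $\rho(x)<1$ if and only if $|x|<\kappa$, which identifies the unit ball with $B_{\R^n}(0,\kappa)$.

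\emph{Reduction of the triangle inequality.} By homogeneity it suffices to prove the following. If $|x|\le\kappa$, $|y|\le\kappa$ and $a,b\ge0$ with $a+b=1$, then
\begin{align*}
|\delta_a x\cdot\delta_b y|\le\kappa .
\end{align*}
Indeed, apply this to $x=\delta_{1/\rho(u)}u$, $y=\delta_{1/\rho(v)}v$, $a=\rho(u)/(\rho(u)+\rho(v))$, and then rescale by $\rho(u)+\rho(v)$.

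By the BCH formula, the $j$-th layer of $z=\delta_a x\cdot\delta_b y$ has the form
\begin{align*}
z_j=a^jx_j+b^jy_j+Q_j,
\end{align*}
where $Q_j$ is a sum of iterated brackets. Each bracket contains at least one entry from $\delta_a x$ and at least one from $\delta_b y$, so every monomial of $Q_j$ carries a factor $ab$.

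\emph{Gain and error terms.} Expand $|z|^2$ and compare with the convex combination $ax+by$. We have $|ax+by|^2\le\kappa^2-ab|x-y|^2$. For $j\ge2$, the defect $a^j-a=-a(1-a^{j-1})\le -c_jab$ (and symmetrically for $b$) yields an additional gain of order
\begin{align*}
-c\,ab\sum_{j\ge2}\bigl(|x_j|^2+|y_j|^2\bigr).
\end{align*}
The errors are the terms $2\langle a^jx_j+b^jy_j,Q_j\rangle+|Q_j|^2$. To estimate them, write $y_1=x_1+(y_1-x_1)$ inside each bracket. Brackets involving only $x_1$ with itself vanish because $[x_1,x_1]=0$. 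Hence every surviving monomial of $Q_j$ contains either a factor $|y_1-x_1|$ or a factor $|x_i|$ or $|y_i|$ with $i\ge2$, and it also contains at least one further small factor of size $\lesssim\kappa$. Therefore
\begin{align*}
|Q_j|\lesssim ab\,\kappa\Bigl(|x-y|+\sum_{i\ge2}\bigl(|x_i|+|y_i|\bigr)\Bigr).
\end{align*}
The factor $a^jx_j+b^jy_j$ is at most $\kappa$. Young's inequality then bounds each error by
\begin{align*}
C\kappa\, ab\Bigl(|x-y|^2+\sum_{i\ge2}\bigl(|x_i|^2+|y_i|^2\bigr)\Bigr)+C\,ab\,\kappa^2\sum_{i\ge2}\bigl(|x_i|^2+|y_i|^2\bigr).
\end{align*}
For $\kappa<\varepsilon_0$ small, these errors are absorbed by the gain terms, and we obtain $|z|^2\le\kappa^2$.

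\emph{Main obstacle.} The hardest step is the structural bound on $Q_j$: every term must vanish to first order along the diagonal $x=y$ in the horizontal layer, or else involve higher-layer coordinates. For step $k>2$ this requires organizing the nested BCH brackets carefully. I would first verify it by induction on the bracket length, replacing $y$ by $x+(y-x)$ and using that every bracket contains both an $x$-entry and a $y$-entry. The scale bookkeeping is the second point to check: higher-layer coordinates are at most $\kappa$, not $\kappa^j$, in the Euclidean normalization, and this must still allow absorption for small $\kappa$.
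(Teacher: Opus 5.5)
The paper does not prove this statement; it is quoted from Hebisch--Sikora, so there is no internal argument to compare with. Your strategy is the natural one: reduce subadditivity by homogeneity to $\delta_a\bar B\cdot\delta_b\bar B\subseteq\bar B$ for the closed Euclidean ball $\bar B$ of radius $\kappa$, and treat the BCH terms as a perturbation of Euclidean convexity. The elementary properties, the reduction, and your bound on $Q_j$ are correct. That bound is actually routine and needs no induction: it uses only multilinearity and the vanishing of every bracket of length $\ge2$ in the single element $x_1$. Writing
\begin{align*}
G:=|x_1-y_1|^2+\sum_{i\ge2}\bigl(|x_i|^2+|y_i|^2\bigr),
\end{align*}
it gives $|Q_j|\le C\,ab\,\kappa\sqrt G$.

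\textbf{The gap.} The gap is in the error estimate, which is the real crux, rather than the structure of $Q_j$.

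\emph{Why the stated bound fails.} You bound the factor $a^jx_j+b^jy_j$ by $\kappa$. The cross term $2\langle a^jx_j+b^jy_j,Q_j\rangle$ is then only $O(ab\,\kappa^2\sqrt G)$. This is linear in $\sqrt G$ and cannot be absorbed by a gain of size $ab\,G$ when $G$ is small. So Young's inequality does not yield your displayed bound from what you wrote.

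\emph{The term really is first order.} In step two, take $x=(x_1,v)$ and $y=(y_1,v)$. Then $Q_2=\tfrac12ab\,[x_1,y_1-x_1]$, and the cross term equals $ab(a^2+b^2)\langle v,[x_1,y_1-x_1]\rangle$. This is linear in $|x_1-y_1|$, with a coefficient proportional to $|v|$. Without the non-isotropic dilation, this very term makes the Euclidean norm fail to be locally subadditive for the group law.

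\emph{The fix.} Your displayed bound is true, but for a different reason. For $j\ge2$ one has $|a^jx_j+b^jy_j|\le|x_j|+|y_j|\le\sqrt{2G}$. Hence the cross term is at most $C\,ab\,\kappa\,G$, and $|Q_j|^2\le C\,ab\,\kappa^2G$. So the dilation defect in the higher layers is indispensable, not merely an extra gain.

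\emph{The gain needs a precise form.} You need
\begin{align*}
|\delta_ax+\delta_by|^2\le\kappa^2-ab\,G .
\end{align*}
This does not simply add to $ab|x-y|^2$ as you state. For instance, with $|x|=|y|=\kappa$, $x=(x_1,v)$, $y=(x_1,-v)$ and $a=b=\tfrac12$, one gets $|\delta_ax+\delta_by|^2=\kappa^2-ab|x-y|^2$ exactly. The displayed inequality nevertheless holds. It follows from the first-layer identity $|ax_1+by_1|^2=a|x_1|^2+b|y_1|^2-ab|x_1-y_1|^2$, together with Cauchy--Schwarz for $j\ge2$:
\begin{align*}
|a^jx_j+b^jy_j|^2\le(a^j+b^j)\bigl(a^j|x_j|^2+b^j|y_j|^2\bigr)\le a|x_j|^2+b|y_j|^2-ab\bigl(|x_j|^2+|y_j|^2\bigr).
\end{align*}
With these two corrections, $|\delta_ax\cdot\delta_by|^2\le\kappa^2-ab\,G\,(1-C\kappa)\le\kappa^2$ for $\kappa$ small, and the argument closes.
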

From the above theorem ,it follows immediately that the unit ball centered at the identity with respect to the norm $\rho$ is convex. In the following we give the description of the characteristic points on the boundary of $\mathbb{B}_\rho(e,R)$ without a proof as it will follow from a similar computation in the proof of Lemma~\ref{lem:characteristic_pt1}.
\begin{lemma}
    Let $\Omega=\mathbb{B}_{\rho}(e,R)$ for some $R>0$. Then $p=(\xi_1,\xi_2)\in\partial\Omega$ is a characteristic point if and only if $\xi_1=0$ and $|\xi_2|=\kappa R^2$.
\end{lemma}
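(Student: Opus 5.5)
The argument is a direct computation: write down the characteristic-point condition for the defining function of $\Omega=\mathbb{B}_\rho(e,R)$, reduce it via skew-symmetry to $\xi_1=0$, and then read off $|\xi_2|$ from the level set. By the Hebisch--Sikora theorem and homogeneity, $\rho(\delta_{1/R}x)<1$ if and only if $|\delta_{1/R}x|<\kappa$. Hence
\begin{align*}
\Omega=\{(\xi_1,\xi_2): R^{-2}|\xi_1|^2+R^{-4}|\xi_2|^2<\kappa^2\},
\end{align*}
and $\Omega$ has the defining function
\begin{align*}
F(\xi_1,\xi_2)=R^2|\xi_1|^2+|\xi_2|^2-\kappa^2R^4.
\end{align*}
This $F$ is smooth with Euclidean gradient $(2R^2\xi_1,2\xi_2)$, which is nonzero on $\partial\Omega$. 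So $\partial\Omega$ is a smooth hypersurface, and Definition~\ref{def:characteristic_pt} applies with $\nabla_{\mathcal H}F$.

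Using the step-2 form of the horizontal fields from the proof of Lemma~\ref{lem:characteristic_pt1}, namely $X_i=\partial_{\xi_1^i}+\frac12\sum_l\langle J_l\xi_1,e_i\rangle\partial_{\xi_2^l}$, we get
\begin{align*}
X_iF=2R^2\xi_1^i+\sum_{l}\xi_2^l\langle J_l\xi_1,e_i\rangle .
\end{align*}
Writing $J_{\xi_2}=\sum_l\xi_2^lJ_l$ as before, this gives
\begin{align*}
\nabla_{\mathcal H}F(\xi_1,\xi_2)=2R^2\xi_1+J_{\xi_2}\xi_1 .
\end{align*}

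Suppose $\nabla_{\mathcal H}F=0$. Taking the inner product with $\xi_1$ and using skew-symmetry of $J_{\xi_2}$ gives $2R^2|\xi_1|^2=0$, so $\xi_1=0$. On $\partial\Omega$ this forces $|\xi_2|^2=\kappa^2R^4$, that is, $|\xi_2|=\kappa R^2$. Conversely, at any point $(0,\xi_2)$ we have $\nabla_{\mathcal H}F=0$ trivially, so every such point on $\partial\Omega$ is characteristic. This proves both directions.

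The step that needs the most care is the first: the identification of $\partial\Omega$ as the dilated Euclidean sphere with quadratic defining function. It relies on the dilations being exactly $\delta_\lambda(\xi_1,\xi_2)=(\lambda\xi_1,\lambda^2\xi_2)$ in the coordinates where the $X_i$ take the stated form. Everything after that is routine.
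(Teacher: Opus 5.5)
Your proof is correct and takes the same route the paper indicates. The paper omits the proof, saying it follows from the computation in Lemma~\ref{lem:characteristic_pt1}, and you carry out exactly that computation: you use the defining function $F=R^2|\xi_1|^2+|\xi_2|^2-\kappa^2R^4$ of the dilated Euclidean ball, and the skew-symmetry of $J_{\xi_2}$ forces $\xi_1=0$.
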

\end{example}
As discussed before, we use the following notion of central symmetry in the Euclidean sense. 
\begin{definition}\label{def:central_symmetry}
    A set $K\subset \R^n$ is called \emph{centrally symmetric} (in the Euclidean sense) with respect to $x_0\in K$ if for any $x\in K$, its reflection around $x_0$ also belongs to $K$, that is, $2x_0-x\in K$.
\end{definition}
\begin{proof}[Proof of Proposition~\ref{prop:VDC_ball}]
   Since $\mathbb G$ is a homogeneous Carnot group of step 2, by \cite[Section~3.2]{BonfiglioliLanconelliUguzzoniBook}, the group law can be written as 
   \begin{align*}
       (\xi_1,\xi_2)\star (\xi'_1, \xi'_2)=(\xi_1+\xi'_1, \xi_2+\xi'_2+B(\xi_1, \xi'_1)),
   \end{align*}
   where $B:\R^{m_1}\times \R^{m_1}\longrightarrow \R^{m_2}$ is a skew-symmetric bilinear form. This shows that for any fixed $x\in\mathbb G$, the left-translation map $L_x$ is an affine map on $\R^n$, where $n=m_1+m_2$. Therefore, $L_x$ maps convex sets to convex sets in $\R^n$. Moreover, the dilation map $\delta_\lambda: \R^n\longrightarrow \R^n$ is also affine. Hence the convexity assumption of $\mathbb B(e,1)$ implies that for any $x\in\mathbb G$ and $R>0$,
   \begin{align*}
       \mathbb{B}(x,R)=L_x\circ \delta_R (\mathbb B(e,1))
   \end{align*}
   is a convex subset of $\R^n$.

   Since $\|x\|=\|-x\|$ for any $x\in\mathbb G$, the metric ball $\mathbb{B}(e,r)$ is centrally symmetric with respect to $e$ in the Euclidean sense for any $r>0$. This means 
   \begin{align*}
    L_x(\mathbb B(e,r))=\mathbb{B}(x,r)
   \end{align*}
   is centrally symmetric with respect to $x$ (in the Euclidean sense) as $L_x:\R^n\longrightarrow\R^n$ is an affine map.

   Consider $p\in\partial\Omega$ where $\Omega=\mathbb{B}(x,R)$. Since $\Omega$ is convex, by the Hahn-Banach separating hyperplane theorem there exists a Euclidean hyperplane $H\subset \R^n$ passing through $p$ such that it divides $\R^n$ into two half-spaces $H^+$ and $H^-$, and $\Omega\subset H^-$. Therefore, $\mathring{H}^+\subset \overline{\Omega}^c$. Since any metric ball $\mathbb{B}(p,r)$ is centrally symmetric with respect to $p$, the hyperplane $H$ divides $\mathbb{B}(p,r)$ into two parts with equal volumes. As a consequence we get
   \begin{align*}
       \Vol(\overline{\Omega}^c\cap \mathbb B(p,r))\ge \Vol(\mathring{H}^+\cap \mathbb{B}(p,r))=\frac{1}{2}\Vol(\mathbb{B}(p,r))=\frac{1}{2}\Vol(\mathbb{B}(e,1)) r^Q.
   \end{align*}
   This proves the volume density condition for $\Omega$.
\end{proof}
We close this section with the following result which explains why the argument of central symmetry used above fails for Carnot groups of higher steps.
\begin{proposition}\label{prop:step_2}
    Let $\mathbb G=\R^n$ be a homogeneous Carnot group of step $k$, and let $\|\cdot\|$ be any homogeneous norm on $\mathbb G$. Assume that the metric balls with respect to this norm are centrally symmetric with respect to the center of the ball. Then $k\le 2$.
\end{proposition}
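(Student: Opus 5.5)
The plan is to turn the central symmetry hypothesis into the statement that a certain polynomial map preserves the homogeneous norm. We then show this forces the map $z\mapsto x\star z$ to be odd about $x$, which is impossible once $V_3\neq\{0\}$. We work in exponential coordinates, so that $x^{-1}=-x$ and the group law is given by the Baker--Campbell--Dynkin--Hausdorff formula.

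\textbf{Step 1 (reformulation).} Central symmetry of every ball $\mathbb B(x,r)$ about $x$ means that $y\in\mathbb B(x,r)$ implies $2x-y\in\mathbb B(x,r)$, for all $r>0$. Hence $\|x^{-1}(2x-y)\|=\|x^{-1}y\|$ for all $x,y$. Writing $y=x\star z$ and setting
\begin{align*}
T_x(z):=(-x)\star\bigl(2x-x\star z\bigr),
\end{align*}
we get $\|T_x(z)\|=\|z\|$ for all $x,z\in\mathbb G$. Balls centred at $e$ are also centrally symmetric, so $S(z):=-z$ satisfies $\|S(z)\|=\|z\|$. Therefore $U_x:=S\circ T_x$ preserves $\|\cdot\|$, and so do all its iterates $U_x^n$.

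\textbf{Step 2 (structure of $U_x$).} By the graded structure of the BCDH formula, $x\star z$ has layer-$j$ component $x_j+z_j+P_j$, where $P_j$ depends only on the components of $x$ and $z$ in layers $<j$. Subtracting from $2x$ and multiplying on the left by $-x$ preserves this triangular form. We conclude that $T_x(z)_j=-z_j+R_j(z_1,\dots,z_{j-1})$, with $R_j$ polynomial and $R_1=0$. Consequently $U_x(z)_j=z_j+\widetilde R_j(z_1,\dots,z_{j-1})$, so $U_x$ is a unipotent triangular polynomial map.

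\textbf{Step 3 (main obstacle: bounded orbits force $U_x=\mathrm{id}$).} All iterates preserve the norm, so every orbit $\{U_x^n(z)\}_n$ stays in the compact set $\overline{\mathbb B(e,\|z\|)}$. We argue by induction on the layer. Suppose the components in layers $<j$ are fixed by $U_x$ for all $z$. Then the layer-$j$ component evolves as
\begin{align*}
U_x^n(z)_j=z_j+n\widetilde R_j(z_1,\dots,z_{j-1}),
\end{align*}
which is bounded in $n$ only if $\widetilde R_j\equiv0$. Hence $U_x=\mathrm{id}$, i.e.\ $T_x(z)=-z$. Unwinding the definition of $T_x$ and using $x\star(-z)$, this says
\begin{align*}
x\star z+x\star(-z)=2x\qquad\text{for all }x,z.
\end{align*}
Equivalently, $z\mapsto x\star z-x$ is an odd polynomial in $z$. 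The delicate points are that only continuity and homogeneity of $\|\cdot\|$ are used (through compactness of closed balls), and that the triangular form in Step 2 must be verified carefully.

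\textbf{Step 4 (contradiction if $k\ge3$).} Take $x,z\in V_1$. The $V_3$-component of $x\star z$ is
\begin{align*}
\tfrac1{12}\bigl([x,[x,z]]+[z,[z,x]]\bigr),
\end{align*}
and its part that is even in $z$ is $\tfrac1{12}[z,[z,x]]$. By Step 3 this must vanish for all $x,z\in V_1$. Polarizing the quadratic map $z\mapsto[z,[z,x]]$ gives $[z,[w,x]]+[w,[z,x]]=0$ for all $z,w,x\in V_1$. Combined with the Jacobi identity, this yields $[V_1,[V_1,V_1]]=0$. But $[V_1,[V_1,V_1]]=[V_1,V_2]=V_3\neq\{0\}$ when $k\ge3$, a contradiction. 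Hence $k\le2$.
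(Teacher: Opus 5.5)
Your proof is correct. It ends at the same Lie-algebraic step as the paper: polarize $[z,[z,x]]=0$ over $V_1$ and use the Jacobi identity to get $[V_1,[V_1,V_1]]=0$. The way you extract that identity, however, is genuinely different.

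\textbf{The paper's route.} It fixes $y=\exp(\varepsilon Y)$ and takes the base point $x=\exp(tX)$. It then expands $F_x(y)=(-x)\star(2x-x\star y)$ to third order in the BCDH formula and finds the $V_3$-component $\frac{t\varepsilon^2}{6}[Y,[X,Y]]$. The contradiction comes from sending the base point to infinity while $F_x(y)$ must stay in the fixed ball $\mathbb B(e,r)$.

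\textbf{Your route.} You keep $x$ fixed and produce unboundedness by iteration instead. Note that $T_x$ is itself an involution, since $x\star T_x(z)=2x-x\star z$ gives $T_x(T_x(z))=z$, so its own iterates carry no information. Your composition with the reflection $S(z)=-z$ is therefore essential, and it is legitimate because balls centred at $e$ are also assumed centrally symmetric. It turns $T_x$ into a unipotent triangular map whose orbits must stay in a compact ball, which forces $U_x=\mathrm{id}$ layer by layer.

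\textbf{What each approach buys.} Your route spares you the third-order expansion of the composite map. You need only the qualitative triangular structure plus the third-order term of a single product $x\star z$. You also obtain a stronger intermediate identity, $x\star z+x\star(-z)=2x$ for all $x,z$. This says every left translation intertwines the Euclidean point reflections at $e$ and at $x$, which is exactly the property that holds in step two, where $L_x$ is affine. The paper's route is a shorter one-shot computation and does not use the symmetry of the norm at $e$.
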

\begin{proof}
   Let us assume that $\mathbb B(x,r)$ is centrally symmetric with respect to $x$ for any $x\in\mathbb G$ and $r>0$. Then, for any $h\in\mathbb B(x,r)$, we have $2x-h\in\mathbb B(x,r)$. In other words, for any $y\in\mathbb B(e,r)$ and $x\in\mathbb G$, we have $x\star y \in\mathbb B(x,r)$ and therefore $2x-(x\star y) \in \mathbb B(x,r)$. Thus
   \begin{align*}
       F_x(y):=x^{-1}\star(2x-(x\star y))=(-x)\star(2x-(x\star y))\in \mathbb B(e,r).
   \end{align*}
   Let us choose $x=\exp(tX), y=\exp(\varepsilon Y)$, where $X,Y\in V_1$, and $\varepsilon>0$ is small enough so that $y\in \mathbb B(e,r)$. Using the Baker-Campbell-Dynkin-Hausdorff formula (see \cite[Theorem~1.2.1]{CorwinGreenleafBook}) and identifying the coordinates of $\mathbb G$ with respect to the coordinates of $\mathfrak g$ via the exponential map we get
   \begin{align*}
       F_{tX}(\varepsilon Y)=\mathrm{BCDH}(-tX, 2tX-\mathrm{BCDH}(tX,\varepsilon Y)),
   \end{align*}
   where 
   \begin{align*}
       \mathrm{BCDH}(X,Y)&=X+Y+\frac{1}{2}[X,Y]+\frac{1}{12}[X,[X,Y]]+\frac{1}{12}[Y, [Y,X]] \\
       &\ \ \ \ \ -\frac{1}{24}[Y, [X,[X,Y]]]+\cdots
   \end{align*}
   A direct computation shows that the $V_3$ component of $F_{tX}(\varepsilon Y)$ is given by
   \begin{align*}
       \frac{t\varepsilon^2}{6}[Y,[X,Y]].
   \end{align*}
   Assume that $[Y,[X,Y]]\neq 0$. Keeping $\varepsilon$ fixed and letting $t\to\infty$ then
   \begin{align*}
       \left|\frac{t\varepsilon^2}{6}[Y,[X,Y]]\right|\to \infty,
   \end{align*}
   which is a contradiction as 
   \begin{align*}
       F_{tX}(\varepsilon Y) \in \mathbb B(e,r) \quad \mbox{for any $t>0$}.
   \end{align*}
   This shows that $[Y, [X,Y]]=0$ for all $X,Y\in V_1$. Now replacing $Y$ by $Y+Z$ for arbitrary $Y,Z\in V_1$, the last identity shows
   \begin{align}\label{eq:lie_identity}
       [Y,[X,Z]]+[Z,[X,Y]]=0 \quad \mbox{for all $X,Y,Z\in V_1$}.
   \end{align}
   Let us write $T=[Y,[Z,X]]$. Since $[X,Z]=-[Z,X]$, \eqref{eq:lie_identity} yields
   \begin{align}\label{eq:lie2}
       T=[Y,[Z,X]]=[Z,[X,Y]].
   \end{align}
   Applying the permutation $(X,Y,Z)\mapsto (Z,Y,X)$ in \eqref{eq:lie_identity} yields
   \begin{align}\label{eq:lie3}
       T=[Y,[Z,X]]=[X,[Y,Z]].
   \end{align}
   Combining \eqref{eq:lie2} and \eqref{eq:lie3}, and using the Jacobi identity, we conclude that $3T=0$. Therefore,
   \begin{align*}
       [X,[Y,Z]]=0 \quad \mbox{for any $X,Y,Z\in V_1$}.
   \end{align*}
   This shows that the number of steps of $\mathbb G$ is at most $2$.
\end{proof}

\section{Finiteness of fractional horizontal perimeter}
 In this section we provide a sufficient condition for a set to have finite fractional horizontal perimeter introduced in Definition~\ref{def:fract_horiz_per}. 
\begin{proposition}\label{prop:frac_per_bound}
    Let $\Omega$ be any measurable subset of $\mathbb{G}$. Then for all $0<\alpha<1$,
    \begin{align}\label{eq:fract_per_bound}
        \Per^{(\alpha)}_{\mathcal H}(\Omega)\le C_\alpha \Vol(\Omega)^{1-\alpha} \Per_{\mathcal H}(\Omega)^\alpha
    \end{align}
    with 
    \begin{align*}
        C_\alpha=\frac{\left(2Q\right)^{\frac{\alpha}{2}}}{(1-\alpha)\Gamma(1-\frac{\alpha}{2})} ,
    \end{align*}
    where $Q$ is the homogeneous dimension of $\mathbb G$, and $m$ is the dimension of the first layer of the Lie algebra $\mathfrak{g}$.
     As a result, $\Per^{(\alpha)}_{\mathcal H}(\Omega)<\infty$ for all $\alpha\in (0,1)$ whenever $\Vol(\Omega)<\infty$ and $\Per_{\mathcal H}(\Omega)<\infty$. In particular, when $\Omega$ is bounded with a $C^1$ boundary, $\Per^{(\alpha)}_{\mathcal H}(\Omega)<\infty$ for all $\alpha\in (0,1)$.
\end{proposition}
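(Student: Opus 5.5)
We rewrite the fractional perimeter through the heat semigroup and then interpolate between two elementary bounds on the semigroup. By Definition~\ref{def:fract_horiz_per} and Tonelli,
\begin{align*}
\Per^{(\alpha)}_{\mathcal H}(\Omega)
=\frac{\alpha}{2\Gamma(1-\frac\alpha2)}\int_0^\infty t^{-\frac\alpha2-1}\int_\Omega\int_{\Omega^c}p(t,x^{-1}y)\,dy\,dx\,dt
=\frac{\alpha}{2\Gamma(1-\frac\alpha2)}\int_0^\infty t^{-\frac\alpha2-1}\,F(t)\,dt,
\end{align*}
where $F(t):=\int_\Omega P_t\mathbbm 1_{\Omega^c}(x)\,dx=\Vol(\Omega)-H_\Omega(t)$, with $H_\Omega$ the relative heat content for $\alpha=2$. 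This uses $\|x^{-1}y\|_\alpha^{-(Q+\alpha)}=\widetilde R_\alpha(x^{-1}y)$, together with the fact that $\Omega$ and $\Omega^c$ play symmetric roles because $p(t,x)=p(t,x^{-1})$.

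The two bounds on $F$ are as follows.
\begin{enumerate}[leftmargin=*]
\item Trivially $F(t)\le\Vol(\Omega)$, since $P_t\mathbbm 1_{\Omega^c}\le 1$.
\item The main input is a linear-in-$\sqrt t$ estimate $F(t)\le \sqrt{c\,t}\,\Per_{\mathcal H}(\Omega)$. This is the standard heat-semigroup BV bound $\|P_tf-f\|_{L^1}\le C\sqrt t\,|D_{\mathcal H}f|(\mathbb G)$, applied to $f=\mathbbm 1_\Omega$. Indeed,
\begin{align*}
2F(t)=\int_{\mathbb G}|P_t\mathbbm 1_\Omega-\mathbbm 1_\Omega|\,dx,
\end{align*}
because $P_t\mathbbm 1_\Omega+P_t\mathbbm 1_{\Omega^c}=1$ and the integral of $P_t\mathbbm 1_\Omega-\mathbbm 1_\Omega$ vanishes.
\end{enumerate}

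To prove the $\sqrt t$ estimate, I first take $f\in C_c^\infty$ and write
\begin{align*}
P_tf-f=-\int_0^t\Delta_{\mathcal H}P_sf\,ds,
\qquad
\int|\Delta_{\mathcal H}P_sf|\,dx\le\sum_i\|X_iP_{s/2}\|_{L^1\to L^1}\,\|X_iP_{s/2}f\|_{L^1}.
\end{align*}
I then control $\|X_iP_s\|_{L^1\to L^1}=\|X_ip(s,\cdot)\|_{L^1}\lesssim s^{-1/2}$ using the gradient heat kernel bounds (the analogue of \eqref{eq:heat_deriv_bound} for horizontal derivatives, by scaling). I also use $\|X_iP_sf\|_{L^1}\le\|X_if\|_{L^1}$, which holds because left-invariant $X_i$ commutes with convolution on the appropriate side.

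Integrating gives $\|P_tf-f\|_1\le C\sqrt t\,\||\nabla_{\mathcal H}f|\|_1$. The step to indicator functions then follows by BV approximation: choose $f_j\to\mathbbm 1_\Omega$ in $L^1$ with total variations converging to $\Per_{\mathcal H}(\Omega)$, which is the Anzellotti--Giaquinta type density available on Carnot groups.

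To obtain the explicit constant $(2Q)^{\alpha/2}$, I would instead aim for $F(t)\le\sqrt{2Qt}\,\Per_{\mathcal H}(\Omega)/2$ or a similar form. One candidate route is a Poincar\'e/Ledoux-type argument, bounding $\|X_ip(s)\|_1$ by $\bigl(\int|\nabla_{\mathcal H}\log p|^2p\bigr)^{1/2}$ and then using the Li--Yau-type or entropy identity $\int|\nabla_{\mathcal H} p|^2/p=-\frac{d}{ds}\mathrm{Ent}=Q/(2s)$. The last equality comes from the entropy scaling $\mathrm{Ent}(p(s))=-\frac Q2\log s+\text{const}$. Pinning down this constant is the main obstacle; the qualitative bound is routine.

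Finally, split the $t$-integral at a parameter $T$: use the $\sqrt t$ bound on $(0,T)$ and the trivial bound on $(T,\infty)$. Since $\alpha<1$, both integrals converge. This gives
\begin{align*}
\Per^{(\alpha)}_{\mathcal H}(\Omega)\le\frac{\alpha}{2\Gamma(1-\frac\alpha2)}\Bigl(\tfrac{2\sqrt{c}\,\Per_{\mathcal H}(\Omega)}{1-\alpha}T^{\frac{1-\alpha}2}+\tfrac{2}{\alpha}\Vol(\Omega)T^{-\frac\alpha2}\Bigr).
\end{align*}
Optimizing at $T\asymp(\Vol(\Omega)/\Per_{\mathcal H}(\Omega))^2$ produces $\Vol(\Omega)^{1-\alpha}\Per_{\mathcal H}(\Omega)^\alpha$, with a constant of the stated form $C_\alpha$.

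The consequences are then immediate. For bounded $C^1$ domains, $\Per_{\mathcal H}(\Omega)\le C\,\mathcal H^{n-1}_{\mathrm{Euc}}(\partial\Omega)<\infty$, because the horizontal normal is bounded by the Euclidean normal times smooth coefficients on a compact set.
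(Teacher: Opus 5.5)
Your skeleton coincides with the paper's: the heat-content formula for $\Per^{(\alpha)}_{\mathcal H}(\Omega)$ (Lemma~\ref{lem:fract_perimeter}), the identity $2F(t)=\|P_t\mathbbm 1_\Omega-\mathbbm 1_\Omega\|_{L^1}$, a $\sqrt t$ bound near $0$ and the trivial bound near $\infty$, optimization in the splitting time, and finiteness of $\Per_{\mathcal H}$ for $C^1$ domains. The paper does not prove the $\sqrt t$ bound. It quotes \cite[Theorem~3.1]{BaudoinBonnefont2016}, $\|P_t\mathbbm 1_\Omega-\mathbbm 1_\Omega\|_{L^1}\le\sqrt{2Qt}\,\Per_{\mathcal H}(\Omega)$, i.e.\ $F(t)\le\sqrt{Qt/2}\,\Per_{\mathcal H}(\Omega)$ (Lemma~\ref{prop:heat_content_upperbound}), and this explicit constant is what yields the explicit $C_\alpha$. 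Your own derivation of this step rests on a false commutation, and you leave the constant open. Since $C_\alpha$ is part of the statement, the proposal as written does not prove it.

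\textbf{The error.} Left-invariant $X_i$ do not commute with $P_s$. Writing $p_s=p(s,\cdot)$, you have $P_sf=f*p_s$, and left-invariant fields act on the kernel: $X_i(f*p_s)=f*(X_ip_s)$, not $(X_if)*p_s$. In general $X_i$ does not even commute with $\Delta_{\mathcal H}$. So neither $\Delta_{\mathcal H}P_sf=-\sum_iX_iP_{s/2}(X_iP_{s/2}f)$ nor $\|X_iP_sf\|_{L^1}\le\|X_if\|_{L^1}$ follows. An $L^1$ bound of $|\nabla_{\mathcal H}P_sf|$ by $|\nabla_{\mathcal H}f|$ is a nontrivial gradient estimate on Carnot groups, not a commutation identity.

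\textbf{The repair.} Never move $X_i$ through $P_s$. For $f\in C_c^\infty(\mathbb G)$ and $\|g\|_\infty\le1$, self-adjointness and one integration by parts give
\begin{align*}
\int_{\mathbb G}(P_tf-f)g\,dx=-\int_0^t\int_{\mathbb G}\langle\nabla_{\mathcal H}f,\nabla_{\mathcal H}P_sg\rangle_{\mathcal H}\,dx\,ds,
\qquad
|\nabla_{\mathcal H}P_sg|\le\bigl\||\nabla_{\mathcal H}p_s|\bigr\|_{L^1},
\end{align*}
where the second bound holds because $\nabla_{\mathcal H}(g*p_s)=g*\nabla_{\mathcal H}p_s$. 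Your entropy idea then closes the gap with exactly the right constant. By Cauchy--Schwarz, $\||\nabla_{\mathcal H}p_s|\|_{L^1}\le(\int|\nabla_{\mathcal H}p_s|^2/p_s\,dx)^{1/2}=\sqrt{Q/(2s)}$. This uses $\frac{d}{ds}\int p_s\log p_s\,dx=-\int|\nabla_{\mathcal H}p_s|^2/p_s\,dx$ together with the scaling identity $\int p_s\log p_s\,dx=-\frac Q2\log s+\mathrm{const}$. Integrating in $s$ gives $\|P_tf-f\|_{L^1}\le\sqrt{2Qt}\,\||\nabla_{\mathcal H}f|\|_{L^1}$, which is precisely the Baudoin--Bonnefont bound. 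After your BV approximation and optimization you obtain the constant $(Q/2)^{\alpha/2}/((1-\alpha)\Gamma(1-\frac\alpha2))=2^{-\alpha}C_\alpha\le C_\alpha$, so the stated inequality follows.
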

To prove the above result, we first note the following alternative representation of the fractional horizontal perimeter.
\begin{lemma}\label{lem:fract_perimeter}
   For any $0<\alpha<1$ and a measurable set $\Omega\subset \mathbb G$,
    \begin{align*}
        \Per^{(\alpha)}_{\mathcal H}(\Omega)=\frac{\alpha}{2\Gamma(1-\frac{\alpha}{2})}\int_0^\infty (\Vol(\Omega)-H_\Omega(t)) t^{-\frac{\alpha}{2}-1} dt,
    \end{align*}
    where 
    \begin{align*}
        H_\Omega(t):=\int_{\Omega}\int_{\Omega}p(t,x,y)dx dy
    \end{align*}
    is the relative heat content of $\Omega$ corresponding to the sub-Laplacian $\Delta_{\mathcal H}$.
\end{lemma}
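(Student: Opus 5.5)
The plan is a direct Fubini–Tonelli computation starting from the definition of $\widetilde R_\alpha$. By Definition~\ref{def:fract_horiz_per},
\begin{align*}
\frac{1}{\|x^{-1}y\|_\alpha^{Q+\alpha}}=\widetilde R_\alpha(x^{-1}y)=\frac{\alpha}{2\Gamma(1-\frac\alpha2)}\int_0^\infty t^{-\frac\alpha2-1}p(t,x^{-1}y)\,dt .
\end{align*}
(The exponent $Q+\alpha$ in the kernel is implicit in Definition~\ref{def:fract_horiz_per}, consistent with the singular integral representation of $\Delta^{\alpha/2}_{\mathcal H}$.) Substituting this into $\int_\Omega\int_{\Omega^c}\|x^{-1}y\|_\alpha^{-Q-\alpha}\,dy\,dx$ gives a triple integral of a nonnegative function. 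Tonelli's theorem therefore lets us interchange the $t$-integral with the $x,y$ integrals without any finiteness hypothesis. The identity then holds in $[0,\infty]$, which covers the case where both sides are infinite.

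After the interchange, the inner quantity at fixed $t$ is $\int_\Omega\int_{\Omega^c}p(t,x,y)\,dy\,dx$, using $p(t,x,y)=p(t,x^{-1}y)$. To identify it, I would use conservativeness, $P_t1=1$ from Hunt's theorem, which gives $\int_{\mathbb G}p(t,x,y)\,dy=1$ for every $x$. Consequently
\begin{align*}
\int_{\Omega^c}p(t,x,y)\,dy=1-\int_\Omega p(t,x,y)\,dy .
\end{align*}
Integrating over $x\in\Omega$ yields $\Vol(\Omega)-H_\Omega(t)$, provided $\Vol(\Omega)<\infty$. Multiplying by the constant and the weight $t^{-\alpha/2-1}$ then produces exactly the claimed formula.

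The only delicate point is the case $\Vol(\Omega)=\infty$, where the difference $\Vol(\Omega)-H_\Omega(t)$ is of the form $\infty-\infty$. There I would interpret $\Vol(\Omega)-H_\Omega(t)$ as $\int_\Omega P_t\mathbbm 1_{\Omega^c}\,dx$, which is nonnegative, or simply restrict to $\Vol(\Omega)<\infty$, since that is the case used in Proposition~\ref{prop:frac_per_bound}. With this convention, Tonelli's theorem applies verbatim and no integrability issue arises. Beyond this, I do not expect any real obstacle: the proof is a single Tonelli interchange combined with stochastic completeness of the hypoelliptic heat kernel.
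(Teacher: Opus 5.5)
Your proposal is correct and follows the paper's proof: the paper also writes $\Vol(\Omega)-H_\Omega(t)=\int_\Omega\int_{\Omega^c}p(t,x^{-1}y)\,dx\,dy$, implicitly using conservativeness $P_t1=1$, and then swaps this with the $t$-integral defining $\widetilde R_\alpha$ via Fubini. Your reading of the kernel as $\|x^{-1}y\|_\alpha^{-(Q+\alpha)}=\widetilde R_\alpha(x^{-1}y)$, your use of Tonelli for the nonnegative integrand, and your remark on the case $\Vol(\Omega)=\infty$ are all things the paper leaves implicit.
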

\begin{proof}
    Since 
    \begin{align*}
        \Vol(\Omega)-H_\Omega(t)= \int_{\Omega}\int_{\Omega ^c} p(t,x,y)dx dy=\int_{\Omega}\int_{\Omega ^c} p(t,x^{-1}y)dx dy,
    \end{align*}
    the proof of the lemma follows directly from the definition of fractional horizontal perimeter (see Definition~\ref{def:fract_horiz_per}) and Fubini's theorem.
\end{proof}

\begin{lemma}\label{prop:heat_content_upperbound}
    For any Caccioppoli set $\Omega$ and $t>0$, we have 
    \begin{align*}
        \frac{\Vol(\Omega)-H_\Omega(t)}{\sqrt{t}}\le \sqrt{\frac{Q}{2}}\Per_{\mathcal H}(\Omega).
    \end{align*}
\end{lemma}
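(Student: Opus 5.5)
Since $\Vol(\Omega)-H_\Omega(t)=\int_\Omega\int_{\Omega^c}p(t,x,y)\,dx\,dy$, I will reduce the statement to an $L^1$ continuity estimate for the heat semigroup acting on $BV_{\mathcal H}$ functions. Writing $f=\mathbbm 1_\Omega$, symmetry of the kernel gives
\begin{align*}
2\bigl(\Vol(\Omega)-H_\Omega(t)\bigr)=\int_{\mathbb G}\int_{\mathbb G}|f(x)-f(y)|\,p(t,x^{-1}y)\,dx\,dy=\int_{\mathbb G}p(t,z)\,\|f(\cdot\, z)-f\|_{L^1}\,dz,
\end{align*}
or equivalently $\Vol(\Omega)-H_\Omega(t)=\tfrac12\|P_tf-f\|_{L^1}$-type expressions. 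I prefer a semigroup route that avoids translation estimates along non-horizontal directions. I will write $\Vol(\Omega)-H_\Omega(t)=\langle f-P_tf,f\rangle=\|P_{t/2}f\|$-differences, and more concretely use
\begin{align*}
\Vol(\Omega)-H_\Omega(t)=\int_0^t\langle \Delta_{\mathcal H}P_sf,f\rangle\,ds=\int_0^t\int_{\mathbb G}\langle\nabla_{\mathcal H}P_sf,\,dD_{\mathcal H}f\rangle\,ds
\end{align*}
in the distributional sense. This gives $\Vol(\Omega)-H_\Omega(t)\le\Per_{\mathcal H}(\Omega)\int_0^t\|\nabla_{\mathcal H}P_sf\|_\infty\,ds$, reducing the problem to a uniform gradient bound.

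The key step is a reverse Poincar\'e inequality for $|\nabla_{\mathcal H}P_sf|$ with $0\le f\le1$. On Carnot groups, left invariance gives $X_iP_sf=P_s(X_i f)$ only for right-invariant fields, so I will instead differentiate the kernel: $\nabla_{\mathcal H}P_sf(x)=\int f(y)\,\nabla^{(x)}_{\mathcal H}p(s,x^{-1}y)\,dy$. To obtain the sharp constant $\sqrt{Q/2}$, I would use the classical sub-Riemannian argument
\begin{align*}
\frac{d}{du}P_u\bigl((P_{s-u}f)^2\bigr)=2P_u\bigl(|\nabla_{\mathcal H}P_{s-u}f|^2\bigr),
\end{align*}
which integrates to $\int_0^s P_u(|\nabla P_{s-u}f|^2)\,du\le\tfrac12 P_s(f^2)\le\tfrac12$. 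This needs to be combined with monotonicity of $u\mapsto P_u(|\nabla P_{s-u}f|^2)$. In Riemannian Ricci $\ge0$ that monotonicity is Bakry--\'Emery; on Carnot groups it fails, so this route may not give a dimension-free constant. The appearance of $Q$ suggests the intended argument is instead a direct kernel computation. By the Carnot scaling $p(s,x)=s^{-Q/2}p(1,\delta_{s^{-1/2}}x)$ and $\Delta_{\mathcal H}$-heat-equation identities, integrating by parts in $\int|\nabla_{\mathcal H}p(s,\cdot)|^2/p$ relates this quantity to the time derivative of the entropy, which is $\tfrac{Q}{2s}$ by scaling. That is, the Fisher information of $p(s,\cdot)$ equals $Q/(2s)$, exactly as in the Euclidean case.

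So the plan is as follows. First, show $\|\nabla_{\mathcal H}P_sf\|_\infty\le\|f\|_\infty\int|\nabla_{\mathcal H}p(s,\cdot)|\,dz\le\bigl(\int\frac{|\nabla_{\mathcal H}p|^2}{p}\bigr)^{1/2}$ by Cauchy--Schwarz, using $\int p=1$ and moving the derivative onto the kernel in the appropriate (right-invariant versus left-invariant) variable via the inversion symmetry $p(s,z)=p(s,z^{-1})$. Second, compute the Fisher information as $-\frac{d}{ds}\int p\log p=\frac{d}{ds}\bigl(\tfrac Q2\log s+\text{const}\bigr)=\tfrac{Q}{2s}$, with the normalisation $\mathrm{Var}(W_t)=2t$ tracked carefully. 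Third, integrate: $\int_0^t\sqrt{Q/(2s)}\,ds=\sqrt{2Qt}$. This overshoots by a factor of $2$, so I would sharpen it by splitting $\langle f-P_tf,f\rangle=\|f-P_{t/2}f\|^2$-type symmetrisation, or by using $|f(x)-f(y)|$ with only the ``half'' coupling, and track constants. The main obstacle is getting the constant exactly $\sqrt{Q/2}$ and justifying the integration by parts against $D_{\mathcal H}\mathbbm 1_\Omega$ for a general Caccioppoli set. For the latter I would approximate $f$ by smooth $f_k$ with $\int|\nabla_{\mathcal H}f_k|\to\Per_{\mathcal H}(\Omega)$ (Anzellotti--Giaquinta-type density in $BV_{\mathcal H}$) and pass to the limit.
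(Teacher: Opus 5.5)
Your ingredients are the right ones, and they reproduce the $L^1$ estimate that the paper simply quotes. The paper writes $\Vol(\Omega)-H_\Omega(t)=\langle P_t\mathbbm 1_\Omega-\mathbbm 1_\Omega,\mathbbm 1_{\Omega^c}\rangle=\tfrac12\|P_t\mathbbm 1_\Omega-\mathbbm 1_\Omega\|_{L^1}$. It then cites Baudoin--Bonnefont for $\|P_t\mathbbm 1_\Omega-\mathbbm 1_\Omega\|_{L^1}\le\sqrt{2Qt}\,\Per_{\mathcal H}(\Omega)$. The constant $\sqrt{2Q}$ is exactly what your gradient bound $\|\nabla_{\mathcal H}P_sg\|_\infty\le\sqrt{Q/(2s)}\,\|g\|_\infty$ gives after integrating in $s$. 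Your derivation of that bound is correct: kernel representation with right-invariant fields, the inversion symmetry $p(s,z)=p(s,z^{-1})$, Cauchy--Schwarz, and the Fisher information $\int_{\mathbb G}|\nabla_{\mathcal H}p(s,\cdot)|^2/p(s,\cdot)\,dz=Q/(2s)$ read off from the scaling of the entropy.

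The gap is the one you flag yourself. Applying the bound with $\|\mathbbm 1_\Omega\|_\infty=1$ only gives $\Vol(\Omega)-H_\Omega(t)\le\sqrt{2Qt}\,\Per_{\mathcal H}(\Omega)$, which is twice the asserted bound. Neither proposed repair closes this. The identity you invoke is false: $\langle f-P_tf,f\rangle=\|f\|_2^2-\|P_{t/2}f\|_2^2$, not $\|f-P_{t/2}f\|_2^2$. In any case, an $L^2$ rewriting does not interact with the $L^\infty$--$BV$ duality you are using. The ``half coupling'' remark is never turned into an argument. As written, the lemma with constant $\sqrt{Q/2}$ is not proved.

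The missing idea is centering. Because $P_s1=1$ (in your kernel formula, $\int_{\mathbb G}\widetilde X_ip(s,z)\,dz=0$ for the right-invariant fields $\widetilde X_i$), you have $\nabla_{\mathcal H}P_s\mathbbm 1_\Omega=\nabla_{\mathcal H}P_s\bigl(\mathbbm 1_\Omega-\tfrac12\bigr)$ with $\|\mathbbm 1_\Omega-\tfrac12\|_\infty=\tfrac12$. Hence $\|\nabla_{\mathcal H}P_s\mathbbm 1_\Omega\|_\infty\le\tfrac12\sqrt{Q/(2s)}$, and $\int_0^t\tfrac12\sqrt{Q/(2s)}\,ds=\sqrt{Qt/2}$, which is exactly the lemma.

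This is the same factor $\tfrac12$ that appears in the paper's identity. Since $\int_{\mathbb G}(f-P_tf)\,dx=0$, pairing $f-P_tf$ with $\mathbbm 1_\Omega$ is the same as pairing it with $\tfrac12(\mathbbm 1_\Omega-\mathbbm 1_{\Omega^c})$, a test function of sup-norm $\tfrac12$.

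With this line added, your route is a valid self-contained proof that rederives the needed case of the Baudoin--Bonnefont inequality instead of citing it. The remaining justifications are routine given the Gaussian bounds for $p$ and its derivatives: pairing the smooth bounded field $\nabla_{\mathcal H}P_s\mathbbm 1_\Omega$ against $D_{\mathcal H}\mathbbm 1_\Omega$ through a cutoff, and the integration by parts in the entropy derivative.
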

\begin{proof} We start with the observation that for any Caccioppoli set $\Omega$ 
\begin{align*}
    \Vol(\Omega)-H_\Omega(t)=\langle P_t \mathbbm{1}_\Omega, \mathbbm{1}_{\Omega^c}\rangle_{L^2(\mathbb G, dx)}=\langle P_t\mathbbm{1}_{\Omega}-\mathbbm{1}_{\Omega}, \mathbbm{1}_{\Omega^c}\rangle_{L^2(\mathbb G, dx)}.
\end{align*}
From \cite[Theorem~3.1]{BaudoinBonnefont2016} it follows that for any Caccioppoli set $\Omega\subset\mathbb{G}$,
\begin{align}\label{eq:IP}
        t^{-\frac{1}{2}}\|P_t \mathbbm{1}_\Omega-\mathbbm{1}_\Omega\|_{L^1(\mathbb G, dx)}\le \sqrt{2Q} \Per_{\mathcal H}(\Omega),
    \end{align}
    where $P_t$ is the hypoelliptic heat semigroup on $\mathbb{G}$. Since
\begin{align*}
    \langle P_t\mathbbm{1}_{\Omega}-\mathbbm{1}_{\Omega}, \mathbbm{1}_{\Omega^c}\rangle_{L^2(\mathbb G, dx)}=\frac{1}{2}\|P_t\mathbbm{1}_\Omega-\mathbbm{1}_\Omega\|_{L^1(\mathbb G, dx)},
\end{align*}
the proof of the lemma now follows from \eqref{eq:IP}.
\end{proof}

\begin{proof}[Proof of Proposition~\ref{prop:frac_per_bound}] Let us write 
\begin{align*}
    c_\alpha=\frac{\alpha}{2\Gamma\left(1-\frac{\alpha}{2}\right)}.
\end{align*}
By Lemma~\ref{lem:fract_perimeter}, for any $\delta>0$ we have
\begin{align*}
    \Per^{(\alpha)}_{\mathcal H}(\Omega)&=c_\alpha\int_0^\delta (\Vol(\Omega)-H_\Omega(t))t^{-1-\frac{\alpha}{2}}dt + c_\alpha\int_\delta^\infty (\Vol(\Omega)-H_\Omega(t))t^{-1-\frac{\alpha}{2}}dt \\
    &=: I_1 + I_2.
\end{align*}
By Lemma~\ref{prop:heat_content_upperbound} we get 
\begin{align*}
    I_1\le c_\alpha\Per_{\mathcal H}(\Omega)\int_{0}^\delta \sqrt{\frac{Q}{2}}t^{-\frac{1}{2}-\frac{\alpha}{2}} dt= \frac{\sqrt{2Q}c_\alpha }{(1-\alpha)} \delta^{\frac{1}{2}-\frac{\alpha}{2}}\Per_{\mathcal H}(\Omega).
\end{align*}
On the other hand, since $0\le \Vol(\Omega)-H_\Omega(t)\le \Vol(\Omega)$, the second integral $I_2$ can be bounded as follows:
\begin{align*}
    I_2\le c_\alpha \Vol(\Omega) \int_\delta^\infty t^{-1-\frac{\alpha}{2}} dt=\frac{1}{\Gamma\left(1-\frac{\alpha}{2}\right)}\delta^{-\frac{\alpha}{2}}\Vol(\Omega).
\end{align*}
This shows that for any $\delta>0$,
\begin{align*}
    \Per^{(\alpha)}_{\mathcal H}(\Omega)\le \frac{\sqrt{2Q}c_\alpha }{(1-\alpha)} \delta^{\frac{1}{2}-\frac{\alpha}{2}}\Per_{\mathcal H}(\Omega)+\frac{1}{\Gamma\left(1-\frac{\alpha}{2}\right)}\delta^{-\frac{\alpha}{2}}\Vol(\Omega).
\end{align*}
Minimizing the above upper bound with respect to $\delta>0$, we conclude the proof of \eqref{eq:fract_per_bound}.

When $\Omega$ is bounded with $C^1$ boundary, it is known from \cite[Equation~(3.2)]{CapognaDanielliGarofalo1994} that
 	\begin{align*}
 		\Per_{\mathcal H}(\Omega)=\int_{\partial\Omega}\left[\sum_{i=1}^m \langle X_i,\nu\rangle^2\right]^{\frac12}d\mathcal{H}^{n-1},
 	\end{align*}
 	where $\nu$ is the Euclidean unit outward normal to the boundary, and $\mathcal{H}^{n-1}$ is the $(n-1)$-dimensional Euclidean Hausdorff measure. Hence, $\Per_{\mathcal H}(\Omega)<\infty$, which by \eqref{eq:fract_per_bound} implies that $\Per^{(\alpha)}_{\mathcal H}(\Omega)<\infty$.
\end{proof}

\section{Proof of Theorem~\ref{thm:eigenvalue_comparison} and Theorem~\ref{thm:heat_content_large}}\label{s:proof1}
We start by proving the following proposition which is the key ingredient to establish the two-sided bounds for the eigenvalues $\lambda_{n,\phi}$ in Theorem~\ref{thm:eigenvalue_comparison}.
\begin{proposition}\label{prop:unif_prob_bound}
Let $(B_t)_{t\ge 0}$ denote the hypoelliptic Brownian motion on $\mathbb{G}$. If $\Omega\subset \mathbb{G}$ is a bounded, open set satisfying the intrinsic exterior cone condition, there exists a constant $c=c(\Omega)\in (0,1)$ such that 
        \begin{align}\label{eq:A}
            \mathbb{P}_x(B_t\in \Omega)\le c \text{ for all } x\in\Omega^c, \ t>0.
        \end{align}
\end{proposition}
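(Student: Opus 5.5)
We want a uniform bound $\mathbb P_x(B_t\in\Omega)\le c<1$ for $x\in\Omega^c$ and all $t>0$. The plan is to split into large $t$, where boundedness of $\Omega$ suffices, and small and moderate $t$, where the exterior cone is used.

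\emph{Large times.} By the Gaussian upper bound \eqref{eq:heat_bound_1},
\begin{align*}
\mathbb P_x(B_t\in\Omega)=\int_\Omega p(t,x^{-1}y)\,dy\le c\,t^{-Q/2}\Vol(\Omega).
\end{align*}
So there is $T_0=T_0(\Omega)$ with $\mathbb P_x(B_t\in\Omega)\le 1/2$ for all $t\ge T_0$ and all $x\in\mathbb G$.

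\emph{Points far from $\overline\Omega$.} Fix $\rho$ and $\beta$ from Definition~\ref{def:cone_cond}. If $x\in\Omega^c$ and $d(x,\partial\Omega)\ge\varepsilon_0$, then reaching $\Omega$ requires travelling distance at least $\varepsilon_0$. For $t\le T_0$ the heat kernel lower bound then gives
\begin{align*}
\mathbb P_x(B_t\in\Omega)\le 1-\mathbb P_e\bigl(B_t\in\mathbb B(e,\varepsilon_0)\bigr)\le 1-\inf_{t\le T_0}\mathbb P_e\bigl(B_t\in\mathbb B(e,\varepsilon_0)\bigr).
\end{align*}
The infimum is positive: as $t\to0$ the probability tends to $1$, and by continuity it is positive on compact $t$-intervals. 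The choice of $\varepsilon_0$ is made below.

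\emph{Points near $\partial\Omega$ — the main case.} Take $x\in\Omega^c$ close to $\partial\Omega$ and pick $p\in\partial\Omega$ with $\|p^{-1}x\|$ small. By the exterior cone condition there is a unit $X_p$ with
\begin{align*}
\mathcal C_p^+(\beta,\rho;X_p)\setminus\{p\}\subset\overline\Omega^c.
\end{align*}
The difficulty is that $x$ is not the vertex $p$.

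\emph{Step 1: replace $x$ by $p$.} Choose a subcone of the cone at $p$ that lies inside the cone seen from $x$. That is, find $\beta'<\beta$ and $\rho'<\rho$ (and a shift along $X_p$) such that $\mathcal C_x$ contains a cone of fixed shape when $\|p^{-1}x\|$ is small relative to $\rho'$.

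\emph{Step 2: a simpler alternative I would try first.} Prove the bound at vertices,
\begin{align*}
\mathbb P_p(B_t\in\Omega)\le 1-\mathbb P_e\bigl(B_t\in\mathcal C_e^+(\beta,\rho;X_p)\bigr),
\end{align*}
and then use continuity of $x\mapsto\mathbb P_x(B_t\in\Omega)$. This continuity is uniform, since the heat kernel is smooth and left invariant. A scaling argument handles small $t$: by Lemma~\ref{lem:cone_dilation} and dilation invariance of $B$,
\begin{align*}
\mathbb P_e\bigl(B_t\in\mathcal C_e^+(\beta,\rho;X)\bigr)=\mathbb P_e\bigl(B_1\in\mathcal C_e^+(\beta,\rho/\sqrt t;X)\bigr)\ge\mathbb P_e\bigl(B_1\in\mathcal C_e^+(\beta,\rho/\sqrt{T_0};X)\bigr)
\end{align*}
for $t\le T_0$. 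The right-hand side is bounded below uniformly in $X$ by \eqref{eq:int_cone_min}, giving a constant $\kappa>0$.

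\emph{Step 3: the uniformity problem.} The continuity in $x$ is uniform only for $t$ bounded below, so it does not cover $t\to0$ with $x$ near $p$. I would handle this by scaling. Set $r=\|p^{-1}x\|$. For $t\gtrsim r^2$, compare $x$ with $p$ at the scaled level, i.e. apply dilation by $1/\sqrt t$. Then the displacement $p^{-1}x$ becomes $\delta_{1/\sqrt t}(p^{-1}x)$ of size $\le M^{-1/2}$ when $t\ge Mr^2$. Continuity of $y\mapsto\mathbb P_y\bigl(B_1\in\mathcal C\bigr)$ near $e$, together with the exterior cone of height $\rho/\sqrt t$, then gives a bound $\kappa/2$, uniformly in $X$ by compactness of the unit sphere in $V_1$.

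For $t\le Mr^2$, the process must travel distance comparable to $d(x,\Omega)$. Here I would use the separation $d(x,\partial\Omega)$ directly when $x\notin\overline\Omega$. If instead $d(x,\Omega)\ll r$, I would re-choose $p$ as a nearest boundary point, so that $r=d(x,\partial\Omega)$. Then $\mathbb P_x(B_t\in\Omega)\le\mathbb P_e\bigl(\|B_t\|\ge r\bigr)$, which is at most $1-\mathbb P_e\bigl(B_1\in\mathbb B(e,M^{-1/2})\bigr)<1$.

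Combining the regimes yields $c<1$. The main obstacle is this last matching: making the choices of $M$, $\varepsilon_0$ and $T_0$ consistent, and getting the cone lower bound uniform over the directions $X_p$ via Lemma~\ref{lem:continuity_HM}.
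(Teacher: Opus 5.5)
Your argument can be completed as sketched, but it takes a different route from the paper's.

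The paper never compares an exterior point with a nearby vertex. A continuous path from $x\in\overline{\Omega}^c$ must hit $\partial\Omega$ before entering $\Omega$. The strong Markov property at $T=\inf\{t>0:B_t\in\partial\Omega\}$ therefore gives $\mathbb P_x(B_t\in\Omega)=\mathbb E_x[\mathbb P_{B_T}(B_{t-T}\in\Omega);T<t]\le\sup_{p\in\partial\Omega,\,s>0}\mathbb P_p(B_s\in\Omega)$. At vertices the paper then does exactly your Step~2, except that for $t\ge 1$ it uses $\Omega\subset\mathbb B(p,\mathrm{diam}\,\Omega)$ and scaling instead of the $t^{-Q/2}$ bound.

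Your Step~3 replaces that single Markov argument with a two-scale comparison to a nearest boundary point. The matching you flag as the main obstacle is harmless once the constants are fixed in this order:
first $T_0$;
then $\kappa:=\inf_X\mathbb P_e(B_1\in\mathcal C_e^+(\beta,\rho/\sqrt{T_0};X))>0$;
then $M$ such that $\|p(1,y^{-1}\cdot)-p(1,\cdot)\|_{L^1}\le\kappa/2$ for $\|y\|\le M^{-1/2}$.
Since $|\mathbb P_e(yB_1\in A)-\mathbb P_e(B_1\in A)|$ is bounded by this $L^1$ norm for every Borel set $A$, uniformity in $X_p$ is automatic. You need neither compactness of the unit sphere nor any continuity in $X$.

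Now take $p$ a nearest boundary point and $r=d(x,\partial\Omega)$.
For $Mr^2\le t\le T_0$ you get $\mathbb P_x(B_t\notin\Omega)\ge\kappa/2$.
For $t\le Mr^2$ you get $\mathbb P_x(B_t\in\Omega)\le\mathbb P_e(\|B_1\|\ge M^{-1/2})<1$.
The second bound uses $\mathbb B(x,r)\cap\Omega=\emptyset$, which needs a word because the norm distance need not be geodesic. It holds since $\mathbb B(x,r)$ is connected (the curve $\lambda\mapsto\delta_\lambda y$, $0\le\lambda\le1$, stays in $\mathbb B(e,1)$), misses $\partial\Omega$, and contains $x\in\overline{\Omega}^c$. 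With this choice of $p$, Step~1 and the separate $\varepsilon_0$ case are superfluous.

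The paper's proof is shorter and needs no perturbation estimate, but it relies on path continuity. Yours uses only fixed-time marginals: left invariance, scaling, positivity and $L^1$-continuity of $p(1,\cdot)$. So it should also work, with $t^{1/\alpha}$ scaling, for jump processes such as $B^{(\alpha)}$, where the hitting-time reduction breaks down.
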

\begin{proof} Suppose that \eqref{eq:A} holds for all $x\in\partial \Omega$. Let $T$ denote the following stopping time 
    \begin{align*}
        T=\inf\{t>0: B_t\in \partial\Omega\}.
    \end{align*}
    Now for any $x\in\Omega^c$ and $t>0$, due to path continuity of $(B_t)_{t\ge 0}$ we note that 
    \begin{align}
        \mathbb{P}_x(B_t\in \Omega)&=\mathbb{P}_x(B_t\in \Omega, T<t) \notag \\
        &=\mathbb{P}_x(B_{T+(t-T)}\in \Omega, T<t) \notag \\
        &=\mathbb{E}_x\left[\mathbb{P}_x(B_{T+(t-T)}\in \Omega\mid \mathcal{F}_T)\mathbbm{1}_{\{T<t\}}\right] \notag\\
        &=\mathbb{E}_x\left[\mathbb{P}_{B_T}(B_{t-T}\in \Omega)\mathbbm{1}_{\{T<t\}}\right], \label{eq:conditional_prob}
    \end{align}
where the last identity follows from \cite[Exercise~8.17]{BlumenthalGetoor1968}. Due to path continuity of $B=(B_t)_{t\ge 0}$, we also have that for any $x\in\Omega^c$ and $t>0$,
\begin{align*}
    \mathbbm{1}_{\{T<t\}}\le \mathbbm{1}_{\{B_T\in\partial\Omega\}} \quad \mbox{$\mathbb P_x$-almost surely.}
\end{align*}
As $\mathbb{P}_y(B_t\in \Omega)\le c$ for all $(y,t)\in \partial \Omega\times (0,\infty)$, \eqref{eq:conditional_prob} implies that $\mathbb{P}_x(B_t\in \Omega)\le c$ for all $x\in\Omega^c$. Now, for any $x\in\partial \Omega$, we note that $\Omega\subset \mathbb{B}(x, r)$, where $r=\mathrm{diam}(\Omega)$. Therefore for any $t\ge 1$
\begin{align*}
    \mathbb{P}_x(B_t\in \Omega)&\le \mathbb{P}_x(B_t\in \mathbb{B}(x,r)) \\
    &= \mathbb{P}_e(B_t\in \mathbb{B}(e,r)) \\
    &= \mathbb{P}_e(B_1\in \mathbb{B}(e, r/\sqrt{t})) \\
    &\le \mathbb{P}_e(B_1\in \mathbb{B}(e,r))=:c_1.
\end{align*}
On the other hand, when $0\le t\le 1$, for any $x\in\partial \Omega$, let $\mathcal{C}^+_x(\beta,\rho;X_x)$ be an intrinsic horizontal cone satisfying $\mathcal{C}^+_x(\beta,\rho; X_x)\subset \Omega^c$.
\begin{align*}
    \mathbb{P}_x(B_t\notin \Omega)\ge \mathbb{P}_x(B_t\in \mathcal{C}^+_x(\beta,\rho; X_x))=\mathbb{P}_e(B_t\in \mathcal{C}^+_e(\beta,\rho; X_x)).
\end{align*}
Using the scaling property of Brownian motion and Lemma~\ref{lem:cone_dilation}, the last identity yields
\begin{align*}
     \mathbb{P}_x(B_t\notin \Omega)&\ge \mathbb{P}_e(B_1 \in\delta_{1/\sqrt{t}} \mathcal{C}^+_e(\beta,\rho; X_x)) \\
    &=\mathbb{P}_e(B_1\in \mathcal{C}^+_e(\beta,\rho/\sqrt{t}; X_x)) \\
    &\ge \mathbb{P}_e(B_1\in \mathcal{C}^+_e(\beta,\rho; X_x))\ge c_2,
\end{align*}
where the last inequality follows as $0\le t\le 1$ and 
\[
c_2=\inf_{\substack{X\in V_1 \\ \langle X,X\rangle=1}}\mathbb{P}_e(B_1\in \mathcal{C}^+_e(\beta,\rho; X))>0
\] 
by \eqref{eq:int_cone_min} in Proposition~\ref{prop:unif_lower_bound}. Taking $c=\max\{c_1,1-c_2\}$, we conclude that 
\begin{align*}
    \mathbb{P}_x(B_t\in \Omega)\le c<1 \quad \mbox{for all $x\in\partial\Omega$}.
\end{align*}
This completes the proof of the proposition.
\end{proof}
\begin{proof}[Proof of Theorem~\ref{thm:eigenvalue_comparison}] We will prove that the semigroup $P^{\phi,\Omega}_t=e^{-t\Delta^{\phi,\Omega}_{\mathcal H}}$ is compact for some $t>0$ whenever \eqref{eq:Bernstein_cond} holds. Since writing $\widetilde{\phi}(u)=\phi(u)-\phi(0)$ we have
\begin{align*}
    \Delta^\phi_{\mathcal H}=\phi(0)+\Delta^{\widetilde{\phi}}_{\mathcal H}, \quad \mbox{and} \quad \Delta^{\phi,\Omega}_{\mathcal H}=\phi(0)+\Delta^{\widetilde{\phi}, \Omega}_{\mathcal H},
\end{align*}
there is no loss of generality in assuming that $\phi(0)=0$.
Recalling Hunt's formula for killed Markov processes, for any bounded measurable function $f:\mathbb G\longrightarrow \R$ we have
\begin{align*}
    P^{\phi,\Omega}_t f(x)=P^{\phi}_t f(x)-\mathbb{E}_x\left[P^\phi_{t-\tau_\Omega} f(B^\phi_{\tau_\Omega}); \tau_\Omega<t\right],
\end{align*}
where $\tau_\Omega=\inf\{t\ge 0: B^\phi_t\in\Omega^c\}$, and $P^\phi$ denotes the semigroup associated with the subordinated hypoelliptic Brownian motion $B^\phi$. When $B^\phi_t$ admits a transition density with respect to the Lebesgue measure on $\mathbb G$ for some $t>0$, so does the process $B^{\phi,\Omega}$ killed upon exiting $\Omega$ and the above formula reads as
\begin{align*}
    p^\phi_\Omega(t,x,y)=p^\phi(t,x,y)-\mathbb{E}_x\left[p^\phi(t-\tau_\Omega,B^\phi_{\tau_\Omega},y); \tau_\Omega<t\right],
\end{align*}
where $p^\phi_\Omega(t,\cdot, \cdot)$ is the density of the killed subordinated hypoelliptic Brownian motion.
As a result, $0\le p^\phi_\Omega(t,x,y)\le p^\phi(t,x,y)$ for all $x,y\in\Omega$. It now remains to check that the transition density of $B^{\phi}_t$ exists and 
\begin{align*}
    \int_{\Omega}\int_{\Omega} p^\phi(t,x,y)^2dxdy<\infty.
\end{align*}
Invoking \eqref{eq:heat_bound_1} for the heat kernel bound for the sub-Laplacian on $\mathbb{G}$ we have 
\begin{align}\label{eq:heat_kernel_bound}
    p(t,x,y)\le C t^{-\frac{Q}{2}}
\end{align}
for all $x,y\in\mathbb G$ and $t>0$. Let $S=(S_t)_{t\ge 0}$ denote the subordinator associated with $\phi$. Then, for any nonnegative measurable function $f:\mathbb G\longrightarrow \R$, using Fubini's theorem we get
\begin{align*}
    P^\phi_t f(x)&=\int_0^\infty P_s f(x) \mathbb{P}(S_t\in ds) \\
    & = \int_{\mathbb G}\left(\int_0^\infty p(s,x,y)\mathbb{P}(S_t\in ds)\right) f(y) dy.
\end{align*}
This shows that $P^\phi_t$ admits a transition density with respect to the Lebesgue measure whenever 
\begin{align}\label{eq:subordination_bound}
    p^\phi(t,x,y):=\int_0^\infty p(s,x,y)\mathbb{P}(S_t\in ds)<\infty
\end{align}
for all $x,y\in\mathbb G$. Due to \eqref{eq:heat_kernel_bound}, a sufficient condition for \eqref{eq:subordination_bound} to hold is $\mathbb{E}\left[S^{-Q/2}_t\right]<\infty$. We recall the identity 
\begin{align*}
    S^{-Q/2}_t = \frac{1}{\Gamma\left(Q/2\right)} \int_0^\infty r^{Q/2-1} e^{-r S_t} dr,
\end{align*}
and therefore using Fubini's theorem, 
\begin{align*}
    \mathbb{E}\left[S^{-Q/2}_t\right]=\frac{1}{\Gamma\left(Q/2\right)}\int_0^\infty r^{Q/2-1} e^{-t\phi(r)} dr,
\end{align*}
 which is finite due to \eqref{eq:Bernstein_cond}. As $(x,y)\longmapsto p(t,x,y)$ is continuous, so is $(x,y)\longmapsto p^\phi(t,x,y)$ due to the dominated convergence theorem. Hence, 
 \begin{align*}
     \int_{\Omega}\int_\Omega p^\phi_\Omega(t,x,y)^2 dx dy\le \int_{\Omega} \int_{\Omega} p^\phi(t,x,y)^2 dx dy<\infty.
 \end{align*}
 This shows that $P^{\phi,\Omega}_t: L^2(\Omega, dx)\longrightarrow L^2(\Omega, dx)$ is a compact (in fact Hilbert-Schmidt) operator, which implies that $\sigma(P^{\phi,\Omega}_t)\setminus \{0\}$ consists of eigenvalues, and $0$ is the only accumulation point in the spectrum. From \cite[Theorem 2.1]{CarfagniniGordinaTeplyaev2024}, we see that $\Delta^{\phi,\Omega}_{\mathcal H}$ has a discrete spectrum. Since $P^{\phi,\Omega}_t$ is a positive operator on $L^2(\Omega, dx)$ satisfying 
 \begin{align*}
     \|P^{\phi,\Omega}_t f\|_{L^2(\Omega, dx)}\le \|f\|_{L^2(\Omega, dx)} \quad \mbox{for all $f\in L^2(\Omega, dx)$},
 \end{align*}
 $\sigma(P^{\phi,\Omega}_t)\subseteq [0,1]$. Therefore, by the spectral mapping theorem for semigroups, see \cite[\S2.6, Equation (2.7)]{EngelNagelBook2006}, we conclude that the $L^2(\Omega, dx)$-spectrum of $\Delta^{\phi,\Omega}_{\mathcal H}$ is a discrete subset of $[0,\infty)$. 
 It remains to prove that the $0$ is not an eigenvalue of $\Delta^{\phi,\Omega}_{\mathcal H}$, that is, the first eigenvalue is strictly positive. Due to \eqref{eq:Bernstein_cond}, we can choose $t_0>0$ large enough so that 
 \begin{align*}
     \frac{C}{\Gamma(Q/2)}\int_0^\infty u^{\frac{Q}{2}-1} e^{-t_0 \phi(u)}du<\frac{1}{\Vol(\Omega)^2},
 \end{align*}
where $C>0$ is the same constant as in \eqref{eq:heat_kernel_bound}. Then by identity \eqref{eq:subordination_bound}, we have $p^{\phi}(t_0,x,y)\le \Vol(\Omega)^{-2}$ for all $x,y\in\mathbb G$. Hence by \cite[Proposition~2.8]{CarfagniniGordinaTeplyaev2024}, we conclude that the first eigenvalue of $\Delta^{\phi,\Omega}_{\mathcal H}$ is strictly positive.

For any Bernstein function $\phi$ satisfying \eqref{eq:Bernstein_cond}, the eigenvalue estimate in \eqref{ineq.UpperBd.Eigenvalue_1} follows directly from \cite[p.~111--112]{ChenSong2005}.

 Using \cite[Theorem~3.4]{ChenSong2005}, for any complete Bernstein function $\phi$ satisfying \eqref{eq:Bernstein_cond} it follows that $\lambda_{n,\phi}\le \phi(\lambda_n)$ for all $n\ge 1$. On the other hand, due to the uniform bound \eqref{eq:A} in Proposition~\ref{prop:unif_prob_bound}, using \cite[Theorem~4.4]{ChenSong2005}, we conclude that $\lambda_{n,\phi}\ge c(\Omega)\phi(\lambda_n)$ for some constant $c(\Omega)$ independent of $n$ and $\phi$ and depending only on $\Omega$. This completes the proof of the theorem.
\end{proof}
We need the following result to prove Theorem~\ref{thm:heat_content_large}.
\begin{lemma}\label{lem:killed_transition_kernel}
    Let $\Omega$ be a bounded open connected subset of $\mathbb G$. Let $p^\phi_\Omega(t,\cdot, \cdot)$ be the transition density of the killed subordinated horizontal Brownian motion. Then, $p^\phi_\Omega(t,x,y)>0$ for all $x,y\in\Omega$ and $t>0$.
\end{lemma}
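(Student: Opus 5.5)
Our plan is to combine the positivity of the free subordinated kernel with a local comparison, and then propagate along chains using Chapman--Kolmogorov and the connectedness of $\Omega$. We assume, as in the proof of Theorem~\ref{thm:eigenvalue_comparison}, that the density $p^\phi_\Omega$ exists (under \eqref{eq:Bernstein_cond}). We may also assume $\phi(0)=0$: killing at rate $a=\phi(0)$ only multiplies $p^\phi_\Omega$ by $e^{-at}$.

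\emph{Step 1: local positivity.} Fix $x_0\in\Omega$ and $r>0$ with $\mathbb B(x_0,4r)\subset\Omega$. By domain monotonicity, $p^\phi_\Omega\ge p^\phi_{\mathbb B(x_0,4r)}$, where the latter is the killed density on the ball, so it suffices to prove positivity for small balls. For $x,y\in \mathbb B(x_0,r)$ we use Hunt's formula from the proof of Theorem~\ref{thm:eigenvalue_comparison}:
\begin{align*}
p^\phi_{U}(t,x,y)=p^\phi(t,x,y)-\mathbb E_x\left[p^\phi(t-\tau_U,B^\phi_{\tau_U},y);\tau_U<t\right],\qquad U=\mathbb B(x_0,4r).
\end{align*}
The free term satisfies $p^\phi(t,x,y)=\int_{(0,\infty)}p(s,x^{-1}y)\,\eta_t(ds)>0$, since $p>0$ everywhere and $\eta_t((0,\infty))>0$ for $t>0$ (as $\phi$ is nonconstant). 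We will bound the correction term by something small when $t$ is small. The exit position satisfies $\|(B^\phi_{\tau_U})^{-1}y\|\ge 2r$, so $p^\phi(t-\tau_U,B^\phi_{\tau_U},y)$ is controlled by $\sup_{s\le t,\|z\|\ge 2r}p^\phi(s,z)$. Also $\mathbb P_x(\tau_U<t)\to0$ as $t\to0$.

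\emph{Step 2: the main obstacle.} We must show that the correction is strictly smaller than $p^\phi(t,x,y)$. For a general Bernstein function, the far-field density $p^\phi(s,z)$ with $\|z\|\ge2r$ is of order $s\,\Pi_\phi$-density. The near-field term $p^\phi(t,x,y)$ could also be small, so a direct comparison is delicate.

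To avoid this, we first try a different route that does not require sharp kernel bounds. The killed semigroup $P^{\phi,\Omega}_t$ is symmetric on $L^2(\Omega)$, and $p^\phi_\Omega(t,\cdot,\cdot)$ is continuous by the argument in the proof of Theorem~\ref{thm:eigenvalue_comparison}. Chapman--Kolmogorov gives
\begin{align*}
p^\phi_\Omega(t,x,x)=\int_\Omega p^\phi_\Omega(t/2,x,z)^2dz>0,
\end{align*}
because $\int_\Omega p^\phi_\Omega(t/2,x,z)\,dz=\mathbb P_x(\tau_\Omega>t/2)>0$ by right-continuity of paths. By continuity, $p^\phi_\Omega(s,x,z)>0$ for $z$ near $x$ and $s$ near $t$. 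This positivity is local in space but not yet uniform in time.

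\emph{Step 3: propagation.} Let $x,y\in\Omega$. Since $\Omega$ is open and connected, we join $x$ to $y$ by a chain of points $x=z_0,z_1,\dots,z_N=y$ lying in small balls $V_i$ on which $p^\phi_\Omega(t/N,\cdot,\cdot)>0$; these come from the local positivity near the diagonal in Step 2, applied uniformly on compact subsets. Iterating Chapman--Kolmogorov then gives
\begin{align*}
p^\phi_\Omega(t,x,y)\ge\int_{V_1}\cdots\int_{V_{N-1}}\prod_{i=1}^{N}p^\phi_\Omega(t/N,w_{i-1},w_i)\,dw_1\cdots dw_{N-1}>0,
\end{align*}
with $w_0=x$ and $w_N=y$.

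The delicate point is that the neighborhood on which positivity holds must not shrink as the time step $t/N$ does. For this we plan to use the jump part directly when $\nu\neq0$. The Lévy measure $\Pi_\phi$ has an everywhere positive density $\int p(s,z)\,\nu(ds)$, so the process jumps from $x$ into any small ball around $y$ before exiting with positive probability. This gives positivity in one step via the Ikeda--Watanabe formula,
\begin{align*}
p^\phi_\Omega(t,x,y)\ge\int_0^t\int_\Omega\int_\Omega p^\phi_\Omega(s,x,w)\,j(w,v)\,p^\phi_\Omega(t-s,v,y)\,dv\,dw\,ds>0.
\end{align*}
If instead $b>0$ and $\nu=0$, the process is a time-changed hypoelliptic Brownian motion, and positivity follows from the sub-Laplacian case in \cite{CarfagniniGordina2024}.
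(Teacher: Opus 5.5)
Your final argument has a genuine gap. The displayed bound
\[
p^\phi_\Omega(t,x,y)\ge\int_0^t\int_\Omega\int_\Omega p^\phi_\Omega(s,x,w)\,j(w,v)\,p^\phi_\Omega(t-s,v,y)\,dv\,dw\,ds
\]
is not what Theorem~\ref{thm:Ikeda-Watanabe} gives, and it is false in general. That theorem describes only the exit jump from a domain into a set at positive distance from it. Your right-hand side instead counts every jump inside $\Omega$: it is the density of $\mathbb E_x[N_t;\,B^\phi_t\in dy,\ t<\tau_\Omega]$, where $N_t$ is the number of jumps in $[0,t]$. So the inequality goes the wrong way. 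When $\nu$ has infinite mass, which \eqref{eq:Bernstein_cond} forces if $b=0$, one has $N_t=\infty$ almost surely and the right-hand side is $+\infty$.

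A correct version needs a small ball $U\ni x$ and a neighbourhood $V\ni y$ with $d(U,V)>0$, the exit formula for $U$, and the strong Markov property at $\tau_U$. The paper verifies the hypotheses of Theorem~\ref{thm:Ikeda-Watanabe} only for $\phi(u)=u^{\alpha/2}$ (Lemma~\ref{lem:IW-check}), so they would still have to be checked for general $\phi$. Even then, positivity of the resulting integral requires $\mathbb P_x(\tau_U>s)>0$ and $\mathbb P_y(B^\phi_{t-s}\in V,\ t-s<\tau_\Omega)>0$ simultaneously on a set of $s\in(0,t)$ of positive measure. That is survival in a small ball for times that are not small, and right-continuity of paths only gives it as $s\to0$. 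The same unjustified claim underlies your diagonal bound in Step~2. That step also uses continuity of the killed kernel, while the paper proves continuity only for the free kernel $p^\phi$. Step~3 is unfinished by your own account.

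The missing idea, which makes all of this unnecessary, is a pathwise comparison with the subordinate killed Brownian motion. Since $u\mapsto S_u$ is nondecreasing, $\{B_{S_u}:u\le t\}\subseteq\{B_v:v\le S_t\}$, so if $B$ stays in $\Omega$ up to time $S_t$ then $\tau_\Omega>t$. Let $\tau^B_\Omega$ be the exit time of $B$ from $\Omega$. Using the independence of $B$ and $S$, this gives for $f\ge0$
\[
P^{\phi,\Omega}_tf(x)\ge\mathbb E_x\bigl[f(B_{S_t});\,S_t<\tau^B_\Omega\bigr]=\int_{(0,\infty)}\mathbb E_x\bigl[f(B_s);\,s<\tau^B_\Omega\bigr]\,\eta_t(ds).
\]
Hence $p^\phi_\Omega(t,x,y)\ge\int_{(0,\infty)}p_\Omega(s,x,y)\,\eta_t(ds)>0$, because the Dirichlet hypoelliptic heat kernel satisfies $p_\Omega(s,x,y)>0$ on the connected set $\Omega$ \cite{CarfagniniGordina2024} and $\eta_t((0,\infty))>0$.

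This is exactly the paper's proof, which cites \cite{SongVondracek2003} for the comparison. Your treatment of the case $\nu=0$ is this same comparison in the degenerate situation where it is an equality. A repaired jump argument would buy something the paper's route does not, namely positivity without connectedness when $\nu\neq0$. It would do so only at the price of the additional estimates listed above.
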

\begin{proof}
     Let $(X_t)_{t\ge 0}$ be the subordinated killed horizontal Brownian motion upon exiting $\Omega$, that is, 
\begin{align*}
    X_t = B^\Omega_{S_t},
\end{align*}
where $(B^\Omega_t)_{t\ge 0}$ is the horizontal Brownian motion killed upon exiting $\Omega$, and $S_t$ is the L\'evy subordinator with exponent $\phi$. Then $(X_t)_{t\ge 0}$ is a Markov process. Let $Q^{\phi,\Omega}_t$ denote the corresponding semigroup. Then using the same argument given in \cite[p.~581]{SongVondracek2003} we have
\begin{align*}
    P^{\phi,\Omega}_t f \ge Q^{\phi,\Omega}_t f
\end{align*}
for all nonnegative measurable functions $f$. This implies that $p^\phi_\Omega(t,x,y)\ge q^{\phi}_\Omega(t,x,y)$ for all $x,y\in\Omega$ where $q^{\phi}_\Omega(t,\cdot, \cdot)$ is the transition density of $Q^{\phi}_t$ that is given by
\begin{align*}
    q^{\phi}_\Omega(t,x,y)=\int_0^\infty p_{\Omega}(s,x,y)\mathbb{P}(S_t\in ds)
\end{align*}
where $p_\Omega(t,\cdot, \cdot)$ is the transition density of $B^{\Omega}_t$. Since $p_\Omega(t,x,y)>0$ for all $t>0$ and $x,y\in\Omega$, see \cite[Equation~3.16]{CarfagniniGordina2024}, we conclude that $q^{\phi}_\Omega(t,x,y)>0$ for all $x,y\in\Omega$. This completes the proof of the lemma.
\end{proof}
\begin{proof}[Proof of Theorem~\ref{thm:heat_content_large}] Let us first prove that $f_\phi(x)>0$ for all $x\in\Omega$. We note that by Theorem~\ref{thm:eigenvalue_comparison}, the spectral radius of $P^{\phi,\Omega}_t$ equals $e^{-t\lambda_{1,\phi}}$. Now consider the convex cone 
\begin{align*}
    \mathcal{K}:=\{f\in L^2(\Omega,dx): f\ge 0 \ \mbox{almost everywhere}\}\subset L^2(\Omega, dx).
\end{align*}
Then, $\mathcal{K}$ is a \emph{total cone}, that is, the closure of $\mathcal{K}-\mathcal{K}$ is $L^2(\Omega, dx)$. Since $P^{\phi,\Omega}_t$ is a Markov semigroup, we have
\begin{align}\label{eq:cone_L2}
    P^{\phi,\Omega}_t\mathcal{K}\subseteq \mathcal{K}.
\end{align}
By the Krein-Rutman theorem (see \cite{KreinRutman1948}) it follows that $f_{\phi}\in\mathcal K\setminus \{0\}$. Suppose, to the contrary that $f_\phi(x)=0$ for some $x\in\Omega$. Then,
\begin{align*}
    0=f_\phi(x)=e^{t\lambda_{1,\phi}}P^{\phi,\Omega}_t f_\phi(x)=e^{t\lambda_{1,\phi}}\int_{\Omega} f_\phi(y) p^\phi_\Omega(t,x,y)dy.
\end{align*}
Since $f_\phi\in\mathcal{K}\setminus \{0\}$, the set $A:=\{x\in\Omega: f_\phi(x)>0\}$ has positive Lebesgue measure, and the above identity implies that $p^\phi_\Omega(t,x,y)=0$ for almost every $y\in A$. This contradicts Lemma~\ref{lem:killed_transition_kernel} and hence $f_\phi(x)>0$ for all $x\in\Omega$.

The simplicity of the eigenvalue $\lambda_{1,\phi}$ would follow from the \emph{irreducibility} of the semigroup $P^{\phi,\Omega}_t$. By \cite[Example~14.11]{BaktaiBook} it is known that $(P^{\phi,\Omega}_t)_{t\ge 0}$ is irreducible if and only if its resolvent defined by
\begin{align}\label{eq:resolvent}
    R_\mu f(x):= \int_0^\infty e^{-t\mu}P^{\phi,\Omega}_t f(x) dt >0
\end{align}
whenever $f>0$ almost everywhere. We note that \eqref{eq:resolvent} indeed holds due to Lemma~\ref{lem:killed_transition_kernel}. Therefore, by \cite[Proposition~14.42(c)]{BaktaiBook} we conclude that $e^{-t\lambda_{1,\phi}}$ is a simple eigenvalue of $P^{\phi,\Omega}_t$, or equivalently, $\lambda_{1,\phi}$ is a simple eigenvalue of $\Delta^{\phi,\Omega}_{\mathcal H}$.

Since $P^{\phi,\Omega}_t : L^2(\Omega, dx)\longrightarrow L^2(\Omega, dx)$ is a compact, self-adjoint operator, it admits the following spectral expansion:
\begin{align}\label{eq:spectral_exp}
    P^{\phi,\Omega}_t f = \sum_{n=1}^\infty e^{-t\lambda_{n,\phi}} \langle f, \psi_n\rangle_{L^2(\Omega, dx)} \psi_n,
\end{align}
where $(\psi_n)_{n\ge 1}$ is a sequence of orthonormal eigenfunctions in $L^2(\Omega, dx)$ satisfying \begin{align*}P^{\phi,\Omega}_t\psi_n = e^{-t\lambda_{n,\phi}} \psi_n\end{align*} for all $t>0$ and $n\ge 1$. In particular, $\psi_1=f_\phi$. Taking $f=\mathbbm{1}_{\Omega}$ in \eqref{eq:spectral_exp}, we obtain
\begin{align*}
    \widetilde{H}^\phi_{\Omega}(t)=\langle P^{\phi,\Omega}_t\mathbbm{1}_\Omega, \mathbbm{1}_\Omega\rangle_{L^2(\Omega, dx)}=\sum_{n=1}^\infty e^{-t\lambda_{n,\phi}}\langle\psi_n,\mathbbm{1}_\Omega\rangle^2_{L^2(\Omega, dx)}.
\end{align*}
Using Parseval's identity, we obtain
\begin{align*}
    0\le \widetilde{H}^\phi_\Omega(t)-e^{-t\lambda_{1,\phi}}\langle f_\phi, \mathbbm{1}_\Omega\rangle^2_{L^2(\Omega, dx)}\le e^{-t\lambda_{2,\phi}}\Vol(\Omega).
\end{align*}
Since $\lambda_{1,\phi}$ is a simple eigenvalue of $P^{\phi,\Omega}_t$, we have $\lambda_{2,\phi}>\lambda_{1,\phi}$, and therefore,
\begin{align*}
    \lim_{t\to\infty} e^{t\lambda_{1,\phi}} \widetilde{H}^\phi_\Omega(t)=\langle f_\phi, \mathbbm{1}_\Omega\rangle^2_{L^2(\Omega, dx)}.
\end{align*}
This completes the proof of the theorem.
\end{proof}

\section{Proof of Theorem~\ref{thm:heat_content} and Corollary~\ref{cor:BBMD}}\label{s.proof2} From our discussion in Section~\ref{s.Subordination} it follows that for any $\alpha\in (0,2]$ the relative heat content is given by 
\begin{align*}
    H^{(\alpha)}_\Omega(t)=\int_\Omega\mathbb{P}_x(B^{(\alpha)}_t\in \Omega) dx=\int_{\Omega}\int_{\Omega}p^{(\alpha)}(t,x,y) dx dy,
\end{align*}
where $p^{(\alpha)}(t,\cdot, \cdot)$ is the fractional hypoelliptic heat kernel introduced in Definition~\ref{def:frac_heat_kernel}. 
For simplicity of notation, we write 
\begin{align*}
    H_\Omega(t):= H^{(2)}_\Omega(t).
\end{align*}
We first recall the result on small time asymptotics of the relative heat content for the sub-Laplacian on arbitrary Carnot groups, and this will be crucial in the proof of Theorem~\ref{thm:heat_content}. The next result follows directly from \cite[Theorem~1.1, Remark~1.2]{AgrachevRizziRossi2024}.
\begin{theorem}[Theorem~1.1 in \cite{AgrachevRizziRossi2024}]\label{t:heat_content}
    Let $\Omega$ be a bounded open subset of $\mathbb{G}$ such that $\partial \Omega$ is smooth and completely non-characteristic. Then, 
    \begin{align}\label{eq:rel_heat_asymp}
       \lim_{t\to 0} \frac{\Vol(\Omega)-H_\Omega(t)}{\sqrt{t}}=\sqrt{\frac{1}{\pi}}\Per_{\mathcal H}(\Omega).
    \end{align}
\end{theorem}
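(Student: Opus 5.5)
The plan is to deduce the statement from \cite[Theorem~1.1, Remark~1.2]{AgrachevRizziRossi2024} rather than reprove it. There are two points to check: that our normalization of $\Delta_{\mathcal H}$ matches theirs, and that their result, formulated for sub-Riemannian manifolds and relative compact domains, applies to a bounded domain in the non-compact group $\mathbb G$ with the Haar measure.

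First I would write the defect probabilistically:
\begin{align*}
\Vol(\Omega)-H_\Omega(t)=\int_\Omega\int_{\Omega^c}p(t,x^{-1}y)\,dy\,dx=\int_\Omega\mathbb P_x(B_t\notin\Omega)\,dx,
\end{align*}
where $B$ is generated by $-\Delta_{\mathcal H}=\sum_i X_i^2$, with no factor $\tfrac12$, since $\mathrm{Var}(W^{(i)}_t)=2t$. This is the same convention $e^{t\sum X_i^2}$ used in \cite{AgrachevRizziRossi2024}. I would confirm the constant on the model case of a Euclidean half-space. There the heat kernel is $(4\pi t)^{-1/2}e^{-x^2/4t}$ in the normal direction, and the defect per unit area is
\begin{align*}
\int_0^\infty\mathbb P\bigl(x+\sqrt{2t}\,Z<0\bigr)\,dx=\sqrt{2t}\,\mathbb E[Z^+]=\sqrt{2t}\,(2\pi)^{-1/2}=\sqrt{t/\pi},
\end{align*}
with $Z$ a standard normal. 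This matches the coefficient $\sqrt{1/\pi}$ in \eqref{eq:rel_heat_asymp}, and the horizontal perimeter plays the role of the area of the hypersurface.

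Second, I would localize. Fix $\varepsilon>0$ and split $\Omega$ into the collar $\{x\in\Omega:d(x,\partial\Omega)<\varepsilon\}$ and its complement. On the complement, the Gaussian upper bound \eqref{eq:heat_bound_1} gives $\mathbb P_x(B_t\notin\Omega)\le Ce^{-c\varepsilon^2/t}$, which is $o(\sqrt t)$. Symmetrically, points $y\in\Omega^c$ far from $\partial\Omega$ contribute negligibly. Hence only a tubular neighbourhood of the compact, smooth, non-characteristic hypersurface $\partial\Omega$ matters.

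Near $\partial\Omega$ I would use the argument of \cite{AgrachevRizziRossi2024}, which is local in a neighbourhood of the boundary. The ingredients are:
\begin{itemize}
\item smoothness of the signed sub-Riemannian distance $\delta$ near a non-characteristic boundary (cf.\ \cite{BossioRizziRossi2024Arxiv});
\item $|\nabla_{\mathcal H}\delta|=1$ near $\partial\Omega$;
\item the coarea formula $\int_{\{\delta=s\}}|\nabla_{\mathcal H}\delta|\,d\sigma\to\Per_{\mathcal H}(\Omega)$ as $s\to0$;
\item a Duhamel/one-dimensional comparison showing that $\delta(B_t)$ behaves to leading order like $\delta(x)+\sqrt{2t}\,Z$.
\end{itemize}
Alternatively, one embeds a neighbourhood of $\overline\Omega$ isometrically, with the same measure, into a compact sub-Riemannian manifold, which Remark~1.2 of \cite{AgrachevRizziRossi2024} allows. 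The heat kernels then agree up to $O(e^{-c/t})$ on compact subsets by the off-diagonal Gaussian bounds and the principle of not feeling the boundary. The main obstacle is this comparison step: it needs the hypoelliptic heat kernel on $\mathbb G$ and on the compact model to differ by $O(e^{-c/t})$ uniformly on $\overline\Omega\times\overline\Omega$. I would obtain this from the Gaussian bounds \eqref{eq:heat_bound_1} together with a strong Markov argument at the exit time from the common neighbourhood.

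Combining the two steps yields $\Vol(\Omega)-H_\Omega(t)=\sqrt{t/\pi}\,\Per_{\mathcal H}(\Omega)+o(\sqrt t)$, which is \eqref{eq:rel_heat_asymp}.
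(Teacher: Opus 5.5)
Your proposal is correct and takes the same route as the paper, which gives no separate argument and simply deduces the statement from \cite[Theorem~1.1, Remark~1.2]{AgrachevRizziRossi2024}; your normalization check (generator $\sum_i X_i^2$, half-space defect $\sqrt{t/\pi}$) is a useful sanity check that the paper omits. Since, as the paper notes, Remark~1.2 of \cite{AgrachevRizziRossi2024} already extends their method to any Carnot group, the localization and compact-embedding comparison you identify as the main obstacle are not actually needed.
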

\begin{remark}
    When $\mathbb G$ is a Carnot group of step 2, one does not require any regularity condition on the boundary. It has been proved in \cite{GarofaloTralli2023} that for step-2 Carnot groups, \eqref{eq:rel_heat_asymp} holds for any bounded Caccioppoli set. The argument in their proof does not extend to arbitrary Carnot groups, while Agrachev, Rizzi, and Rossi \cite{AgrachevRizziRossi2024} proved \eqref{eq:rel_heat_asymp} for arbitrary compact sub-Riemannian manifolds equipped with a smooth volume measure, according to \cite[Remark~1.2]{AgrachevRizziRossi2024}, their method works for any Carnot group.
\end{remark}

\begin{proof}[Proof of Theorem~\ref{thm:heat_content}] Let $\eta^{(\alpha)}_t$ denote the density of the $\alpha/2$-subordinator $S^{(\alpha)}_t$. Using the self-similarity in \eqref{eq:self-similarity} we have 
\begin{align*}
    \eta_t(s)=t^{-\frac{2}{\alpha}}\eta_1(s t^{-\frac{2}{\alpha}}) \quad \mbox{for all $s\ge 0$}.
\end{align*}
Let us also denote 
\begin{align*}
    h^{(\alpha)}_\Omega(t):= \Vol(\Omega)-H^{(\alpha)}_\Omega(t)=\int_{\Omega}\int_{\Omega^c} p^{(\alpha)}(t,x,y) dx dy, \quad h_\Omega(t):= h^{(2)}_\Omega(t).
\end{align*}
Using Fubini's theorem, we can write
   \begin{align*}
       h^{(\alpha)}_\Omega(t)=\int_{\Omega}\mathbb{P}_x\left(B_{S^{(\alpha)}_t}\in \Omega^c\right) dx=\int_{0}^\infty h_\Omega(s)\eta^{(\alpha)}_t(s)ds.
   \end{align*}
   Therefore for any $t>0$ we have
    \begin{align*}
       t^{-\frac{1}{\alpha}} h^{(\alpha)}_{\Omega}(t)&=t^{-\frac{1}{\alpha}}\int_0^\infty h_\Omega(s) \eta^{(\alpha)}_t(s) ds \\
       &=\int_0^\infty \frac{h_\Omega(st^{2/\alpha})}{s^{\frac{1}{2}} t^{\frac{1}{\alpha}}} s^{\frac{1}{2}} \eta^{(\alpha)}_1(s) ds \\
       &=\int_{0}^\infty G(s,t) ds,
    \end{align*}
    where 
    \begin{align*}
        G(s,t)=\frac{h_\Omega(st^{2/\alpha})}{s^{\frac{1}{2}} t^{\frac{1}{\alpha}}} s^{\frac{1}{2}} \eta^{(\alpha)}_1(s).
    \end{align*}
    Now, by Theorem~\ref{t:heat_content}, we have
    \begin{align*}
        \lim_{t\to 0} G(s,t)=\sqrt{\frac{1}{\pi}} \Per_{\mathcal H}(\Omega) s^{\frac12} \eta^{(\alpha)}_1(s).
    \end{align*}
    Also, by Lemma~\ref{prop:heat_content_upperbound}, for all $s,t>0$,
    \begin{align*}
        0\le G(s,t)\le \sqrt{2Q}\Per_{\mathcal H}(\Omega) s^{\frac12} \eta^{(\alpha)}_1(s).
    \end{align*}
    Applying the dominated convergence theorem it follows that for $1<\alpha<2$
    \begin{align*}
        \lim_{t\to 0} \int_0^\infty G(s,t) ds&=\int_0^\infty \sqrt{\frac{1}{\pi}}\Per_{\mathcal H}(\Omega) s^{\frac12} \eta^{(\alpha)}_1(s) ds \\
        &=\frac{1}{\pi} \Gamma\left(1-\frac{1}{\alpha}\right) \Per_{\mathcal H}(\Omega).
    \end{align*}

When $\alpha=1$, we follow an argument similar to that used in the proof of \cite[Proposition~3.3]{ParkSong2019} with some modifications in our setting. From \cite[Lemma~3.2]{ParkSong2019}, for any $\delta>0$ one has 
    \begin{align}\label{eq:S_asymp}
        \lim_{t\to 0}\frac{\mathbb{E}\left[\left(S^{(1/2)}_1\right)^{\frac12}, 0<S^{(1/2)}_1<\delta t^{-2}\right]}{\log(1/t)}=\frac{1}{\sqrt{\pi}}.
    \end{align}
    Now, for any bounded $\Omega$ we write 
    \begin{align*}
        \Vol(\Omega)-H^{(1)}_\Omega(t)&=\int_0^\infty h_\Omega(st^2)\eta^{(1/2)}_1(s)ds \\
        &=\int_0^{\delta t^{-2}} \frac{h_\Omega(st^2)}{s^{\frac12}t} s^{\frac12} \eta^{(1/2)}_1(s) ds+\int_{\delta t^{-2}}^\infty h_\Omega(st^2)\eta^{(1/2)}_1(s) ds \\
        &=: I_1(t) + I_2(t),
    \end{align*}
where $\delta>0$ is to be determined. Since by Theorem~\ref{t:heat_content} 
\begin{align*}
    \lim_{t\to 0}\frac{h_\Omega(t)}{\sqrt{t}}=\frac{1}{\sqrt{\pi}}\Per_{\mathcal H}(\Omega),
\end{align*}
for any $\epsilon>0$, there exists $\delta>0$ such that 
\begin{align*}
    \frac{1}{\sqrt{\pi}}\Per_{\mathcal H}(\Omega)-\epsilon<\left|\frac{h_\Omega(t)}{\sqrt{t}}\right|< \frac{1}{\sqrt{\pi}}\Per_{\mathcal H}(\Omega)+\epsilon \quad \mbox{for all $0<t<\delta$}.
\end{align*}
As a result, using \eqref{eq:S_asymp} we get 
\begin{align*}
    \limsup_{t\to 0} \frac{I_1(t)}{t \log(1/t)} & \le \left(\frac{1}{\sqrt{\pi}}\Per_{\mathcal H}(\Omega)+\epsilon\right)\limsup_{t\to 0}\frac{1}{\log(1/t)}\int_0^{\delta t^{-2}} s^{\frac12} \eta^{(1/2)}_1(s)ds \\
    &=\frac{1}{\sqrt{\pi}}\left(\frac{1}{\sqrt{\pi}}\Per_{\mathcal H}(\Omega)+\epsilon\right).
\end{align*}
Similarly, we also get 
\begin{align*}
    \liminf_{t\to 0}\frac{I_1(t)}{t \log(1/t)}\ge \frac{1}{\sqrt{\pi}} \left(\frac{1}{\sqrt{\pi}}\Per_{\mathcal H}(\Omega)-\epsilon\right).
\end{align*}
As $\epsilon>0$ is arbitrary, we conclude that 
\begin{align*}
    \lim_{t\to 0} \frac{I_1(t)}{t\log(1/t)}=\frac{1}{\pi}\Per_{\mathcal H}(\Omega).
\end{align*}
On the other hand, since $h_{\Omega}(t)= \Vol(\Omega)-H_\Omega(t)\le \Vol(\Omega)$ for all $t>0$, using \eqref{eq:stable_asymptotic} we obtain 
\begin{align*}
    I_2(t)\le c\int_{\delta t^{-2}}^\infty \Vol(\Omega) s^{-3/2} ds= \frac{2c}{\sqrt{\delta}}\Vol(\Omega) t
\end{align*}
for some constant $c>0$. As a result, 
\begin{align*}
    \lim_{t\to 0}\frac{I_2(t)}{t\log(1/t)}=0.
\end{align*}
Therefore, we have
\begin{align*}
\lim_{t\to 0} \frac{\Vol(\Omega) - H^{(\alpha)}_\Omega(t)}{t\log(1/t)}=\frac{1}{\pi} \Per_{\mathcal H}(\Omega).
\end{align*}

Let us now consider the case when $0<\alpha<1$. Consider $\Omega\subset \mathbb G$ such that $\Per^{(\alpha)}_{\mathcal H}(\Omega)<\infty$. Since 
\begin{align*}
    \Vol(\Omega)-H^{(\alpha)}_\Omega(t)=\int_{\Omega}\int_{\Omega^c} p^{(\alpha)}(t,x^{-1}y) dy dx,
\end{align*}
from the estimates in \eqref{eq:frac_heat_bound} and \eqref{eq:heat_asymp} in Proposition~\ref{prop:heat_asymp} along with the dominated convergence theorem it follows that 
\begin{align*}
    \lim_{t\to 0}\frac{\Vol(\Omega)-H^{(\alpha)}_\Omega(t)}{t}&=\lim_{t\to 0}\int_{\Omega}\int_{\Omega^c} \frac{p^{(\alpha)}(t, x^{-1}y)}{t} dx dy \\
    &=\int_\Omega \int_{\Omega^c} \frac{1}{\|x^{-1}y\|_\alpha^{Q+\alpha}} dx dy\\
    &=\Per^{(\alpha)}_{\mathcal H}(\Omega).
\end{align*}
This concludes the proof of the theorem.
\end{proof}

\begin{proof}[Proof of Corollary~\ref{cor:BBMD}]
    We first prove the limit when $\alpha\nearrow 1$. Let $\Omega$ be a bounded open set having $C^\infty$ boundary with no characteristic points, and let us denote as before $h_\Omega(t)=\Vol(\Omega)-H_\Omega(t)$.  Then by Theorem~\ref{t:heat_content}, for any $\varepsilon>0$ there exists $\delta>0$ such that 
    \begin{align}\label{eq:h_omega_bound}
        \left|\frac{h_\Omega(t)}{\sqrt{t}}-\frac{1}{\sqrt{\pi}}\Per_{\mathcal H}(\Omega)\right|\le \varepsilon \quad \mbox{for all $0<t<\delta$}.
    \end{align}
    Using Lemma~\ref{lem:fract_perimeter} we can therefore write 
    \begin{align}\label{eq:per_decomp}
        \Per^{(\alpha)}_{\mathcal H}(\Omega)=c_\alpha \int_0^\delta \frac{h_\Omega(t)}{\sqrt{t}} t^{-\frac{\alpha}{2}-\frac{1}{2}} dt + c_\alpha\int_\delta^\infty h_\Omega(t) t^{-\frac{\alpha}{2}-1} dt,
    \end{align}
    where 
    \begin{align*}
        c_\alpha=\frac{\alpha}{2\Gamma(1-\frac{\alpha}{2})}.
    \end{align*}
    Since $0\le h_\Omega(t)\le \Vol(\Omega)$ for all $t>0$, we have 
    \begin{align}\label{eq:lim_bound_1}
        \limsup_{\alpha\nearrow 1} (1-\alpha)c_\alpha\int_\delta^\infty h_\Omega(t) t^{-\frac{\alpha}{2}-1} dt\le \limsup_{\alpha\nearrow 1}\frac{(1-\alpha)}{\Gamma(1-\frac{\alpha}{2})}\Vol(\Omega)\delta^{-\frac{\alpha}{2}}=0.
    \end{align}
    On the other hand noting that 
    \begin{align*}
        \lim_{\alpha\nearrow 1} c_\alpha=\frac{1}{2\sqrt{\pi}},
    \end{align*}
    \eqref{eq:h_omega_bound} implies 
    \begin{equation}\label{eq:lim_bound_2}
    \begin{aligned}
        \limsup_{\alpha\nearrow 1}(1-\alpha)c_\alpha \int_{0}^\delta \frac{h_\Omega(t)}{\sqrt{t}} t^{-\frac{\alpha}{2}-\frac12} dt\le \left(\frac{1}{\sqrt{\pi}}\Per_\mathcal{H}(\Omega)+\varepsilon\right)\frac{1}{\sqrt{\pi}}, \\
        \liminf_{\alpha\nearrow 1}(1-\alpha)c_\alpha \int_{0}^\delta \frac{h_\Omega(t)}{\sqrt{t}} t^{-\frac{\alpha}{2}-\frac12} dt\ge \left(\frac{1}{\sqrt{\pi}}\Per_\mathcal{H}(\Omega)-\varepsilon\right)\frac{1}{\sqrt{\pi}}.
    \end{aligned}
    \end{equation}
    Since $\varepsilon>0$ is arbitrary, combining \eqref{eq:lim_bound_1} and \eqref{eq:lim_bound_2}, we conclude that 
    \begin{align*}
        \lim_{\alpha\nearrow 1} (1-\alpha)\Per^{(\alpha)}_{\mathcal H}(\Omega)=\frac{1}{\pi}\Per_{\mathcal H}(\Omega).
    \end{align*}
    
    Let us now prove the limit when $\alpha\searrow 0$. From the heat kernel estimate in \eqref{eq:heat_bound_1} it follows that 
    \begin{align*}
        p(t,x,y)\le ct^{-\frac Q2} \quad \mbox{for all $x,y\in\mathbb G, \ t>0$}.
    \end{align*}
    As a result, 
    \begin{align*}
        H_\Omega(t)=\int_{\Omega}\int_{\Omega} p(t,x,y)dx dy\le ct^{-\frac Q2}\Vol(\Omega)^2,
    \end{align*}
    which shows that
    \begin{align*}
        \lim_{t\to \infty} H_\Omega(t)=0.
    \end{align*}
    Therefore, for any $\varepsilon>0$, there exists $\delta>0$ such that 
    \begin{align}\label{eq:heat_volume}
        \left|h_{\Omega}(t)-\Vol(\Omega)\right|\le \varepsilon \quad \mbox{for all $t>\delta$}.
    \end{align}
    We note that 
    \begin{align*}
        \lim_{\alpha\searrow 0} c_\alpha=0, \quad \mbox{and} \quad \lim_{\alpha\searrow 0} \frac{c_\alpha}{\alpha/2}=1.
    \end{align*}
     Using the same decomposition as in \eqref{eq:per_decomp} and invoking Lemma~\ref{prop:heat_content_upperbound} for any $\Omega$ satisfying $\Per_{\mathcal H}(\Omega)<\infty$ we get 
    \begin{align*} 
        \limsup_{\alpha\searrow 0}c_\alpha \int_0^\delta \frac{h_\Omega(t)}{\sqrt{t}} t^{-\frac{\alpha}{2}-\frac{1}{2}} dt\le \sqrt{2Q}\Per_{\mathcal H}(\Omega)\limsup_{\alpha\searrow 0} \frac{1}{2}c_\alpha(1-\alpha)=0.
    \end{align*}
    On the other hand, using \eqref{eq:heat_volume} we obtain
    \begin{align*}
        \limsup_{\alpha\searrow 0} c_\alpha\int_\delta^\infty h_\Omega(t) t^{-\frac{\alpha}{2}-1} dt\le (\Vol(\Omega)+\varepsilon)\limsup_{\alpha\searrow 0}\frac{c_\alpha}{\alpha/2}\delta^{-\frac{\alpha}{2}}=\Vol(\Omega)+\varepsilon, \\
        \liminf_{\alpha\searrow 0} c_\alpha\int_\delta^\infty h_\Omega(t) t^{-\frac{\alpha}{2}-1} dt\ge (\Vol(\Omega)-\varepsilon)\liminf_{\alpha\searrow 0}\frac{c_\alpha}{\alpha/2}\delta^{-\frac{\alpha}{2}}=\Vol(\Omega)-\varepsilon
    \end{align*}
    Since $\varepsilon>0$ is arbitrary, we conclude that 
    \begin{align*}
        \lim_{\alpha\searrow 0} \Per^{(\alpha)}_{\mathcal H}(\Omega)=\Vol(\Omega).
    \end{align*}
    This concludes the proof of the corollary.
\end{proof}

\section{Proof of Theorem~\ref{thm:spectral_heat}} \label{s:proof3}
An essential tool for proving \eqref{eq:lim_0_1} in Theorem~\ref{thm:spectral_heat} is the following powerful theorem due to Ikeda and Watanabe \cite{Ikeda-Watanabe}.

\begin{theorem}[Ikeda-Watanabe, 1962] \label{thm:Ikeda-Watanabe} Let $X=(X_t)_{t\ge 0}$ be a Feller process defined on a locally compact separable metric space $(M,d)$. Assume that there exists a positive kernel $\Pi(x,E)$, $x\in M$, $E\in\mathcal{B}(M)$ such that 
	\begin{enumerate}[leftmargin=*]
		\item \label{cond1} $\Pi(x,E)<\infty$ if $d(x,E)>0$, 
		\item \label{cond2} for any bounded $f\in C(M)$ and a bounded open set $D$ with $d(D,\supp(f))>0$, 
		\begin{align*}
			\frac{P_t f(x)}{t} \mbox{ is uniformly bounded in $x\in D$, $t>0$},
		\end{align*}
		and 
		$\lim_{t\downarrow 0}\frac{P_t f(x)}{t}=\int_M f(y)\Pi(x,dy)$ for every $x\in D$,
	\end{enumerate}
	then for any open set $\Omega\subset M$, $E\in\mathcal{B}(M)$ satisfying $d(E,\Omega)>0$, and $\lambda>0$,
	\begin{align}\label{eq:Laplace_transform}
		\mathbb{E}_x\left[e^{-\lambda \tau_\Omega}, X_{\tau_\Omega}\in E\right]=\int_\Omega G^\lambda_\Omega(x, dy)\Pi(y,E),
	\end{align}
	where $G^\lambda_\Omega(x,E)=\int_0^\infty e^{-\lambda s} p_\Omega(s,x,E)ds$. Therefore, the joint distribution of $(\tau_\Omega, X_{\tau_\Omega})$ on the event $\{X_{\tau_\Omega}\neq X_{\tau_\Omega -}\}$ is given by 
	\begin{align*}
		\mathbb{P}_x\left(\tau_\Omega\in ds, X_{\tau_\Omega}\in dz\right)=\int_\Omega p_\Omega(s,x,dy)\Pi(y, dz).
	\end{align*} 
	Moreover, \eqref{eq:Laplace_transform} holds for $\lambda=0$ if $\mathbb{E}_x[\tau_D]<\infty$.
\end{theorem}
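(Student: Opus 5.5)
The plan is to establish \eqref{eq:Laplace_transform} first for a bounded continuous $f\ge 0$ with $d(\supp f,\Omega)>0$ in place of $\mathbbm 1_E$. The extension to Borel $E$ then goes by a monotone class argument, using condition \eqref{cond1} to keep both sides finite. The joint law of $(\tau_\Omega,X_{\tau_\Omega})$ follows from uniqueness of Laplace transforms in $\lambda$. The case $\lambda=0$ follows by monotone convergence as $\lambda\downarrow0$ once $\mathbb E_x[\tau_\Omega]<\infty$. Throughout I would first take $\Omega$ bounded (replacing it by $D=\Omega\cap\mathbb B(x_0,R)$) and remove this at the end by letting $R\to\infty$. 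This keeps the uniform bound in \eqref{cond2} available on the relevant set.

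\textbf{Main step.} Fix $h>0$ and discretize time. Since $f$ vanishes on a neighbourhood of $\overline\Omega$ and the process is right continuous, $X_{\tau_\Omega}\in\supp f$ forces exit by a jump. This jump lands at positive distance from $\Omega$. Define the Riemann-type sum
\begin{align*}
S_h(x):=\sum_{k\ge0}e^{-\lambda kh}\,\mathbb E_x\bigl[f(X_{(k+1)h});\,\tau_\Omega>kh\bigr].
\end{align*}
By the Markov property at time $kh$, it equals
\begin{align*}
S_h(x)=\sum_{k\ge0}e^{-\lambda kh}\,\mathbb E_x\bigl[P_hf(X_{kh});\,\tau_\Omega>kh\bigr]=\sum_{k\ge0}h\,e^{-\lambda kh}\int_\Omega p_\Omega(kh,x,dy)\,\frac{P_hf(y)}{h}.
\end{align*}
Condition \eqref{cond2} gives $P_hf(y)/h\to\int f(z)\Pi(y,dz)$ pointwise, with a uniform bound on $\Omega$. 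Combined with the exponential weight, dominated convergence yields
\begin{align*}
S_h(x)\to\int_\Omega G^\lambda_\Omega(x,dy)\int f(z)\,\Pi(y,dz)\quad\text{as } h\to0.
\end{align*}

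\textbf{Main obstacle.} One must show that $S_h(x)\to\mathbb E_x[e^{-\lambda\tau_\Omega}f(X_{\tau_\Omega})]$. Split $S_h$ according to whether $\tau_\Omega\in(kh,(k+1)h]$ or $\tau_\Omega>(k+1)h$. The terms with $\tau_\Omega>(k+1)h$ vanish, because $f=0$ on $\Omega$. The term with $k=\lfloor\tau_\Omega/h\rfloor$ is $e^{-\lambda kh}f(X_{(k+1)h})$, which tends to $e^{-\lambda\tau_\Omega}f(X_{\tau_\Omega})$ by right continuity. The difficulty is the terms with $\tau_\Omega\le kh$. There the process has already left $\Omega$, but could still be at some point from which it reaches $\supp f$ later. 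These contributions are excluded by the indicator $\tau_\Omega>kh$ only if I use exactly that indicator. So I would check carefully that the sum is written with $\{\tau_\Omega>kh\}$ and not with $\{X_{kh}\in\Omega\}$. I expect the real work to be a uniform-integrability bound that justifies exchanging the limit in $h$ with the infinite sum over $k$. For this I would use $|f|\le\|f\|_\infty$, the bound $P_hf\le Ch$ on $\Omega$ from \eqref{cond2}, and the geometric weights $e^{-\lambda kh}$. This dominates $S_h$ uniformly by $C\|f\|_\infty/\lambda$ and, path by path, gives the bounded-convergence argument.

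With \eqref{eq:Laplace_transform} in hand for all $\lambda>0$ and all such $E$, inverting the Laplace transform in $\lambda$ identifies the measure $\mathbb P_x(\tau_\Omega\in ds,\,X_{\tau_\Omega}\in dz)$ on $\{d(z,\Omega)>0\}$ as
\begin{align*}
\int_\Omega p_\Omega(s,x,dy)\,\Pi(y,dz)\,ds.
\end{align*}
Exhausting $\{X_{\tau_\Omega}\ne X_{\tau_\Omega-}\}$ by the sets $\{d(X_{\tau_\Omega},\Omega)>1/j\}$ then gives the stated formula on the jump event.
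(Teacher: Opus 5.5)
The paper does not prove this theorem; it quotes it from \cite{Ikeda-Watanabe}, so your argument has to stand on its own. Its architecture is sound: continuous $f$ first, bounded $\Omega$, a monotone class extension, Laplace inversion, and monotone convergence as $\lambda\downarrow 0$. There is, however, a genuine gap in the key limit, and your diagnosis of where the difficulty lies is reversed.

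\textbf{The gap.} The convergence of $S_h(x)$ to $\int_\Omega G^\lambda_\Omega(x,dy)\int f(z)\,\Pi(y,dz)$ is \emph{not} a dominated convergence statement. With $g_h:=P_hf/h$ you have
\begin{align*}
S_h(x)=\int_0^\infty e^{-\lambda \lfloor s/h\rfloor h}\,\mathbb E_x\bigl[g_h(X_{\lfloor s/h\rfloor h});\,\tau_\Omega>\lfloor s/h\rfloor h\bigr]\,ds ,
\end{align*}
where both the function and the evaluation times move with $h$. Condition~(2) only gives $g_h(y)\to g(y):=\int f(z)\,\Pi(y,dz)$ at each \emph{fixed} $y$, together with a uniform bound. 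There is no equicontinuity of $(g_h)$ and no continuity of $g$. So nothing yields $g_h(X_{\lfloor s/h\rfloor h})\to g(X_s)$; moreover $X_{\lfloor s/h\rfloor h}\to X_{s-}$, from the left. Put differently, the measures $\sum_k h e^{-\lambda kh}p_\Omega(kh,x,\cdot)$ change with $h$ and in general converge only weakly. That is not enough against an integrand that converges merely pointwise. The claim $S_h(x)\to$ RHS is true, but only as a consequence of the theorem itself.

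\textbf{The step you flagged is easy.} The part you call the main obstacle is in fact trivial. Pathwise, exactly one index contributes, namely the $k$ with $kh<\tau_\Omega\le (k+1)h$, and its summand is bounded by $\|f\|_\infty$. Right continuity plus bounded convergence then give $S_h(x)\to\mathbb E_x[e^{-\lambda\tau_\Omega}f(X_{\tau_\Omega})]$, with no uniform integrability needed.

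\textbf{The fix.} Average over the offset of the time grid, i.e.\ use the continuous-time quantity
\begin{align*}
T_h(x):=\frac1h\int_0^\infty e^{-\lambda s}\,\mathbb E_x\bigl[f(X_{s+h});\,\tau_\Omega>s\bigr]\,ds
=\int_\Omega G^\lambda_\Omega(x,dy)\,\frac{P_hf(y)}{h}.
\end{align*}
The identity comes from the Markov property at time $s$. Now the integrating measure $G^\lambda_\Omega(x,\cdot)$ is fixed, with mass at most $1/\lambda$, so condition~(2) and dominated convergence genuinely give the right-hand side. Pathwise, $f=0$ on $\Omega$ and, for $x\in\Omega$, $\tau_\Omega>0$ $\mathbb P_x$-a.s. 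Hence for $h<\tau_\Omega$ the integrand of $T_h(x)$ equals $\frac1h\int_0^h e^{-\lambda(\tau_\Omega-h+u)}f(X_{\tau_\Omega+u})\,du$. This tends to $e^{-\lambda\tau_\Omega}f(X_{\tau_\Omega})$ by right continuity and is dominated by $\|f\|_\infty$. With $S_h$ replaced by $T_h$, the rest of your plan goes through.

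\textbf{A smaller point.} The sets $\{d(X_{\tau_\Omega},\Omega)>1/j\}$ exhaust $\{X_{\tau_\Omega}\in\overline{\Omega}^{c}\}$, not the jump event $\{X_{\tau_\Omega}\neq X_{\tau_\Omega-}\}$. The jump event also contains jumps landing on $\partial\Omega$, for which $d(E,\Omega)=0$ and conditions~(1)--(2) give no direct control. What your argument proves is therefore the joint law for $z\in\overline{\Omega}^{c}$. That is exactly the form the paper uses: landing on $\partial\Omega$ is treated separately in Proposition~\ref{prop:skip_boundary}.
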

In the following lemma we identify the positive kernel $\Pi$ in the case of the subordinated hypoelliptic Brownian motion $B^{(\alpha)}_t = B_{S^{(\alpha)}_t}$.

\begin{lemma}\label{lem:IW-check}
	For any $0<\alpha<2$, the subordinated process $B^{(\alpha)}$ satisfies conditions \eqref{cond1}-\eqref{cond2} of Theorem~\ref{thm:Ikeda-Watanabe} with 
	\begin{align*}
		\Pi(x,dy)=\frac{1}{\|x^{-1}y\|^{Q+\alpha}_\alpha}dy.
	\end{align*}
\end{lemma}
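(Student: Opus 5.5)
Since $B^{(\alpha)}$ is a left-invariant L\'evy process on $\mathbb G$, its semigroup is $P^{(\alpha)}_t f(x)=\int_{\mathbb G} f(y)\,p^{(\alpha)}(t,x^{-1}y)\,dy$. Both conditions therefore reduce to the two estimates in Proposition~\ref{prop:heat_asymp}. Throughout we take $d(x,y)=\|x^{-1}y\|$ for a symmetric homogeneous norm; this is a left-invariant metric inducing the Euclidean topology. We also use the equivalence \eqref{eq:norm_equiv} between $\|\cdot\|$ and $\|\cdot\|_\alpha$. The Feller property is standard for subordinated Feller processes, since $P_t$ is Feller on $C_0(\mathbb G)$ by the Gaussian bounds \eqref{eq:heat_bound_1} and subordination preserves this by \eqref{eq:Bochner_subordination}.

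For condition \eqref{cond1}, fix $x$ and $E$ with $d(x,E)=r>0$. Then $E\subset\{y:\|x^{-1}y\|_\alpha\ge r/c\}$, and after the left translation $y\mapsto xy$,
\begin{align*}
\Pi(x,E)\le\int_{\{\|z\|_\alpha\ge r/c\}}\|z\|_\alpha^{-Q-\alpha}\,dz .
\end{align*}
Using polar coordinates, or a dyadic decomposition into annuli $\{2^j\rho\le\|z\|<2^{j+1}\rho\}$ whose volumes scale like $2^{jQ}\rho^Q$ by homogeneity, this integral is bounded by $C r^{-\alpha}<\infty$.

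For condition \eqref{cond2}, let $f$ be bounded and continuous, and let $D$ be bounded and open with $d(D,\supp f)=r>0$. For $x\in D$ and $y\in\supp f$ we have $\|x^{-1}y\|\ge r$. Then \eqref{eq:frac_heat_bound} gives
\begin{align*}
\frac{|P^{(\alpha)}_t f(x)|}{t}\le c\|f\|_\infty\int_{\{\|z\|\ge r\}}\|z\|_\alpha^{-Q-\alpha}\,dz\le C\|f\|_\infty r^{-\alpha}.
\end{align*}
This bound is uniform in $x\in D$ and $t>0$. For the limit, \eqref{eq:heat_asymp} gives pointwise convergence $t^{-1}p^{(\alpha)}(t,x^{-1}y)\to\|x^{-1}y\|_\alpha^{-Q-\alpha}$. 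The bound \eqref{eq:frac_heat_bound}, restricted to $\{y:\|x^{-1}y\|\ge r\}$, gives the integrable dominating function $c\|f\|_\infty\|x^{-1}y\|_\alpha^{-Q-\alpha}$. Dominated convergence then yields $\lim_{t\downarrow0}t^{-1}P^{(\alpha)}_tf(x)=\int f(y)\,\Pi(x,dy)$. Measurability and positivity of $\Pi$ are immediate from the continuity of $\|\cdot\|_\alpha$.

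The step that needs the most care is the tail integrability of $\|z\|_\alpha^{-Q-\alpha}$ away from the identity, since the heat kernel bounds are stated only pointwise. I would handle it with the dyadic-annulus argument. It uses only the dilation invariance $\Vol(\mathbb B(e,\rho))=\rho^Q\Vol(\mathbb B(e,1))$ and \eqref{eq:norm_equiv}, so it does not depend on the non-Euclidean geometry.
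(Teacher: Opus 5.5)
Your proposal is correct and follows essentially the same route as the paper. Condition (1) comes from the dyadic-annulus bound $\Pi(x,E)\lesssim d(x,E)^{-\alpha}$. Condition (2) comes from the uniform bound supplied by \eqref{eq:frac_heat_bound}, combined with \eqref{eq:heat_asymp} and dominated convergence.

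The differences are cosmetic. You work with a symmetric homogeneous norm rather than the Carnot--Carath\'eodory distance $d_c$. You also add a brief justification of the Feller property, which the paper takes for granted.
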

\begin{proof}
	Due to the equivalence of homogeneous metrics on $\mathbb{G}$, let us fix $d=d_c$, the Carnot-Carath\'eodory metric. For any $E\in\mathcal{B}(\mathbb G)$ and $x\in\mathbb{G}$ with $d_c(x,E)>0$, let us write $r_x=d_c(x,E)/2$. Then, $E\subset \mathbb B(x,r_x)^c$. Due to the equivalence between the quasinorm $\|\cdot\|_\alpha$ and the Carnot-Carath\'eodory norm $d_c(e,\cdot)$, we obtain
	\begin{align*}
		\int_E \frac{1}{\|x^{-1}y\|^{Q+\alpha}_\alpha} dy\le \int_{\mathbb B(x,r_x)^c}\frac{c_1}{d_c(x,y)^{Q+\alpha}} dy=\int_{\mathbb B(e,r_x)^c}\frac{c_1}{d_c(e,y)^{Q+\alpha}} dy
	\end{align*}
	for some constant $c_1>0$. Let us write 
    \begin{align*}
        A_k = \mathbb{B}(e,2^{k+1}r_x)\setminus \mathbb{B}(e,2^{k}r_x).
    \end{align*}
    Then, 
    \begin{align*}
        \int_{\mathbb B(e,r_x)^c}\frac{c_1}{d_c(e,y)^{Q+\alpha}} dy&=\sum_{k=0}^\infty \int_{A_k}\frac{c_1}{d_c(e,y)^{Q+\alpha}} dy \\
        &\le c_1\sum_{k=0}^\infty (2^k r_x)^{-Q-\alpha} \Vol(\mathbb{B}(e, 2^{k+1}r_x)) \\
        &=c_1\Vol(\mathbb{B}(e,1))\sum_{k=0}^\infty 2^{-k\alpha} r^{-\alpha}_x \\
        &=\frac{c_1\Vol(\mathbb{B}(e,1))}{1-2^{-\alpha}} r^{-\alpha}_x.
    \end{align*}
    This shows that 
	\begin{align}\label{eq:integral_estimate}
		\int_{E}\frac{1}{\|x^{-1}y\|^{Q+\alpha}_\alpha} dy \le \frac{c_2}{r^\alpha_x}
	\end{align}
    with $c_2=c_1\Vol(\mathbb{B}(e,1))/(1-2^{-\alpha})$, and hence \eqref{cond1} is satisfied.
    
	Let $D$ be a bounded open subset of $\mathbb{G}$ and $f$ be a bounded continuous function on $\mathbb{G}$ with $d_c(D, S_f)>0$, where $S_f=\mathrm{supp}(f)$. Let $P^{(\alpha)}=P^{(\alpha)}_t$ denote the semigroup associated with $B^{(\alpha)}$. Then for any $x\in\mathbb G$, 
	\begin{align*}
		P^{(\alpha)}_t f(x) =\int_{\mathbb G} f(y) p^{(\alpha)}(t,x,y)dy=\int_{\mathbb G} f(y) p^{(\alpha)}(t,x^{-1}y)dy.
	\end{align*}
	Using \eqref{eq:frac_heat_bound} in Proposition~\ref{prop:heat_asymp}, for any $x\in D$ we obtain
	\begin{align*}
		\left|\frac{P^{(\alpha)}_t f(x)}{t}\right|\le \int_{S_f}\frac{c_3\|f\|_\infty}{\|x^{-1}y\|^{Q+\alpha}_\alpha}dy\le \frac{2^\alpha c_2 c_3\|f\|_\infty}{d_c(x,S_f)^\alpha}\le \frac{2^\alpha c_2c_3 \|f\|_\infty}{d_c(D,S_f)^\alpha}
	\end{align*}
	where the penultimate inequality follows from \eqref{eq:integral_estimate}. Since $d_c(D,S_f)>0$, we conclude that $P^{(\alpha)}_t f(x)/t$ is uniformly bounded with respect to $x\in D$ and $t>0$. Finally, using \eqref{eq:heat_asymp} in Proposition~\ref{prop:heat_asymp} and the dominated convergence theorem, we conclude that \eqref{cond2} is also satisfied. This completes the proof of the lemma.
\end{proof}
In the next result we prove that under some regularity conditions on the boundary of a domain in the Carnot group, the subordinated hypoelliptic Brownian motion generated by the fractional sub-Laplacian operator does not hit the boundary upon exiting the domain. This result is known for Euclidean spaces from \cite{Bogdan1997}, and the following result is a generalization of this fact.
\begin{proposition}\label{prop:skip_boundary}
	Assume that $\Omega$ satisfies the volume density condition defined in Definition~\ref{def:VDC}. Then for any $\alpha\in (0,2)$,
	\begin{align*}
		\mathbb{P}_x\left( B^{(\alpha)}_{\tau_\Omega} \in \partial\Omega\right)=0.
	\end{align*}
\end{proposition}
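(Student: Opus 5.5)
The plan is to split the exit event according to whether the process leaves $\Omega$ by a jump or by ``creeping'' continuously onto the boundary, and to show that each part puts no mass on $\partial\Omega$. Since the stable subordinator $S^{(\alpha)}$ has zero drift ($b=0$ in \eqref{eq:Bernstein_alpha}), $B^{(\alpha)}$ is a pure-jump Feller process. Up to null sets,
\begin{align*}
\{B^{(\alpha)}_{\tau_\Omega}\in\partial\Omega\}\subseteq\{B^{(\alpha)}_{\tau_\Omega}\neq B^{(\alpha)}_{\tau_\Omega-},\ B^{(\alpha)}_{\tau_\Omega}\in\partial\Omega\}\cup\{B^{(\alpha)}_{\tau_\Omega-}=B^{(\alpha)}_{\tau_\Omega}\in\partial\Omega,\ \tau_\Omega<\infty\},
\end{align*}
and I will show that both events on the right are $\mathbb P_x$-null.

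\textbf{Jump part.} Lemma~\ref{lem:IW-check} gives the Ikeda--Watanabe kernel $\Pi(y,dz)=\|y^{-1}z\|_\alpha^{-Q-\alpha}dz$, which is absolutely continuous with respect to Haar measure. The joint-distribution statement of Theorem~\ref{thm:Ikeda-Watanabe} describes $(\tau_\Omega,B^{(\alpha)}_{\tau_\Omega})$ on the jump event. Using it (first for $\lambda>0$, then letting $\lambda\downarrow0$ by monotone convergence on $\{\tau_\Omega<\infty\}$) gives
\begin{align*}
\mathbb P_x\bigl(B^{(\alpha)}_{\tau_\Omega}\in\partial\Omega,\ B^{(\alpha)}_{\tau_\Omega}\neq B^{(\alpha)}_{\tau_\Omega-}\bigr)=\int_0^\infty\!\!\int_\Omega p_\Omega(s,x,dy)\,\Pi(y,\partial\Omega)\,ds.
\end{align*}
By Proposition~\ref{prop:VDC_conseq}(1), the volume density condition gives $\Vol(\partial\Omega)=0$, so $\Pi(y,\partial\Omega)=0$ for every $y\in\Omega$. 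If one insists on the hypothesis $d(E,\Omega)>0$ in \eqref{eq:Laplace_transform}, one can instead use the L\'evy system formula for the jumps of $B^{(\alpha)}$, whose jump kernel is this same $\Pi$. That formula holds for any Borel $E$ disjoint from $\Omega$ and gives the same conclusion.

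\textbf{Creeping part.} This is the main obstacle. The key step is a uniform estimate: there are $p_0>0$ and $\kappa>1$ such that for every $z\in\Omega$ with $r:=d(z,\partial\Omega)$ small,
\begin{align*}
\mathbb P_z\bigl(\text{$B^{(\alpha)}$ jumps into }\overline\Omega^c\text{ before leaving }\mathbb B(z,r/2)\bigr)\ge p_0.
\end{align*}
To prove it, pick $w\in\partial\Omega$ with $d(z,w)=r$. By the volume density condition, $A:=\overline\Omega^c\cap\mathbb B(w,r)$ satisfies $\Vol(A)\ge c\,r^Q$, and $A\subset\mathbb B(z,\kappa r)$ with $d(A,\mathbb B(z,r/2))\ge r/2$. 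Applying Theorem~\ref{thm:Ikeda-Watanabe} to the ball $D=\mathbb B(z,r/2)$ (where $\mathbb E_z[\tau_D]<\infty$, so $\lambda=0$ is allowed) gives
\begin{align*}
\mathbb P_z(B^{(\alpha)}_{\tau_D}\in A)=\int_D G_D(z,dy)\,\Pi(y,A)\ge \mathbb E_z[\tau_D]\cdot\inf_{y\in D}\Pi(y,A)\ge \mathbb E_z[\tau_D]\cdot c'\,r^Q\,r^{-Q-\alpha}.
\end{align*}
Here I use \eqref{eq:norm_equiv} and $\|y^{-1}u\|\lesssim r$ for $y\in D$, $u\in A$. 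Combining the scaling \eqref{eq:self-similarity} with dilations and left invariance gives $\mathbb E_z[\tau_{\mathbb B(z,r/2)}]=c''r^\alpha$, so the product is at least some $p_0>0$ independent of $z$ and $r$.

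With the estimate in hand, I conclude as follows. On the creeping event, $d(B^{(\alpha)}_t,\partial\Omega)\to0$ as $t\uparrow\tau_\Omega$ without a terminal jump. Define stopping times $T_n$ recursively: $T_{n+1}$ is the first time after $T_n$ that the process exits $\mathbb B\bigl(B^{(\alpha)}_{T_n},\,d(B^{(\alpha)}_{T_n},\partial\Omega)/2\bigr)$. On the creeping event, infinitely many $T_n<\tau_\Omega$ occur, and at each of these exits the process stays in $\Omega$, in particular does not jump into $A$. By the strong Markov property at the $T_n$ and the uniform bound, the probability that the first $n$ such exits all avoid $\overline\Omega^c$ is at most $(1-p_0)^n$. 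Letting $n\to\infty$ shows the creeping event is null. The delicate points are that the chosen set $A$ must really lie in $\overline\Omega^c$ and stay uniformly away from the small ball, and that the $T_n$ accumulate only at $\tau_\Omega$ on the creeping event, which uses quasi-left continuity of the Feller process. Combining the two parts proves the proposition.
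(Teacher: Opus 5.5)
Your proof is correct, and its core is the paper's. Your uniform estimate is exactly Lemma~\ref{lem:unif_bound}: Ikeda--Watanabe on $D=\mathbb B(z,r/2)$ with $\lambda=0$, the volume density condition at a nearest boundary point, and $\mathbb E_z[\tau_D]=c''r^\alpha$ by scaling. Both arguments then close with the strong Markov property.

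The organization differs. The paper never separates jump exits from creeping. It stops once at $\tau_{D_x}$, with $D_x=\mathbb B(x,r_x/2)$. It disposes of $\{B^{(\alpha)}_{\tau_{D_x}}\in\partial\Omega\}$ by Lemma~\ref{lem:skip_boundary1}, which applies Ikeda--Watanabe to the small ball $D_x$ (at positive distance from $\partial\Omega$) together with $\Vol(\partial\Omega)=0$. It then obtains $\sup_x\mathbb P_x(B^{(\alpha)}_{\tau_\Omega}\in\partial\Omega)\le\gamma\sup_x\mathbb P_x(B^{(\alpha)}_{\tau_\Omega}\in\partial\Omega)$ with $\gamma<1$, a one-line substitute for your stopping times $T_n$.

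That route uses Ikeda--Watanabe only in its stated form, so the obstruction $d(\partial\Omega,\Omega)=0$ that sends you to the L\'evy system formula never arises. Even inside your own scheme the L\'evy system is avoidable. On $\{B^{(\alpha)}_{\tau_\Omega}\in\partial\Omega\}$, either some $T_k=\tau_\Omega$, which is null by the strong Markov property at $T_{k-1}$ and Lemma~\ref{lem:skip_boundary1}, or all $T_n<\tau_\Omega$. Your route does have a merit: it makes the mechanism explicit. Your L\'evy-system argument also proves the stronger fact that the process a.s. never jumps onto any Lebesgue-null set, so that half needs only $\Vol(\partial\Omega)=0$.

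Two simplifications of your creeping part:
\begin{itemize}
\item Quasi-left continuity is not needed. If $T_{N+1}=\tau_\Omega$, then $B^{(\alpha)}_{\tau_\Omega-}\in\overline{\mathbb B}(B^{(\alpha)}_{T_N},r_N/2)$, which is at positive distance from $\partial\Omega$. Hence the creeping event lies in $\bigcap_n\{T_n<\tau_\Omega\}$ pathwise, and the bound $\mathbb P_x(T_n<\tau_\Omega)\le(1-p_0)^n$ suffices.
\item The restriction to small $r$ can be dropped. The volume density condition holds for all $r>0$, and scaling makes $p_0$ independent of $r$.
\end{itemize}
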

The proof requires several intermediate results which are stated in a few lemmas below.
	\begin{lemma}\label{lem:finite_exp}
		Let $D\subset\mathbb{G}$ be an open ball with respect to some left-translation invariant homogeneous metric on $\mathbb{G}$. Then for $x\in D$, $\mathbb{E}_x[\tau_D]<\infty$.
	\end{lemma}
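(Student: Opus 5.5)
The plan is to show that $\tau_D$ has a geometric tail: there exist $t_0>0$ and $\theta\in(0,1)$, independent of the starting point, with $\sup_{x\in D}\mathbb P_x(\tau_D>t_0)\le\theta$. Then $\mathbb E_x[\tau_D]<\infty$ follows from the Markov property. The process is the subordinated motion $B^{(\alpha)}$ (or more generally $B^\phi$, where $\phi(0)=0$). By left-invariance, $\mathbb P_x(B^{(\alpha)}_t\in A)=\mathbb P_e(B^{(\alpha)}_t\in x^{-1}A)$, and $D=\mathbb B_d(x_0,R)$ is a ball of a left-invariant homogeneous metric $d$. So for every $x\in D$ we have $D\subset \mathbb B_d(x,2R)$, and hence
\begin{align*}
\mathbb P_x(\tau_D>t)\le \mathbb P_x\bigl(B^{(\alpha)}_t\in D\bigr)\le \mathbb P_e\bigl(B^{(\alpha)}_t\in \mathbb B_d(e,2R)\bigr)=:\theta(t).
\end{align*}

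Next I will show that $\theta(t)<1$ for some $t=t_0$. Using the subordination formula \eqref{eq:Bochner_subordination} and the heat kernel upper bound \eqref{eq:heat_kernel_bound}, I get
\begin{align*}
\theta(t)=\int_{[0,\infty)}\mathbb P_e\bigl(B_s\in\mathbb B_d(e,2R)\bigr)\eta_t(ds)\le \eta_t([0,s_1])+Cs_1^{-Q/2}\Vol(\mathbb B_d(e,2R)).
\end{align*}
First choose $s_1$ large so that the second term is less than $1/4$. Since the subordinator is non-constant, $S_t\to\infty$ almost surely as $t\to\infty$, so $\eta_t([0,s_1])\to0$ as $t\to\infty$. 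In the stable case this is immediate from the scaling \eqref{eq:self-similarity}: $\mathbb P(S_t\le s_1)=\mathbb P(S_1\le s_1t^{-2/\alpha})$. Fix $t_0$ with $\theta(t_0)\le 1/2$. Even more simply, the heat kernel bound \eqref{eq:frac_heat_bound} gives $p^{(\alpha)}(t,\cdot)\le c\,t^{-Q/\alpha}$ by scaling, which already makes $\theta(t)\le c t^{-Q/\alpha}\Vol(\mathbb B_d(e,2R))<1/2$ for large $t$.

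Finally, I iterate with the Markov property at times $kt_0$:
\begin{align*}
\mathbb P_x(\tau_D>(k+1)t_0)=\mathbb E_x\bigl[\mathbb P_{B^{(\alpha)}_{kt_0}}(\tau_D>t_0);\tau_D>kt_0\bigr]\le \tfrac12\,\mathbb P_x(\tau_D>kt_0).
\end{align*}
This step uses $B^{(\alpha)}_{kt_0}\in D$ on the event $\{\tau_D>kt_0\}$. It gives $\mathbb P_x(\tau_D>kt_0)\le 2^{-k}$, and therefore $\mathbb E_x[\tau_D]\le t_0\sum_{k\ge0}2^{-k}=2t_0<\infty$. The only mildly delicate point is the uniformity in $x$. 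It comes entirely from left-invariance and the inclusion $D\subset\mathbb B_d(x,2R)$, which uses the triangle inequality; for a quasi-metric the same inclusion holds with $2R$ replaced by a constant multiple of $R$. I expect no serious obstacle beyond this.
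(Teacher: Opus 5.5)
Your proof is correct, but it takes a different route from the paper's.

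\textbf{The paper's route.} It reduces to $D=\mathbb B(e,r)$ and observes that the horizontal projection $\pi_1(B^{(\alpha)}_t)$ is an isotropic $\alpha$-stable L\'evy process on $\mathbb R^{m}$. Using the equivalence of $\|\cdot\|$ with $N(x)=\max_j|x_j|^{1/j}$, it dominates $\tau_D$ by the exit time $\tau'_{cr}$ of this projection from a Euclidean ball. It then invokes Getoor's explicit formula $\mathbb E_{x_1}[\tau'_{r}]=c_{m,\alpha}(r^2-|x_1|^2)_+^{\alpha/2}$.

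\textbf{Your route.} You use only the on-diagonal bound $p(s,\cdot)\le Cs^{-Q/2}$ for the unsubordinated kernel and the fact that $\mathbb P(S_t\le s_1)\to 0$. From these you get the uniform estimate $\sup_{y\in D}\mathbb P_y(\tau_D>t_0)\le 1/2$, and finish with a Markov-property iteration.

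\textbf{Comparison.} Your argument has several advantages:
\begin{itemize}
\item It is self-contained: it needs no external explicit formula and no identification of the law of the horizontal projection.
\item It gives a bound uniform in $x\in D$, and in fact a geometric tail, hence exponential moments of $\tau_D$.
\item It works verbatim for any bounded open $D$ and any non-constant subordinator, without needing \eqref{eq:Bernstein_cond}.
\item In the stable case it gives $\theta(t)\lesssim (R^{\alpha}/t)^{Q/\alpha}$, hence $\mathbb E_x[\tau_D]\lesssim R^{\alpha}$, the same order as Getoor's formula.
\end{itemize}
The paper's argument, in exchange, yields a completely explicit bound in terms of the horizontal coordinate of the starting point. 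However, it is specific to the $\alpha$-stable subordinator and to balls, and it relies on the exact law of the projected process.

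\textbf{A small correction to your side remark.} The bound \eqref{eq:frac_heat_bound} is singular at $x=e$, so scaling alone does not turn it into $p^{(\alpha)}(t,\cdot)\le c\,t^{-Q/\alpha}$. That uniform bound instead follows from your main computation:
\begin{align*}
p^{(\alpha)}(t,x)\le C\,\mathbb E\bigl[S_t^{-Q/2}\bigr]=C\,t^{-Q/\alpha}\,\mathbb E\bigl[S_1^{-Q/2}\bigr],
\end{align*}
where the expectation on the right is finite. This does not affect the proof.
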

	\begin{proof}
		Due to left-translation invariance of the subordinated process $B^{(\alpha)}_t=B_{S^{(\alpha)}_t}$, without loss of generality we can assume that $D$ is centered at the identity element $e$, that is, $D=\mathbb B(e,r)$ for some $r>0$. It is easy to see that the horizontal coordinates of $B^{(\alpha)}$ have the same distribution as the subordinated Brownian motion on $\R^{m}$, where $m=\dim(V_1)$, $V_1$ being the first layer of the Lie algebra $\mathfrak{g}$. Since the homogeneous quasinorm defined by 
        \begin{align*}
            N(x)=\sup\{|x_j|^{1/j}, \ j=1,\ldots, k, \ x=(x_1,\ldots, x_k)\}
        \end{align*}
        is equivalent to any homogeneous norm on $\mathbb{G}$, for any $t>0$ we have
		\begin{align}\label{eq:exit_time_ineq}
			\mathbb{P}_x\left(\tau_D>t\right)\le \mathbb{P}_{x_1}\left(\tau'_{cr}>t\right)
		\end{align}
		for some $c>0$, where $x_1\in\R^{m}$ denotes the horizontal coordinate of $x\in\mathbb{G}$, and
		\begin{align*}
			\tau'_r=\inf\{t\ge 0: |\pi_1(B^{(\alpha)}_t)|>r\},
		\end{align*}
		where $\pi_1(B^{(\alpha)}_t)$ is the horizontal coordinate of $B^{(\alpha)}_t$.
		In fact, $\tau'_r$ is the first exit time of the  $\alpha$-stable isotropic L\'evy process from a Euclidean ball of radius $r$ centered at the origin (in $\R^{m}$). Therefore, \eqref{eq:exit_time_ineq} implies that $\mathbb{E}_x[\tau_D]\le \mathbb{E}_{x_1}[\tau'_{cr}]$. It was proved by Getoor \cite{Getoor1961b} that for any $\alpha\in (0,2)$ and $r>0$,
		\begin{align*}
			\mathbb{E}_{x_1}[\tau'_r]=\Gamma(m/2)[2^\alpha\Gamma(1+\alpha/2)\Gamma((m+\alpha)/2)]^{-1} (r^2-|x_1|^2)^{\alpha/2}_+,
		\end{align*}
		which is in particular finite for all $\alpha\in (0,2)$. This completes the proof of the lemma.
	\end{proof}
	\begin{lemma}\label{lem:skip_boundary1}
		Let $D$ be any open ball in $\Omega$ such that $d(D,\Omega^c)>0$. Then, 
		\begin{align*}
			\mathbb{P}_x\left( B^{(\alpha)}_{\tau_D} \in \partial\Omega\right)=0 \quad \text{for all $x\in D$}.
		\end{align*}
	\end{lemma}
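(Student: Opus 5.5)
The plan is to use the Ikeda--Watanabe formula (Theorem~\ref{thm:Ikeda-Watanabe}) together with Lemma~\ref{lem:IW-check} and Lemma~\ref{lem:finite_exp}. Since $D$ is a ball with $d(D,\Omega^c)>0$, the set $E:=\partial\Omega$ satisfies $d(E,D)\ge d(D,\Omega^c)>0$ (as $\partial\Omega\subset\Omega^c$ for open $\Omega$). This is exactly the setting of \eqref{eq:Laplace_transform}. Moreover $\mathbb E_x[\tau_D]<\infty$ by Lemma~\ref{lem:finite_exp}, so we may take $\lambda=0$ and obtain
\begin{align*}
\mathbb P_x\left(B^{(\alpha)}_{\tau_D}\in\partial\Omega\right)=\int_D G^0_D(x,dy)\,\Pi(y,\partial\Omega),
\qquad
\Pi(y,\partial\Omega)=\int_{\partial\Omega}\frac{dz}{\|y^{-1}z\|_\alpha^{Q+\alpha}}.
\end{align*}
Note that $\tau_D<\infty$ a.s. by Lemma~\ref{lem:finite_exp}, so $B^{(\alpha)}_{\tau_D}$ is well defined.

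It then suffices to show that $\Pi(y,\partial\Omega)=0$ for every $y\in D$. The integrand is bounded by \eqref{eq:integral_estimate}, because $d(y,\partial\Omega)\ge d(D,\Omega^c)>0$, so the integral vanishes as soon as $\Vol(\partial\Omega)=0$. This null-boundary property is supplied by Proposition~\ref{prop:VDC_conseq}(1) under the volume density condition, which is the standing hypothesis of Proposition~\ref{prop:skip_boundary}. I will make this dependence explicit in the proof. Finiteness of the right-hand side of \eqref{eq:Laplace_transform} is not an issue, since the integrand is identically zero.

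The only delicate point is justifying the application of Theorem~\ref{thm:Ikeda-Watanabe} to $D$. First, $B^{(\alpha)}$ must be Feller on $\mathbb G$. This holds because $P^{(\alpha)}_t$ is convolution with the continuous integrable kernel $p^{(\alpha)}(t,\cdot)$, and it is strongly continuous on $C_0$ by subordination of the Feller semigroup $P_t$. Second, the identity must be read with $E=\partial\Omega$ a Borel set at positive distance from the open set $D$. Nothing else about the geometry enters here: the real content (the role of the volume density condition) has been moved into $\Vol(\partial\Omega)=0$, and the harder step is presumably the later passage from small balls $D$ to $\Omega$ itself.
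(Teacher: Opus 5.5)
Your proposal is correct and follows the paper's proof essentially step for step. Both apply the Ikeda--Watanabe formula with $\lambda=0$, justified by Lemma~\ref{lem:finite_exp}, use the kernel $\Pi(y,dz)=\|y^{-1}z\|_\alpha^{-Q-\alpha}\,dz$ from Lemma~\ref{lem:IW-check}, and use $\Vol(\partial\Omega)=0$ from Proposition~\ref{prop:VDC_conseq}, which depends on the standing volume density hypothesis that you rightly make explicit. Your appeal to boundedness of the integrand via \eqref{eq:integral_estimate} is unnecessary, since that estimate bounds the integral rather than the integrand, and any nonnegative function integrates to zero over a Lebesgue-null set.
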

	\begin{proof}
		To prove this, we again resort to Theorem~\ref{thm:Ikeda-Watanabe}. Since by Lemma~\ref{lem:finite_exp} $\mathbb{E}_x[\tau_D]<\infty$ for any $x\in D$, plugging $\lambda=0$ in \eqref{eq:Laplace_transform} we get that for any $A\subset\mathcal{B}(\mathbb{G})$ with $d(D,A)>0$,
		\begin{align}\label{eq:X_D_boundary}
			\mathbb{P}_x\left( B^{(\alpha)}_{\tau_D}\in A\right) = \int_A \int_{D}\frac{G^{(\alpha)}_{D}(x,y)}{\|y^{-1}z\|^{Q+\alpha}_\alpha}dy dz,
		\end{align}
		where $G_D$ is the Green's function defined by 
		\begin{align*}
			G^{(\alpha)}_D(x,y)=\int_0^\infty p^{(\alpha)}_D(s,x,y)ds.
		\end{align*}
		Since $d(D,\partial\Omega)>0$ and $\Vol(\partial\Omega)=0$ (see Lemma~\ref{prop:VDC_conseq}), by \eqref{eq:X_D_boundary}, $\mathbb{P}_x\left(B^{(\alpha)}_{\tau_D}\in \partial \Omega\right) =0$ for all $x\in D$.
	\end{proof}
	\begin{lemma}\label{lem:unif_bound} Let $x\in\Omega$ and $D_x=\mathbb B(x,r_x/2)$ where $r_x=d(x,\partial\Omega)$. Then,
		\begin{align}
			\inf_{x\in \Omega}\mathbb{P}_x\left(B^{(\alpha)}_{\tau_{D_x}}\in\Omega^c\right)>0.
		\end{align}
	\end{lemma}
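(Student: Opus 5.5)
Fix $x\in\Omega$ and write $r=r_x=d(x,\partial\Omega)$, $D=D_x=\mathbb B(x,r/2)$. We bound $\mathbb P_x(B^{(\alpha)}_{\tau_D}\in\Omega^c)$ from below by a constant independent of $x$, using the Ikeda--Watanabe formula of Theorem~\ref{thm:Ikeda-Watanabe} with $\lambda=0$. This is justified by Lemma~\ref{lem:finite_exp} and Lemma~\ref{lem:IW-check}. Since $d(D,\Omega^c)\ge r/2>0$, the formula gives
\begin{align*}
\mathbb P_x\bigl(B^{(\alpha)}_{\tau_D}\in\overline{\Omega}^c\bigr)=\int_{\overline{\Omega}^c}\int_D \frac{G^{(\alpha)}_D(x,y)}{\|y^{-1}z\|_\alpha^{Q+\alpha}}\,dy\,dz .
\end{align*}
Let $p\in\partial\Omega$ satisfy $d(x,p)=r$. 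For $y\in D$ and $z\in\mathbb B(p,r)$, the triangle inequality for $d$, together with the norm equivalence \eqref{eq:norm_equiv}, gives $\|y^{-1}z\|_\alpha\le C r$. Hence the integrand is at least $c\,r^{-Q-\alpha}G_D(x,y)$ on $D\times(\overline{\Omega}^c\cap\mathbb B(p,r))$. The volume density condition (Definition~\ref{def:VDC}) gives $\Vol(\overline{\Omega}^c\cap\mathbb B(p,r))\ge c r^Q$. Combining these,
\begin{align*}
\mathbb P_x\bigl(B^{(\alpha)}_{\tau_D}\in\Omega^c\bigr)\ge c'\, r^{-\alpha}\int_D G^{(\alpha)}_D(x,y)\,dy = c'\, r^{-\alpha}\,\mathbb E_x[\tau_D].
\end{align*}

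It then suffices to show $\mathbb E_x[\tau_{\mathbb B(x,r/2)}]\ge c\, r^\alpha$ with $c$ independent of $x$ and $r$. Left-invariance of $B^{(\alpha)}$ reduces this to balls centred at $e$. The scaling $B^{(\alpha)}_{t}\overset{d}{=}\delta_{\lambda}B^{(\alpha)}_{\lambda^{-\alpha}t}$ follows from \eqref{eq:self-similarity} and the Brownian scaling $B_{\lambda^2 s}\overset d=\delta_\lambda B_s$. It gives $\mathbb E_e[\tau_{\mathbb B(e,\rho)}]=\rho^\alpha\,\mathbb E_e[\tau_{\mathbb B(e,1)}]$. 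It remains to check $\mathbb E_e[\tau_{\mathbb B(e,1)}]>0$. Since $\mathbb P_e(\tau_{\mathbb B(e,1)}>0)=1$, because the process is right-continuous and starts inside the open ball, this expectation is strictly positive. Thus $\mathbb E_x[\tau_D]\ge c_0 (r/2)^\alpha$, and the infimum is bounded below by $c'c_0 2^{-\alpha}>0$.

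The main obstacle is controlling constants. The lower bound for the kernel needs the quasi-triangle inequality: $\|\cdot\|_\alpha$ is only a quasinorm, so all comparisons must be made with $d_c$ and then transferred through \eqref{eq:norm_equiv}. The scaling identity for exit times also needs care. One must check that dilations map $\mathbb B(e,1)$ to $\mathbb B(e,\rho)$ for the chosen homogeneous metric and commute appropriately with the time change; this holds since $\delta_\lambda$ is a group automorphism and $\|\delta_\lambda x\|=\lambda\|x\|$. Note also that $\Vol(\partial\Omega)=0$ by Proposition~\ref{prop:VDC_conseq}, but this is not needed here, since we only integrate over $\overline\Omega^c\subset\Omega^c$.
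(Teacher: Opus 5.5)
Your proposal is correct and follows essentially the same route as the paper: the Ikeda--Watanabe formula with $\lambda=0$, the bound $\|y^{-1}z\|_\alpha\le C r_x$, the volume density condition applied at a nearest boundary point, and dilation scaling to reduce everything to $\mathbb E_e[\tau_{\mathbb B(e,1)}]>0$. The only difference is cosmetic. You scale the expected exit time directly, whereas the paper scales the Green function via \eqref{eq:green_dilation}. Your version makes the cancellation of the powers of $r_x$ fully explicit, which the paper's displayed constant $(cr_x)^{-Q}$ slightly garbles.
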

	\begin{proof}
		Due to the volume density condition, there exists a constant $c_1>0$ such that 
		\begin{align*}
			\Vol\left(\overline{\Omega}^c\cap \mathbb B(x, 2r_x\right)\ge \Vol\left(\overline{\Omega}^c\cap \mathbb B(m_x, r_x\right) \ge c_1 r^Q_x,
		\end{align*}
        where $m_x$ is a point on the boundary nearest to $x$.
		Let us write $\Omega'_1=\overline{\Omega}^c\cap B(x, 2r_x)$.
		By \eqref{eq:X_D_boundary} we have
		\begin{align}\label{eq:green1}
			\mathbb{P}_x\left(B^{(\alpha)}_{\tau_{D_x}}\in \Omega'_1\right)= \int_{\Omega'_1}\int_{D_x} \frac{G^{(\alpha)}_{D_x}(x,y)}{\|y^{-1}z\|^{Q+\alpha}_\alpha} dy dz.
		\end{align}
		Due to the scaling property of the Dirichlet fractional sub-Laplacian $\Delta^{\alpha,\Omega}_\mathcal{H}$, it follows that for all $r,t>0$ and $x,y\in\Omega$
		\begin{align}
			p^{(\alpha)}_\Omega(t,x,y)=r^{-Q}p^{(\alpha)}_{\delta_{\frac1r}\Omega}\left(\frac{t}{r^\alpha}, \delta_{\frac1r}x, \delta_{\frac1r} y\right),
		\end{align}
		where $\delta$ is the dilation on $\mathbb{G}$, which shows that the Green's function has the following scaling property:
		\begin{align}\label{eq:green_dilation}
			G^{(\alpha)}_{\Omega}(x,y)=r^{\alpha-Q} G^{(\alpha)}_{\delta_{\frac1r}\Omega}\left(\delta_{\frac1r}x, \delta_{\frac1r} y\right) \quad \forall x,y\in\Omega.
		\end{align}
		Using the left-translation invariance of $B^{(\alpha)}$ we have 
		\begin{align}
			G^{(\alpha)}_{D_x}(x,y)=G^{(\alpha)}_{\mathbb B(e,r_x/2)} (e,x^{-1}y).
		\end{align}
        As the quasinorm $\|\cdot\|_\alpha$ is equivalent to the Carnot-Carath\'eodory norm, we have $\|y^{-1} z\|_\alpha\le c r_x$ for some constant $c$ independent of $x$. Therefore, using \eqref{eq:green1} and \eqref{eq:green_dilation} we obtain
        \begin{equation}\label{eq:green_prob}
		\begin{aligned}
			\mathbb{P}_x\left(B^{(\alpha)}_{\tau_{D_x}}\in \Omega^c\right)&\ge \mathbb{P}_x\left(B^{(\alpha)}_{\tau_{D_x}}\in \Omega'_1\right) \\
			&\ge \int_{\Omega'_1}\int_{\mathbb B(e,r_x/2)} \frac{G^{(\alpha)}_{\mathbb B(e,r_x/2)}(e,y)}{(cr_x)^{Q+\alpha} }dy dz \\
			& = \int_{\Omega'_1}\int_{\mathbb B(e,r_x/2)} r_x^{\alpha-Q}\frac{G^{(\alpha)}_{\mathbb B(e,1/2)}(e,\delta_{1/r_x}y)}{(cr_x)^{Q+\alpha} }dy dz \\
			& = (cr_x)^{-Q}\int_{\Omega'_1}\int_{\mathbb B(e,1/2)} G^{(\alpha)}_{\mathbb B(e,1/2)}(e,y) dy dz,
		\end{aligned}
        \end{equation}
        where the last equality holds due to the change of variable $y\mapsto \delta_{1/r_x}y$.
		Since 
        \begin{align*}
        \int_{\mathbb B(e,1)} G^{(\alpha)}_{\mathbb B(e,1/2)}(e,y)dy=\mathbb{E}_e[\tau_{\mathbb B(e,1/2)}]
        \end{align*}
        and $\Vol(\Omega'_1)\ge c_1 r^Q_x$ for some $c_1>0$, we conclude the proof of the lemma using \eqref{eq:green_prob}.
	\end{proof}  
	\begin{proof}[Proof of Proposition~\ref{prop:skip_boundary}] We follow a similar argument used in the proof of \cite[Lemma~A.1]{Bogdan_et_al2020} adapted to our setting. For any $x\in\Omega$, let us denote $D_x=\mathbb B(x,r_x/2)\in \Omega$ with $r_x=d(x,\partial\Omega)$. Then for any $x\in \Omega$ we have
        \begin{align*}
            \mathbb P_x\left(B^{(\alpha)}_{\tau_\Omega}\in \partial\Omega\right)=\mathbb P_x\left(B^{(\alpha)}_{\tau_{D_x}}\in \partial\Omega\right)+\mathbb{P}_x\left(B^{(\alpha)}_{\tau_{D_x}}\in \Omega, B^{(\alpha)}_{\tau_\Omega}\in \partial\Omega\right).
        \end{align*}
        The first term is identically equal to $0$ due to Lemma~\ref{lem:skip_boundary1}. For the second term, using strong Markov property for $B^{(\alpha)}$ we obtain
        \begin{equation}\label{eq:Markov_eq}
        \begin{aligned}
            &\ \ \mathbb{P}_x\left(B^{(\alpha)}_{\tau_{D_x}}\in \Omega, B^{(\alpha)}_{\tau_\Omega}\in \partial\Omega\right) \\
            &=\mathbb{E}_x\left[\mathbb{P}_{B^{(\alpha)}_{\tau_{D_x}}}\left(B^{(\alpha)}_{\tau_\Omega}\in\partial \Omega\right), \ B^{(\alpha)}_{\tau_{D_x}}\in\Omega\right]
        \end{aligned}
        \end{equation}
    By Lemma~\ref{lem:unif_bound}, there exists $\gamma\in [0,1)$ such that
    \begin{align*}
        \sup_{x\in\Omega}\mathbb P_x\left(B^{(\alpha)}_{\tau_{D_x}}\in\Omega\right)\le \gamma.
    \end{align*}
    Therefore, \eqref{eq:Markov_eq} yields,
    \begin{align*}
        \sup_{x\in\Omega} \mathbb{P}_x\left(B^{(\alpha)}_{\tau_{\Omega}}\in\partial\Omega\right) \le \gamma \sup_{x\in\Omega} \mathbb{P}_x\left(B^{(\alpha)}_{\tau_{\Omega}}\in\partial\Omega\right). 
    \end{align*}
    This shows that $\mathbb{P}_x\left(B^{(\alpha)}_{\tau_{\Omega}}\in\partial\Omega\right)=0$ for all $x\in\Omega$, which completes the proof of the proposition.
    \end{proof}
    
We are now ready to prove Theorem~\ref{thm:spectral_heat}.
\begin{proof}[Proof of Theorem~\ref{thm:spectral_heat}]
	We only need to prove \eqref{eq:lim_0_1}. We adapt the technique in \cite{GrzywnyParkSong2019} in the context of L\'evy processes in Euclidean spaces. We note that for any $x\in\Omega$, 
	\begin{align*}
		\mathbb{P}_x(\tau_\Omega>t)=\mathbb{P}_x(B^{(\alpha)}_t\in\Omega)-\mathbb{P}_x(\tau_\Omega<t, B^{(\alpha)}_t\in\Omega).
	\end{align*}
	Using the strong Markov property and \cite[Exercise~8.17]{BlumenthalGetoor1968}, we get 
	\begin{align*}
		\mathbb{P}_x(\tau_\Omega<t, B^{(\alpha)}_t\in\Omega)&=\mathbb{E}_x\left[\tau_\Omega< t, \mathbb{P}_{B^{(\alpha)}_{\tau_\Omega}}(B^{(\alpha)}_{t-\tau_\Omega}\in\Omega) \right] \\
		& = \mathbb{E}_x\left[\tau_\Omega< t, B^{(\alpha)}_{\tau_\Omega}\in\overline{\Omega}^c, \mathbb{P}_{B^{(\alpha)}_{\tau_\Omega}}(B^{(\alpha)}_{t-\tau_\Omega}\in\Omega)\right] \\
		& +  \mathbb{E}_x\left[\tau_\Omega< t, B^{(\alpha)}_{\tau_\Omega}\in\partial\Omega, \mathbb{P}_{B^{(\alpha)}_{\tau_\Omega}}(B^{(\alpha)}_{t-\tau_\Omega}\in\Omega)\right].
	\end{align*}
	Let us define 
	\begin{align*}
		I(t)&=\int_{\Omega} \mathbb{E}_x\left[\tau_\Omega< t, B^{(\alpha)}_{\tau_\Omega}\in\overline{\Omega}^c, \mathbb{P}_{B^{(\alpha)}_{\tau_\Omega}}(B^{(\alpha)}_{t-\tau_\Omega}\in\Omega)\right] dx, \\
		II(t)&= \int_{\Omega}\mathbb{E}_x\left[\tau_\Omega< t, B^{(\alpha)}_{\tau_\Omega}\in\partial\Omega, \mathbb{P}_{B^{(\alpha)}_{\tau_\Omega}}(B^{(\alpha)}_{t-\tau_\Omega}\in\Omega)\right] dx.
	\end{align*}
	Then, 
	\begin{align}\label{eq:H_decomp}
		\widetilde{H}^{(\alpha)}_\Omega(t)=H^{(\alpha)}_\Omega(t) - I(t) -II(t) \text{ for all $t\ge 0$.}
	\end{align}
	We first prove that $\lim_{t\to 0} I(t)/t =0$. In this part, we carefully use the Ikeda-Watanabe theorem. By Proposition~\ref{prop:heat_asymp} and Lemma~\ref{lem:IW-check}, when $B^{(\alpha)}$ is the subordinated hypoelliptic Brownian motion on $\mathbb{G}$, $\Pi$ is given by 
	\begin{align*}
		\Pi(x,dy)=\Pi(x,y)dy=\frac{dy}{\|x^{-1}y\|^{Q+\alpha}_\alpha}.
	\end{align*}
	Using Theorem~\ref{thm:Ikeda-Watanabe} we get 
	\begin{align*}
		I(t)=\int_\Omega\int_0^t \int_\Omega\int_{\overline{\Omega}^c} \mathbb{P}_z\left(B^{(\alpha)}_{t-s}\in \Omega\right)\frac{p^{(\alpha)}_\Omega(s,x,y)}{\|y^{-1} z\|^{Q+\alpha}_\alpha} dz dy ds dx,
	\end{align*}
	where $p^{(\alpha)}_\Omega(s,x,y)$ is the transition density of the subordinated Brownian motion killed upon exiting $\Omega$. We note that 
	\begin{align*}
		\int_\Omega\int_{\overline{\Omega}^c} \frac{\mathbb{P}_z\left(B^{(\alpha)}_{t-s}\in \Omega\right)}{\|y^{-1}z\|^{Q+\alpha}_Q} p^{(\alpha)}_\Omega(s,x,y) dz dy&= \int_\Omega\left(\int_{\overline{\Omega}^c} \frac{\mathbb{P}_{z}(B^{(\alpha)}_{t-s}\in \Omega)}{\|y^{-1}z\|^{Q+\alpha}_\alpha} dz\right) p^{(\alpha)}_\Omega(s,x,y)dy \\
		& \le P^{(\alpha)}_s(g_{t-s})(x),
	\end{align*}
	where \begin{align*}g_t(y)=\mathbbm{1}_\Omega(y)\int_{\overline{\Omega}^c} \frac{\mathbb{P}_{z}(B^{(\alpha)}_t\in \Omega)}{\|y^{-1}z\|^{Q+\alpha}_\alpha} dz,\end{align*} 
	and $P^{\Omega}$ is the semigroup associated with the killed subordinated process. Therefore, we have 
	\begin{align*}
		I(t)\le  \int_0^t \int_\Omega P^{\Omega}_s (g_{t-s})(x) dx ds.
	\end{align*}
	Also note that for all $t>0$,
	\begin{align*}
		\int_\Omega g_t(y) dy\le \int_{\Omega}\int_{\Omega^c}\frac{1}{\|y^{-1} z\|^{Q+\alpha}_\alpha} dydz =\Per^{(\alpha)}_{\mathcal H}(\Omega)<\infty.
	\end{align*}
	Hence,
	\begin{align*}
		I(t)\le \int_0^t \int_{\Omega} P^{\Omega}_s(\mathbbm{1}_\Omega)(x) g_{t-s}(x)dx ds \le \int_0^t \int_{\Omega}g_s(x) dx ds,
	\end{align*}
	which shows that 
	\begin{align*}
		\limsup_{t\to 0}\frac{I(t)}{t}\le \limsup_{t\to 0} \int_\Omega g_t(x) dx \le \int_\Omega\int_{\overline{\Omega}^c}\frac{\limsup_{t\to 0}\mathbb{P}_z(B^{(\alpha)}_t\in\Omega)}{\|y^{-1}z\|^{Q+\alpha}_\alpha} dz dy =0,
	\end{align*}
	where the last inequality follows from the dominated convergence theorem along with the fact $\lim_{t\to 0} \mathbb{P}_z(B^{(\alpha)}_t\in\Omega)=0$ for any $z\in\overline{\Omega}^c$ as $B^{(\alpha)}$ is right continuous.
	
	On the other hand, due to Proposition~\ref{prop:skip_boundary}, $II(t)=0$ for all $t>0$. Invoking \eqref{eq:H_decomp}, we conclude the proof of \eqref{eq:lim_0_1}. 

    When $\Omega$ is bounded with $C^1$ boundary having no characteristic points, by Theorem~\ref{thm:noncharacteristic-cones}, $\Omega$ satisfies the intrinsic exterior cone condition defined in Definition~\ref{def:cone_cond}. By Proposition~\ref{prop:VDC_conseq} it follows that $\Omega$ also satisfies the volume density condition, and hence \eqref{eq:lim_0_1} holds in this case. This completes the proof of the theorem. 
\end{proof}

\subsection*{Acknowledgment} The authors thank Sasha Teplyaev for many insightful discussions during the preparation of this work.

\appendix
\section{Dirichlet form associated with subordinated semigroup}
Let $(\mathcal{E}^\phi,\mathcal{D}(\mathcal{E}^\phi))$ be the Dirichlet form defined in \eqref{eq:subordinated_DF} which corresponds to the subordinated semigroup $(P^\phi_t)_{t\ge 0}$ defined in \eqref{eq:subordinated_semigroup}. Let us also recall that 
\begin{align*}
    -\Delta^\phi_{\mathcal H}=-\phi(\Delta_{\mathcal H})
\end{align*}
is the generator of $(P^\phi_t)_{t\ge 0}$ in $L^2(\mathbb G, dx)$.
\begin{lemma}\label{lem:form_core_1} For any Bernstein function $\phi$, 
\begin{align}\label{eq:chain_inclusion}
    C^\infty_c(\mathbb G)\subset \mathcal{D}(\Delta^\phi_{\mathcal H})\subset \mathcal{D}(\mathcal{E}^\phi).
\end{align}
Moreover, $C^\infty_c(\mathbb G)$ is a form core for $\mathcal{E}^\phi$.
\end{lemma}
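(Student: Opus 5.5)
The plan is to argue entirely through the spectral calculus of $\Delta_{\mathcal H}$. The basic tool is the elementary bound for Bernstein functions
\begin{align*}
\phi(\lambda)\le \phi(0)+\bigl(b+\nu((1,\infty))+\textstyle\int_0^1 r\,\nu(dr)\bigr)(1+\lambda)=:C_\phi(1+\lambda),
\end{align*}
valid for all $\lambda\ge0$. It follows from \eqref{eq:Bernstein} together with $1-e^{-\lambda r}\le \min\{1,\lambda r\}\le(1+\lambda)\min\{1,r\}$.

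For the first inclusion in \eqref{eq:chain_inclusion}, take $f\in C_c^\infty(\mathbb G)$. Then $f\in\mathcal D(\Delta_{\mathcal H})$, since $\Delta_{\mathcal H}f$ computed via \eqref{eq:sub-Laplacian} lies in $C_c^\infty$ and $\Delta_{\mathcal H}$ is the closure of its restriction to $C_c^\infty$ by essential self-adjointness. Hence
\begin{align*}
\int \phi(\lambda)^2\,d\mu_f\le C_\phi^2\int(1+\lambda)^2\,d\mu_f=C_\phi^2\|(1+\Delta_{\mathcal H})f\|^2_{L^2(\mathbb G,dx)}<\infty,
\end{align*}
so $f\in\mathcal D(\Delta^\phi_{\mathcal H})$. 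The second inclusion is the general fact $\mathcal D(A)\subset\mathcal D(A^{1/2})$ for a nonnegative self-adjoint operator $A$. In spectral terms, $\phi\le 1+\phi^2$ gives $\int\phi\,d\mu_f\le\|f\|^2+\int\phi^2\,d\mu_f<\infty$.

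For the core property, I must show that $C_c^\infty(\mathbb G)$ is dense in $\mathcal D(\mathcal E^\phi)$ for the norm $\mathcal E^\phi_1(f):=\int(1+\phi(\lambda))\,d\mu_f$. The same bound gives $\mathcal E^\phi_1(f)\le(1+C_\phi)\int(1+\lambda)\,d\mu_f=(1+C_\phi)\,\mathcal E_1(f)$, where $\mathcal E_1(f)=\mathcal E(f,f)+\|f\|^2_{L^2}$ is the form norm of $\Delta_{\mathcal H}$. Therefore $\mathcal D(\mathcal E)\subset\mathcal D(\mathcal E^\phi)$ continuously. Moreover, $C_c^\infty(\mathbb G)$ is a core for the regular Dirichlet form $\mathcal E$, since it is the closure of the form defined on $C_c^\infty$. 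It therefore suffices to show that $\mathcal D(\mathcal E)$ is $\mathcal E^\phi_1$-dense in $\mathcal D(\mathcal E^\phi)$.

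This last density step is the only one needing care, because it is not true for arbitrary operator domains without a spectral argument. I would use the spectral cutoffs $f_N:=E([0,N])f$ for $f\in\mathcal D(\mathcal E^\phi)$. Each $f_N$ lies in $\mathcal D(\Delta_{\mathcal H})\subset\mathcal D(\mathcal E)$, since $\mu_{f_N}$ is supported in $[0,N]$. Also
\begin{align*}
\mathcal E^\phi_1(f-f_N)=\int_{(N,\infty)}(1+\phi(\lambda))\,d\mu_f(\lambda)\longrightarrow0
\end{align*}
as $N\to\infty$ by dominated convergence, because $\int(1+\phi)\,d\mu_f<\infty$. Approximating each $f_N$ in $\mathcal E_1$ by functions in $C_c^\infty(\mathbb G)$ and using the continuous embedding from the previous paragraph then completes a diagonal argument. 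The step where I would be most careful is identifying the closure of $(\mathcal E,C_c^\infty)$ with the form domain $\mathcal D(\Delta_{\mathcal H}^{1/2})$ of the self-adjoint extension. This identification is exactly what essential self-adjointness (Friedrichs extension equals the unique extension) provides, as recalled in Section~\ref{s:preliminaries}.
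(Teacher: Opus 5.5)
Your proof is correct, but it takes a more elementary route than the paper's. Both arguments rest on the same two inputs: the linear growth bound $\phi(\lambda)\le C(1+\lambda)$, and essential self-adjointness of $\Delta_{\mathcal H}$ on $C_c^\infty(\mathbb G)$.

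The paper cites the growth bound from Schilling--Song--Vondra\v{c}ek. It then obtains the density statements from Phillips' subordination theorem, used twice:
\begin{itemize}
\item once to say that $\mathcal D(\Delta_{\mathcal H})$ is a core for $\Delta^\phi_{\mathcal H}$;
\item once more, viewing $e^{-t(\Delta^\phi_{\mathcal H})^{1/2}}$ as $P^\phi_t$ subordinated by $u\mapsto\sqrt u$, to say that $\mathcal D(\Delta^\phi_{\mathcal H})$ is a core for $(\Delta^\phi_{\mathcal H})^{1/2}$.
\end{itemize}
It quotes Fukushima--Oshima--Takeda for $\mathcal D(\Delta^\phi_{\mathcal H})\subset\mathcal D(\mathcal E^\phi)$. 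Finally it chains these cores using a comparison of $\|\cdot\|_{\mathcal E^\phi}$ with the graph norm of $\Delta_{\mathcal H}$; this last chaining step is only sketched.

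You instead stay inside the spectral representation of $\Delta_{\mathcal H}$ throughout:
\begin{itemize}
\item you derive the growth bound directly from \eqref{eq:Bernstein};
\item you compare two form norms, $\mathcal E^\phi_1\le(1+C_\phi)\mathcal E_1$, rather than a form norm with a graph norm;
\item you replace both applications of Phillips' theorem by the single truncation $E([0,N])f$, which lies in $\mathcal D(\Delta_{\mathcal H})$ and converges in $\mathcal E^\phi_1$ by dominated convergence.
\end{itemize}

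Your version is self-contained and makes the final density chain explicit. It also shows that the Bernstein structure matters only through linear growth: the same argument shows $C_c^\infty(\mathbb G)$ is a form core for $\psi(\Delta_{\mathcal H})$ for any Borel $\psi\ge0$ with $\psi(\lambda)\le C(1+\lambda)$. The paper's version is shorter on the page and sits naturally in the subordination framework, at the price of heavier citations.

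One cosmetic point. As written, $\phi(0)+(\cdots)(1+\lambda)=:C_\phi(1+\lambda)$ does not define a constant unless $\phi(0)=0$. Move $\phi(0)$ inside the bracket, using $\phi(0)\le\phi(0)(1+\lambda)$.
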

\begin{proof}
    By Phillips' subordination theorem, see for instance \cite[Theorem~13.6]{SchillingSongVondracekBook}, for any Bernstein function $\phi$, $\mathcal{D}(\Delta_{\mathcal H})$ is a core for $(\Delta^\phi_{\mathcal H}, \mathcal{D}(\Delta^\phi_{\mathcal H}))$. As noted before,  $(\Delta_{\mathcal H}, C^\infty_c(\mathbb G))$ is essentially self-adjoint in $L^2(\mathbb G, dx)$, that is, $C^\infty_c(\mathbb G)$ is a core for $(\Delta_{\mathcal H},\mathcal{D}(\Delta_{\mathcal H}))$, which implies that 
    \begin{align*}
    C^\infty_c(\mathbb G)\subset \mathcal{D}(\Delta^\phi_{\mathcal H}).
\end{align*}
Also, by \cite[Corollary~1.3.1]{Fukushima_et_alBook}, $\mathcal{D}(\Delta^\phi_{\mathcal H})\subset \mathcal{D}(\mathcal{E}^\phi)$. This proves \eqref{eq:chain_inclusion}.

By \cite[p.~16]{SchillingSongVondracekBook}, for any Bernstein function $\phi$, there exists $C>0$ such that 
\begin{align}\label{eq:Bernstein_bound}
    \phi(\lambda)\le C(1+\lambda) \quad \mbox{for all $\lambda\ge 0$}.
\end{align}
Therefore, for any $f\in \mathcal{D}(\Delta_{\mathcal H})$,
\begin{align}\label{eq:norm_ineq}
    \|f\|_{\mathcal{E}^\phi}=(\|f\|^2_{L^2(\mathbb G,dx)}+\mathcal{E}^\phi(f,f))^{\frac12} \le C(\|f\|_{L^2(\mathbb G, dx)}+\|\Delta_{\mathcal H} f\|_{L^2(\mathbb G, dx)}),
\end{align}
for some constant $C>0$. We note that the semigroup
\begin{align*}
e^{-t(\Delta^\phi_{\mathcal H})^{1/2}}
\end{align*}
is also obtained by subordinating the semigroup $P^\phi_t$ with the Bernstein function $u\mapsto \sqrt{u}$, and therefore by Phillips' subordination theorem, $\mathcal{D}(\Delta^\phi_{\mathcal H})$ is a core for $((\Delta^\phi_{\mathcal H})^{1/2}, \mathcal{D}((\Delta^\phi_{\mathcal H})^{1/2}))$. Since $C^\infty_c(\mathbb G)$ is a core for $(\Delta_{\mathcal H}, \mathcal{D}(\Delta_{\mathcal H}))$ in $L^2(\mathbb G, dx)$, the above inequality in \eqref{eq:norm_ineq} implies that $C^\infty_c(\mathbb G)$ is a form core for $\mathcal{E}^\phi$. This completes the proof.
\end{proof}
Let $\Omega\subset\mathbb G$ be a measurable subset. Consider the part of the Dirichlet form $((\mathcal{E}^\phi)^\Omega, (\mathcal{F}^\phi)^\Omega)$ with
\begin{align*}
     (\mathcal{F}^\phi)^\Omega=\{f\in\mathcal{D}(\mathcal{E}^\phi): \widetilde{f}=0 \quad \mbox{$\mathcal{E}^\phi$-q.e. on $\mathbb{G}\setminus \Omega$}\},
\end{align*}
where $\widetilde{f}$ denote the quasi-continuous version of $f$, see \cite[p.~69]{Fukushima_et_alBook}.
\begin{lemma}\label{lem:form_core_2}
    Let $\Omega$ be an open subset of $\mathbb G$. Then, $((\mathcal{E}^\phi)^\Omega, (\mathcal{F}^\phi)^\Omega)$ is a regular Dirichlet form on $L^2(\Omega,dx)$. Moreover, $C^\infty_c(\Omega)$ is a form core for $\mathcal{E}^{\phi,\Omega}$. As a result, the generator associated with $\mathcal{E}^{\phi,\Omega}$ is the Friedrichs extension of $(\mathcal{L}^\phi_\Omega, C^\infty_c(\Omega))$ in $L^2(\Omega, dx)$, where for any $f\in C^\infty_c(\Omega)$,
    \begin{align*}
        \mathcal{L}^\phi_\Omega f = \left.\Delta^\phi_\Omega f^0\right|_{\Omega},
    \end{align*}
    with 
    \begin{align*}
        f^0(x)=\begin{cases}
            f(x) & \mbox{if $x\in\Omega$} \\ 
            0 & \mbox{if $x\in\Omega^c$}.
        \end{cases}
    \end{align*}
\end{lemma}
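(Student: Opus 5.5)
We will obtain the three assertions from the general theory of parts of regular Dirichlet forms on open sets, combined with Lemma~\ref{lem:form_core_1}.

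\textbf{Regularity.} By Lemma~\ref{lem:form_core_1}, $(\mathcal{E}^\phi,\mathcal{D}(\mathcal{E}^\phi))$ is a regular Dirichlet form on $L^2(\mathbb G,dx)$, and $C^\infty_c(\mathbb G)$ is a core. For an open set $\Omega$, \cite[Theorem~4.4.3(i)]{Fukushima_et_alBook} then shows that the part $((\mathcal{E}^\phi)^\Omega,(\mathcal{F}^\phi)^\Omega)$ is a regular Dirichlet form on $L^2(\Omega,dx)$. The same theorem also shows that $\mathcal{D}(\mathcal{E}^\phi)\cap C_c(\Omega)$ is a core.

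\textbf{Core property of $C^\infty_c(\Omega)$.} It remains to upgrade this core to $C^\infty_c(\Omega)$. Take $f\in\mathcal{D}(\mathcal{E}^\phi)\cap C_c(\Omega)$, so that $K=\supp f$ is a compact subset of $\Omega$. We mollify on the group, setting $f_\varepsilon=f*\rho_\varepsilon$, where $\rho_\varepsilon$ is a smooth approximate identity supported in $\mathbb B(e,\varepsilon)$. For $\varepsilon$ small, $\supp f_\varepsilon$ lies in a compact neighbourhood of $K$ inside $\Omega$, so $f_\varepsilon\in C^\infty_c(\Omega)$.

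The key point is that $\mathcal{E}^\phi$-convergence of $f_\varepsilon$ to $f$ follows from left invariance. The operator $\Delta_{\mathcal H}$ commutes with right convolution, because it is built from left-invariant vector fields. Hence every spectral function of it also commutes with right convolution, and in particular
\[
(\Delta^\phi_{\mathcal H})^{1/2}(f*\rho_\varepsilon)=\big((\Delta^\phi_{\mathcal H})^{1/2}f\big)*\rho_\varepsilon .
\]
This converges to $(\Delta^\phi_{\mathcal H})^{1/2}f$ in $L^2$. To justify the commutation, note that right convolution is a weighted average of right translations $R_h$, and each $R_h$ is unitary and commutes with $P_t$. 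The semigroup $P_t$ is given by left convolution with $p(t,\cdot)$, so $R_h$ commutes with $P_t$, hence with $P^\phi_t$, and hence with its generator.

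We expect this commutation step, done carefully on a non-abelian group, to be the main technical obstacle. If it proves delicate, the fallback is to use instead the $\varepsilon$-approximation of \cite[Lemma~1.4.2]{Fukushima_et_alBook} together with the bound \eqref{eq:norm_ineq}, which controls $\|\cdot\|_{\mathcal{E}^\phi}$ by the graph norm of $\Delta_{\mathcal H}$ for smooth functions.

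\textbf{Identification of the generator.} For $f,g\in C^\infty_c(\Omega)$, the zero extensions $f^0,g^0$ lie in $C^\infty_c(\mathbb G)\subset\mathcal{D}(\Delta^\phi_{\mathcal H})$ by Lemma~\ref{lem:form_core_1}. Therefore
\[
\mathcal{E}^{\phi,\Omega}(f,g)=\mathcal{E}^\phi(f^0,g^0)=\langle \Delta^\phi_{\mathcal H}f^0,g^0\rangle_{L^2(\mathbb G)}=\langle \mathcal{L}^\phi_\Omega f,g\rangle_{L^2(\Omega)} .
\]
So $\mathcal{L}^\phi_\Omega$ is a symmetric, nonnegative operator on $C^\infty_c(\Omega)$ whose form closure is $\mathcal{E}^{\phi,\Omega}$, by the core property just established. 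The Friedrichs extension of $\mathcal{L}^\phi_\Omega$ is by definition the operator associated with this closed form, which proves the claim.
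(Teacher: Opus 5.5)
Your regularity step and your identification of the generator are fine; the generator part is spelled out more carefully than in the paper. The gap is in the step that upgrades the core to $C^\infty_c(\Omega)$: the commutation you rely on goes the wrong way.

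\textbf{Why the commutation fails.} Left-invariant vector fields commute with \emph{left} translations, $X(u\circ L_h)=(Xu)\circ L_h$, but not with right ones. On the Heisenberg group with law $(x,y,z)(x',y',z')=(x+x',y+y',z+z'+\tfrac12(xy'-yx'))$, $X=\partial_x-\tfrac y2\partial_z$ and $h=(a,b,c)$, one finds $X(u\circ R_h)-(Xu)\circ R_h=b\,(\partial_z u)\circ R_h$.

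At the level of the semigroup, the paper's convention is $P_tf=f*p(t,\cdot)$. Then $P_t(f*\rho_\varepsilon)=f*(\rho_\varepsilon*p_t)$, while $(P_tf)*\rho_\varepsilon=f*(p_t*\rho_\varepsilon)$. These agree for all mollifiers only if $p(t,\cdot)$ is conjugation-invariant. That fails on non-abelian Carnot groups, since it would make $\Delta_{\mathcal H}$ bi-invariant.

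Your mollifier $f*\rho_\varepsilon(x)=\int f(xz^{-1})\rho_\varepsilon(z)\,dz$ is an average of right translates. So the identity $(\Delta^\phi_{\mathcal H})^{1/2}(f*\rho_\varepsilon)=((\Delta^\phi_{\mathcal H})^{1/2}f)*\rho_\varepsilon$ is false in general, and $f_\varepsilon\to f$ in the $\mathcal E^\phi$-norm is not established.

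The fallback does not close the gap either. The bound \eqref{eq:norm_ineq} controls $\|\cdot\|_{\mathcal E^\phi}$ only on $\mathcal D(\Delta_{\mathcal H})$, and a general $f\in\mathcal D(\mathcal E^\phi)\cap C_c(\Omega)$ need not lie there.

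\textbf{How to repair it.} Put the mollifier on the left: $f_\varepsilon=\rho_\varepsilon*f=\int\rho_\varepsilon(y)\,f(y^{-1}\,\cdot\,)\,dy$. It is smooth and supported in $\overline{\mathbb B(e,\varepsilon)}\cdot K$, which lies compactly inside $\Omega$ for small $\varepsilon$. It is an average of left translates $f(y^{-1}\,\cdot\,)$, which are unitary and commute with $P_t$, hence with $P^\phi_t=\int P_s\,\eta_t(ds)$ and with $(\Delta^\phi_{\mathcal H})^{1/2}$. Closedness then gives $(\Delta^\phi_{\mathcal H})^{1/2}f_\varepsilon=\rho_\varepsilon*(\Delta^\phi_{\mathcal H})^{1/2}f\to(\Delta^\phi_{\mathcal H})^{1/2}f$ in $L^2$.

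The paper avoids mollification entirely. \cite[Theorem~4.4.3(i)]{Fukushima_et_alBook} also states that if $\mathcal C$ is a special standard core of $\mathcal E^\phi$, then $\{u\in\mathcal C:\supp u\subset\Omega\}$ is a core of the part form. $C^\infty_c(\mathbb G)$ is such a core: it is a form core by Lemma~\ref{lem:form_core_1}, a dense subalgebra of $C_c(\mathbb G)$ with smooth cut-offs, and closed under smooth contractions fixing $0$. This yields $C^\infty_c(\Omega)$ in one line.

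Your corrected route is self-contained and uses the group structure explicitly. The paper's route is shorter and applies to any regular Dirichlet form with a special standard core.
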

\begin{proof}
    We first note that $\mathcal{E}^\phi$ is a regular Dirichlet form on $L^2(\mathbb G, dx)$ as $C^\infty_c(\mathbb G)\subset \mathcal{D}(\mathcal{E}^\phi)$, due to Lemma~\ref{lem:form_core_1}. Since $\Omega$ is open, by \cite[Theorem~4.4.3(i)]{Fukushima_et_alBook}, $(\mathcal{E}^{\phi,\Omega}, (\mathcal{F}^\phi)^\Omega)$ is a regular Dirichlet form on $L^2(\mathbb G, dx)$. Also by Lemma~\ref{lem:form_core_1}, $C^\infty_c(\mathbb G)$ is a core for $\mathcal{E}^\phi$. Moreover, it is a \emph{special standard core} for $\mathcal{E}^\phi$, see \cite[p.~6]{Fukushima_et_alBook}. Therefore, invoking \cite[Theorem~4.4.3(i)]{Fukushima_et_alBook} once again, the space of functions
    \begin{align*}
        \mathcal{C}_\Omega:=\{f\in C^\infty_c(\mathbb G): \supp(f)\subset\Omega\}
    \end{align*}
    is a form core for $\mathcal{E}^{\phi,\Omega}$. Since $\mathcal{C}_\Omega=C^\infty_c(\Omega)$, we conclude that the generator of $\mathcal{E}^{\phi,\Omega}$ in $L^2(\Omega, dx)$ is the Friedrichs extension of $(\mathcal{L}^\phi_\Omega, C^\infty_c(\Omega))$ in $L^2(\Omega, dx)$. This completes the proof.
\end{proof}

\bibliographystyle{plain}
\bibliography{SubordinatedCarnot}

@article {MR2836591,
    AUTHOR = {Franchi, Bruno and Serapioni, Raul and Serra Cassano,
              Francesco},
     TITLE = {Differentiability of intrinsic {L}ipschitz functions within
              {H}eisenberg groups},
   JOURNAL = {J. Geom. Anal.},
  FJOURNAL = {Journal of Geometric Analysis},
    VOLUME = {21},
      YEAR = {2011},
    NUMBER = {4},
     PAGES = {1044--1084},
      ISSN = {1050-6926,1559-002X},
   MRCLASS = {22E30 (58C20)},
  MRNUMBER = {2836591},
MRREVIEWER = {Davide\ Vittone},
       DOI = {10.1007/s12220-010-9178-4},
       URL = {https://doi.org/10.1007/s12220-010-9178-4},
}

@incollection {MR3587666,
    AUTHOR = {Serra Cassano, Francesco},
     TITLE = {Some topics of geometric measure theory in {C}arnot groups},
 BOOKTITLE = {Geometry, analysis and dynamics on sub-{R}iemannian manifolds.
              {V}ol. 1},
    SERIES = {EMS Ser. Lect. Math.},
     PAGES = {1--121},
 PUBLISHER = {Eur. Math. Soc., Z\"urich},
      YEAR = {2016},
      ISBN = {978-3-03719-162-0},
   MRCLASS = {53C17 (28A75 49Q20)},
  MRNUMBER = {3587666},
MRREVIEWER = {Davide\ Vittone},
}

@Book{LiaoMingBookLevyProcessesinLieGroups,
  Author                   = {Liao, Ming},
  Title                    = {L\'evy processes in {L}ie groups},
  Year                     = {2004},
  Volume                   = {162},
  Pages                    = {x+266},
  Address                  = {Cambridge},
  ISBN                     = {0-521-83653-0},
  Mrclass                  = {60-02 (22E30 43A80 60G51)},
  Mrnumber                 = {MR2060091 (2005e:60004)},
  Mrreviewer               = {H. Heyer},
  Publisher                = {Cambridge University Press},
  Series                   = {Cambridge Tracts in Mathematics}
}

@Article{AlbeverioGordina2007,
  author       = {Albeverio, Sergio and Gordina, Maria},
  journal      = {Bull. Sci. Math.},
  title        = {L\'evy processes and their subordination in matrix {L}ie groups},
  year         = {2007},
  issn         = {0007-4497},
  number       = {8},
  pages        = {738--760},
  volume       = {131},
  coden        = {BSMQA9},
  creationdate = {2025-03-03T19:00:36},
  fjournal     = {Bulletin des Sciences Math\'ematiques},
  mrclass      = {60B15 (43A05 43A80 58D20 60G51)},
  mrnumber     = {MR2372464 (2008m:60010)},
  mrreviewer   = {David Applebaum},
}

@Article{ChenSong2005,
  author       = {Chen, Zhen-Qing and Song, Renming},
  journal      = {J. Funct. Anal.},
  title        = {Two-sided eigenvalue estimates for subordinate processes in domains},
  year         = {2005},
  issn         = {0022-1236},
  number       = {1},
  pages        = {90--113},
  volume       = {226},
  creationdate = {2025-03-03T19:15:15},
  doi          = {10.1016/j.jfa.2005.05.004},
  fjournal     = {Journal of Functional Analysis},
  mrclass      = {60J25 (35P15 60J35)},
  mrnumber     = {2158176},
  mrreviewer   = {Wilhelm Stannat},
  url          = {https://doi.org/10.1016/j.jfa.2005.05.004},
}

@Article{TysonWangJ2018,
  author       = {Tyson, Jeremy and Wang, Jing},
  journal      = {Comm. Partial Differential Equations},
  title        = {Heat content and horizontal mean curvature on the {H}eisenberg group},
  year         = {2018},
  issn         = {0360-5302},
  number       = {3},
  pages        = {467--505},
  volume       = {43},
  creationdate = {2025-03-03T19:23:40},
  doi          = {10.1080/03605302.2018.1446166},
  fjournal     = {Communications in Partial Differential Equations},
  mrclass      = {53C17 (35K05 35R03 58J65)},
  mrnumber     = {3804205},
  mrreviewer   = {Andrea Pinamonti},
  url          = {https://doi.org/10.1080/03605302.2018.1446166},
}

@Article{RizziRossi2021,
  author       = {Rizzi, Luca and Rossi, Tommaso},
  journal      = {J. Math. Pures Appl. (9)},
  title        = {Heat content asymptotics for sub-{R}iemannian manifolds},
  year         = {2021},
  issn         = {0021-7824},
  pages        = {267--307},
  volume       = {148},
  creationdate = {2025-03-03T19:24:25},
  doi          = {10.1016/j.matpur.2020.12.004},
  fjournal     = {Journal de Math\'{e}matiques Pures et Appliqu\'{e}es. Neuvi\`eme S\'{e}rie},
  mrclass      = {53C17 (35R01 58J35 58J60)},
  mrnumber     = {4223354},
  mrreviewer   = {Jing Wang},
  url          = {https://doi.org/10.1016/j.matpur.2020.12.004},
}

@Article{ParkSong2022,
  author       = {Park, Hyunchul and Song, Renming},
  journal      = {Electron. J. Probab.},
  title        = {Spectral heat content for {$\alpha$}-stable processes in {$C^{1,1}$} open sets},
  year         = {2022},
  pages        = {Paper No. 22, 19},
  volume       = {27},
  creationdate = {2025-03-04T16:17:05},
  doi          = {10.1214/22-ejp752},
  fjournal     = {Electronic Journal of Probability},
  mrclass      = {60G52 (60J76)},
  mrnumber     = {4379201},
  url          = {https://doi.org/10.1214/22-ejp752},
}

@Article{BaudoinBonnefont2016,
  author       = {Baudoin, Fabrice and Bonnefont, Michel},
  journal      = {Nonlinear Anal.},
  title        = {Reverse {P}oincar\'e inequalities, isoperimetry, and {R}iesz transforms in {C}arnot groups},
  year         = {2016},
  issn         = {0362-546X},
  pages        = {48--59},
  volume       = {131},
  creationdate = {2024-01-21T07:49:13},
  fjournal     = {Nonlinear Analysis. Theory, Methods \& Applications. An International Multidisciplinary Journal},
  mrclass      = {43A77 (47D07)},
  mrnumber     = {3427969},
  mrreviewer   = {Alexander Isaakovich Shtern},
  url          = {https://doi.org/10.1016/j.na.2015.10.014},
}

@article {FerrariFranchi2015,
    AUTHOR = {Ferrari, Fausto and Franchi, Bruno},
     TITLE = {Harnack inequality for fractional sub-{L}aplacians in {C}arnot
              groups},
   JOURNAL = {Math. Z.},
  FJOURNAL = {Mathematische Zeitschrift},
    VOLUME = {279},
      YEAR = {2015},
    NUMBER = {1-2},
     PAGES = {435--458},
      ISSN = {0025-5874,1432-1823},
   MRCLASS = {35R03 (35B45 35B65 35K08 35R11 53C17)},
  MRNUMBER = {3299862},
MRREVIEWER = {Maochun\ Zhu},
       DOI = {10.1007/s00209-014-1376-5},
       URL = {https://doi.org/10.1007/s00209-014-1376-5},
}

@article {FFMPPS2028,
    AUTHOR = {Ferrari, Fausto and Miranda, Jr., Michele and Pallara, Diego
              and Pinamonti, Andrea and Sire, Yannick},
     TITLE = {Fractional {L}aplacians, perimeters and heat semigroups in
              {C}arnot groups},
   JOURNAL = {Discrete Contin. Dyn. Syst. Ser. S},
  FJOURNAL = {Discrete and Continuous Dynamical Systems. Series S},
    VOLUME = {11},
      YEAR = {2018},
    NUMBER = {3},
     PAGES = {477--491},
      ISSN = {1937-1632,1937-1179},
   MRCLASS = {35R11 (35K08 35R03)},
  MRNUMBER = {3732178},
MRREVIEWER = {Serena\ Dipierro},
       DOI = {10.3934/dcdss.2018026},
       URL = {https://doi.org/10.3934/dcdss.2018026},
}

@book {Sato_Book,
    AUTHOR = {Sato, Ken-iti},
     TITLE = {L\'evy processes and infinitely divisible distributions},
    SERIES = {Cambridge Studies in Advanced Mathematics},
    VOLUME = {68},
   EDITION = {Revised},
      NOTE = {Translated from the 1990 Japanese original},
 PUBLISHER = {Cambridge University Press, Cambridge},
      YEAR = {2013},
     PAGES = {xiv+521},
      ISBN = {978-1-107-65649-9},
   MRCLASS = {60G51 (60E07 60G18 60G52 60J45)},
  MRNUMBER = {3185174},
}

@article {ParkSong2019,
    AUTHOR = {Park, Hyunchul and Song, Renming},
     TITLE = {Small time asymptotics of spectral heat contents for
              subordinate killed {B}rownian motions related to isotropic
              {$\alpha$}-stable processes},
   JOURNAL = {Bull. Lond. Math. Soc.},
  FJOURNAL = {Bulletin of the London Mathematical Society},
    VOLUME = {51},
      YEAR = {2019},
    NUMBER = {2},
     PAGES = {371--384},
      ISSN = {0024-6093,1469-2120},
   MRCLASS = {60J75},
  MRNUMBER = {3937594},
MRREVIEWER = {Wojciech\ Cygan},
       DOI = {10.1112/blms.12235},
       URL = {https://doi.org/10.1112/blms.12235},
}

@Book{BlumenthalGetoor1968,
  author       = {Blumenthal, R. M. and Getoor, R. K.},
  publisher    = {Academic Press, New York-London},
  title        = {Markov processes and potential theory},
  year         = {1968},
  series       = {Pure and Applied Mathematics, Vol. 29},
  creationdate = {2025-07-06T05:17:15},
  mrclass      = {60.62 (60.60)},
  mrnumber     = {0264757},
  mrreviewer   = {J. L. Doob},
  pages        = {x+313},
}

@Article{CarfagniniGordina2024,
  author       = {Carfagnini, Marco and Gordina, Maria},
  journal      = {Int. Math. Res. Not. IMRN},
  title        = {Dirichlet {S}ub-{L}aplacians on {H}omogeneous {C}arnot {G}roups: {S}pectral {P}roperties, {A}symptotics, and {H}eat {C}ontent},
  year         = {2024},
  issn         = {1073-7928},
  number       = {3},
  pages        = {1894--1930},
  volume       = {3},
  creationdate = {2025-07-06T05:17:58},
  doi          = {10.1093/imrn/rnad065},
  fjournal     = {International Mathematics Research Notices. IMRN},
  mrclass      = {Prelim},
  mrnumber     = {4702267},
  url          = {https://doi.org/10.1093/imrn/rnad065},
}

@Book{BonfiglioliLanconelliUguzzoniBook,
  author       = {Bonfiglioli, Andrea and Lanconelli, Ermanno and Uguzzoni, Francesco},
  publisher    = {Springer, Berlin},
  title        = {Stratified {L}ie groups and potential theory for their sub-{L}aplacians},
  year         = {2007},
  isbn         = {978-3-540-71896-3; 3-540-71896-6},
  series       = {Springer Monographs in Mathematics},
  creationdate = {2025-11-05T17:44:12},
  mrclass      = {22E30 (31C45 35-02 35H10 43A80)},
  mrnumber     = {2363343},
  mrreviewer   = {Maria Stella Fanciullo},
  pages        = {xxvi+800},
}

@Misc{BossioRizziRossi2024Arxiv,
  author        = {Tania Bossio and Luca Rizzi and Tommaso Rossi},
  title         = {Tubes in sub-Riemannian geometry and a Weyl's invariance result for curves in the Heisenberg groups},
  year          = {2024},
  archiveprefix = {arXiv},
  creationdate  = {2025-12-09T14:40:35},
  eprint        = {2408.16838},
  primaryclass  = {math.DG},
  url           = {https://arxiv.org/abs/2408.16838},
}

@Article{AgrachevRizziRossi2024,
  author       = {Agrachev, Andrei and Rizzi, Luca and Rossi, Tommaso},
  journal      = {Anal. PDE},
  title        = {Relative heat content asymptotics for sub-{R}iemannian manifolds},
  year         = {2024},
  issn         = {2157-5045,1948-206X},
  number       = {9},
  pages        = {2997--3037},
  volume       = {17},
  creationdate = {2025-12-09T15:01:39},
  doi          = {10.2140/apde.2024.17.2997},
  fjournal     = {Analysis \& PDE},
  mrclass      = {53C17 (35K15 35R01 58J35 58J60)},
  mrnumber     = {4818197},
  mrreviewer   = {Robert\ Weston\ Neel},
  url          = {https://doi.org/10.2140/apde.2024.17.2997},
}

@Article{Hunt1956a,
  author       = {Hunt, G. A.},
  journal      = {Trans. Amer. Math. Soc.},
  title        = {Semi-groups of measures on {L}ie groups},
  year         = {1956},
  issn         = {0002-9947},
  pages        = {264--293},
  volume       = {81},
  creationdate = {2025-12-09T16:02:09},
  fjournal     = {Transactions of the American Mathematical Society},
  mrclass      = {46.2X},
  mrnumber     = {MR0079232 (18,54a)},
  mrreviewer   = {K. Yosida},
}

@Misc{CarfagniniGordinaTeplyaev2024,
  author        = {Marco Carfagnini and Maria Gordina and Alexander Teplyaev},
  title         = {Dirichlet metric measure spaces: spectrum, irreducibility, and small deviations},
  year          = {2024},
  archiveprefix = {arXiv},
  creationdate  = {2025-12-09T16:08:31},
  eprint        = {2409.07425},
  primaryclass  = {math.PR},
  url           = {https://arxiv.org/abs/2409.07425},
}

@Book{CapognaDanielliPaulsTysonBook,
  author       = {Capogna, Luca and Danielli, Donatella and Pauls, Scott D. and Tyson, Jeremy T.},
  publisher    = {Birkh\"auser Verlag, Basel},
  title        = {An introduction to the {H}eisenberg group and the sub-{R}iemannian isoperimetric problem},
  year         = {2007},
  isbn         = {978-3-7643-8132-5; 3-7643-8132-9},
  series       = {Progress in Mathematics},
  volume       = {259},
  creationdate = {2025-12-09T16:37:00},
  mrclass      = {53C17 (22E30 30C65 32T27 32V15 49Q15)},
  mrnumber     = {2312336 (2009a:53053)},
  mrreviewer   = {Piotr Haj{\l}asz},
  pages        = {xvi+223},
}

@article {GrzywnyParkSong2019,
    AUTHOR = {Grzywny, Tomasz and Park, Hyunchul and Song, Renming},
     TITLE = {Spectral heat content for {L}\'evy processes},
   JOURNAL = {Math. Nachr.},
  FJOURNAL = {Mathematische Nachrichten},
    VOLUME = {292},
      YEAR = {2019},
    NUMBER = {4},
     PAGES = {805--825},
      ISSN = {0025-584X,1522-2616},
   MRCLASS = {60J75 (35K05 35K08)},
  MRNUMBER = {3937619},
MRREVIEWER = {Vassili\ N.\ Kolokol\cprime tsov},
       DOI = {10.1002/mana.201800035},
       URL = {https://doi.org/10.1002/mana.201800035},
}

@Article{Hoermander1967a,
  author       = {H{\"o}rmander, Lars},
  journal      = {Acta Math.},
  title        = {Hypoelliptic second order differential equations},
  year         = {1967},
  issn         = {0001-5962},
  pages        = {147--171},
  volume       = {119},
  creationdate = {2026-03-17T10:02:57},
  doi          = {10.1007/bf02392081},
  fjournal     = {Acta Mathematica},
  mrclass      = {35.48 (47.00)},
  mrnumber     = {0222474 (36 \#5526)},
  mrreviewer   = {J. Smoller},
}

@Article{GordinaLaetsch2017,
  author       = {Gordina, Maria and Laetsch, Thomas},
  journal      = {Trans. Amer. Math. Soc.},
  title        = {A convergence to {B}rownian motion on sub-{R}iemannian manifolds},
  year         = {2017},
  issn         = {0002-9947},
  number       = {9},
  pages        = {6263--6278},
  volume       = {369},
  creationdate = {2026-03-17T12:01:58},
  doi          = {10.1090/tran/6831},
  fjournal     = {Transactions of the American Mathematical Society},
  mrclass      = {60J65 (58J65)},
  mrnumber     = {3660220},
  url          = {http://dx.doi.org/10.1090/tran/6831},
}

@Article{Bogdan1997,
  author       = {Bogdan, Krzysztof},
  journal      = {Studia Math.},
  title        = {The boundary {H}arnack principle for the fractional {L}aplacian},
  year         = {1997},
  issn         = {0039-3223,1730-6337},
  number       = {1},
  pages        = {43--80},
  volume       = {123},
  creationdate = {2026-03-17T12:22:42},
  doi          = {10.4064/sm-123-1-43-80},
  fjournal     = {Studia Mathematica},
  mrclass      = {31B25 (31C05 35S99 60J65)},
  mrnumber     = {1438304},
  mrreviewer   = {Zhong\ Xin\ Zhao},
  url          = {https://doi.org/10.4064/sm-123-1-43-80},
}

@article{Sarkar2026,
      title={Fractional heat content asymptotics for Carnot groups}, 
      author={Rohan Sarkar},
      year={2026},
      eprint={2601.04088},
      archivePrefix={arXiv},
      primaryClass={math.AP},
      url={https://arxiv.org/abs/2601.04088}, 
      journal={arXiv:2601.04088}
}

@article {Ikeda-Watanabe,
    AUTHOR = {Ikeda, Nobuyuki and Watanabe, Shinzo},
     TITLE = {On some relations between the harmonic measure and the
              {L}\'evy measure for a certain class of {M}arkov processes},
   JOURNAL = {J. Math. Kyoto Univ.},
  FJOURNAL = {Journal of Mathematics of Kyoto University},
    VOLUME = {2},
      YEAR = {1962},
     PAGES = {79--95},
      ISSN = {0023-608X},
   MRCLASS = {60.60 (60.62)},
  MRNUMBER = {142153},
MRREVIEWER = {J.\ Elliott},
       DOI = {10.1215/kjm/1250524975},
       URL = {https://doi.org/10.1215/kjm/1250524975},
}

@Book{EngelNagelBook2006,
  author     = {Engel, Klaus-Jochen and Nagel, Rainer},
  publisher  = {Springer, New York},
  title      = {A short course on operator semigroups},
  year       = {2006},
  isbn       = {978-0387-31341-2; 0-387-31341-9},
  series     = {Universitext},
  mrclass    = {47-01 (47D03 47D06)},
  mrnumber   = {2229872},
  mrreviewer = {Jacek Banasiak},
  pages      = {x+247},
}

@book{VaropoulosSaloff-CosteCoulhonBook1992,
    AUTHOR = {Varopoulos, N. Th. and Saloff-Coste, L. and Coulhon, T.},
     TITLE = {Analysis and geometry on groups},
    SERIES = {Cambridge Tracts in Mathematics},
    VOLUME = {100},
 PUBLISHER = {Cambridge University Press, Cambridge},
      YEAR = {1992},
     PAGES = {xii+156},
      ISBN = {0-521-35382-3},
   MRCLASS = {43A80 (47D03 47F05 58G11 60B15)},
  MRNUMBER = {1218884},
MRREVIEWER = {A.\ Hulanicki},
}

@Book{AgrachevBarilariBoscainBook2020,
  author       = {Agrachev, Andrei and Barilari, Davide and Boscain, Ugo},
  publisher    = {Cambridge University Press, Cambridge},
  title        = {A comprehensive introduction to sub-{R}iemannian geometry},
  year         = {2020},
  isbn         = {978-1-108-47635-5},
  note         = {From the Hamiltonian viewpoint, With an appendix by Igor Zelenko},
  series       = {Cambridge Studies in Advanced Mathematics},
  volume       = {181},
  creationdate = {2026-06-24T11:05:38},
  mrclass      = {53C17},
  mrnumber     = {3971262},
  mrreviewer   = {Luca Rizzi},
  pages        = {xviii+745},
}

@Article{ChenChenLi2026,
  author       = {Chen, Hua and Chen, Hong-Ge and Li, Jin-Ning},
  journal      = {J. Differential Equations},
  title        = {Weyl's law and estimates of eigenvalues for fractional sub-{L}aplacian operators on {C}arnot groups},
  year         = {2026},
  issn         = {0022-0396,1090-2732},
  number       = {part 2},
  pages        = {Paper No. 113861, 46},
  volume       = {453},
  creationdate = {2026-06-24T11:09:07},
  doi          = {10.1016/j.jde.2025.113861},
  fjournal     = {Journal of Differential Equations},
  mrclass      = {35P15 (35J70 35R03 58C40)},
  mrnumber     = {4975440},
  url          = {https://doi.org/10.1016/j.jde.2025.113861},
}

@Article{FranchiSerapioni2016,
  author       = {Franchi, Bruno and Serapioni, Raul Paolo},
  journal      = {J. Geom. Anal.},
  title        = {Intrinsic {L}ipschitz graphs within {C}arnot groups},
  year         = {2016},
  issn         = {1050-6926},
  number       = {3},
  pages        = {1946--1994},
  volume       = {26},
  creationdate = {2026-06-24T11:11:10},
  doi          = {10.1007/s12220-015-9615-5},
  fjournal     = {Journal of Geometric Analysis},
  mrclass      = {49Q15 (22E25 53C17 58C20 58C35)},
  mrnumber     = {3511465},
  mrreviewer   = {Jingzhi Tie},
  url          = {https://doi.org/10.1007/s12220-015-9615-5},
}

@article{Strichartz1986,
  author  = {Strichartz, Robert S.},
  title   = {Sub-Riemannian geometry},
  journal = {Journal of Differential Geometry},
  volume  = {24},
  number  = {2},
  pages   = {221--263},
  year    = {1986},
  doi     = {10.4310/jdg/1214440436}
}

@Article{FranchiSerapioniSerra_Cassano2003a,
  author       = {Franchi, Bruno and Serapioni, Raul and Serra Cassano, Francesco},
  journal      = {J. Geom. Anal.},
  title        = {On the structure of finite perimeter sets in step 2 {C}arnot groups},
  year         = {2003},
  issn         = {1050-6926},
  number       = {3},
  pages        = {421--466},
  volume       = {13},
  creationdate = {2026-06-24T11:11:48},
  doi          = {10.1007/BF02922053},
  fjournal     = {The Journal of Geometric Analysis},
  mrclass      = {49Q15 (53C17)},
  mrnumber     = {1984849},
  mrreviewer   = {J. E. Brothers},
  url          = {https://doi.org/10.1007/BF02922053},
}

@Article{Getoor1961b,
  author       = {Getoor, R. K.},
  journal      = {Trans. Amer. Math. Soc.},
  title        = {First passage times for symmetric stable processes in space},
  year         = {1961},
  issn         = {0002-9947,1088-6850},
  pages        = {75--90},
  volume       = {101},
  creationdate = {2026-06-24T11:15:39},
  doi          = {10.2307/1993412},
  fjournal     = {Transactions of the American Mathematical Society},
  mrclass      = {60.60},
  mrnumber     = {137148},
  mrreviewer   = {J.\ Elliott},
  url          = {https://doi.org/10.2307/1993412},
}

@Book{HeinonenBook2001,
  author       = {Heinonen, Juha},
  publisher    = {Springer-Verlag, New York},
  title        = {Lectures on analysis on metric spaces},
  year         = {2001},
  isbn         = {0-387-95104-0},
  series       = {Universitext},
  creationdate = {2026-06-24T11:17:11},
  doi          = {10.1007/978-1-4613-0131-8},
  mrclass      = {30C65 (28A75 28A78 46E35)},
  mrnumber     = {1800917},
  mrreviewer   = {Christopher Bishop},
  pages        = {x+140},
  url          = {http://dx.doi.org/10.1007/978-1-4613-0131-8},
}

@Book{SchillingSongVondracekBook,
  author       = {Schilling, Ren\'{e} L. and Song, Renming and Vondraček, Zoran},
  publisher    = {Walter de Gruyter \& Co., Berlin},
  title        = {Bernstein functions},
  year         = {2012},
  edition      = {Second},
  isbn         = {978-3-11-025229-3; 978-3-11-026933-8},
  note         = {Theory and applications},
  series       = {De Gruyter Studies in Mathematics},
  volume       = {37},
  creationdate = {2026-06-24T11:35:39},
  doi          = {10.1515/9783110269338},
  mrclass      = {60E07 (31C05 43A35 44A10 47A57 47D06 60E10 60Jxx)},
  mrnumber     = {2978140},
  mrreviewer   = {David\ Applebaum},
  pages        = {xiv+410},
  url          = {https://doi.org/10.1515/9783110269338},
}

@Article{LeDonne2017,
  author       = {Le Donne, Enrico},
  journal      = {Anal. Geom. Metr. Spaces},
  title        = {A primer on {C}arnot groups: homogenous groups, {C}arnot-{C}arath\'{e}odory spaces, and regularity of their isometries},
  year         = {2017},
  number       = {1},
  pages        = {116--137},
  volume       = {5},
  creationdate = {2026-06-24T11:55:19},
  doi          = {10.1515/agms-2017-0007},
  fjournal     = {Analysis and Geometry in Metric Spaces},
  mrclass      = {53C17 (22E25 22F30 43A80)},
  mrnumber     = {3742567},
  mrreviewer   = {Andrea Pinamonti},
  url          = {https://doi.org/10.1515/agms-2017-0007},
}

@article {GarofaloTralli2023,
    AUTHOR = {Garofalo, Nicola and Tralli, Giulio},
     TITLE = {A {B}ourgain-{B}rezis-{M}ironescu-{D}\'avila theorem in
              {C}arnot groups of step two},
   JOURNAL = {Comm. Anal. Geom.},
  FJOURNAL = {Communications in Analysis and Geometry},
    VOLUME = {31},
      YEAR = {2023},
    NUMBER = {2},
     PAGES = {321--341},
      ISSN = {1019-8385,1944-9992},
   MRCLASS = {53C17 (46E35 49Q15)},
  MRNUMBER = {4685023},
MRREVIEWER = {Andrea\ Pinamonti},
       DOI = {10.4310/cag.2023.v31.n2.a3},
       URL = {https://doi.org/10.4310/cag.2023.v31.n2.a3},
}

@article {AlbanoCannarsaScarinci2018,
	AUTHOR = {Albano, Paolo and Cannarsa, Piermarco and Scarinci, Teresa},
	TITLE = {Regularity results for the minimum time function with
	{H}\"ormander vector fields},
	JOURNAL = {J. Differential Equations},
	FJOURNAL = {Journal of Differential Equations},
	VOLUME = {264},
	YEAR = {2018},
	NUMBER = {5},
	PAGES = {3312--3335},
	ISSN = {0022-0396,1090-2732},
	MRCLASS = {35F30 (35D40 35F21)},
	MRNUMBER = {3741391},
	DOI = {10.1016/j.jde.2017.11.016},
	URL = {https://doi.org/10.1016/j.jde.2017.11.016},
}

@article {Phillips1952,
    AUTHOR = {Phillips, R. S.},
     TITLE = {On the generation of semigroups of linear operators},
   JOURNAL = {Pacific J. Math.},
  FJOURNAL = {Pacific Journal of Mathematics},
    VOLUME = {2},
      YEAR = {1952},
     PAGES = {343--369},
      ISSN = {0030-8730,1945-5844},
   MRCLASS = {46.3X},
  MRNUMBER = {50797},
MRREVIEWER = {K.\ Yosida},
       URL = {http://projecteuclid.org/euclid.pjm/1103051781},
}

@book {Bogdan_et_al2009,
    AUTHOR = {Bogdan, Krzysztof and Byczkowski, Tomasz and Kulczycki,
              Tadeusz and Ryznar, Michal and Song, Renming and Vondraček,
              Zoran},
     TITLE = {Potential analysis of stable processes and its extensions},
    SERIES = {Lecture Notes in Mathematics},
    VOLUME = {1980},
    EDITOR = {Graczyk, Piotr and Stos, Andrzej},
 PUBLISHER = {Springer-Verlag, Berlin},
      YEAR = {2009},
     PAGES = {x+187},
      ISBN = {978-3-642-02140-4},
   MRCLASS = {60J45 (31C05 31C25 35K08 35P15 60-02 60G52 60J50)},
  MRNUMBER = {2569321},
MRREVIEWER = {Wilhelm\ Stannat},
       DOI = {10.1007/978-3-642-02141-1},
       URL = {https://doi.org/10.1007/978-3-642-02141-1},
}

@book {EngelNagelBook2000,
    AUTHOR = {Engel, Klaus-Jochen and Nagel, Rainer},
     TITLE = {One-parameter semigroups for linear evolution equations},
    SERIES = {Graduate Texts in Mathematics},
    VOLUME = {194},
      NOTE = {With contributions by S. Brendle, M. Campiti, T. Hahn, G.
              Metafune, G. Nickel, D. Pallara, C. Perazzoli, A. Rhandi, S.
              Romanelli and R. Schnaubelt},
 PUBLISHER = {Springer-Verlag, New York},
      YEAR = {2000},
     PAGES = {xxii+586},
      ISBN = {0-387-98463-1},
   MRCLASS = {47D06 (34G10 35K90 47N20)},
  MRNUMBER = {1721989},
MRREVIEWER = {Charles\ Batty},
}

@article {DGS-C2009,
    AUTHOR = {Driver, Bruce K. and Gross, Leonard and Saloff-Coste, Laurent},
     TITLE = {Holomorphic functions and subelliptic heat kernels over {L}ie
              groups},
   JOURNAL = {J. Eur. Math. Soc. (JEMS)},
  FJOURNAL = {Journal of the European Mathematical Society (JEMS)},
    VOLUME = {11},
      YEAR = {2009},
    NUMBER = {5},
     PAGES = {941--978},
      ISSN = {1435-9855,1435-9863},
   MRCLASS = {32W30 (22E30 35H20)},
  MRNUMBER = {2538496},
MRREVIEWER = {Maria\ Gordina},
       DOI = {10.4171/JEMS/171},
       URL = {https://doi.org/10.4171/JEMS/171},
}

@article {SongVondracek2003,
    AUTHOR = {Song, Renming and Vondraček, Zoran},
     TITLE = {Potential theory of subordinate killed {B}rownian motion in a
              domain},
   JOURNAL = {Probab. Theory Related Fields},
  FJOURNAL = {Probability Theory and Related Fields},
    VOLUME = {125},
      YEAR = {2003},
    NUMBER = {4},
     PAGES = {578--592},
      ISSN = {0178-8051,1432-2064},
   MRCLASS = {60J45 (31C25 60J75)},
  MRNUMBER = {1974415},
MRREVIEWER = {Zhen-Qing\ Chen},
       DOI = {10.1007/s00440-002-0251-1},
       URL = {https://doi.org/10.1007/s00440-002-0251-1},
}

@book {BaktaiBook,
    AUTHOR = {B\'atkai, Andr\'as and Kramar Fijavž, Marjeta and Rhandi,
              Abdelaziz},
     TITLE = {Positive operator semigroups},
    SERIES = {Operator Theory: Advances and Applications},
    VOLUME = {257},
      NOTE = {From finite to infinite dimensions,
              With a foreword by Rainer Nagel and Ulf Schlotterbeck},
 PUBLISHER = {Birkh\"auser/Springer, Cham},
      YEAR = {2017},
     PAGES = {xvii+364},
      ISBN = {978-3-319-42811-6; 978-3-319-42813-0},
   MRCLASS = {47-02 (15-02 47B65 47D06)},
  MRNUMBER = {3616245},
MRREVIEWER = {Christoph\ Kriegler},
       DOI = {10.1007/978-3-319-42813-0},
       URL = {https://doi.org/10.1007/978-3-319-42813-0},
}

@article {KreinRutman1948,
    AUTHOR = {Kre\u in, M. G. and Rutman, M. A.},
     TITLE = {Linear operators leaving invariant a cone in a {B}anach space},
   JOURNAL = {Uspehi Matem. Nauk (N.S.)},
  FJOURNAL = {Uspehi Matem. Nauk (N.S.)},
    VOLUME = {3},
      YEAR = {1948},
    NUMBER = {1(23)},
     PAGES = {3--95},
   MRCLASS = {46.3X},
  MRNUMBER = {27128},
MRREVIEWER = {M.\ M.\ Day},
}

@article {CapognaDanielliGarofalo1994,
	AUTHOR = {Capogna, Luca and Danielli, Donatella and Garofalo, Nicola},
	TITLE = {The geometric {S}obolev embedding for vector fields and the
	isoperimetric inequality},
	JOURNAL = {Comm. Anal. Geom.},
	FJOURNAL = {Communications in Analysis and Geometry},
	VOLUME = {2},
	YEAR = {1994},
	NUMBER = {2},
	PAGES = {203--215},
	ISSN = {1019-8385,1944-9992},
	MRCLASS = {46E35 (26D20 35J99 58G03)},
	MRNUMBER = {1312686},
	MRREVIEWER = {James\ E.\ Ross},
	DOI = {10.4310/CAG.1994.v2.n2.a2},
	URL = {https://doi.org/10.4310/CAG.1994.v2.n2.a2},
}

@article{HebischSikora1990,
 author={Hebisch, Waldemar and Sikora, Adam},
  title={A smooth subadditive homogeneous norm on a homogeneous group},
  journal={Studia Mathematica},
  volume={96},
  pages={231--236},
  year={1990},
  publisher={Instytut Matematyczny Polskiej Akademii Nauk}
}

@Article {BergGilkey1994,
	AUTHOR = {van den Berg, M. and Gilkey, Peter B.},
	TITLE = {Heat content asymptotics of a {R}iemannian manifold with
	boundary},
	JOURNAL = {J. Funct. Anal.},
	FJOURNAL = {Journal of Functional Analysis},
	VOLUME = {120},
	YEAR = {1994},
	NUMBER = {1},
	PAGES = {48--71},
	ISSN = {0022-1236,1096-0783},
	MRCLASS = {58G11 (58G18 58G20)},
	MRNUMBER = {1262245},
	MRREVIEWER = {Patrice\ Sawyer},
	DOI = {10.1006/jfan.1994.1022},
	URL = {https://doi.org/10.1006/jfan.1994.1022},
}

@article {Cygan1981,
    AUTHOR = {Cygan, Jacek},
     TITLE = {Subadditivity of homogeneous norms on certain nilpotent {L}ie
              groups},
   JOURNAL = {Proc. Amer. Math. Soc.},
  FJOURNAL = {Proceedings of the American Mathematical Society},
    VOLUME = {83},
      YEAR = {1981},
    NUMBER = {1},
     PAGES = {69--70},
      ISSN = {0002-9939,1088-6826},
   MRCLASS = {22E30 (35H05 43A80)},
  MRNUMBER = {619983},
       DOI = {10.2307/2043893},
       URL = {https://doi.org/10.2307/2043893},
}

@article {Koranyi1985,
    AUTHOR = {Kor\'anyi, Adam},
     TITLE = {Geometric properties of {H}eisenberg-type groups},
   JOURNAL = {Adv. in Math.},
  FJOURNAL = {Advances in Mathematics},
    VOLUME = {56},
      YEAR = {1985},
    NUMBER = {1},
     PAGES = {28--38},
      ISSN = {0001-8708},
   MRCLASS = {53C22 (58F19)},
  MRNUMBER = {782541},
MRREVIEWER = {Edward\ N.\ Wilson},
       DOI = {10.1016/0001-8708(85)90083-0},
       URL = {https://doi.org/10.1016/0001-8708(85)90083-0},
}

@book {CorwinGreenleafBook,
    AUTHOR = {Corwin, Lawrence J. and Greenleaf, Frederick P.},
     TITLE = {Representations of nilpotent {L}ie groups and their
              applications. {P}art {I}},
    SERIES = {Cambridge Studies in Advanced Mathematics},
    VOLUME = {18},
      NOTE = {Basic theory and examples},
 PUBLISHER = {Cambridge University Press, Cambridge},
      YEAR = {1990},
     PAGES = {viii+269},
      ISBN = {0-521-36034-X},
   MRCLASS = {22E27 (22-01 22E25 22E30)},
  MRNUMBER = {1070979},
MRREVIEWER = {Jeffrey\ Fox},
}

@article {Savo1999,
	AUTHOR = {Savo, Alessandro},
	TITLE = {Uniform estimates and the whole asymptotic series of the heat
	content on manifolds},
	JOURNAL = {Geom. Dedicata},
	FJOURNAL = {Geometriae Dedicata},
	VOLUME = {73},
	YEAR = {1998},
	NUMBER = {2},
	PAGES = {181--214},
	ISSN = {0046-5755,1572-9168},
	MRCLASS = {58J37 (58J35)},
	MRNUMBER = {1652049},
	DOI = {10.1023/A:1005016122695},
	URL = {https://doi.org/10.1023/A:1005016122695},
}

@article {Jang-MeiWu2002,
    AUTHOR = {Wu, Jang-Mei},
     TITLE = {Harmonic measures for symmetric stable processes},
   JOURNAL = {Studia Math.},
  FJOURNAL = {Studia Mathematica},
    VOLUME = {149},
      YEAR = {2002},
    NUMBER = {3},
     PAGES = {281--293},
      ISSN = {0039-3223,1730-6337},
   MRCLASS = {60J45 (31B15 31C45)},
  MRNUMBER = {1893056},
MRREVIEWER = {Ren\ Ming\ Song},
       DOI = {10.4064/sm149-3-5},
       URL = {https://doi.org/10.4064/sm149-3-5},
}

@article {Sztonyk2000,
    AUTHOR = {Sztonyk, Pawe\l \},
     TITLE = {On harmonic measure for {L}\'evy processes},
   JOURNAL = {Probab. Math. Statist.},
  FJOURNAL = {Probability and Mathematical Statistics},
    VOLUME = {20},
      YEAR = {2000},
    NUMBER = {2},
     PAGES = {383--390},
      ISSN = {0208-4147,2300-8113},
   MRCLASS = {60J45},
  MRNUMBER = {1825650},
MRREVIEWER = {Mamoru\ Kanda},
}

@article {Bogdan_et_al2020,
    AUTHOR = {Bogdan, Krzysztof and Grzywny, Tomasz and Pietruska-Pa\l uba,
              Katarzyna and Rutkowski, Artur},
     TITLE = {Extension and trace for nonlocal operators},
   JOURNAL = {J. Math. Pures Appl. (9)},
  FJOURNAL = {Journal de Math\'ematiques Pures et Appliqu\'ees. Neuvi\`eme
              S\'erie},
    VOLUME = {137},
      YEAR = {2020},
     PAGES = {33--69},
      ISSN = {0021-7824,1776-3371},
   MRCLASS = {46E35 (31C05 35A15 35C15 35J25 60J45)},
  MRNUMBER = {4088505},
MRREVIEWER = {William\ E.\ Gryc},
       DOI = {10.1016/j.matpur.2019.09.005},
       URL = {https://doi.org/10.1016/j.matpur.2019.09.005},
}

@book {Fukushima_et_alBook,
    AUTHOR = {Fukushima, Masatoshi and Oshima, Yoichi and Takeda, Masayoshi},
     TITLE = {Dirichlet forms and symmetric {M}arkov processes},
    SERIES = {De Gruyter Studies in Mathematics},
    VOLUME = {19},
   EDITION = {extended},
 PUBLISHER = {Walter de Gruyter \& Co., Berlin},
      YEAR = {2011},
     PAGES = {x+489},
      ISBN = {978-3-11-021808-4},
   MRCLASS = {60J25 (28A12 31C45 60F10 60J40 60J45 60J55)},
  MRNUMBER = {2778606},
}

@book {ReedSimonBook,
    AUTHOR = {Reed, Michael and Simon, Barry},
     TITLE = {Methods of modern mathematical physics. {I}.
              {F}unctional analysis},
   EDITION = {Second},
 PUBLISHER = {Academic Press, Inc. [Harcourt Brace Jovanovich, Publishers]},
   ADDRESS = {New York},
      YEAR = {1980},
     PAGES = {xv+400},
      ISBN = {0-12-585050-6},
   MRCLASS = {46-01 (47-01 81-01)},
  MRNUMBER = {0751959},
}
\end{document}